\documentclass[10pt]{amsart}
\usepackage{datetime}
\usepackage{amsfonts}
\usepackage{amsmath}
\usepackage{amssymb}
\usepackage{amsthm}
\usepackage{stmaryrd}
\usepackage{eucal}
\usepackage{libertine}
\usepackage{tensor}
\usepackage{booktabs}
\usepackage{graphicx}  
\graphicspath{{../figures/}{./figures/}}
\usepackage[usenames,dvipsnames,svgnames,table]{xcolor}
\definecolor{badgerred}{rgb}{0.715,0.004,0.004}
\definecolor{burntorange}{rgb}{0.801,0.332,0.0}
\usepackage[font=small,labelfont={bf,sf}, textfont={sf},margin=0em]{caption}
\usepackage{hyperref}
\hypersetup{colorlinks,
filecolor=black,
linkcolor=burntorange,
citecolor=burntorange,
urlcolor=badgerred,
bookmarksopen=true}
\hypersetup{bookmarksopenlevel=1}

%
%

\theoremstyle{plain}
\newtheorem{maintheorem}{Theorem}

\newtheorem{theorem}{Theorem}[subsection]

\newtheorem{lemma}[theorem]{Lemma}

\newcommand{\equalradii}{{\mc R}}
\newcommand{\solspace}{{\mathbb X}}
\renewcommand{\Re}{\mathop{\mathrm{Re}}}
\renewcommand{\Im}{\mathop{\mathrm{Im}}}
\newcommand{\C}{{\mb C}}
\newcommand{\N}{{\mb N}}
\newcommand{\R}{{\mb R}}

\newcommand{\mb}{\mathbb}
\newcommand{\mc}{\mathcal}
\newcommand{\mr}{\mathrm}
\newcommand{\mf}{\mathfrak}
\newcommand{\bs}{\boldsymbol}
\newcommand{\pd}{\partial}
\newcommand{\pdd}[1]{\frac{\partial}{\partial#1}}
\newcommand{\dd}{{\mr d}}
\newcommand{\ds}{\mr{d}s}
\newcommand{\pt}{\partial_t}
\newcommand{\pdt}{{\partial_{\mathfrak t}}}  
\newcommand{\vp}{\varphi}

\newcommand{\vV}{\bs V}  
\newcommand{\vE}{\bs E}  

\newcommand{\tsum}{\textstyle\sum}
\newcommand{\gf}{{\mathfrak g \mathfrak f}}
\newcommand{\rfc}{{\mathfrak r\mathfrak f\mathfrak c}}
\newcommand{\rfes}{{\mathfrak r\mathfrak f\mathfrak e\mathfrak s}}
\newcommand{\bgk}{{\mathfrak B}}
\newcommand{\wug}{{W^{u}(\gf)}}
\newcommand{\wsr}{{W^{s}(\rfc)}}
\newcommand{\wur}{{W^{u}(\rfc)}}

\newcommand{\fp}{{\mathfrak p}}
\newcommand{\fq}{{\mathfrak q}}

\newcommand{\fw}{{\mathfrak w}}
\newcommand{\cC}{{\mathcal C}}

\newcommand{\cM}{{\mathcal M}}
\newcommand{\cO}{{\mathcal O}}
\newcommand{\cQ}{{\mathcal Q}}
\newcommand{\cR}{{\mathcal R}}
\newcommand{\cU}{{\mathcal U}}

\DeclareMathOperator{\rc}{Rc}

\DeclareMathOperator{\grad}{grad}

\newcommand{\mat}{\begin{bmatrix}}
\newcommand{\rix}{\end{bmatrix}}
\newcommand{\tmat}{\left[\begin{smallmatrix}}
\newcommand{\trix}{\end{smallmatrix}\right]}
\newcommand{\isdef}{\, \raisebox{0.3pt}{:}\!\!= }
%
\newcommand{\transv}{\mathrel{\text{\tpitchfork}}}
\makeatletter
\newcommand{\tpitchfork} {%
  \vbox{
    \baselineskip\z@skip
    \lineskip-.52ex
    \lineskiplimit\maxdimen
    \m@th
    \ialign{##\crcr\hidewidth\smash{$-$}\hidewidth\crcr$\pitchfork$\crcr}
  }%
}
\makeatother

\DeclareMathOperator{\Rc}{Rc}

\setcounter{tocdepth}{1}

\begin{document}

\title{Ricci Solitons, Conical Singularities, and Nonuniqueness}

\author{Sigurd B.~Angenent} \address[Sigurd Angenent]{University of Wisconsin-Madison} \email{angenent@math.wisc.edu} \urladdr{http://www.math.wisc.edu/\symbol{126}angenent/}

\author{Dan Knopf} \address[Dan Knopf]{University of Texas} \email{danknopf@math.utexas.edu} \urladdr{http://www.ma.utexas.edu/users/danknopf}

\begin{abstract}
  In dimension $n=3$, there is a complete theory of weak solutions of Ricci flow ---
  the \emph{singular Ricci flows} introduced by Kleiner and Lott \cite{KL17, KL18}
  --- which Bamler and Kleiner~\cite{BK17} proved are unique across singularities.

  In this paper, we show that uniqueness should not be expected to hold for Ricci
  flow weak solutions in dimensions $n\geq5$.  Specifically, for any integers
  \(p_1,p_2\geq 2\) with \(p_1+p_2\leq 8\), and any $K\in\N$, we construct a complete
  shrinking soliton metric $g_K$ on $\mc S^{p_1}\times\R^{p_2+1}$ whose forward
  evolution \(g_K(t)\) by Ricci flow starting at $t=-1$ forms a singularity at time
  $t=0$.  As \(t\nearrow 0\), the metric \(g_K(t)\) converges to a conical metric on
  $\mc S^{p_1}\times\mc S^{p_2}\times(0,\infty)$.  Moreover there exist at least $K$
  distinct, non-isometric, forward continuations by Ricci flow expanding solitons on
  $\mc S^{p_1}\times\R^{p_2+1}$, and also at least $K$ non-isometric, forward
  continuations expanding solitons on $\R^{p_1+1}\times \mc S^{p_2}$.

  In short, there exist smooth complete initial metrics for Ricci flow whose forward
  evolutions after a first singularity forms are not unique, and whose topology may
  change at the singularity for some solutions but not for others.
\end{abstract}

\maketitle
\tableofcontents

\setlength{\parskip}{1ex plus 1ex}

\section{Introduction}
In this paper we analyze shrinking and expanding soliton solutions of Ricci flow and
in particular construct solutions that are defined for \(t<0\), form a singularity at
time \(t=0\), and then allow more than one smooth continuation for \(t>0\).  To state
the result more precisely, we use the notion of a \emph{Ricci Flow spacetime}
introduced by Kleiner and Lott~\cite{KL17}.  Their definition is as follows.

\noindent\textbf{Definition. } A \emph{Ricci flow spacetime} is a tuple
$(\mc M,\mf t,\pt,g)$ such that:\footnote{To obtain the full strength of their
  results, Kleiner and Lott impose three other conditions (Definition~1.6) on the
  singular Ricci flows they consider involving $3$-manifolds and PIC
  $4$-manifolds. We omit those conditions because they are not relevant to our work
  on higher-dimensional manifolds in this paper.}
\begin{itemize}

\item $\mc M$ is a smooth manifold-with-boundary;

\item $\mf t$, the \emph{time function,} is a submersion $\mf t : \mc M \to I\subseteq \mb R$, where $I$ is regarded as a time interval; 

\item the boundary of $\mc M$, if nonempty, corresponds to the endpoint(s) of the interval, that is, $\partial\mc M = \mf t^{-1}(\partial I)$;

\item $\pt$ is the \emph{time vector field,} which satisfies $\pt\mf t\equiv 1$; and

\item $g$ is a smooth inner product on the subbundle $\ker(\mr d\mf t)\subset T\mc M$ which defines a Ricci flow,
  \[
    \mc L_{\partial_t} [g] =-2\Rc[g].
  \]
\end{itemize}
Slightly modifying the notation of~\cite{KL17}, we denote the spacelike timeslice at time \(a\) by $\mc M_a = \mf t^{-1}(a)$, and similarly define $\mc M_{<a}=\mf t^{-1}(-\infty,a)$ and $\mc M_{>a}=\mf t^{-1}(a,\infty)$.

\noindent\textbf{Main Theorem. }\itshape%
Let \(p_1,p_2\geq 2\) be integers with \(p_1+p_2\leq 8\), and let \(n=p_1+p_2\).

For any integer \(K\in\N\), there exist Ricci flow spacetimes \(\mc M^1\), \dots, \(\mc M^K\) and $\tilde{\mc M}^1$, \dots, $\tilde{\mc M}^K$ with time functions \(\mf t:\mc M^k\to\R\), $\tilde{\mf t}:\tilde{\mc M}^k\to\R$, and evolving metrics \(g_k\) and $\tilde g_k$, respectively, such that for
\begin{itemize}
\item[\(\mf t<0\)~:] the sets $\mc M^k_{<0}$ and $\tilde{\mc M}_k$ and their metrics \(g_k|_{\mc M^k_{<0}}\) and \(\tilde g_k|_{\tilde{\mc M}^k_{<0}}\) all coincide and are given by a single shrinking gradient soliton --- all timeslices \(\mc M^k_t\) and $\tilde{\mc M}^k_t$ with \(t<0\) are diffeomorphic to \(\mc S^{p_1}\times\R^{p_2+1}\);

\item[\(\mf t=0\)~:] each timeslice $\mc M^k_0$ and $\tilde{\mc M}^k_0$ is incomplete and isometric to the same cone metric on \((0,\infty)\times\mc S^{p_1}\times \mc S^{p_2}\) with its conical singularity removed; and

\item[\(\mf t>0\)~:] each $\mc M^k_{>0}$ and $\tilde{\mc M}^k_{>0}$ is isometric to the flow of a \underline{distinct} expanding $n+1$-dimensional soliton --- that is, the spacetimes $\mc M^j_{>0}$ and $\tilde{\mc M}^k_{>0}$ are isometric if and only if $j=k$.

\end{itemize}

These Ricci flow spacetimes are maximal with respect to the partial ordering induced by inclusion:
\((\mc M, \mf t, \partial_t, g) \preceq (\mc M', \mf t', \partial_t', g')\) iff \(\mc M\subset\mc M'\), \(\mf t = \mf t'|_{\mc M}\),
\(\partial_t = \partial_t'|_{\mc M}\), and \(g=g'|_{\mc M}\).
There is only one singular time, \(\mf t=0\), \emph{i.e.,} \(\mc M^k_t\) and \(\tilde{\mc M}^k_t\) are complete for all
\(t\neq 0\).\footnote{ Completeness here is understood with respect to Kleiner and Lott's \emph{spacetime metric}
$g_{\mc M} = \hat g +(\mr d\mf t)^2$, where $\hat g$ is the extension of $g$ to a quadratic form on $T\mc M$ satisfying $\pt\in\ker(\hat g)$.}

For $t>0$, the timeslices \(\mc M_t^k\) are diffeomorphic to \(\R^{p_1+1}\times\mc S^{p_2}\), while the timeslices $\tilde{\mc M}^k_{t}$ are diffeomorphic to \(\mc S^{p_1}\times\R^{p_2+1}\).

\upshape

Put succinctly, in dimensions $n+1\geq5$, Ricci flow spacetimes may not be unique after their initial singularities: a given initial metric \(\mc M^k_{-1}\) may admit several distinct Ricci flow spacetimes, some of which change their topology while others do not.

The Main Theorem follows directly from Theorems \ref{main-A}, \ref{main-B}, and \ref{main-C} as stated below.

In the remainder of this introduction, we provide further exposition of these ideas.

\section{Background}

It is well known that Ricci flow solutions $\big(\mc M^n,g(t)\big)$ typically develop local singularities in finite time, after which the flow cannot be continued by classical means.  To deal with this phenomenon, Hamilton~\cite{Ham97} introduced and Perelman~\cite{Per02, Per03} further developed \emph{Ricci flows with surgery.} As implemented by Perelman, these depend on a fixed positive constant $\epsilon\ll1$ and three decreasing positive functions of time: a surgery parameter $\delta(t)$, a canonical neighborhood scale $r(t)$, and a non-collapsing parameter $\varkappa(t)$.  While Perelman's construction was brilliantly successful, it suffered from two limitations that he himself noted: the surgeries violate the \textsc{pde} where they occur, and they depend on arbitrary choices, hence are not canonical.  \emph{A priori,} the forward evolution of a solution after a surgery is not known to be independent of those choices.  In drawing attention to these issues, Perelman wrote: ``It is likely that by passing to the limit in this construction one would get a canonically defined Ricci flow through singularities, but at the moment I don't have a proof of that''~\cite{Per02}; and ``Our approach \dots is aimed at eventually constructing a canonical Ricci flow, defined on a largest possible subset of space-time --- a goal that has not been achieved yet in the present work''~\cite{Per03}.

In the intervening years, there have been a few rigorous examples of Ricci flow singularity recovery without intervening surgeries.  For $t<0$, the noncompact K\"ahler ``blowdown soliton'' discovered by Feldman, Ilmanen, and one of the authors is a shrinking gradient soliton with the topology of $\mb C^N$ blown up at the origin; as $t\nearrow0$, it converges to a cone on $\mb C^N\setminus\{0\}$; and for $t>0$, it desingularizes into an expanding gradient soliton discovered by Cao~\cite{Cao97} on complete $\mb C^N$; see \cite{FIK03} for details.  A \textsc{pde} regularization scheme, closer in spirit to what Perelman suggested, was used to recover from nondegenerate neck pinches by Caputo and the authors of this paper~\cite{ACK12}.  Similar techniques were employed by Carson to recover from degenerate neck pinches~\cite{Car16} as well as flows from more general singular initial metrics that need not be warped products globally \cite{Car18}.  There has also been significant progress on flowing through singularities in the K\"ahler setting (where the flow reduces to a strictly parabolic equation for a scalar function); see, \emph{e.g.,} Song--Tian~\cite{ST17} and Eyssidieux--Guedj--Zeriahi~\cite{EGZ16}.  All these examples can be thought of heuristically as ``weak'' or ``generalized'' solutions of Ricci flow.  We note that several other authors have studied existence, uniqueness, and regularity of Ricci flow solutions originating from non-smooth initial data.  In chronological order, see \cite{Sim02}, \cite{Sim09}, \cite{Top10}, \cite{CTZ11}, \cite{GT11}, \cite{KL12}, \cite{Sim12}, \cite{Top12}, \cite{GT13}, and \cite{Top15}.

For dimension $n=2$, where Ricci flow is conformal, the \emph{instantaneously complete Ricci flows} studied by Topping and collaborators provide a well-posed solution of the Ricci flow initial value problem starting from a completely general initial surface --- see~\cite{Top15} and the references therein.

For dimension $n=3$, a complete theory of Ricci flow weak solutions has now been developed.  Kleiner and Lott define and analyze \emph{singular Ricci flows}, constructed by regularization of compact $3$-dimensional (and $4$-dimensional PIC) solutions \cite{KL17, KL18}.  Bamler and Kleiner subsequently prove that these \emph{singular Ricci flows} are unique for $n=3$~\cite{BK17}.  Together, these results elegantly realize Perelman's hope for a canonically-defined solution of the Ricci flow initial value problem.  \smallskip

In higher dimensions, far less is currently known.  ``Super Ricci flows'' have been studied by McCann--Topping~\cite{MT10} and Sturm~\cite{Stu17} using techniques from optimal transport.  An alternate approach to constructing weak solutions, using stochastic analysis, has been pioneered by Naber--Haslhofer \cite{HN18}.

Our main result in this paper demonstrates that, whichever definition(s) of Ricci flow ``weak solutions'' emerge(s), one should not expect uniqueness to hold in dimensions five and above.  (Dimension four remains, as is so often the case, a mystery.)  As stated above in our Main Theorem, we construct families of asymptotically conical gradient solitons that model the formation of and recovery from finite-time singularities that admit non-unique forward continuations, both geometrically and topologically.

In our construction, we consider cohomogeneity-one metrics on the manifold $\R_+\times \mc S^{p_1} \times \mc S^{p_2}$ having the form of a doubly-warped product,
\begin{equation}
  \label{eq-MetricAnsatz}
  g = (\ds)^2 + \frac{(p_1-1)s^2}{x_1(s)}\,g_{\mc S^{p_1}} + \frac{(p_2-1)s^2}{x_2(s)}\,g_{\mc S^{p_2}},
\end{equation}
where $s\in\R_+$, $4\leq p_1+p_2\leq8$, and where \(g_{\mc S^{p_\alpha}}\) is the round metric on the \(p_\alpha\)-dimensional unit sphere \(\mc S^{p_\alpha}\).  In parts of this paper, it is convenient to set $s=e^\tau$ and write the metric in the form
\begin{equation}
  \label{eq-MetricAnsatz-t}
  g = e^{2\tau}\left\{
    (\dd \tau)^2 +
    \frac{p_1-1}{x_1(\tau)} g_{\mc S^{p_1}} +
    \frac{p_2-2}{x_2(\tau)} g_{\mc S^{p_2}}
  \right\},
\end{equation}
regarding \(x_\alpha:\R\to\R_+\) as functions of \(\tau\).  We are interested in smooth metrics that compactify as $s\searrow0$ and that are complete as $s\nearrow\infty$, giving the topology of $\R^{p_1+1}\times\mc S^{p_2}$.
%

Metrics of the form \eqref{eq-MetricAnsatz} extend to complete metrics on \(\R^{p_1+1} \times \mc S^{p_2}\) if \(x_1\) and \(x_2\) satisfy
\begin{equation}
  \label{eq-gf-boundaryconditions}
  x_1(s) = p_1-1 + o(1)\qquad\mbox{ and }\qquad
  x_2(s) = Cs^2 + o(s^2)
\end{equation}
as \(s\to 0\), for some constant \(C>0\).

Metrics of the form \eqref{eq-MetricAnsatz} are asymptotically conical as \(s\to\infty\) if the limits
\[
  \bar x_\alpha = \lim_{s\to\infty} x_\alpha(s),\qquad \alpha\in\{1,2\},
\]
exist and are positive.  We call the constants \(\bar x_\alpha\) the \emph{asymptotic apertures} of the metric \(g\).

In the very special case in which
\begin{equation}
  \label{eq-rfc}
  x_1(s) = x_2(s) = n-1
\end{equation}
for all \(s>0\), the metric \eqref{eq-MetricAnsatz} is that of the unique \emph{Ricci flat cone} of the form \eqref{eq-MetricAnsatz}.  This cone metric is singular at \(s=0\): as we show in Appendix~\ref{DWP-geometries}, the norm of its Riemann tensor is unbounded as \(s\searrow 0\).

In this paper, we seek complete metrics having the structure~\eqref{eq-MetricAnsatz} that satisfy the Ricci soliton condition.
We write this in the form
\begin{subequations}
  \begin{equation}
    \label{eq-Ricci-Soliton-pde}
    -2\rc[g]  = 2\lambda g + \mc L_{\mf X}(g),
  \end{equation}
  where $\lambda\in\{-1,0,+1\}$ is the dilation rate that corresponds to shrinking, steady, and expanding solitons, respectively, and
  \[
    \mf X = f(s) \frac\pd {\pd s}
  \]
  is the soliton vector field, \emph{i.e.,} the vector field on $\R^{p_1+1}\times\mc S^{p_2}$ that generates the diffeomorphisms by which the soliton evolves under Ricci flow. If $\mf X = \grad F$ for some potential function $F$ (which is the case here), then~\eqref{eq-Ricci-Soliton-pde} is equivalent to
  \begin{equation}
    \Rc[g]+\lambda g + \nabla^2 F=0.
  \end{equation}
\end{subequations}
We show in \S~\ref{more_equations} below that the soliton equation~\eqref{eq-Ricci-Soliton-pde} applied to the \emph{Ansatz}~\eqref{eq-MetricAnsatz} gives rise to a system of \textsc{ode} on $\R^6$.  Generalizations of this system were investigated analytically and numerically by Dancer--Hall--Wang \cite{DancerHallWang}.

The Ricci-flat cone~\eqref{eq-rfc} is a stationary soliton whose soliton vector field is \(\mf X = 0\). But a consequence of system~\eqref{eq-LittleSolitonSystem} below is that any Ricci-flat metric may also be regarded as an expanding or shrinking soliton if one chooses the soliton vector field to be \(\mf X = -\lambda s\frac{\pd}{\pd s}\).

We are particularly interested in complementing pairs of shrinking solitons \((G^-, \mf X^-)\) and expanding solitons \((G^+, \mf X^+)\) that are asymptotic to the same conical metric,
\begin{equation} \label{eq-cone-metric}
  \bar G = (\mr ds)^2 + \frac{(p_1-1)s^2}{\bar x_1} g_{\mc S^{p_1}} + \frac{(p_2-1)s^2}{\bar x_2} g_{\mc S^{p_2}},
\end{equation}
with apertures \(\bar x_1\), \(\bar x_2\).  Given such a pair of solitons, we define the family of metrics
\[
  g(t) \isdef
  \begin{cases}
    -2t\bigl(\phi_{\mf X^-}^t\bigr)^* G^- & (t<0), \\
    \bar G  & (t=0),\\
    2t\,\bigl(\phi_{\mf X^+}^t\bigr)^* G^+ & (t<0),
  \end{cases}
\]
where \(\phi_{\mf X^\pm}^t\) denotes the flow generated by the vector field \(\mf X^\pm\).

In Section~\ref{app-diffeo}, we show that these choices $g(t)$ glue together by isometries to give a smooth Ricci flow spacetime.
For \(t<0\), the metric \(g(t)\) is the unique (see below) smooth shrinking soliton diffeomorphic to $\mb R^{p_1+1}\times\mc S^{p_2}$
that converges to its asymptotic cone as \(t\nearrow 0\).
For \(t>0\), the solution continues as a smooth expanding soliton diffeomorphic to either $\mb R^{p_1+1}\times\mc S^{p_2}$
or $\mc S^{p_1}\times\mb R^{p_2+1}$ with the same singular conical metric as initial data.
(This construction may be compared to that of Theorem~1.6 in~\cite{FIK03}.)

A theorem of Kotschwar~and~Wang~\cite{KW15} implies that there can be at most one shrinking soliton that is asymptotic to a given cone \(\bar G\).  There can however be many expanding solitons \(G^+\) asymptotic to any given cone \(\bar G\).  In fact, one of our results is that the Ricci-flat cone (see below) admits infinitely many smooth expanding solitons as forward evolutions. Moreover, for cones very close to the Ricci-flat cone, the number of expanding solitons can be arbitrarily large.  More precisely, we have the following existence result.

\begin{maintheorem}\label{main-A}
  Assume the dimensions \(p_\alpha\) satisfy \(p_1, p_2\geq 2\) and \(p_1+p_2\leq 8\).  Then there exists a two-parameter family of expanding solitons, \((G^+,\mathfrak X^+)(j,T)\), where the parameters \((j,T)\) take values in \([-\iota, +\iota]\times [T_0,\infty)\) for certain \(\iota>0\), \(T_0<\infty\), with the following properties.

  The expanding solitons \((G^+,\mf X^+)(j,T)\) as well as their asymptotic apertures \(x_1^+(j,T)\) and \(x_2^+(j,T)\) are real analytic functions of \(j,T\).  For any \(k\in\N\), there exists a neighborhood \(\mc U_k\subset\R^2\) of the point \((n-1, n-1)\in\R^2\) such that for each \((\bar x_1, \bar x_2) \in \mc U_k\), there exist at least \(k\) distinct expanding solitons \((G^+,\mathfrak X^+)(j_i, T_i)\), (\(i=1, \dots, k\)), with
  \[
    x_1^+(j_i, T_i) = \bar x_1, \quad x_2^+(j_i, T_i)=\bar x_2 \qquad (i=1, \dots k).
  \]

\end{maintheorem}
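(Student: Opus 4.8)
The plan is to reduce the soliton equation on the \emph{Ansatz}~\eqref{eq-MetricAnsatz} to the autonomous \textsc{ode} system on $\R^6$ promised in \S~\ref{more_equations}, and then to produce the expanding solitons as a family of trajectories of that system connecting the ``smooth closing'' boundary stratum (where the $\mc S^{p_2}$ factor collapses, enforcing~\eqref{eq-gf-boundaryconditions}) to the fixed point $\rfc$ representing the Ricci-flat cone~\eqref{eq-rfc}. The expanding solitons we want are precisely the orbits on the unstable manifold $\wug$ of the closing stratum that also lie on the stable manifold $\wsr$ of $\rfc$; the asymptotic apertures $\bar x_1,\bar x_2$ are read off from the rate and direction at which such an orbit approaches $\rfc$. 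So the first step is to set up this phase-space picture carefully: identify the relevant invariant sets, compute the linearization at $\rfc$, and check that the condition $p_1+p_2\le 8$ (i.e.\ $n\le 8$) is exactly what makes the eigenvalues at $\rfc$ complex — this is the hypothesis that will drive everything.

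Next I would linearize at $\rfc$ and diagonalize. The key point: for $n\le 8$ the linearization has a pair of complex-conjugate eigenvalues whose real parts have the sign that makes $\rfc$ an attracting spiral focus within the relevant slice, so that trajectories approaching $\rfc$ do so while \emph{rotating infinitely often} around it. The ``spiraling'' is what will be exploited. Concretely, in suitable coordinates the stable directions at $\rfc$ carry a rotation by angle $\sim \omega\log(\text{distance})$; as a trajectory converges to $\rfc$, the phase winds through arbitrarily many full turns. Each full turn corresponds to the orbit crossing a given transversal, and at each such crossing the pair $(x_1^+,x_2^+)$ of asymptotic apertures takes a value; the winding produces a sequence of crossings whose $(x_1^+,x_2^+)$ values converge to $(n-1,n-1)$ but from many different ``angular'' positions, giving the multiplicity $k$.

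To make this rigorous I would: (i) parametrize the unstable manifold of the closing stratum by one variable — and here the soliton sign $\lambda=+1$ (expanders) must be allowed to vary, or more precisely I introduce the two parameters $(j,T)$ of the theorem, where $T$ is the "time" of closing / scale and $j$ measures a transverse direction; (ii) show by a shooting/continuity argument (using the \Wazewski\ principle, or just continuous dependence plus a topological box argument) that for an open range of $(j,T)$ the trajectory does converge to $\rfc$, so the apertures $x_1^+(j,T), x_2^+(j,T)$ are well-defined; (iii) establish real analyticity of $(j,T)\mapsto\bigl((G^+,\mf X^+),x_1^+,x_2^+\bigr)$ from analytic dependence of solutions of analytic \textsc{ode} on initial conditions and parameters, together with analyticity of the closing conditions; and (iv) analyze the map $(j,T)\mapsto(x_1^+,x_2^+)$ near the value of $(j,T)$ that hits $\rfc$ exactly, showing its image covers a full punctured neighborhood of $(n-1,n-1)$ \emph{with multiplicity} — the spiraling of $\wsr$ around $\rfc$ forces the aperture map to wrap around $(n-1,n-1)$, so that shrinking the target neighborhood $\mc U_k$ forces at least $k$ preimages $(j_i,T_i)$. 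Then $\mc U_k$ is chosen small enough that at least $k$ of these wraps land inside it.

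\textbf{Main obstacle.} The crux is step (iv): turning the \emph{qualitative} spiraling of stable trajectories at $\rfc$ into a \emph{quantitative} statement that the aperture map $(j,T)\mapsto(x_1^+,x_2^+)$ is a genuine (degree $\ge 1$, in fact "infinite-winding") covering of a neighborhood of $(n-1,n-1)$. One must show that the two functionals extracting $\bar x_1$ and $\bar x_2$ from a spiraling orbit are, to leading order near $\rfc$, independent linear functionals of the (rotating) stable coordinates — i.e.\ that the asymptotic direction of approach is not degenerate — so that as the phase winds through $2\pi$ the pair $(\bar x_1,\bar x_2)$ traces a loop around $(n-1,n-1)$ rather than collapsing to a ray. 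This requires a careful asymptotic expansion of solutions near the focus $\rfc$ (a normal-form / Hartman–Grobman-type argument, made effective), controlling the nonlinear remainder well enough that the winding number of the image loop is computed by the linear part. Establishing this nondegeneracy — essentially that a certain $2\times2$ determinant built from the eigenvectors at $\rfc$ is nonzero for all admissible $(p_1,p_2)$ with $p_1+p_2\le 8$ — is the technical heart of the proof; the analyticity and shooting parts (i)–(iii) are comparatively standard.
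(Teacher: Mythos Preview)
Your identification of the expanding solitons with orbits in $\wug\cap\wsr$ is wrong, and this misreading propagates through the rest of the plan. The intersection $\wug\cap\wsr$ consists of a single orbit---the B\"ohm Ricci-flat soliton---and it lies entirely in the hyperplane $\sigma=0$, so it does not itself give an expanding soliton metric. The actual expanding solitons are orbits in $\wug$ with $\sigma>0$: they leave $\gf$ at $s=0$, pass \emph{near} $\rfc$ (but never reach it, since $\sigma(t)=\sigma(0)e^{2t}>0$), and continue out to $s\to\infty$ where $x_\alpha(s)\to\bar x_\alpha$. The asymptotic apertures are read off at $s=\infty$, not from any ``direction of approach to $\rfc$.''

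Consequently the mechanism producing multiplicity is not the one you describe. The paper's argument runs as follows: orbits in $\wug$ with small $\sigma>0$ shadow the B\"ohm heteroclinic, come close to $\rfc$, and then---by the transversality $\wug\transv\wsr$ and Palis' $\lambda$-lemma---exit the $\rfc$ neighborhood $C^1$-close to a piece of the two-dimensional \emph{unstable} manifold $\wur$. The parameter $T$ is the passage time through an isolating block around $\rfc$; the parameter $j$ is a coordinate on $\wur$. During this passage the difference variables $(x_{12},y_{12})$ evolve under a linear system with eigenvalues $-A\pm i\Omega$, so at the exit they carry a factor $e^{-AT}e^{i\Omega T}$: the phase $\Omega T$ is what winds. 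After exit, the orbit is tracked along the $\rfes$ out to large $s$; for expanders the stable set of conical ends is \emph{open} (Theorem~\ref{thm-conical-stable-set-is-smooth}), so every such orbit is asymptotically conical and one only needs to \emph{compute} the apertures, not solve a matching problem. The averaged aperture $\xi_\infty$ depends to leading order on $j$, the difference $x_{12}(\infty)$ depends on $Re^{-AT}\cos(\Omega T+\phi)$, and a degree-one winding argument on rectangles $[-\iota,\iota]\times[T_m,T_{m+1}]$ (Lemma~\ref{lem-Xi-degree-1-expanders}) then gives the $k$-fold covering of a neighborhood of $(n-1,n-1)$. Your step (iv) has the right topological flavor, but it is applied to the wrong map at the wrong end of the orbit; the nondegeneracy you worry about is not an eigenvector determinant at $\rfc$ but rather the decoupling of averaged and difference variables in the linearization at the $\rfes$, handled by the explicit confluent-hypergeometric analysis in \S\ref{sec-fundamental-sols-of-linearization}.
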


The metrics we find can be thought of as the result of gluing two simpler soliton metrics, each of which appears as a one-parameter family of solitons, the parameters being \(j\) for one family and \(T\) for the other.

To describe the first family, which is parameterized by $j$, we recall that the singular cone \(x_1=x_2=n-1\), whose metric we denote by
\[
  g_{\rfc} = (\mr ds)^2 + \frac{p_1-1}{n-1} s^2 g_{\mc S^{p_1}} + \frac{p_2-1}{n-1} s^2 g_{\mc S^{p_2}},
\]
is Ricci-flat.  As follows from system~\eqref{eq-LittleSolitonSystem} below, the choices of soliton vector field
\[
  \mf X = \lambda s \frac{\pd}{\pd s}
\]
make it into a shrinking or expanding soliton, respectively.  In the case of expanding solitons, it turns out that there is a one-parameter family of metrics \(g^j\) (with \(j\in[-\iota,\iota]\) for some small \(\iota>0\)) of the form
\[
  g^j = (\mr ds)^2 + \frac{p_1-1}{x_1(j;s)} s^2 g_{\mc S^{p_1}} + \frac{p_2-1}{x_2(j;s)} s^2 g_{\mc S^{p_2}},
\]
where \(x_\alpha(j;s)\) are real analytic functions of \(s^2\) with
\[
  x_1(j;0)=x_2(j;0)=n-1.
\]
Thus the metrics \(g^j\) are singular at \(s=0\), and the singularity is asymptotically like the Ricci-flat cone metric $g_{\rfc}$.  These metrics appear as solutions in the unstable manifold \(\wur\) of a fixed point $\rfc$ in the ODE system~\eqref{eq-BlowUpSystem-Polynomial} that we study.

To describe the second family of solutions, which is parameterized by $T$, we recall the Ricci-flat Einstein metric found by B\"ohm \cite{Bohm}, which is of the form
\[
  g_\bgk = (\mr ds)^2 + (p_1-1) \frac{s^2}{x_1^\bgk(s)} g_{\mc S^{p_1}} + (p_2-1) \frac{s^2}{x_2^\bgk(s)} g_{\mc S^{p_2}},
\]
where \(x_\alpha^{\bgk}(s)\) are again real analytic functions of \(s^2\), this time with
\[
  x^\bgk_1(0) = p_1-1 \quad\text{and}\quad x^\bgk_2(s) = s^2 + \cO(s^4), \qquad (s\to0).
\]
This metric extends smoothly to a metric on \(\mb R^{p_1+1}\times\mc S^{p_2}\).  As \(s\to\infty\), the metric grows asymptotically like a paraboloid, in that \(x_\alpha(s) = As+o(s)\) for \(s\to\infty\), and thus
\[
  g_{\bgk} \sim (\mr ds)^2 + s \left\{\frac{p_1-1}{A}g_{\mc S^{p_1}} + \frac{p_2-1}{A}g_{\mc S^{p_2}}\right\}, \qquad (s\to\infty).
\]
Any multiple of a Ricci-flat metric is again Ricci-flat, so we have a one-parameter family of Ricci-flat metrics given by \(e^{-2T}g_\bgk\), with \(T\in\R\).

Heuristically, to produce the metric \(G^+(j,T)\), one removes a neighborhood of size \(\mc O(e^{-T})\) of the singular point in the expanding soliton metric \(g^j\) on \((0, \infty)\times\mc S^{p_1}\times\mc S^{p_2}\) and replaces it by a piece of the same size of the rescaled B\"ohm metric \(e^{-2T}g_\bgk\).  We execute this gluing rigorously by analyzing the \textsc{ode} system that describes Ricci solitons.

More precisely, the B\"ohm metric \(g_\bgk\) and the singular metrics \(g^j\) appear as complete orbits of the \textsc{ode} system, which meet at a hyperbolic fixed point that represents the Ricci-flat cone metric~\(g_{\rfc}\).  To glue the two families of orbits, we use techniques from dynamical systems, notably Palis's \(\lambda\)-lemma.
Moreover, we find when linearizing the \textsc{ode} system near the $\rfc$ fixed point, at which the B\"ohm and \(g^j\) metrics are to be glued, that the differences $x_1-x_2$ and $s\frac{\dd}{\dd s}(x_1-x_2)$ decouple into a subsystem whose eigenvalues are complex when $n=p_1+p_2$ satisfies $n\in\{4,5,6,7,8\}$.  Existence of this oscillatory subsystem was earlier observed by Dancer--Hall--Wang in their analysis of winding numbers of cohomogeneity-one shrinking solitons \cite{DancerHallWang}.  These complex eigenvalues are responsible for the oscillatory dependence on the parameter \(T\) of the asymptotic apertures of the expanding solitons we construct, and thus are the main source of the nonuniqueness of smooth continuation by Ricci flow of the cone metrics that we find in this paper.  This nonuniqueness is analogous to the phenomenon of ``fattening'' for Mean Curvature Flow~\cite[Lecture~4]{Trieste}, \cite{fattening}.

For shrinking solitons, we have the following companion Theorem.

\begin{maintheorem}\label{main-B}
  Assume again that the dimensions \(p_\alpha\) satisfy \(p_1, p_2\geq 2\) and \(p_1+p_2\leq 8\).  Then there exists a sequence of smooth shrinking soliton metrics \(\{(G_i^-, \mf X_i^-) \mid i\in\N\}\) on \(\R^{p_1}\times \mc S^{p_2}\) having the form~\eqref{eq-MetricAnsatz}, \emph{i.e.,}
  \[
    G_i^- = (\mr ds)^2 + \frac{(p_1-1)s^2}{x^-_{i,1}(s)} g_{\mc S^{p_1}} + \frac{(p_2-1)s^2}{x^-_{i,2}(s)} g_{\mc S^{p_2}}, \qquad \mf X^-_i = f^-_i(s)\frac{\pd}{\pd s},
  \]
  whose asymptotic apertures \((\bar x^-_{i,1}, \bar x^-_{i, 2})\) satisfy
  \[
    \lim_{i\to\infty} \bar x^-_{i,1} = \lim_{i\to\infty} \bar x^-_{i,2} = n-1.
  \]
\end{maintheorem}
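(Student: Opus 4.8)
The plan is to reduce the construction to a shooting problem for the soliton \textsc{ode} system and to exploit the spiral generated by the complex eigenvalues at the Ricci-flat cone; it is a companion to, and uses the same dynamical-systems machinery as, the proof of Theorem~\ref{main-A}. Fixing $\lambda=-1$, I would first use the reduction of \S\ref{more_equations} to identify smooth shrinking solitons of the form~\eqref{eq-MetricAnsatz} that satisfy the smoothness conditions~\eqref{eq-gf-boundaryconditions} (hence are diffeomorphic to $\R^{p_1+1}\times\mc S^{p_2}$) with the orbits of~\eqref{eq-LittleSolitonSystem}, or of its polynomial blow-up~\eqref{eq-BlowUpSystem-Polynomial}, that issue, as $\tau=\log s\to-\infty$, from the fixed point $\gf$ corresponding to smooth closure at $s=0$, that remain in the geometric region $\{x_1>0,\ x_2>0\}$ for all $\tau$, and that converge as $\tau\to+\infty$ to a limit at which $x_1$ and $x_2$ take positive finite values; those limits are exactly the asymptotic apertures. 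After normalizing the scaling symmetry $\tau\mapsto\tau+\mathrm{const}$, the smooth-cap orbits form a one-parameter family $\{\gamma_c\}$, and the task becomes to produce a sequence $c_i$ for which $\gamma_{c_i}$ is globally defined, geometric, and asymptotically conical, with apertures $(\bar x^-_{i,1},\bar x^-_{i,2})\to(n-1,n-1)$.

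Next I would record the local dynamics at the Ricci-flat cone. The cone metric~\eqref{eq-rfc} is a hyperbolic fixed point $\rfc$ of the system, and, as noted above and by Dancer--Hall--Wang, linearizing there decouples $u=x_1-x_2$ and $\partial_\tau u$ into a two-dimensional subsystem whose eigenvalues form a complex-conjugate pair precisely when $n=p_1+p_2\in\{4,\dots,8\}$. Consequently an orbit entering a small neighborhood of $\rfc$ spirals, in the $(u,\partial_\tau u)$-plane, through a number of half-turns that grows without bound as its entry point approaches the local stable manifold of $\rfc$. Moreover the smooth-cap orbits that are globally defined, geometric, and asymptotically conical with apertures near $(n-1,n-1)$ form a set whose closure contains $\rfc$; since the oscillatory directions at $\rfc$ are transverse to this set, it winds around $\rfc$.

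The heart of the argument is a shooting and winding step parallel to the proof of Theorem~\ref{main-A}. As there, B\"ohm's Ricci-flat metric $g_\bgk$ --- read as a degenerate shrinking soliton with $\mf X=s\,\frac{\pd}{\pd s}$ --- provides a smooth-cap orbit that enters a small neighborhood of $\rfc$; choose $c_*$ so that $\gamma_{c_*}$ is (asymptotic to) this orbit. By Palis's $\lambda$-lemma, for $c$ near $c_*$ the orbit $\gamma_c$ shadows $g_\bgk$ into that neighborhood, makes $N(c)\to\infty$ half-turns while near $\rfc$, and then exits near $\wur$, the exit data sweeping out (as $c$ runs over a punctured neighborhood of $c_*$) a spiral converging to $\wur$. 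Intersecting this spiral with the ``asymptotically conical'' set of the previous paragraph produces a sequence $c_i\to c_*$; for each $i$ the orbit $\gamma_{c_i}$ is then globally defined, geometric, and asymptotically conical, and since its exit point lies within distance tending to $0$ of $\rfc$ while the forward flow depends continuously on that point, $(\bar x^-_{i,1},\bar x^-_{i,2})\to(n-1,n-1)$. The half-turn count is an isometry invariant --- it is the number of sign changes of $x_1-x_2$ along the geodesic emanating from the core --- so distinct $c_i$ give pairwise non-isometric solitons; undoing the normalization yields the asserted sequence $\{(G^-_i,\mf X^-_i)\}$.

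I expect the main obstacle to be controlling $\gamma_c$ globally after it leaves the neighborhood of $\rfc$: one must exclude neck formation ($x_\alpha\to0$ in finite $\tau$) and recollapse, and prove genuine convergence of $(x_1,x_2)$ to positive limits. This calls for invariant trapping regions and monotone quantities derived from the $\lambda=-1$ soliton identities --- the heaviest technical ingredient in any B\"ohm-type argument. Two further points need care: making the spiral \emph{effective}, i.e., showing the winding is monotone and not merely that the linearization at $\rfc$ has complex eigenvalues, which requires uniform estimates on the nonlinear terms near $\rfc$ together with the quantitative form of the $\lambda$-lemma; and verifying that the B\"ohm orbit genuinely reaches $\rfc$ and that the distance from the exit points to $\rfc$ tends to zero, which is precisely what forces the limiting apertures to equal $n-1$.
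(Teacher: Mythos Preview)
Your overall strategy is the paper's: use the B\"ohm heteroclinic from $\gf$ to $\rfc$, apply Palis's $\lambda$-lemma so that nearby orbits in $\wug$ shadow it and exit near $\wur$, exploit the complex eigenvalues at $\rfc$ for oscillation, and match with the stable set of conical ends. But the execution has a dimension error that makes the argument, as written, fail.

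You claim that ``after normalizing the scaling symmetry $\tau\mapsto\tau+\mathrm{const}$, the smooth-cap orbits form a one-parameter family.'' For $\lambda\neq0$ there is no such symmetry: the $\lambda\sigma$ term in~\eqref{eq-Pb} breaks scale invariance, and $\tau$-translation in the autonomous system merely reparametrizes a given orbit. In fact $\wug$ is three-dimensional (\S\ref{sec-Lin-GF}), so the smooth-cap orbits form a genuine \emph{two}-parameter family, which the paper parametrizes by $(j,T)$ via the slice $\Sigma_a$ of \S\ref{sec-slice-chosen} and Lemma~\ref{lem-qjT-defined-lambda-applied}. This matters because, for shrinkers, the stable set of conical ends $\mc W_{\rm sh}$ has codimension \emph{two} (Theorem~\ref{thm-conical-stable-set-is-smooth}), realized at $s=\ell$ as a graph $\Upsilon=\{y_\alpha=Y_\alpha(\xi_1,\xi_2,\gamma)\}$. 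A one-dimensional spiral generically misses a codimension-two submanifold; the paper needs the full two parameters to define a map $\Xi(j,T)=\bigl(y-Y,\;y_{12}-Y_{12}\bigr)$ and run a winding-number argument on rectangles $\mf R^*_{\iota m}$ (Lemma~\ref{lem-Xi-degree-1-shrinkers}), producing a zero $(j_m,T_m)$ for each large $m$.

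Your anticipated ``main obstacle'' --- controlling orbits globally after they leave the $\rfc$ neighborhood via trapping regions and monotone quantities --- is not how the paper proceeds, and is not needed. The paper works entirely in a tube around the $\rfes$: it builds $\Upsilon$ at $s=\ell$ by rewriting the soliton equations as integral equations and applying the implicit function theorem in a weighted Banach space (\S\ref{sec-stable-mfds-of-cone-ends}), with explicit bounds on $Y_\alpha$ and $Y_1-Y_2$. The degree computation is then driven by the large-$\ell$ asymptotics of the linearized solutions $\Phi_2^-$, $\chi_1^-$, $\chi_2^-$ from Table~\ref{tab-shrinker-asymptotics}. Finally, the convergence of apertures to $(n-1,n-1)$ does not follow merely from ``exit points near $\rfc$''; it requires a separate argument that $j_m\to0$ (the last lemma of the shrinker section), which again uses the quantitative bounds on $Y_\alpha$ against the asymptotics of $\Phi_2^-$.
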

To the best of our knowledge, this construction (at least for $p_1+p_2$ even) gives the only known examples of complete, nontrivial, gradient shrinking solitons that are neither K\"ahler nor products of compact Einstein spaces with a Gaussian soliton \cite{PW09}.

Combining Theorems A and B with the isometric gluing construction in Section~\ref{app-diffeo},
we conclude that there exists sequences of smooth shrinking solitons \(\bigl(G_i^-, \mf X_i^-\bigr)\) for which the corresponding ancient solutions
\(-2t\bigl(\varphi_{\mf X_i^-}^t\bigr)^*G_i^-\) of Ricci flow each form a conical singularity at \(t=0\) that admits at least \(2k(i)\) distinct forward evolutions by expanding solitons.  More precisely, we have the following.

\begin{maintheorem}\label{main-C}
  Let \((G_i^-, \mf X_i^-)\) be a family of shrinking solitons constructed in Theorem~B on $\mb R^{p_1+1}\times\mc S^{p_2}$.
  Let \((G_i^+, \mf X_i^+)\) be a family of expanding solitons constructed in Theorem~A on either $\mb R^{p_1+1}\times\mc S^{p_2}$
  or $\mc S^{p_1}\times\mb R^{p_2+1}$ with the same asymptotic apertures $(\bar x_{i,1},\bar x_{i,2})$.
 
 These glue together by isometries to form maximal Ricci flow spacetimes. And the number \(k(i)\) of distinct pairs (of either topology) of this type becomes unbounded as \(i\to\infty\),
 \emph{i.e.,} the number of pairs \((j,T)\in [-\iota, \iota]\times[T_0, \infty)\) such that \(x_1^+(j,T) = \bar x_{i,1}\) and
 \(x_2^+(j,T) = \bar x_{i,2}\) satisfies
  \[
    \lim _{i\to\infty} k(i) = \infty.
  \]
\end{maintheorem}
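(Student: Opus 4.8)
The plan is to assemble the desired spacetimes by the isometric gluing of Section~\ref{app-diffeo}, to read off maximality from completeness of the solitons together with the curvature blow-up of their asymptotic cones, and then to obtain \(k(i)\to\infty\) by a short limiting argument. Granting Theorems~\ref{main-A} and~\ref{main-B} and that gluing construction, the only genuine verification is the matching of asymptotic data: one must arrange that the shrinker and the expander are asymptotic to the \emph{same} cone, which --- since the cone~\eqref{eq-cone-metric} is determined by its apertures --- reduces to equality of the pair \((\bar x_1,\bar x_2)\), after which the compatibility of the two self-similar rescalings as \(\mf t\to 0^{\pm}\) is exactly what Section~\ref{app-diffeo} provides. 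The conceptual crux of the theorem is the interplay of the two earlier theorems: Theorem~\ref{main-A} realizes arbitrarily high multiplicity of expanders only on neighborhoods \(\mc U_k\) of the Ricci-flat apertures that shrink with \(k\), while Theorem~\ref{main-B} produces shrinkers whose apertures accumulate precisely at those Ricci-flat apertures, so the multiplicities are realized only asymptotically as \(i\to\infty\).

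In detail, fix \(i\) and let \((G_i^-,\mf X_i^-)\) be a shrinker from Theorem~\ref{main-B} with asymptotic apertures \((\bar x_{i,1},\bar x_{i,2})\); by definition of the apertures it is asymptotic to the cone \(\bar G_i = (\mr ds)^2 + \tfrac{(p_1-1)s^2}{\bar x_{i,1}} g_{\mc S^{p_1}} + \tfrac{(p_2-1)s^2}{\bar x_{i,2}} g_{\mc S^{p_2}}\). Any expanding soliton \((G_i^+,\mf X_i^+)\) furnished by Theorem~\ref{main-A} with \(x_1^+=\bar x_{i,1}\) and \(x_2^+=\bar x_{i,2}\) --- on \(\R^{p_1+1}\times\mc S^{p_2}\) or on \(\mc S^{p_1}\times\R^{p_2+1}\) --- is asymptotic to the same cone \(\bar G_i\). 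For such a matched pair define \(g_i(t)\isdef -2t\,(\phi_{\mf X_i^-}^t)^*G_i^-\) for \(t<0\), \(g_i(0)\isdef\bar G_i\), and \(g_i(t)\isdef 2t\,(\phi_{\mf X_i^+}^t)^*G_i^+\) for \(t>0\). Each branch solves Ricci flow on its time interval because a gradient shrinking (resp.\ expanding) soliton evolves self-similarly by the flow of its soliton vector field, and each branch converges to \(\bar G_i\) as \(\mf t\nearrow 0\) (resp.\ \(\mf t\searrow 0\)) by the soliton-to-cone asymptotics established in Section~\ref{app-diffeo}; by the gluing there the timeslices fit together by isometries into a smooth Ricci flow spacetime \((\mc M_i,\mf t,\pt,g_i)\), with \(\mc M_{i,t}\cong\R^{p_1+1}\times\mc S^{p_2}\) for \(t<0\) and \(\mc M_{i,t}\) diffeomorphic to the underlying manifold of the chosen expander for \(t>0\).

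For maximality with respect to \(\preceq\): the image of \(\mf t\) is already all of \(\R\), so no extension can enlarge the time interval; for each \(t\neq 0\) the slice \(\mc M_{i,t}\) is isometric to a fixed multiple of the complete soliton metric \(G_i^-\) (for \(t<0\)) or \(G_i^+\) (for \(t>0\)), hence complete, so no points can be adjoined there; and the incomplete slice \(\mc M_{i,0}\) is isometric to \(\bar G_i\) with its apex deleted, where \(|\Rm|\to\infty\) as \(s\searrow 0\) by Appendix~\ref{DWP-geometries} (the link \(\mc S^{p_1}\times\mc S^{p_2}\) is never a round sphere, so \(\bar G_i\) is never flat at the apex), so the spacetime metric admits no smooth extension across \(s=0\). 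Hence \(\mf t=0\) is the unique singular time and \((\mc M_i,\mf t,\pt,g_i)\) is maximal.

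It remains to see \(\lim_{i\to\infty}k(i)=\infty\). Here \(k(i)\) counts the parameter pairs \((j,T)\in[-\iota,\iota]\times[T_0,\infty)\) with \(x_1^+(j,T)=\bar x_{i,1}\) and \(x_2^+(j,T)=\bar x_{i,2}\); by the real-analytic dependence of \((G^+,\mf X^+)(j,T)\) on \((j,T)\) asserted in Theorem~\ref{main-A}, distinct such pairs give non-isometric expanders and hence distinct forward continuations, and the mirror family on the other spherical factor doubles the count once the two topologies are distinguished. Fix \(k\in\N\). Theorem~\ref{main-A} provides a neighborhood \(\mc U_k\subset\R^2\) of \((n-1,n-1)\) over which at least \(k\) such parameter pairs exist. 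By Theorem~\ref{main-B}, \((\bar x_{i,1},\bar x_{i,2})\to(n-1,n-1)\) as \(i\to\infty\), so there is \(I_k\) with \((\bar x_{i,1},\bar x_{i,2})\in\mc U_k\) for all \(i\geq I_k\), and then \(k(i)\geq k\). Since \(k\in\N\) was arbitrary, \(k(i)\to\infty\), completing the proof.
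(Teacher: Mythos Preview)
Your proposal is correct and follows essentially the same route as the paper: you invoke Theorems~\ref{main-A} and~\ref{main-B} together with the gluing construction of Section~\ref{app-diffeo}, argue maximality via completeness of the \(t\neq 0\) slices and the curvature blow-up at the cone apex (Appendix~\ref{DWP-geometries}), and deduce \(k(i)\to\infty\) from the convergence \((\bar x_{i,1},\bar x_{i,2})\to(n-1,n-1)\) into the neighborhoods \(\mc U_k\). The paper's maximality argument in \S\ref{app-diffeo} is phrased slightly more carefully through the spacetime-metric completion (showing any added point must be the cone vertex), but the content is the same as yours.
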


The remainder of this paper provides additional analytic details, including the
solitons' asymptotic rates of convergence to the Ricci-flat cone at spatial infinity.

\smallskip\noindent\textbf{Remark. }  If $k$ denotes the number of zeroes of $x_1-x_2$, one
is led to wonder whether there exists a unique shrinker for each $k\in\N$.  Answering
this would require a global analysis of the \textsc{ode} system; our methods, which
are local near $\rfc\cup\rfes$, do not provide this information.

\subsubsection*{Acknowledgments}
DK thanks the NSF for support (DMS-1205270) during early work on this project.  Both authors thank the Mathematisches Forschungsinstitut Oberwolfach for its hospitality in the 2016 \emph{Geometrie} workshop, during which they made further progress on the project.

\section{Derivation of the soliton flow}
\label{more_equations} The requirement that a metric of the form \eqref{eq-MetricAnsatz} be a Ricci soliton is equivalent to a system of ordinary differential equations for the functions $(x_1(s), x_2(s), f(s))$, supplemented with boundary conditions that describe whether the metric closes up smoothly or is asymptotically conical at either end of the $s$ interval.  In this section, we rewrite this system of \textsc{ode} as an autonomous system on $\R^6$, which we call the \emph{soliton system}.

\subsection{Derivation of the second-order system}

Curvatures of doubly-warped product metrics are well studied. For brevity, we follow~\cite{Pet16} and merely outline the derivation in Appendix~\ref{DWP-geometries}, referring the reader to that text for further details.

So we begin by considering doubly-warped-product manifolds $\mb R_+\times\mc S^{p_1}\times\mc S^{p_2}$ with metrics
\begin{equation} \label{eq-Petersen} g = (\ds)^2 + \vp_1^2(s)\,g_{\mc S^{p_1}} + \vp_2^2(s)\,g_{\mc S^{p_2}},
\end{equation}
where $g_{\mc S^{p_1}}$ and $g_{\mc S^{p_2}}$ have constant unit sectional curvatures. Following the derivation in
Appendix~\ref{DWP-geometries}, we see that the Ricci tensor of $g$ is
\begin{align*}
  \Rc=
  &-\left\{p_1\frac{(\vp_1)_{ss}}{\vp_1}+p_2\frac{(\vp_2)_{ss}}{\vp_2}\right\}(\ds)^2\\
  &+ \left\{-\frac{(\vp_1)_{ss}}{\vp_1}+(p_1-1)\frac{1-(\vp_1)_s^2}{\vp_1^2}
    -p_2\frac{(\vp_1)_s(\vp_2)_s}{\vp_1\vp_2}\right\}\vp_1^2\,g_{\mc S^p}\\
  &+\left\{-\frac{(\vp_2)_{ss}}{\vp_2}+(p_2-1)\frac{1-(\vp_2)_s^2}{\vp_2^2}-p_1\frac{(\vp_1)_s(\vp_2)_s}{\vp_1\vp_2}\right\}\vp_2^2\,g_{\mc S^{p_2}},
\end{align*}
and the Lie derivative of $g$ with respect to a gradient vector field $\grad F=\mathfrak X=f(s)\frac{\partial}{\partial s}$ of a potential function $F(s)$ is
\[
  \mc L_{\mf X} g = 2f_s(\mr d s)^2+2f\frac{(\vp_1)_s}{\vp_1}\,\vp_1^2g_{\mc S^{p_1}} +2f\frac{(\vp_2)_s}{\vp_2}\,\vp_2^2g_{\mc S^{p_2}}.
\]
It follows that equation~\eqref{eq-Ricci-Soliton-pde} is equivalent to the second-order system of three differential equations
\begin{subequations} \label{eq-LittleSolitonSystem}
  \begin{align}
    f_s &= p_1\frac{(\vp_1)_{ss}}{\vp_1}+p_2\frac{(\vp_2)_{ss}}{\vp_2}-\lambda,\\
    \frac{(\vp_1)_{ss}}{\vp_1} &= (p_1-1)\frac{1-(\vp_1)_s^2}{\vp_1^2} -
                                 p_2\frac{(\vp_1)_s(\vp_2)_s}{\vp_1\vp_2}
                                 + \frac{(\vp_1)_s}{\vp_1}f+\lambda,\\
    \frac{(\vp_2)_{ss}}{\vp_2} &=(p_2-1)\frac{1-(\vp_2)_s^2}{\vp_2^2} -
                                 p_1\frac{(\vp_1)_s(\vp_2)_s}{\vp_1\vp_2} + \frac{(\vp_2)_s}{\vp_2}f +\lambda.
  \end{align}
\end{subequations}

We note the curious fact that system~\eqref{eq-LittleSolitonSystem} is equivalent to a mechanical system on $\mb R^3$.  We outline this correspondence in Appendix~\ref{MechanicalSystem}.

\subsection{Reduction to a system of first-order equations}
The differential equations~\eqref{eq-LittleSolitonSystem} have the disadvantage that the different possible boundary conditions at \(s=0\) lead to singularities.  If at \(s=0\) the metric should extend to a complete smooth metric on \(\mc D^{p_1+1}\times\mc S^{p_2}\), then one of the functions \(\varphi_\alpha\) must vanish at \(s=0\), which leads to a division by zero in the corresponding equation for \(\varphi_\alpha\) in \eqref{eq-LittleSolitonSystem}.  It turns out that the choice of variables below leads to equations that are equivalent to~\eqref{eq-LittleSolitonSystem} for \(s>0\), and that also capture all the possible boundary conditions at \(s=0\) in the form of hyperbolic fixed points of the corresponding flow.  We arrive at the new variables by writing the metric in the form~\eqref{eq-MetricAnsatz} rather than~\eqref{eq-Petersen}.  The variables \(x_\alpha\) and \(\varphi_\alpha\) are related by
\[
  x_\alpha=(p_\alpha-1)\frac{s^2}{\vp_\alpha^2},\qquad\alpha\in\{1,2\}.
\]
Making these substitutions in~\eqref{eq-LittleSolitonSystem} leads via further straightforward calculations to a second-order system for $x_\alpha$ and $f$.

To get a system of first-order equations, we introduce variables \(y_\alpha\) related to \(x_\alpha\) via
\begin{equation} \label{eq-define-yalpha} y_\alpha =-\frac{s}{2x_\alpha}\frac{\dd x_\alpha}{\dd s} = -1 +s\frac{(\varphi_\alpha)_s}{\varphi_\alpha} \,.
\end{equation}
Furthermore, we replace the function $f$ in the soliton vector field $\mf X$ by the new quantity
\begin{equation}\label{eq-define-Gamma}
  \Gamma = s f(s) + \lambda s^2 - \sum_{\alpha=1,2} p_\alpha \bigl(1+y_\alpha\bigr).
\end{equation}

In those parts of the soliton where \(s\) is small, it is advantageous to consider the quantity
\[
  \sigma = s^2
\]
rather than the distance \(s\) itself.  It is related to \(\tau\) via \(\sigma=e^{2\tau}\).

Putting these substitutions into~\eqref{eq-LittleSolitonSystem}, one finds after diligent computations that a metric of the form~\eqref{eq-MetricAnsatz} is a shrinking $(\lambda<0)$, steady $(\lambda=0)$, or expanding $(\lambda>0)$ gradient Ricci soliton if and only if $(x_1, x_2, y_1, y_2, \Gamma, \sigma)$ satisfy
\begin{subequations}
  \label{eq-BlowUpSystem-Polynomial}
  \begin{align}
    x_\alpha' & = -2x_\alpha y_\alpha \label{eq-Pa}, \\
    y_\alpha' & = x_\alpha + \bigl(\Gamma+1 - \lambda\sigma\bigr)y_\alpha + \Gamma + 1, \label{eq-Pb} \\
    \Gamma'   & =\Gamma+ \tsum_\alpha p_\alpha (1+y_\alpha)^2, \label{eq-PG}  \\
    \sigma'   & = 2\sigma.  \label{eq-Pd}
  \end{align}
\end{subequations}
We call this system the \emph{soliton system}.  It is the main system of differential equations that we study in the remainder of this paper.  Here the prime ${}'$ indicates differentiation with respect to $\tau$, \emph{i.e.,} for any differentiable quantity $\psi$,
\[
  \psi' = \frac{\dd \psi}{\dd \tau} = s \frac{\dd \psi}{\ds}.
\]
The system of \textsc{ode}~\eqref{eq-BlowUpSystem-Polynomial} defines a vector field $X$ on $\R^6$,
\begin{multline}\label{eq-soliton-vectorfield}
  X = \sum_\alpha \Bigl\{ -2x_\alpha y_\alpha \frac{\pd} {\pd x_\alpha} + \Bigl[x_\alpha + \bigl(\Gamma+1 - \lambda\sigma\bigr)y_\alpha + \Gamma + 1 \Bigr] \frac\pd{\pd y_\alpha}
  \Bigr\}\\
  + \Bigl\{\Gamma + \sum_\alpha p_\alpha (1+y_\alpha)^2 \Bigr\}\frac{\pd}{\pd\Gamma} + 2\sigma \frac{\pd} {\pd\sigma}.
\end{multline}
We denote the corresponding \emph{soliton flow} on $\R^6$ by $g^t:\R^6\to\R^6$.  It has the defining property that for any solution \((x_\alpha(\tau), y_\alpha(\tau), \Gamma(\tau), \sigma(\tau))\) of~\eqref{eq-BlowUpSystem-Polynomial} one has
\[
  (x_\alpha(\tau+t), y_\alpha(\tau+t), \Gamma(\tau+t), \sigma(\tau+t)) =g^t\bigl(x_\alpha(\tau), y_\alpha(\tau), \Gamma(\tau), \sigma(\tau)\bigr).
\]
If \(I\subset\R\) is an interval and \(E\subset\R^6\) is any subset, then we will find the following notation convenient:
\[
  g^I(\fp) = \{g^t\fp \mid t\in I\}\qquad \text{ and } \qquad g^I(E) = \{g^t\fp \mid t\in I, \fp\in E\}.
\]

The map \(g^t\) may not actually be defined on all of \(\R^6\).  Standard existence and uniqueness theorems for \textsc{ode} imply that for each \(\fp\in\R^6\), there exist \(T_-(\fp) <0< T_+(\fp)\) such that the solution \(g^t(\fp)\) of the soliton system~\eqref{eq-BlowUpSystem-Polynomial} is defined exactly on the interval \(T_-(\fp) < t < T_+(\fp)\).  The domain \(\{(\fp, t)\in \R^6\times\R \mid T_-(\fp) < t < T_+(\fp)\}\) is open in \(\R^7\), and the flow is a real analytic function from this domain to \(\R^6\).

The objects we seek are \(\R^3\)-valued functions \((x_1(s), x_2(s), \Gamma(s))\); these are in one-to-one correspondence with \(\R^5\)-valued functions \((x_1(s), x_2(s), y_1(s), y_2(s), \Gamma(s))\) satisfying \eqref{eq-define-yalpha}.  Instead of the unknown functions \((x_1,x_2,y_1,y_2, \Gamma) : (s_0, s_1) \to \R^5 \), we will frequently consider their graphs, which are differentiable curves in \(\R^6= \R\times\R^5\).  The condition that \((x_1, x_2, y_1, y_2, \Gamma)\) satisfy the first-order differential system is equivalent to the requirement that the graph of \((x_1,x_2,y_1,y_2, \Gamma)\) be an orbit of the flow of the vector field \(X\).

At times it will be more convenient to use $s$ instead of $\sigma$, especially when we let $\sigma\to\infty$.  In the region where $\sigma >0$, we may regard $(x_\alpha, y_\alpha, \Gamma, \sigma)$ and $(x_\alpha, y_\alpha, \Gamma, s)$ as equivalent sets of coordinates related by $\sigma=s^2$.  In the $s$ coordinate, the soliton flow is given by a nearly identical system, namely%
\[
  \begin{aligned}
    x_\alpha' & = -2x_\alpha y_\alpha, &
    \Gamma'   & =\Gamma+ \tsum_\alpha p_\alpha (1+y_\alpha)^2,                        \\
    y_\alpha' & = x_\alpha + \bigl(\Gamma+1 - \lambda s^2\bigr)y_\alpha + \Gamma + 1, & s' & = s.
  \end{aligned}
\]
In these coordinates, the vector field $X$ is given by
\begin{multline*}
  X = \sum_\alpha \Bigl\{ -2x_\alpha y_\alpha \frac{\pd} {\pd x_\alpha} + \Bigl(x_\alpha + \bigl(\Gamma+1 - \lambda s^2\bigr)y_\alpha + \Gamma + 1 \Bigr)
  \frac\pd{\pd y_\alpha} \Bigr\}\\
  + \Bigl\{\Gamma + {\sum_\alpha p_\alpha (1+y_\alpha)^2} \Bigr\}\frac{\pd}{\pd\Gamma} + s \frac{\pd} {\pd s}.
\end{multline*}

\subsection{Separation into averaged and difference variables}
It is useful (especially in the ``gluing region'' near the $\rfc$ fixed point) to consider the \emph{averaged variables}
\begin{equation}
  x = \sum_\alpha \frac{p_\alpha}{n} x_\alpha,
  \qquad
  y = \sum_\alpha \frac{p_\alpha}{n} y_\alpha,
\end{equation}
and the \emph{difference variables}
\[
  x_{12} = x_1-x_2, \qquad y_{12} = y_1-y_2.
\]
The averaged variables evolve by
\begin{subequations}\label{eq-average}
  \begin{align}
    x' &= -2 xy-2 \frac{p_1p_2}{n^2} x_{12}y_{12}, \\
    y' &= x + (\Gamma+1-\lambda s^2)y + \Gamma + 1,\\
    \Gamma' &= \Gamma + n (1+y)^2 + \frac{p_1p_2}{n} y_{12}^2,
  \end{align}
\end{subequations}
while the difference variables satisfy the \emph{difference} or \emph{oscillating system}
\begin{subequations}\label{eq-difference}
  \begin{align}
    x_{12}' &=-2(yx_{12}+xy_{12}) + 2\,\frac{p_1-p_2}{n}x_{12}y_{12},\\
    y_{12}' &= x_{12} + (\Gamma+1-\lambda s^2)y_{12}.
  \end{align}
\end{subequations}
This system of equations is equivalent to that found by Dancer, Hall, and Wang~\cite{DancerHallWang}.

\subsection{Coordinates near the Ricci-flat cone}
The Ricci-flat cone, given by \(x_1=x_2=n-1\), \(y_1=y_2=0\), and \(\Gamma=-n\) is both an expanding and a shrinking soliton, and plays a central role in our construction.  In the second half of this paper, we will mostly be analyzing orbits of the soliton flow near this special solution.  Because of this, it is convenient to consider new variables $\xi_\alpha$ and $\gamma$ defined by
\begin{equation}
  \label{eq-xi-y-gamma-def}
  x_\alpha = \xi_\alpha+n-1 \quad\mbox{ and }\quad \Gamma = -n+\gamma,
\end{equation}
respectively.  In these variables, the soliton flow is given by
\begin{equation}
  \label{eq-xi-y-gamma-system}
  \begin{aligned}
    \xi_\alpha' &= -2(n-1+\xi_\alpha)y_\alpha, &
    \gamma' &= \gamma + \textstyle{\sum_\alpha} p_\alpha (2y_\alpha+y_\alpha^2)\\
    y_\alpha ' &= \xi_\alpha + \bigl(\gamma-n+1-\lambda s^2\bigr)y_\alpha+\gamma, & s' &= s.
  \end{aligned}
\end{equation}

\section{Special invariant subsets for the soliton flow}
To explore solutions of the soliton flow, we begin by identifying a number of its invariant subsets.

The only solutions of the soliton system~\eqref{eq-BlowUpSystem-Polynomial} that correspond directly to metrics of the form~\eqref{eq-MetricAnsatz} are those for which \(\sigma, x_1, x_2\) all are positive.  Nevertheless the local flow \(g^t\) is defined on all \(\R^6\) and some of the solutions with \(\sigma=0\) or \(x_\alpha=0\) for some \(\alpha\) are still relevant to the problem of finding solitons.

\subsection{The hyperplane $\sigma=0$}

It follows directly from~\eqref{eq-Pd} that the hyperplane $\{\sigma\equiv0\} = \R^5\times\{0\}$ is an invariant subset for the soliton flow.  As we just observed, none of the solutions in this hyperplane correspond directly to soliton metrics.  Nonetheless, they do generate stationary soliton metrics in an indirect way.  Namely, if \((x_\alpha(\tau), y_\alpha(\tau), \Gamma(\tau), 0)\) is a solution of~\eqref{eq-BlowUpSystem-Polynomial} for some value of the parameter \(\lambda\), then for any constant \(\sigma_0>0\), one finds that
\begin{equation}
  \label{eq-stationary-soliton}
  \mf p(\tau) = \bigl(x_\alpha(\tau), y_\alpha(\tau), \Gamma(\tau), \sigma_0 e^{2\tau}\bigr)
\end{equation}
is a solution of~\eqref{eq-BlowUpSystem-Polynomial} with \(\lambda=0\).  So any solution of the soliton system~\eqref{eq-BlowUpSystem-Polynomial} with arbitrary \(\lambda\in\R\) that lies in the \(\sigma=0\) hyperplane generates solutions to the stationary soliton equations~\eqref{eq-BlowUpSystem-Polynomial} in which \(\lambda=0\).  The free parameter \(\sigma_0>0\) appears because the equation for stationary solitons is homogeneous: if \(g\) is a stationary soliton, then so is \(\sigma_0 g\) for any \(\sigma_0>0\).

Conversely, any solution of the stationary soliton equations (\emph{i.e.,}~\eqref{eq-BlowUpSystem-Polynomial} with \(\lambda=0\)) is of the form~\eqref{eq-stationary-soliton} for some \(\sigma_0>0\).  Given such a solution, the projection \((x_\alpha(\tau), y_\alpha(\tau), \Gamma(\tau), 0)\) onto the \(\sigma=0\) hyperplane is a solution of the soliton equations~\eqref{eq-BlowUpSystem-Polynomial} for any choice of \(\lambda\in\R\).

Consequently, complete solutions of the reduced system in the hyperplane $\{\sigma=0\}$ are in one-to-one correspondence with steady soliton metrics on $\R\times \mc S^{p_1}\times \mc S^{p_2}$.

\subsection{The region \(\sigma<0\)}
While it is clear from the definition \(\sigma=s^2\) that solutions to the soliton equations~\eqref{eq-BlowUpSystem-Polynomial} with \(\sigma<0\) do not correspond to metrics of the form~\eqref{eq-MetricAnsatz}, it is also trivially true that for any solution \((x_\alpha(\tau), y_\alpha(\tau), \Gamma(\tau), \sigma(\tau))\) of \eqref{eq-BlowUpSystem-Polynomial} with \(\sigma(\tau)<0\), the \(\R^6\)-valued function
\[
  \mf p(\tau) =\bigl(x_\alpha(\tau), y_\alpha(\tau), \Gamma(\tau), -\sigma(\tau)\bigr)
\]
satisfies~\eqref{eq-BlowUpSystem-Polynomial} with \(\lambda\) replaced by \(-\lambda\).

Thus instead of studying~\eqref{eq-BlowUpSystem-Polynomial} for the three different cases \textcolor{badgerred}{\(\lambda\in\{0, \pm 1\}\)}, one could in principle only consider the case \textcolor{badgerred}{\(\lambda=+1\)}.  Then expanding solitons correspond to solutions with \(\sigma>0\); shrinkers are solutions with \(\sigma<0\); and stationary solitons correspond to solutions with \(\sigma=0\).  We find it easier to consider the three cases separately, and so assume that \(\sigma\geq 0\) always holds.

\subsection{Invariant subsets determined by the sign of $x_\alpha$}
It follows directly from the equation (\ref{eq-BlowUpSystem-Polynomial}a) for $x_\alpha$ that for any $\alpha\in\{1,2\}$, the three subsets of $\R^6$ defined by $\{x_\alpha =0\}$, $\{x_\alpha >0\}$, and $\{x_\alpha <0\}$ are invariant under the soliton flow $g^t$.

The region $x_\alpha < 0$ is of no interest to us, because it does not lead to metrics of the form~\eqref{eq-MetricAnsatz}.

\subsection{Metrics with equal radii}

\label{sec-equal-radii} Even though the soliton equations \eqref{eq-BlowUpSystem-Polynomial} are not invariant under exchange of $(x_1, y_1)$ and $(x_2, y_2)$ unless $p_1=p_2$, it is true that the four-dimensional subspace of $\R^6$ defined by
\[
  \equalradii \isdef \bigl\{ (x_1, y_1, x_2, y_2, \Gamma, \sigma) \mid x_1=x_2 \text{ and }y_1=y_2\bigr\}
\]
is invariant under the soliton flow \eqref{eq-BlowUpSystem-Polynomial}.

Indeed, if $(x_1, y_1, x_2, y_2, \Gamma, \sigma)$ solves~\eqref{eq-BlowUpSystem-Polynomial}, then $(x_1(t), y_1(t))$ and $(x_2(t),$ $y_2(t))$ are both solutions of~\eqref{eq-Pa}--\eqref{eq-Pb} with the same $\Gamma(t)$.  The uniqueness theorem for \textsc{ode} now implies that $(x_1(t), y_1(t))$ and $(x_2(t), y_2(t))$ either coincide for all $t$, or else are different for all $t$.

Alternatively, one could simply observe that \(x_{12}=y_{12}=0\) is a solution of the difference equations~\eqref{eq-difference}, no matter what the averaged solutions \((x, y, \Gamma)\) are.

\section{Special orbits of the soliton flow \texorpdfstring{$g^t$}{gt}}
\label{SpecialOrbits} Fixed points of the flow $g^t$ correspond to $t$-independent solutions of \eqref{eq-BlowUpSystem-Polynomial}.  Since $ \sigma' =2 \sigma $, fixed points can only occur in the hyperplane $\sigma=0$.  At a stationary solution in that hyperplane, system~\eqref{eq-BlowUpSystem-Polynomial} reduces to
\begin{equation} \label{eq-soliton-fixed-points} x_\alpha y_\alpha = 0, \quad x_\alpha = -(\Gamma+1)(y_\alpha+1), \quad \Gamma = -\tsum_\alpha p_\alpha (1+y_\alpha)^2.
\end{equation}
We can classify solutions according to the number of $\alpha$ for which $x_\alpha=0$.

\subsection{Ricci-flat Cone $(\rfc)$}
If $(x_\alpha, y_\alpha, \Gamma, \sigma=0)$ is a fixed point for which neither of the $x_\alpha$ vanish, then $y_\alpha= 0$ for each $\alpha$, and thus $\Gamma=-\tsum_\alpha p_\alpha = -n$.  The second equation in \eqref{eq-soliton-fixed-points} then implies that $x_\alpha = -\Gamma-1 = n-1$ for both $\alpha$.  Thus we have
\[
  (x_1, y_1, x_2, y_2, \Gamma, \sigma) = \bigl(n-1, 0,\, n-1, 0,\, -n, 0\bigr)=\rfc.
\]
This fixed point corresponds to a Ricci-flat cone that will play a central role in our subsequent analysis.

\subsection{Good Fills $(\gf)$}
If we have a fixed point at which $x_\alpha=0$ for exactly one $\alpha\in\{1,2\}$, then for $\beta\neq\alpha$ we have $x_\beta\neq0$, so that $x_\beta y_\beta=0$ implies $y_\beta=0$, while $x_\beta = -(\Gamma+1)(y_\beta+1)=-\Gamma-1$ implies $\Gamma+1\neq0$.  On the other hand, $x_\alpha=0$ implies $x_\alpha = -(\Gamma+1)(y_\alpha+1)= 0$, so that $y_\alpha=-1$.

For each $\alpha\in\{1,2\}$, we therefore get a ``Good Fill'' that smoothly fills one sphere with a disc.  For $\alpha=1$, we get
\[
  (x_1, y_1, x_2, y_2, \Gamma, \sigma) = \bigl(0, -1, \, p_2-1, 0,\, -p_2, 0\bigr),
\]
while $\alpha=2$ yields
\begin{equation} \label{eq-BestEnd} \gf = (x_1, y_1, x_2, y_2, \Gamma, \sigma) = \bigl(p_1-1, 0, \, 0, -1,\, -p_1, 0\bigr).
\end{equation}
In what follows, we study orbits of the soliton flow that converge to a Good Fill of one sphere.  We will always assume that the Good Fill is the one with $\alpha=2$, \emph{i.e.,} the solution $\gf$ given in~\eqref{eq-BestEnd}.  The corresponding metrics then satisfy the boundary conditions \eqref{eq-gf-boundaryconditions}.  In Lemma~\ref{lem-Wu-gf-analytic}, we verify that all solutions \(\bigl(x_1(\tau),y_1(\tau), x_2(\tau), y_2(\tau), \Gamma(\tau), \sigma(\tau)\bigr)\) of~\eqref{eq-BlowUpSystem-Polynomial} attaining these boundary conditions actually are real analytic functions of \(\sigma=s^2\), so that the corresponding metrics are complete as $\sigma\searrow0$ and locally diffeomorphic to $\mc D^{p_1+1}\times\mc S^{p_2}$.

\subsection{Non-Cones}
If $x_\alpha=0$ for both $\alpha$, then we have two possibilities: either $\Gamma=-1$, or else $y_\alpha=-1$ for both $\alpha$.

In the first case, where $\Gamma=-1$, we find a continuum of fixed points given by
\[
  (x_1, y_1, x_2, y_2, \Gamma, \sigma) = (0, y_1^*,\, 0, y_2^*,\, -1, 0),
\]
where the two parameters $y_\alpha^*$ must satisfy the constraint
\[
  \tsum_\alpha p_\alpha (1+y_\alpha^*)^2 = 1.
\]
To describe these, we rewrite the metric~\eqref{eq-MetricAnsatz} in the equivalent form~\eqref{eq-Petersen}.
Unwrapping variables, one finds that near a Non-Cone fixed point with $\Gamma=-1$, the functions $\vp_\alpha$ exhibit the asymptotic behavior
\[
  \vp_\alpha\sim C_\alpha s^{1+y_\alpha^*},
\]
where $p_\alpha\geq 2$ implies $|1+y_\alpha^*|\leq \frac12\sqrt2$.  These points therefore cannot produce Good Fills as $s\searrow0$.  Hence we do not study them further.

The remaining case is that in which $x_\alpha = 0$ and $y_\alpha = -1$ for all $\alpha$, and thus $\Gamma = 0$.  These correspond to solutions for which the functions \(\varphi_\alpha = (p_\alpha-1)s^2/x_\alpha\) have nonzero limits as \(s\searrow 0\).  For such solutions, the system~\eqref{eq-LittleSolitonSystem} is not singular, and hence the solutions extend into the region $\{s<0\}$.  One such solution is the \emph{generalized shrinking cylinder soliton} on \(\R\times\mc S^{p_1}\times\mc S^{p_2}\), which occurs for \(\lambda=-1\) and in which for all \(s\in\R\) one has
\[
  x_\alpha = s^2, \quad y_\alpha = -1, \quad \Gamma = C_0s.
\]
The metric and soliton field for this solution are
\[
  g = (\mr ds)^2 + (p_1-1)g_{\mc S^{p_1}} + (p_2-1)g_{\mc S^{p_2}} \quad\mbox{ and }\quad \mf X = (C_0+s)\frac{\pd}{\pd s}.
\]
In this paper, we will not investigate whether other solitons of this type exist.

\subsection{The Ricci-flat Cone regarded as an expander or shrinker ($\rfes$)}
There exists a solution of~\eqref{eq-BlowUpSystem-Polynomial} given by $x_\alpha=n-1$, $y_\alpha=0$, $\Gamma=-n$, and $0<\sigma<\infty$, namely
\[
  (x_1, y_1, x_2, y_2, \Gamma, \sigma) = \bigl(n-1, 0,\, n-1, 0,\, -n, \sigma\bigr),\qquad 0<\sigma<\infty .
\]
We refer to this as the $\rfes$, and discuss it further in \S~\ref{sec-ConvergeToRFC} below.

\subsection{The invariant submanifold $\mc E$}
\label{sec-Ivey-and-co}
We consider the following quantities:
\begin{subequations}\label{eq-FJdefined}
  \begin{align}
    J & \stackrel{\rm def}{=} \tfrac12\tsum_\alpha p_\alpha \bigl\{x_\alpha+(1+y_\alpha)^2\bigr\}
        -  \tfrac12\Gamma^2,   \\
    F & \stackrel{\rm def}{=} \Gamma + \tsum_\alpha p_\alpha(1+y_\alpha)
        = \Gamma + n + \tsum_\alpha p_\alpha y_\alpha.
  \end{align}
\end{subequations}
The quantity $J$ is related to the invariant Ivey used~\cite{Ivey94} to construct the steady solitons he found.  The quantity $F$ is related to the vector field $\mf X=f(s)\frac{\pd} {\pd s}$ that generates the diffeomorphisms by which a soliton flows. Indeed, it follows immediately from equation~\eqref{eq-define-Gamma} that $F=sf+\lambda s^2$.

Direct substitution shows that both quantities $J$ and $F$ vanish at the $\gf$ and $\rfc$ fixed points.  The differential equations for $(x_\alpha, y_\alpha, \Gamma, \sigma)$ imply that
\begin{subequations}\label{eq-ivey-F-evolution}%
  \begin{align}
    J' & = 2J -\lambda \sigma\tsum_\alpha p_\alpha y_\alpha(1+y_\alpha),       \\
    F' & = (\Gamma+1) F + 2 J - \lambda \sigma \tsum_\alpha p_\alpha y_\alpha.
  \end{align}
\end{subequations}

The joint zero-set of $F$, $J$, and $\sigma$ is of interest, so we define
\[
  \mc E \isdef \bigl\{(x_\alpha, y_\alpha, \Gamma, \sigma)\in\R^6 \mid J=F=\sigma=0\bigr\}.
\]
The definitions of $J$ and $F$ imply that $\mc E$ is a three-dimensional submanifold of $\R^5\times\{0\}$ given by
\[
  \tsum_\alpha p_\alpha x_\alpha = \tsum_\alpha p_\alpha (1+y_\alpha)^2 - \Bigl(\tsum_\alpha p_\alpha (1+y_\alpha)\Bigr)^2 \qquad\text{and}\qquad \Gamma = - \tsum_\alpha p_\alpha (1+y_\alpha).
\]
The manifold $\mc E$ contains both fixed points $\gf$ and $\rfc$ as well as the unique heteroclinic orbit between them discovered by B\"ohm.  We recall Gastel and Kronz' proof \cite{GK04} of this fact in \S~\ref{thm-BGK}.

Since $\mc E$ is invariant under the flow $g^t$, the tangent space $T_{\fp}\mc E$ at any point $\fp$ of the flow is invariant under the linearization of the flow at $\fp$.  In the next section, this will help us organize the eigenvalues of the linearization.

\section{Linearization at fixed points}

\subsection{Linearization at the $\gf$}

\label{sec-Lin-GF}
Here we study the fixed point
\[
  \gf = (x_1, y_1, x_2, y_2, \Gamma, \sigma) = \bigl(p_1-1, 0, \, 0, -1,\, -p_1, 0\bigr).
\]
Recall that $X$ given in~\eqref{eq-soliton-vectorfield} denotes the vector field on $\R^6$ defined by the system~\eqref{eq-BlowUpSystem-Polynomial}.  At the $\gf$ fixed point, the linearization is given by
\[
  \dd X_{\gf} =
  \begin{bmatrix}
    0 & -2(p_1-1) & 0 & 0        & 0 & 0       \\
    1 & -(p_1-1)  & 0 & 0        & 1 & 0       \\
    0 & 0         & 2 & 0        & 0 & 0       \\
    0 & 0         & 1 & -(p_1-1) & 0 & \lambda \\
    0 & 2p_1      & 0 & 0        & 1 & 0       \\
    0 & 0 & 0 & 0 & 0 & 2
  \end{bmatrix}.
\]
The $\gf$ linearization splits into two independent subsystems.  The $(x_2, y_2, \sigma)$ subsystem has the matrix
\[
  \begin{bmatrix}
    2 & 0        & 0       \\
    1 & -(p_1-1) & \lambda \\
    0 & 0 & 2
  \end{bmatrix},
\]
whose eigenvalues are $\{-(p_1-1), +2, +2\}$.  The $(x_1, y_1, \Gamma)$ subsystem has the matrix
\[
  \begin{bmatrix}
    0 & -2(p_1-1) & 0 \\
    1 & -(p_1-1)  & 1 \\
    0 & 2p_1 & 1
  \end{bmatrix},
\]
whose eigenvalues are $\{-(p_1-1), -1, +2\}$.

Thus we see \(\gf\) is a hyperbolic fixed point with a three-dimensional unstable
manifold.  There is only one unstable eigenvalue, $\mu=2$, but it has multiplicity
three.  The eigenspace corresponding to the eigenvalue $2$ is spanned by the vectors
\begin{equation}\label{eq-GF-unstable-eigenvectors}
  \begin{gathered}
    \vE_1 = -(p_1-1)\frac{\pd}{\pd x_1} + \frac{\pd}{\pd y_1} + 2p_1 \frac{\pd}{\pd \Gamma},
    \\
    \vE_{2} = (p_1+1)\frac{\pd}{\pd x_2} + \frac{\pd}{\pd y_2},\quad \text{ and }\quad \vE_\sigma = -\lambda\frac{\pd}{\pd x_2} + \frac{\pd}{\pd\sigma}.
  \end{gathered}
\end{equation}

There is no resonance among the unstable eigenvalues, in the sense that none of the
eigenvalues is a nontrivial integer combination of the others (see the chapter on
normal forms in \cite{Arnold88}). Thus by Poincar\'e's theorem~\cite{Arnold88}, the
fixed point $\gf\in\R^6$ has a three-dimensional real analytic unstable manifold, and
there is a real analytic parametrization of $\wug$ that conjugates the flow of $ X$
to the flow of its linearization, \emph{i.e.,} $\dot {\bs x} = 2{\bs x}$ for
$\bs x\in T_{\gf}\wug$.  All of this directly implies:
\begin{lemma}\label{lem-Wu-gf-analytic}
  Solutions $t\mapsto (x_1, y_1, x_2, y_2, \Gamma, \sigma)$ of
  \eqref{eq-BlowUpSystem-Polynomial} that lie in $\wug$ for which $\sigma>0$ are
  graphs over the $\sigma$ axis in which the remaining variables $x_1$, $y_1$, $x_2$,
  $y_2$, and $\Gamma$ are real analytic functions of $\sigma = s^2$.
\end{lemma}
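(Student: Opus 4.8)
The plan is to extract the conclusion from the analytic linearization already established for the $\gf$ fixed point, and then to translate the statement about the flow parameter $\tau$ into a statement about $\sigma = e^{2\tau}$. The key point is that the eigenspace of the unstable eigenvalue $\mu = 2$ is spanned by the three vectors $\vE_1,\vE_2,\vE_\sigma$ in~\eqref{eq-GF-unstable-eigenvectors}, and the component of $\vE_\sigma$ in the $\sigma$-direction is nonzero (equal to $1$). Poincar\'e's theorem (no resonance among the unstable eigenvalues, since $2$ has multiplicity three) gives a real analytic diffeomorphism $\Psi$ from a neighborhood of the origin in $T_{\gf}\wug \cong \R^3$ onto a neighborhood of $\gf$ in $\wug$, conjugating the soliton flow $g^t$ restricted to $\wug$ to the linear flow $\bs x \mapsto e^{2t}\bs x$. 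Thus every orbit in $\wug$ is, in these linearizing coordinates $\bs x = (a_1, a_2, a_\sigma) \in \R^3$, of the form $t\mapsto (e^{2t}a_1, e^{2t}a_2, e^{2t}a_\sigma)$ for fixed initial data $(a_1,a_2,a_\sigma)$.

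The next step is to read off how $\sigma$ depends on $\bs x$. Writing the six original coordinates as real analytic functions of $\bs x$ via $\Psi$, and using that $D\Psi(0)$ carries the standard basis of $\R^3$ to $\vE_1,\vE_2,\vE_\sigma$, the function $\sigma\circ\Psi(\bs x)$ vanishes at $\bs x = 0$ and has differential $a_\sigma$-component equal to $1$ and $a_1,a_2$-components equal to $0$ at the origin; so $\sigma\circ\Psi(\bs x) = a_\sigma + O(|\bs x|^2)$. Along an orbit with data $(a_1,a_2,a_\sigma)$ we therefore get $\sigma(t) = e^{2t}a_\sigma + O(e^{4t})$ as $t\to -\infty$, which is positive for all sufficiently negative $t$ precisely when $a_\sigma > 0$; orbits with $a_\sigma \le 0$ either have $\sigma \equiv 0$ or enter $\sigma < 0$, neither of which is relevant. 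For an orbit with $a_\sigma > 0$, consider the reparametrization sending $\tau$ to $\sigma = \sigma(\tau)$. Since $\frac{d\sigma}{d\tau} = 2\sigma > 0$ on $\sigma > 0$, this is a real analytic, strictly increasing change of variable near $\tau = -\infty$ (equivalently $\sigma = 0^+$), so its inverse $\sigma \mapsto \tau(\sigma)$ is real analytic on a half-neighborhood $(0,\epsilon)$ and in fact extends real analytically across $\sigma = 0$: indeed from $\sigma = e^{2t}a_\sigma(1 + O(e^{2t}))$ one inverts by the analytic implicit function theorem to get $e^{2t}$ as a real analytic function of $\sigma$ near $\sigma = 0$ with $e^{2t} = \sigma/a_\sigma + O(\sigma^2)$.

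Finally, substituting $e^{2t} = h(\sigma)$ (real analytic in $\sigma$, vanishing to first order) into the linearizing coordinates gives $\bs x(\sigma) = h(\sigma)(a_1,a_2,a_\sigma)$, and then the six original coordinates $(x_1,y_1,x_2,y_2,\Gamma,\sigma) = \Psi(\bs x(\sigma))$ are real analytic functions of $\sigma$ on a neighborhood of $\sigma = 0$; restricting to $\sigma > 0$ we conclude that $x_1,y_1,x_2,y_2,\Gamma$ are real analytic functions of $\sigma = s^2$ along the orbit, and that the orbit is a graph over the $\sigma$-axis. The main obstacle is the bookkeeping in the second and third steps: one must verify that the $\vE_\sigma$-direction genuinely contributes a nonvanishing $\partial/\partial\sigma$-component to $D\Psi(0)$ (so that $\sigma$ itself, and not merely some combination, can serve as the analytic parameter), and one must be careful that the only orbits in $\wug$ with $\sigma > 0$ somewhere are exactly those with $a_\sigma > 0$, so that no orbit with $\sigma \equiv 0$ or $\sigma < 0$ sneaks in. Both are routine once the eigenvector computation~\eqref{eq-GF-unstable-eigenvectors} is in hand, but they are where the argument actually has content beyond quoting Poincar\'e's theorem.
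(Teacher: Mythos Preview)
Your proposal is correct and follows the same overall strategy as the paper: use Poincar\'e's analytic linearization on $\wug$ to write solutions as analytic functions of $e^{2t}$, then reparametrize by $\sigma$. The paper's argument is shorter, however, because it bypasses your implicit function theorem step entirely: since the equation $\sigma' = 2\sigma$ is already decoupled in the \emph{original} coordinates, one has $\sigma(t) = \sigma(0)e^{2t}$ \emph{exactly} along any orbit, so $e^{2t} = \sigma/\sigma(0)$ is linear in $\sigma$ and no inversion is needed. Your route through the linearizing chart $\Psi$ and the expansion $\sigma\circ\Psi(\bs x) = a_\sigma + O(|\bs x|^2)$ is valid but unnecessary; the exact integral of~\eqref{eq-Pd} gives the same conclusion in one line.
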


Indeed, solutions of the linearized flow in $T_{\gf}\wug$ are of the form $\bs x(t)= e^{2t}\bs x(0)$.  The flow on $\wug$ is analytically conjugate to the linear flow on the tangent space, so all solutions are analytic functions of $e^{2t}$.  If $\sigma\neq0$ on a solution, then~\eqref{eq-Pd} implies that $\sigma(t) = \sigma(0)e^{2t}$, \emph{i.e.,} $e^{2t} = \sigma/\sigma(0)$.  Hence solutions are convergent power series in $\sigma/\sigma(0)$.  \medskip

Recall that in \S~\ref{sec-Ivey-and-co}, we defined $\mc E$ to be the three-dimensional submanifold of $\R^5\times\{0\}$ defined by the equations $J=F=0$.  This submanifold is invariant under the flow $g^t$, and therefore the tangent space $T_\gf\mc E$ is an invariant subspace for the linearization $\dd X_\gf$.
\begin{lemma}\label{lem-GF-eigen-data}
  The eigenvalues of $\dd X_\gf$ restricted to $T_\gf\mc E$ are $\{ -(p_1-1),-1,+2\}$.  The unique unstable eigenvalue has eigenvector
  \[
    p_2\vE_1 - 3p_1\vE_2.
  \]
\end{lemma}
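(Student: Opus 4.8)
The plan is to identify $T_\gf\mc E$ with the common kernel of three explicit linear functionals and then intersect it, eigenvalue by eigenvalue, with the eigenspaces of $\dd X_\gf$ already computed in \S\ref{sec-Lin-GF}. Since $\mc E=\{J=F=\sigma=0\}$ and all three functions vanish at $\gf$, we have $T_\gf\mc E=\ker\dd J_\gf\cap\ker\dd F_\gf\cap\ker(\dd\sigma)$. First I would differentiate the formulas in \eqref{eq-FJdefined} and evaluate at $\gf=(p_1-1,0,0,-1,-p_1,0)$; because $1+y_2=0$ and $\Gamma=-p_1$ there, the $\dd y_2$ term drops from $\dd J_\gf$ and one gets $\dd F_\gf=p_1\,\dd y_1+p_2\,\dd y_2+\dd\Gamma$ and $\dd J_\gf=\tfrac{p_1}{2}\dd x_1+\tfrac{p_2}{2}\dd x_2+p_1\,\dd y_1+p_1\,\dd\Gamma$. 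These are manifestly independent of each other and of $\dd\sigma$, which re-confirms $\dim T_\gf\mc E=3$ and shows $T_\gf\mc E\subset\{v_\sigma=0\}$.

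Next I would produce, for each claimed eigenvalue, an eigenvector of $\dd X_\gf$ lying in $T_\gf\mc E$, using the block splitting: the $(x_1,y_1,\Gamma)$ block has eigenvalues $\{-(p_1-1),-1,+2\}$ and the $(x_2,y_2,\sigma)$ block has eigenvalues $\{-(p_1-1),+2,+2\}$. For $\mu=-1$, which occurs only in the first block, the eigenvector (extended by zero in $x_2,y_2,\sigma$) is a multiple of $2(p_1-1)\pd_{x_1}+\pd_{y_1}-p_1\pd_\Gamma$, and a one-line substitution shows both $\dd J_\gf$ and $\dd F_\gf$ annihilate it. For $\mu=+2$, which is the case the Lemma highlights, the eigenspace of $\dd X_\gf$ is spanned by $\vE_1,\vE_2,\vE_\sigma$ of \eqref{eq-GF-unstable-eigenvectors}; since $T_\gf\mc E\subset\{v_\sigma=0\}$ and only $\vE_\sigma$ carries a $\pd_\sigma$ component, the coefficient of $\vE_\sigma$ must vanish, and imposing $\dd J_\gf=0$ and $\dd F_\gf=0$ on $a\vE_1+b\vE_2$ should — this is the one computation worth doing carefully — collapse both conditions to the single relation $3p_1a+p_2b=0$, giving the eigenvector $p_2\vE_1-3p_1\vE_2$. (For $\mu=-(p_1-1)$ one similarly finds $\dd J_\gf$ automatically vanishes on the two-dimensional eigenspace, while $\dd F_\gf=0$ cuts out a one-dimensional piece inside $T_\gf\mc E$.)

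Finally, I would conclude by a multiplicity count rather than relying on the eigenvectors being distinct (which fails in the borderline case $p_1=2$, where $-(p_1-1)=-1$): the full $+2$-generalized eigenspace of $\dd X_\gf$ is an honest eigenspace, so the multiplicity of $+2$ in the restriction $\dd X_\gf|_{T_\gf\mc E}$ equals $\dim(T_\gf\mc E\cap E_{+2})=1$; since $T_\gf\mc E$ also contains the genuine $-1$-eigenvector above, the characteristic polynomial of $\dd X_\gf|_{T_\gf\mc E}$ has $(t-2)(t+1)$ as a factor, and the remaining degree-one factor must be $t+(p_1-1)$ because $-1$ has algebraic multiplicity one in $\dd X_\gf$ when $p_1\neq2$ and because $+2$ already has multiplicity one in the restriction. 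Hence the eigenvalues are $\{-(p_1-1),-1,+2\}$ and $+2$, the unique unstable one, has eigenvector $p_2\vE_1-3p_1\vE_2$. I do not anticipate a real obstacle: everything reduces to substituting the coordinates of $\gf$ and keeping track of the $\Gamma$ contributions; the only subtlety deserving explicit mention is the coincidence of eigenvalues at $p_1=2$, which is why I would phrase the conclusion through the characteristic polynomial rather than through a basis of eigenvectors.
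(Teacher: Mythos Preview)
Your proposal is correct and follows essentially the same route as the paper: compute $\dd J_\gf$ and $\dd F_\gf$, then intersect $T_\gf\mc E=\ker\dd J_\gf\cap\ker\dd F_\gf\cap\ker\dd\sigma$ with the known eigenspaces of $\dd X_\gf$, obtaining exactly the pairings $\dd J_\gf\cdot\vE_i$, $\dd F_\gf\cdot\vE_i$ that the paper records. Your treatment is in fact slightly more careful than the paper's, which simply exhibits an explicit eigenvector in $T_\gf\mc E$ for each of $-(p_1-1)$, $-1$, $+2$ without the multiplicity/characteristic-polynomial bookkeeping you add for the borderline case $p_1=2$ (where those two stable eigenvectors happen to coincide).
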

\begin{proof}
  The tangent space $T_\gf\mc E$ is the combined null space of $\dd J_\gf$ and $\dd F_\gf$.  We compute that
  \begin{align*}
    \dd J & = \tfrac12\tsum_\alpha p_\alpha \dd x_\alpha
            + \tsum_\alpha p_\alpha(1+y_\alpha) \dd y_\alpha - \Gamma \dd\Gamma, \\
    \dd F & = \tsum_\alpha p_\alpha \dd y_\alpha + \dd\Gamma.
  \end{align*}%
  Thus at the $\gf$ fixed point,
  \begin{align*}
    \dd J_\gf & = \tfrac12 p_1 \dd x_1 + \tfrac12 p_2 \dd x_2
                +  p_1 \dd y_1  + p_1 \dd\Gamma,                          \\
    \dd F_\gf & = p_1 \dd y_1 + p_2 \dd y_2 + \dd\Gamma.
  \end{align*}
  Unstable eigenvectors are linear combinations of $\vE_1$ and $\vE_2$.  Because\footnote{Throughout this paper, we denote the result of a one-form $\varrho:\R^6\to T^*\R^6$ acting on a vector $V\in T_{\fp}\R^6$ by $\varrho_{\fp}\cdot V$, or just $\varrho\cdot V$, if the point $\fp\in\R^6$ of evaluation can be deduced from the context.}
  \begin{align*}
    \dd J_\gf\cdot \vE_1 & = -\tfrac12 p_1(p_1-1) + p_1 +2p_1^2
                           = \tfrac32 p_1(p_1+1),                                      \\
    \dd J_\gf\cdot \vE_2 & = \tfrac12 p_2(p_1+1),               \\
    \dd F_\gf\cdot \vE_1 & = p_1 + 2p_1 = 3p_1,                 \\
    \dd F_\gf\cdot \vE_2 & = p_2,
  \end{align*}
  we see that $p_2\vE_1-3p_1\vE_2$ belongs to the kernels of $\dd J_\gf$ and of $\dd F_\gf$.

  To finish the proof, we use similar reasoning to see that
  \[
    2p_2\frac{\pd}{\pd x_1} + p_2\frac{\pd}{\pd y_1} -(p_1-2)\frac{\pd}{\pd y_2} - 2p_2\frac{\pd}{\pd\Gamma} \quad\text{ and } \quad 2(p_1-1)\frac{\pd}{\pd x_1} + \frac{\pd}{\pd y_1}-p_1\frac{\pd}{\pd\Gamma}
  \]
  belong to the subspace $T_\gf\mc E$ and are eigenvectors of $\dd X_{\gf}$ for the eigenvalues $-(p_1-1)$ and $-1$, respectively.
\end{proof}

\subsection{Linearization at the $\rfc$}

\label{sec-Lin-RFC} Recall that the fixed point $\rfc$ corresponds to $(x_1,y_1,x_2,y_2,\Gamma,\sigma) = (n-1,0,n-1,0,-n,0)$.  In these coordinates, the matrix of the linearization of~\eqref{eq-BlowUpSystem-Polynomial} at the $\rfc$ is
\[
  \dd X_\rfc =
  \begin{bmatrix}
    0 & -2(n-1) & 0 & 0       & 0 & 0 \\
    1 & -(n-1)  & 0 & 0       & 1 & 0 \\
    0 & 0       & 0 & -2(n-1) & 0 & 0 \\
    0 & 0       & 1 & -(n-1)  & 1 & 0 \\
    0 & 2p_1    & 0 & 2p_2    & 1 & 0 \\
    0 & 0 & 0 & 0 & 0 & 2
  \end{bmatrix}.
\]
It is immediately clear that $\frac{\pd}{\pd\sigma}$ is an eigenvector with eigenvalue $ +2 $.

To understand the dynamics of the soliton system near the $\rfc$, it is useful to use the \emph{averaged} and \emph{difference} variables \((x, y, x_{12}, y_{12})\), which satisfy equations~\eqref{eq-average} and \eqref{eq-difference}, respectively.  Recall that in \eqref{eq-xi-y-gamma-def}, we defined the perturbations \(\xi_\alpha = x_\alpha-n+1\) and \(\gamma= \Gamma+n\), both of which vanish at the \(\rfc\).  We also introduce the averaged \(\xi_\alpha\), namely
\begin{equation} \label{eq-xi-average-def} \xi = \sum_\alpha \frac{p_\alpha}{n} \bigl(x_\alpha-n+1\bigr).
\end{equation}
Observe that there is no need to consider the difference of the \(\xi_\alpha\) separately, because
\[
  x_{12} = x_1-x_2 = \xi_1-\xi_2.
\]
In coordinates $(\xi, y,\gamma, x_{12},y_{12})$, the matrix of the linearization of~\eqref{eq-BlowUpSystem-Polynomial} at the $\rfc$, restricted to the subspace $\sigma=0$, admits the block decomposition
\[
  \dd X_\rfc\big|_{\sigma=0}=
  \begin{bmatrix}
    0 & -2(n-1) & 0 &   &         \\
    1 & -(n-1)  & 1 &   &         \\
    0 & 2n      & 1 &   &         \\
    &         &   & 0 & -2(n-1) \\
    & & & 1 & -(n-1)
  \end{bmatrix}.
\]

On the $3$-dimensional subspace corresponding to the averaged variables~\((\xi, y, \gamma)\), $\dd X_\rfc$ has eigenvalues $\mu\in\{-(n-1), -1, 2\}$ with eigenvectors $\vV_\mu$ given in the basis \(\{\frac{\partial}{\partial\xi},\frac{\partial}{\partial y}, \frac{\partial}{\partial\gamma}\}\) by
\begin{equation}\label{eq-rfc-equal-radii-eigenvectors}
  \vV_2 = \mat -(n-1) \\ 1 \\ 2n \rix,\qquad
  \vV_{-1} = \mat 2(n-1) \\ 1 \\ -n \rix,\qquad
  \vV_{-(n-1)} = \mat 2 \\ 1 \\ -2 \rix.
\end{equation}

If the dimension lies in the range $n\in\{2, \dots, 8\}$, the eigenvalues of $\dd X_\rfc$ acting on the $2$-dimensional subspace on which \(\xi=y=\gamma=\sigma=0\) (spanned by \(\{\frac{\pd}{\pd x_{12}},\,\frac{\pd}{\pd y_{12}}\}\)) are complex.  They are $\{\omega, \bar\omega\}$, where
\begin{equation}
  \label{eq-A-pm-iOmega-defined}
  \omega = -A + i\Omega, \qquad\mbox{with}\qquad A=\frac{n-1} {2} \quad\text{ and }\quad \Omega = \frac{\sqrt{(n-1)(9-n)}}{2}.
\end{equation}
Since we assume that $p_\alpha \geq 2$ for $\alpha=1,2$, the dimension in our case is always bounded from below by $n\geq 4$.  For future use, we note that the following identity holds:
\begin{equation}
  \label{eq-A2plusOmega2-identity}
  A^2+\Omega^2 = 2(n-1).
\end{equation}

\subsection*{Summary}
The $2$-dimensional unstable manifold of the $\rfc$ is spanned by eigenvectors \(\{\pdd\sigma , \vV_2\}\) with the same eigenvalue $+2$.  The $4$-dimensional stable manifold is spanned by \(\{\vV_{-1}, \vV_{-(n-1)}\}\) and the two complex eigenvectors corresponding to the complex eigenvalues $A\pm i\Omega$.

\section{Linearization at the \texorpdfstring{$\rfes$}{RFES} soliton}
\label{sec-ConvergeToRFC}

\subsection{A linear non autonomous system}
To analyze the soliton flow near the \(\rfes\), we use the modified average/difference coordinates \((\xi, y, \gamma, x_{12}, y_{12})\).  According to \eqref{eq-average} and \eqref{eq-difference}, these satisfy
\begin{equation}\label{eq-rfes-average-nonlin}
  \left\{
    \begin{aligned} 
      \xi' & = -2(n-1) y - 2\xi y -2 \frac{p_1p_2}{n^2} x_{12}y_{12},\\
      y' & = \xi - (n-1 + \lambda s^2)y + \gamma +y\gamma,\\
      \gamma' &= \gamma + 2ny + ny^2 + \frac{p_1p_2}{n} y_{12}^2,
    \end{aligned}
  \right.
\end{equation}
and
\begin{equation}\label{eq-rfes-difference-nonlin}
  \left\{
    \begin{aligned}
      x_{12}' &= -2(n-1)y_{12} -2 yx_{12} -2\xi y_{12} + 2\frac{p_1-p_2}{n}x_{12}y_{12},\\
      y_{12}' &= x_{12} - (n-1-\gamma+\lambda s^2)y_{12}.
    \end{aligned}
  \right.
\end{equation}
Here, as always, \('\) stands for \(\frac{\dd}{\dd t} = s\frac{\dd}{\dd s}\).

This system can be regarded as a linear system in the difference variables whose coefficients depend on the difference and averaged variables.  We write this in matrix form as
\begin{equation}
  \label{eq-difference-system-matrix-form}
  \mat x_{12}' \\ y_{12}' \rix=
  \mat
  -2y- \frac2n (p_2-p_1)y_{12} & -2(n-1)+\xi \\
  1 & -n+1+\gamma-\lambda s^2
  \rix
  \mat x_{12}\\ y_{12}\rix.
\end{equation}

If we discard the terms in \eqref{eq-rfes-difference-nonlin} that are quadratic in \((\xi, y, \gamma, x_{12}, y_{12})\), then we are left with two non-autonomous systems of linear equations,
\begin{equation}\label{eq-three-by-three-subsystem}
  \left\{
    \begin{aligned}
      \xi'     & = -2(n-1) y,                           \\
      y'       & = \xi - (n-1+\lambda s^2) y + \gamma,  \\
      \gamma ' & = 2n y + \gamma,
    \end{aligned}
  \right.
\end{equation}
and
\begin{equation}
  \label{eq-two-by-two-subsystem}
  \left\{
    \begin{aligned}
      x_{12}' & =  -2(n-1)y_{12},                             \\
      y_{12}' & = x_{12} - \bigl(n-1+\lambda s^2\bigr)y_{12}.
    \end{aligned}
  \right.
\end{equation}

We see that the linearization of the flow around the \(\rfes\) decouples into two smaller systems of equations.  We group variables accordingly and define
\begin{equation} \label{eq-Phi-zeta-introduced} \Phi = \mat \xi \\ y\\ \gamma\rix \qquad \text{ and }\qquad \zeta = (A+i\Omega)x_{12} - (A^2+\Omega^2)y_{12}.
\end{equation}
As noted in~\eqref{eq-A2plusOmega2-identity}, $A$ and $\Omega$ satisfy \(A^2+\Omega^2 = 2(n-1)\).



\subsection{Reduction to higher-order scalar differential equations}

\begin{lemma}\label{lem-reduction-to-scalar-eqns}
  The linear system~\eqref{eq-three-by-three-subsystem} is equivalent to the third-order scalar \textsc{ode}
  \begin{equation}\label{eq-xi-bar-third-order}
    \xi''' + (n-2)\xi'' - (n+1)\xi' -2(n-1)\xi +\lambda
    s^2(\xi'' + \xi')
    =0.
  \end{equation}
  One can recover $\Phi$ from any solution $\xi$ of \eqref{eq-xi-bar-third-order} via
  \begin{equation}
    \Phi =
    \mat
    \xi \\ y \\ \gamma
    \rix
    = \frac{-1}{2(n-1)}
    \mat
    -2(n-1)\xi \\
    \xi' \\
    \xi'' + (n-1+\lambda s^2)\xi' + 2(n-1)\xi
    \rix.
  \end{equation}

  Similarly, the linear system \eqref{eq-two-by-two-subsystem} is equivalent to the second-order scalar \textsc{ode}
  \begin{equation}\label{eq-chi-second-order}
    \chi'' + (n-1+\lambda s^2)\chi'+2(n-1)\chi=0,
  \end{equation}
  where one can recover $x_{12}$ and $y_{12}$ from a given solution of \eqref{eq-chi-second-order} via
  \begin{equation}\label{eq-zeta-from-chi}
    x_{12} = \chi, \qquad y_{12} = -\frac{\chi'}{2(n-1)} ,\qquad
    \zeta = (A+i\Omega)\chi + \chi'.
  \end{equation}
\end{lemma}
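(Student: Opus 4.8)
The plan is to carry out the classical elimination of variables that turns each first-order linear system into a single higher-order scalar equation, and then to read off the reconstruction formulas and check them by direct substitution.

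For the $3\times 3$ system \eqref{eq-three-by-three-subsystem}, I would first use the equation $\xi' = -2(n-1)y$ to solve $y = -\xi'/\bigl(2(n-1)\bigr)$; this already gives the middle component of the claimed expression for $\Phi$ (and the top component is just $\xi$ itself). Differentiating $\xi' = -2(n-1)y$ once more and substituting the second equation of \eqref{eq-three-by-three-subsystem} gives $\xi'' = -2(n-1)\bigl[\xi - (n-1+\lambda s^2)y + \gamma\bigr]$; replacing $y$ by $-\xi'/\bigl(2(n-1)\bigr)$ and solving for $\gamma$ yields $\gamma = -\frac{1}{2(n-1)}\bigl[\xi'' + (n-1+\lambda s^2)\xi' + 2(n-1)\xi\bigr]$, the bottom component of $\Phi$. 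Differentiating this last relation — here one must remember that $'=\frac{\dd}{\dd t}=s\frac{\dd}{\dd s}$, so that $(\lambda s^2)' = 2\lambda s^2$ — and imposing the third equation $\gamma' = 2ny + \gamma$ collapses, after collecting terms by order, to exactly \eqref{eq-xi-bar-third-order}. For the converse I would define $y$ and $\gamma$ from a solution $\xi$ of \eqref{eq-xi-bar-third-order} via those same formulas and verify the three equations of \eqref{eq-three-by-three-subsystem}: the first holds by definition of $y$, the second holds identically because $\gamma$ was defined precisely so that it does, and the third is equivalent to \eqref{eq-xi-bar-third-order}. This establishes the claimed bijection between solutions.

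The $2\times 2$ case \eqref{eq-two-by-two-subsystem} is handled identically: set $\chi = x_{12}$, use $x_{12}' = -2(n-1)y_{12}$ to get $y_{12} = -\chi'/\bigl(2(n-1)\bigr)$, differentiate and substitute the second equation to obtain $\chi'' + (n-1+\lambda s^2)\chi' + 2(n-1)\chi = 0$, which is \eqref{eq-chi-second-order}; the formula $\zeta = (A+i\Omega)\chi + \chi'$ then follows from the definition of $\zeta$ in \eqref{eq-Phi-zeta-introduced} together with $A^2+\Omega^2 = 2(n-1)$ from \eqref{eq-A2plusOmega2-identity}. There is no genuine obstacle here — the content is bookkeeping — and the only points requiring care are the differentiation rule for $\lambda s^2$ under $'=s\,\dd/\dd s$, the sign and coefficient accounting when collecting terms to reach \eqref{eq-xi-bar-third-order}, and stating the equivalence in both directions so that "equivalent" is justified as an identification of solution spaces rather than a mere implication.
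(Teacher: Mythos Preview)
Your proposal is correct and follows essentially the same approach as the paper: use the first equation to express $y$ in terms of $\xi'$, the second to express $\gamma$ in terms of $\xi,\xi',\xi''$, and then substitute into the third to obtain the scalar \textsc{ode}, with the $\zeta$ formula coming from \eqref{eq-Phi-zeta-introduced} and \eqref{eq-A2plusOmega2-identity}. Your write-up is in fact more careful than the paper's, which merely sketches these steps; your explicit attention to $(\lambda s^2)'=2\lambda s^2$ and to the converse direction is appropriate.
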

\begin{proof}
  One can use the first equation in \eqref{eq-three-by-three-subsystem} to write $ y$ in terms of $\xi'$; then one can use the second equation in \eqref{eq-three-by-three-subsystem} to express $\gamma$ in terms of $\xi$, $\xi'$, and $\xi''$.  Substituting all this into the third equation of~\eqref{eq-three-by-three-subsystem} then leads to \eqref{eq-xi-bar-third-order}.  The derivation of \eqref{eq-chi-second-order} proceeds along the same lines.

  The expression for $\Phi$ follows directly from~\eqref{eq-three-by-three-subsystem}, while $\zeta = \chi'+(A+i\Omega)\chi$ follows from \eqref{eq-Phi-zeta-introduced} by using~\eqref{eq-A2plusOmega2-identity}, \emph{i.e.,} $A^2+\Omega^2 = 4A = 2(n-1)$.
\end{proof}

\subsection{Fundamental solutions of (\ref{eq-three-by-three-subsystem}) and (\ref{eq-two-by-two-subsystem})}
\label{sec-fundamental-sols-of-linearization}
Here we take a closer look at the \textsc{ode} for $\xi$ and $\chi$.

Equation~\eqref{eq-xi-bar-third-order} for $\xi$ can be simplified by observing that if
\[
  \psi = \xi'+\xi,
\]
then~\eqref{eq-xi-bar-third-order} is equivalent to
\[
  \psi'' +(n-3)\psi' - 2(n-1) \psi + \lambda s^2\psi' =0.
\]
Recalling that ${}' = \frac{\dd}{\dd t} = s\frac{\dd}{\ds}$, one can rewrite this equation as
\begin{equation} \label{eq-psi-s} \psi_{ss} + \Bigl(\frac{n-2}{s} + {\lambda s}\Bigr) \psi_s - \frac{2(n-1)}{s^2}\psi = 0.
\end{equation}
Equation~\eqref{eq-chi-second-order} for $\chi$ is equivalent to
\begin{equation}\label{eq-chi-s}
  \chi_{ss}
  + \left(\frac ns+\lambda s\right) \chi_s
  + \frac{2(n-1)}{s^2} \chi = 0.
\end{equation}
Both of these equations are of confluent hypergeometric type.  We need to classify their solutions in terms of their asymptotic behaviors at $s=0$ and at $s=\infty$.  Here we have to distinguish between $\lambda=+1$ and $\lambda=-1$.

\begin{lemma}
  The \textsc{ode}~\eqref{eq-xi-bar-third-order} and~\eqref{eq-chi-second-order} have linearly independent solutions $\{\xi_{0}, \xi_{1}^\pm, \xi_{2}^\pm\}$ and $\{\chi_1^\pm, \chi_2^\pm\}$, respectively, whose asymptotic behaviors are displayed in Table~\ref{tab-expander-asymptotics} for $\lambda>0$ and in Table~~\ref{tab-shrinker-asymptotics} for $\lambda<0$.
\end{lemma}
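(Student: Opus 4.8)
The plan is to reduce both scalar equations to classical confluent hypergeometric equations and then read off the asymptotic behavior at the two singular points from standard theory. The first step is to exploit the factorization already visible in the preceding discussion: with $\psi=\xi'+\xi$, the third-order equation~\eqref{eq-xi-bar-third-order} becomes the second-order equation~\eqref{eq-psi-s}, and conversely every solution $\xi$ is obtained from a solution $\psi$ of~\eqref{eq-psi-s} by integrating the first-order linear equation $\xi'+\xi=\psi$ (here ${}'=s\frac{\dd}{\ds}$ throughout), whose homogeneous solution is $\xi_0=1/s$. Hence a fundamental system for~\eqref{eq-xi-bar-third-order} consists of $\xi_0$ together with the two lifts of a fundamental system for~\eqref{eq-psi-s}, and the entire problem reduces to the two second-order equations~\eqref{eq-psi-s} and~\eqref{eq-chi-s}.

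Next I would analyze these equations at $s=0$ and at $s=\infty$. At $s=0$ both have a regular singular point, and a Frobenius computation gives the indicial roots: for~\eqref{eq-psi-s} these are $2$ and $-(n-1)$, while for~\eqref{eq-chi-s} they are the complex pair $-A\pm i\Omega$ of~\eqref{eq-A-pm-iOmega-defined}, which produce the oscillatory factors $s^{-A}e^{\pm i\Omega\log s}$ anticipated by~\eqref{eq-A2plusOmega2-identity}. As a consistency check, the three $s\to0$ exponents of a fundamental system for~\eqref{eq-xi-bar-third-order} turn out to be $\{-1,2,-(n-1)\}$---namely $-1$ from $\xi_0$, and $2$ and $-(n-1)$ from the lifts, since $\xi'+\xi$ sends $s^r$ to $(r+1)s^r$ with $r+1\neq0$ for $r\in\{2,-(n-1)\}$ when $n\geq4$---and these are exactly the eigenvalues of $\dd X_\rfc$ on the averaged subspace in~\eqref{eq-rfc-equal-radii-eigenvectors}. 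At $s=\infty$ both equations have an irregular singular point; after a change of variables $z\propto s^2$ together with a power-of-$s$ prefactor chosen to cancel the $s^{-2}$ term, each becomes an instance of Kummer's confluent hypergeometric equation, one solution of which approaches a finite limit while the other carries the Gaussian factor $e^{-\lambda s^2/2}$ times a negative power of $s$. The crucial point is that $e^{-\lambda s^2/2}$ decays as $s\to\infty$ when $\lambda>0$ but grows when $\lambda<0$; this is precisely why Tables~\ref{tab-expander-asymptotics} and~\ref{tab-shrinker-asymptotics} differ.

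With the behavior of $\psi$ (and of $\chi$ directly) in hand, I would transport it to $\xi$ by solving $\xi'+\xi=\psi$ through variation of parameters, writing $\xi(s)=\tfrac1s\int^s\psi(u)\,\dd u+\tfrac{c}{s}$, deducing the asymptotics of the integral from those of $\psi$ at each end, and choosing the lower limit and the constant $c$ to isolate the cleanest leading term (any residual $1/s$ ambiguity being absorbed into $\xi_0$). Together with the recovery formulas in Lemma~\ref{lem-reduction-to-scalar-eqns} and~\eqref{eq-Phi-zeta-introduced}, this fills in every entry of the two tables and simultaneously exhibits the listed solutions as linearly independent.

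The step I expect to be the main obstacle is the analysis at the irregular singular point $s=\infty$, and in particular the connection problem: identifying, for each solution normalized by its behavior at $s=0$, the correct asymptotics at $s=\infty$, and verifying that the relevant connection coefficients are nonzero, so that the solutions displayed in each table genuinely form a fundamental system. This is delicate in the shrinker case $\lambda<0$, where a solution that is bounded (or vanishing) at $s=0$ may be exponentially growing at $s=\infty$; I would resolve it using the classical connection formulas for the Kummer and Tricomi (equivalently, Whittaker) functions, or by a direct Wronskian argument. A secondary point to verify is that at $s=0$ the integer difference $n+1$ between the indicial roots of~\eqref{eq-psi-s} does not introduce a logarithm into the leading behavior of the recessive solution---in any case, a possible $s^2\log s$ term would be subleading compared with $s^{-(n-1)}$.
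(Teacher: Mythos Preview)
Your plan is correct and would work, but the paper takes a shortcut for the $\psi$-equation that you do not mention: rather than reducing~\eqref{eq-psi-s} to Kummer form and invoking connection formulas, the paper simply observes that $\psi_1(s)=s^{-(n-1)}e^{-\lambda s^2/2}$ is an \emph{exact closed-form solution} of~\eqref{eq-psi-s}, and then obtains a second solution by reduction of order as an explicit integral, $\psi_2(s)=\psi_1(s)\int_0^s r^n e^{\lambda r^2/2}\,\dd r$. With $\psi_1,\psi_2$ in hand, the $\xi_j^\pm$ are produced exactly as you describe, by integrating $\xi'+\xi=\psi$ (i.e.\ $\xi=s^{-1}\int\psi$) with the integration limits chosen to isolate the desired leading behavior at each end. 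This makes the $\xi$-analysis completely elementary---just asymptotic expansion of explicit Gaussian-type integrals via integration by parts---and in particular it dissolves the connection problem you flag as the main obstacle: the large-$s$ and small-$s$ behaviors are read off from the same explicit formula, so no separate matching is needed, and the nonvanishing of the relevant constants (e.g.\ $C_2^-=(n-2)C_1^-=2^{(n-1)/2}\Gamma(\tfrac{n+1}{2})$) is manifest.

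For the $\chi$-equation~\eqref{eq-chi-s} there is no such closed-form miracle, and here the paper does essentially what you propose: it writes down the Kummer representation $\chi(s)=s^{-A+i\Omega}\,{}_1F_1\bigl(\tfrac12(-A+i\Omega),\,1+i\Omega;\,-\tfrac{\lambda}{2}s^2\bigr)$ and reads off the large-$s$ asymptotics from the standard expansion of ${}_1F_1$. The paper also sketches (in an appendix) a self-contained alternative for the $\chi$ asymptotics, via the Riccati equation for $Z=\chi_s/(s\chi)$ and barrier/Wa\.zewski arguments, which avoids quoting the Kummer connection formulas altogether. Either route is fine; your approach trades the elementary integral manipulations for the more uniform but heavier machinery of the classical theory.
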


\begin{proof}

  Since $\psi=\xi'+\xi = s\xi_s + \xi = (s\xi)_s$, we see that $\xi_{0} \isdef s^{-1}$ is an exact solution of~\eqref{eq-xi-bar-third-order} for either value of $\lambda$.  And it is easy to verify that an exact solution of~\eqref{eq-psi-s} is
  \[
    \psi_1=s^{-(n-1)}e^{-\lambda s^2/2}.
  \]
  Using $\psi_1$ and reduction of order, will will derive the claimed behaviors of $\xi_{1}^\pm$ and $\xi_2^\pm$.  We deal with $\chi_1^\pm$ and $\chi_2^\pm$ below.

  \begin{table}[t]\centering

    \begin{tabular}{c@{\hspace{2em}}c@{\hspace{2em}}c}
      \toprule
      $\lambda=+1$
      & $s\to0$
      & $s\to\infty$
      \\ \midrule
      $\xi_{0} $
      & $1/s$
      & $1/s$
      \\[6pt]
      $\xi_{1}^+ $
      & $\dfrac{1+o(1)}{(n-2) s^{n-1}}$
      & $\big(1+o(1)\big)\,s^{-(n+1)}e^{-s^2/2} $
      \\[8pt]
      $\xi_{2}^+ $
      & $\dfrac{s^{2}}{3(n+1)} + \cO(s^{4})$
      & $1 - \dfrac{C_2^+}{s}+ \cO(s^{-2})$
      \\[6pt] \midrule
      $\chi_1^+  $
      & $\Im\bigl[(k^++\cO(s^2))\,s^{-A+i\Omega}\bigr]$
      & $ 1 + \cO(s^{-2})$
      \\[6pt]
      $\chi_2^+ $
      & $\Im \bigl[(ik^++\cO(s^2))\,s^{-A+i\Omega}\bigr]$
      & $ \bigl(C_\chi^++\cO(s^{-2})\bigr)\, s ^{-(n+1)}e^{- s^2/2} $
      \\[3pt]  \bottomrule
    \end{tabular}
    \medskip
    \caption{Asymptotics of the solutions of~\eqref{eq-xi-bar-third-order}, \eqref{eq-chi-second-order} for $\lambda=+1$.  Here $C_2^+$, $C_\chi^+\in \R$, and $k^+\in\C$ are nonzero constants.  }
    \label{tab-expander-asymptotics}
  \end{table}

  \begin{table}[t]
    \centering
    \begin{tabular}{c@{\hspace{2em}}c@{\hspace{2em}}c}
      \toprule
      $\lambda=-1$
      & $s\to0$
      & $s\to\infty$
      \\ \midrule
      $\xi_{0} $
      & $1/s$
      & $1/s$
      \\[6pt]

      $\xi_{1}^- $
      & $C_1^-s^{-(n-1)}+\cO\bigl(s^{-(n-3)}\bigr)$
      & $1+(n-1)s^{-2}+\cO(s^{-4}) $
      \\[6pt]
      $\xi_{2}^- $
      & $\dfrac{s^2}{3(n+1)}+\cO(s^4)$
      & $\big(C_2^-+\cO(s^{-2})\big)\,s^{-(n+1)}e^{s^2/2} $
      \\[8pt] \midrule
      $\chi_1^- $
      & $\Im \bigl[(k^-+\cO(s^2))\,s^{-A+i\Omega}\bigr]$
      & $ 1+\cO(s^{-2})$
      \\[6pt]
      $\chi_2^- $
      & $\Im \bigl[(ik^-+\cO(s^2))\,s^{-A+i\Omega}\bigr]$
      & $ \bigl(C_\chi^-+\cO(s^{-2})\bigr)\, s ^{-(n+1)}e^{ s^2/2}$
      \\[3pt]\bottomrule
    \end{tabular}\medskip
    \caption{Asymptotics of the solutions of~\eqref{eq-xi-bar-third-order}, \eqref{eq-chi-second-order} for $\lambda=-1$.  Here $C_\chi^-\in \R$ and $k^-\in \C$ are nonzero constants, while $C_2^- = (n-2)C_1^- = 2^{(n-1)/2}\Gamma\big(\frac{n+1}{2}\big)$.  }
    \label{tab-shrinker-asymptotics}
  \end{table}

  \smallskip

  \textbf{The case $\lambda=+1$.  } If $\lambda>0$, then $\psi_1^+(s)\isdef\psi_1(s)|_{\lambda>0}= s^{-(n-1)}e^{-s^2/2}$.  The corresponding solution $\xi_{1}^+$ is given by
  \[
    \xi_{1}^+(s) = \frac 1s \int_s^\infty \psi_1^+(r)\,\mr dr.
  \]
  Clearly \(\xi_1^+(s)\) is positive, and its asymptotic behaviors at $s=0$ and $s=\infty$ in Table~\ref{tab-expander-asymptotics} are easily verified.

  Using reduction of order, one finds that a second linearly independent solution of~\eqref{eq-psi-s} is
  \begin{equation}
    \label{eq-psi2-expander}
    \psi_2^+(s) = \psi_1^+(s)  \int_0^s r^n e^{r^2/2}\mr dr.
  \end{equation}
  At $s=0$, we get
  \[
    \psi_2^+(s) = \frac{s^2}{n+1} + \cO(s^4), \qquad (s\to0).
  \]
  To find an expansion at large $s$, we integrate by parts,
  \[
    \int_0^s r^ne^{r^2/2}\mr d r = s^{n-1}e^{s^2/2} - (n-1)\int_0^s r^{n-2}e^{r^2/2} \mr d r,
  \]
  which then leads to
  \[
    \psi_2^+(s) = 1 - s^{-(n-1)}e^{-s^2/2}\int_0^s r^{n-2}e^{r^2/2}\mr d r.
  \]
  Integrating by parts again yields an asymptotic expansion whose first few terms are
  \[
    \psi_2^+(s) = 1 - \frac{n-1}{s^2} + \cO(s^{-4}), \qquad (s\to\infty).
  \]
  The corresponding solution $\xi_{2}^+$ is
  \[
    \xi_{2}^+(s) = \frac 1s \int_0^s \psi_2^+(r)\,\mr dr.
  \]
  For $s\approx 0$, we therefore get
  \[
    \xi_2^+(s) = \frac{s^2}{3(n+1)} + \cO(s^4).
  \]
  For $s\to\infty$, we get
  \[
    \xi_2^+(s) = 1 - \frac{C_2^+}{s} + \cO (s^{-2}),
  \]
  where
  \[
    C_2^+ = \int_0^\infty \bigl(1-\psi_2^+(s)\bigr) \mr d s >0.
  \]

  \smallskip \textbf{The case $\lambda=-1$.  } Here $\xi_0 = s^{-1}$ is again a solution.

  We will use reduction of order to obtain two linearly independent solutions of~\eqref{eq-psi-s} from the exact solution $\psi_1(s)|_{\lambda=-1}= s^{-(n-1)}e^{s^2/2}$, with one bounded as $s\to\infty$ and the other bounded as $s\to0$.

  The special solution $\psi_1(s)|_{\lambda=-1}$ in itself does not lead to a useful solution $\xi_1$ of \eqref{eq-xi-bar-third-order}, but by using reduction of order, we can construct two other solutions $\psi_1^-$ and $\psi_2^-$ of~\eqref{eq-psi-s} for which the corresponding $\xi$ functions are relevant.

  To obtain $\xi_1^-$, we choose
  \[
    \psi_1^-(s)\isdef s^{-(n-1)}e^{s^2/2}\int_s^\infty r^n e^{-r^2/2}\,\mr dr.
  \]
  Integration by parts shows that
  \[
    \int_s^\infty r^n e^{-r^2/2}\,\mr dr =e^{-s^2/2}\big\{s^{n-1}+(n-1)s^{n-3}+\mc O(s^{n-5})\big\}, \qquad (s\to\infty).
  \]
  Therefore,
  \[
    \psi_1^-(s)=1+\frac{n-1}{s^2} + \frac{(n-1)(n-3)}{s^4} +\cO(s^{-6}),\qquad(s\to\infty).
  \]
  Repeated integration by parts shows that for odd values of \(n\), \(\psi_1^-(s)\) is a polynomial in \(s^{-2}\).  For even values of \(n\), one obtains an asymptotic expansion in arbitrarily high powers of \(s^{-2}\).

  We define \(\xi_1^-(s)\) by solving \((s\xi)_s = \psi\).  By writing the equation as \(\bigl(s(\xi-1)\bigr)_s = (s\xi)_s-1 = \psi-1\), and taking into account that \(\psi_1^-(s)-1 = \cO(s^{-2})\) as \(s\to\infty\), so that \(\psi_1^-(s)-1\) is integrable, we arrive at
  \begin{equation}
    \label{eq-xi1-shrinker-defined}
    \xi_1^-(s) \isdef
    1 - \frac{1}{s} \int_s^\infty \bigl(\psi_1^-(\varsigma)-1\bigr)\,\dd\varsigma.
  \end{equation}
  From the expansion of \(\psi_1^-(s)\), we then find the following expansion for \(\xi_1^-\),
  \[
    \xi_1^-(s)=1+\frac{n-1}{s^2} + \frac{(n-1)(n-3)}{3s^4} + \frac{(n-1)(n-3)(n-5)}{5s^6} +\cdots,\qquad(s\to\infty),
  \]
  which implies the large-\(s\) asymptotics in Table~\ref{tab-shrinker-asymptotics}.

  To verify the asymptotics at small \(s\), we consider that for $0<s\ll1$, one has
  \[
    \psi_1^-(s) = (n-2)C_1^-s^{-(n-1)} + \cO\bigl(s^{-(n-3)}\bigr),
  \]
  where $(n-2)C_1^-=\int_0^\infty r^n e^{-r^2/2}\,\dd r=2^{(n-1)/2}\Gamma\big(\tfrac{n+1}{2}\big)$.  Integration shows that \(\xi_1^-(s)\) as defined in~\eqref{eq-xi1-shrinker-defined} has the asymptotic behavior claimed in Table~\ref{tab-shrinker-asymptotics}.

  Our second linearly independent solution $\psi_2^-$ of~\eqref{eq-psi-s} is also obtained from the special solution $s^{-(n-1)}e^{s^2}$ through reduction of order,
  \begin{equation}
    \label{eq-psi2-shrinker}
    \psi_2^-(s) = s^{-(n-1)}e^{s^2/2} \int_0^s r^n e^{-r^2/2}\,\mr dr
    = (n-2)C_1^- s^{-(n-1)}e^{s^2/2} - \psi_1^-(s).
  \end{equation}
  For $0<s\ll1$, one sees that
  \[
    \psi_2^-(s)=\frac{s^2}{n+1}+\cO(s^4), \qquad (s\to0),
  \]
  and hence that $\xi_2^-(s) = \frac1s \int_0^s \psi_2^-(\tilde s)\,\dd\tilde s$ satisfies
  \[
    \xi_{2}^-=\frac{s^2}{3(n+1)}+\cO(s^4), \qquad (s\to0).
  \]
  On the other hand, for $s\gg1$, one has
  \[
    \psi_2^-(s) = \bigl((n-2)C_1^- +\cO(s^2)\bigr)s^{-(n-1)}e^{s^2/2}, \qquad (s\to\infty),
  \]
  with $C_1^-$ as above.  Integration by parts then leads to the asymptotic expression for $\xi_2^-$ in Table~\ref{tab-shrinker-asymptotics}.  \medskip

  Now we deal with solutions $\chi(s)$ of~\eqref{eq-chi-s}.  Classical theory \cite{Olv91} shows that a pair of linearly independent solutions is given by $\{\chi,\bar\chi\}$, where
  \[
    \chi(s) = s^{-A + i\Omega}\, \tensor*[_1]{\mr F}{_1}\Bigl( \frac12(-A + i\Omega),\,1 + i\Omega; \,-\frac{\lambda}{2}s^2 \Bigr),
  \]
  where $\tensor*[_1]{\mr F}{_1}$ denotes the Kummer confluent hypergeometric function,
  \[
    \tensor*[_1]{\mr F}{_1}(a,b;q) =\sum_{m=0}^\infty\frac{a(a+1)\cdots(a+m-1) q^m}{b(b+1)\cdots(b+m-1) m!}.
  \]
  As $|q|\rightarrow\infty$, with $-3\pi/2 < \arg(q) < \pi/2$, one has the asymptotic behaviors
  \[
    \tensor*[_1]{\mr F}{_1}(a,b;q)\sim\Gamma(b)\left\{\frac{e^q q^{a-b}}{\Gamma(a)}+\frac{(-q)^{-a}}{\Gamma(b-a)}\right\}.
  \]
  For a solution in which the first term above dominates, $\Re\big(\chi(s)\big)\sim e^{-\lambda s^2/2}s^{-(n+1)}$ as $s\rightarrow\infty$.  For a solution in which the second term dominates, $\Re\big(\chi(s)\big)\sim1$ as $s\rightarrow\infty$.  We choose real solutions $\chi_1,\,\chi_2$ so that $\chi_1\sim e^{-\lambda s^2/2}s^{-(n+1)}$ and $\chi_2=\cO(1)$ as $s\to\infty$.  We define $\chi_1^\pm=\chi_1|_{\lambda=\pm1}$ and $\chi_1^\pm=\chi_2|_{\lambda=\pm1}$.  Then these functions behave as claimed in the tables. The reader may refer to Appendix~\ref{appendix-chi} for a self-contained justification of these claims.
\end{proof}


\subsection{Asymptotics of \texorpdfstring{$\Phi$}{Phi} at $s=0$}
\label{Phi-small-s}
For each of the solutions $\{\xi_0, \xi_1^\pm, \xi_2^\pm\}$ we choose for $\lambda=\pm1$, we get a solution $\Phi^\pm$ of the system~\eqref{eq-three-by-three-subsystem}.  Similarly, each of the two pairs of solutions $\{\chi_1^\pm, \chi_2^\pm\}$ corresponds to solutions $\zeta^\pm$ of the homogeneous system~\eqref{eq-two-by-two-subsystem}.  In this and the next few sections, we translate the asymptotic behaviors of $\xi_j^\pm$ and $\chi_j^\pm$ from Tables~\ref{tab-expander-asymptotics} and~\ref{tab-shrinker-asymptotics} into asymptotic expansions for $\Phi^\pm$ and $\zeta^\pm$ at either end of the interval $s\in(0,\infty)$.

For any given solution $\xi$ of \eqref{eq-xi-bar-third-order}, it follows from Lemma~\ref{lem-reduction-to-scalar-eqns} that
\begin{equation}
  \label{eq:HomogeneousSolution}
  \Phi = c \mat
  -2(n-1)\xi\\
  \xi' \\
  \xi'' + (n-1+\lambda s^2)\xi' +2(n-1)\xi\\
  \rix
\end{equation}
is a solution of the homogeneous system \eqref{eq-three-by-three-subsystem}, for any choice of $c\neq0$.  For each of the five fundamental solutions listed below, we choose the constant $c$ so as to simplify the coefficients in the asymptotic expansions of the solutions.  We will describe the asymptotic behavior using the eigenvectors $\vV_2,\vV_{-1},\vV_{-(n-1)}$ that appear in~\eqref{eq-rfc-equal-radii-eigenvectors}.

Applying~\eqref{eq:HomogeneousSolution} to $\xi_0 = s^{-1}$, recalling that ${}^\prime=s\frac{\dd}{\ds}$, and choosing a convenient value of $c$, we get
\[
  \Phi_0 = s^{-1} \vV_{-1} + \mat 0 \\ 0 \\ \lambda s \rix.
\]

We do not have simple explicit expressions for $\xi_1^\pm$ or $\xi_2^\pm$, but after choosing convenient values for $c$, we can compute the asymptotic expansions of the corresponding $\Phi_j^\pm$ as $s\to 0$.

We define $\Phi_1^\pm$ for $\lambda=\pm1$ using $\xi_1^\pm$.  From Table~\ref{tab-expander-asymptotics} and Table~\ref{tab-shrinker-asymptotics}, we see that $\xi_1^\pm$ have the same leading terms at $s=0$, up to a constant.  So by making suitable choices $c^\pm$, we get
\[
  \Phi_1^\pm = s^{-(n-1)}\vV_{-(n-1)} + o\bigl(s^{-(n-1)}\bigr), \qquad (s\to0).
\]

Both Tables~\ref{tab-expander-asymptotics} and \ref{tab-shrinker-asymptotics} list the same asymptotic expansion for $\xi_2^\pm$ at $s=0$, so that for both values $\lambda=\pm1$, choosing $c=3(n+1)$ gives
\[
  \Phi_2^\pm = s^2 \vV_2 + \cO(s^4), \qquad (s\to0).
\]

\subsection{Asymptotics of \texorpdfstring{$\Phi$}{Phi} as \texorpdfstring{$s\to\infty$}{s=infty}}
\label{Phi-big-s}

For $\Phi_0$, the explicit solution we found above is valid for all $s$,
\[
  \Phi_0 = s^{-1} \vV_{-1} + \mat 0 \\ 0 \\ \lambda s \rix.
\]
We note that for this solution, $\gamma\sim\lambda s$ as $s\to\infty$.

If $\lambda=+1$, using Table~\ref{tab-expander-asymptotics} and recalling equation~\eqref{eq:HomogeneousSolution}, one sees that $\Phi_1^+$ (defined using $\xi_1^+$) satisfies
\[
  \Phi_1^+ = C_{1,\infty}^+ s^{-(n+1)} e^{- s^2/2} \mat
  2(n-1) +\cO(s^{-2}) \\
  s^2+(n+1)+\cO(s^{-2}) \\
  -4n+\cO(s^{-2}) \rix, \qquad (s\rightarrow\infty).
\]
Using the fact that $\xi_2=1-\lambda(n-1)s^{-2}+\cO(s^{-4})$, one finds that $\Phi_2^+$ has the asymptotic behavior
\[
  \Phi_2^+= C_{2,\infty}^+  \mat 1 + \cO(s^{-2}) \\ \cO(s^{-2}) \\
  \cO(s^{-2}) \rix, \qquad (s\to\infty).
\]
We note that our choice of $c=3(n+1)$ above forces $C_{2,\infty}^+=-6(n-1)(n+1)$.

If $\lambda=-1$, using Table~\ref{tab-shrinker-asymptotics} and equation~\eqref{eq:HomogeneousSolution}, one sees that $\Phi_1^-$ (defined with $\xi_1^-$) satisfies
\[
  \Phi_1^-= C_{1,\infty}^-  \mat 1+\cO(s^{-2}) \\ \cO(s^{-2}) \\
  \cO(s^{-2}) \rix, \qquad (s\to\infty).
\]
For $\Phi_2^-$, we have
\begin{equation}
  \label{eq-Phi2-shrink-at-infty}
  \Phi_2^- = C_{2,\infty}^- s^{-(n+1)} e^{+s^2/2}
  \mat
  -2(n-1)+\cO(s^{-2}) \\
  s^2 -(n+1)+\cO(s^{-2}) \\
  4n+\cO(s^{-2})
  \rix,
  \qquad (s\rightarrow\infty),
\end{equation}
where $C_{2,\infty}^-=2^{(n-1)/2}\Gamma(\tfrac{n+1}{2})$.

\subsection{Asymptotics of \texorpdfstring{$\zeta$}{zeta} at $s=0$}
\label{sec-zeta-small-s}

To recover the asymptotic expansion of $\zeta$ from $\chi$, we use~\eqref{eq-zeta-from-chi}, \emph{i.e.,} $\zeta = \chi'+(A+i\Omega)\chi$.

A short computation shows that if $f(s) = \Im (ks^{-A+i\Omega})$ with $k$ complex, then
\[
  f'+Af = \Im\bigl(i\Omega k s^{-A+i\Omega}\bigr) = \Omega\Re\bigl(ks^{-A+i\Omega}\bigr),
\]
so that $f'(s)+(A+i\Omega)f(s) = \Omega ks^{-A+i\Omega}$.

Referring back to Tables~\ref{tab-expander-asymptotics} and \ref{tab-shrinker-asymptotics}, this implies that as $s\to0$,
\begin{equation}
  \label{eq-zeta-small-s}
  \zeta_1^\pm(s) =\big(\Omega k^\pm+ \cO(s^2)\big) s^{-A+i\Omega}
  \quad\mbox{and}\quad
  \zeta_2^\pm(s) = \big(i\Omega k^\pm+ \cO(s^2)\big) s^{-A+i\Omega}.
\end{equation}

We remind the reader that the equation for $\zeta^\pm_{1,2}$ is only \emph{real} linear, so the constants \(\Omega k_1^\pm\) and \(i\Omega k_2^\pm\) above can only be adjusted by a real multiple.\medskip

We do not derive the asymptotics of the complex function $\zeta$ as $s\to\infty$, because we find it more convenient to deal directly with the real functions $\chi^\pm$ in that region.

\section{The B\"ohm stationary soliton}

As a step in their construction of expanding solitons, Gastel and Kronz prove existence of a complete steady ($\lambda=0$) soliton $\bgk(t)$ solving~\eqref{eq-BlowUpSystem-Polynomial}.  This Ricci-flat soliton, whose existence follows from a more general result obtained earlier by B\"ohm \cite{Bohm}, plays a central role in their construction as well as ours, so we outline the proof of~\cite{GK04} here using our notation.

\subsection{The Gastel--Kronz construction of the B\"ohm stationary soliton}
\label{thm-BGK}\itshape
The unstable manifold $\wug$ and the stable manifold $\wsr$ intersect in exactly one orbit \(\{\bgk(t)\mid t\in\R\}\) of the soliton flow $g^t$.  The metric corresponding to this connecting orbit from $\gf$ to $\rfc$ is Ricci-flat.  It is thus a steady soliton and an Einstein metric.  \upshape

\begin{proof}[Sketch of the construction]
  Since $\sigma' = 2\sigma$, all connecting orbits between fixed points of the soliton flow must lie in the hyperplane $\sigma=0$.  In that subspace, we have $J'=2J$, so that $J$ also must vanish along connecting orbits.  All connecting orbits are therefore contained in the submanifold of $\R^6$ defined by $J=\sigma=0$.  On this submanifold, it follows from (\ref{eq-ivey-F-evolution}b) that the quantity $F$ defined in \eqref{eq-FJdefined} satisfies $F' =(\Gamma+1)F$.

  As $t\searrow-\infty$, one has $F\rightarrow0$ and $\Gamma+1\rightarrow1-p_\alpha<0$, which forces $F=0$ everywhere, \emph{i.e.,} that any connecting orbit lies in the invariant submanifold $\mc E$, a fact that we use below.  Furthermore, we also note that $J=0$ implies that
  \[
    (\Gamma+1)' =\tsum_\alpha p_\alpha(1+y_\alpha)^2 + \Gamma =\Gamma^2 + \Gamma - \tsum_\alpha p_\alpha x_\alpha \leq\Gamma(\Gamma+1).
  \]
  Since $\Gamma+1=-p_\alpha+1 < 0$ at $t=-\infty$, it follows that $\Gamma+1<0$ on the entire orbit.

  Thus far, we have shown that any connecting orbit from $\gf$ to $\rfc$ must lie in the portion of the submanifold $\mc E$ of $\R^6$ on which $\Gamma < -1$.  We now show that this region contains exactly one such orbit.

  The unstable manifold $\wug$ is three-dimensional, but the part of this manifold that lies in $\mc E$ is one-dimensional.  There are therefore two orbits that emanate from $\gf$ and that can converge to $\rfc$ as $t\to\infty$.

  According to Lemma~\ref{lem-GF-eigen-data}, the unstable eigendirection in $\mc E$ at $\gf$ is given by
  \[
    p_2\vE_1 - 3p_1\vE_2 = -p_2(p_1-1)\frac{\pd}{\pd x_1} + p_2\frac{\pd}{\pd y_1} -3p_1(p_1+1)\frac{\pd}{\pd x_2} -3p_1\frac{\pd}{\pd y_2} + 2p_1p_2\frac{\pd}{\pd \Gamma}.
  \]
  In particular, it has a nonzero component in the $x_2$ direction.  Since $x_2=0$ at $\gf$, we see that only one of the two orbits on the unstable manifold of \(\gf\) lies in the region \(x_2 > 0\).  This is the orbit we now consider.

  To show convergence to the $\rfc$ fixed point as $t\nearrow\infty$, we follow Gastel and Kronz by considering the Lyapunov function
  \[
    W=\tfrac12\tsum_\alpha p_\alpha\big\{x_\alpha-(n-1)\log(x_\alpha)+y_\alpha^2\big\}.
  \]
  Along any orbit of the soliton flow, we have
  \[
    W'=\tsum_\alpha p_\alpha\big\{(\Gamma+n)y_\alpha+(\Gamma+1)y_\alpha^2\big\} \leq(\Gamma+n)\tsum_\alpha p_\alpha y_\alpha =-(\Gamma+n)^2 < 0.
  \]
  In the final step, we used the fact that the quantity $F$ defined in~\eqref{eq-FJdefined} satisfies $F=0$ along the orbit.  It follows that $W$ is monotonically decreasing along the orbit and achieves its minimum at the stationary solution $\rfc=(n-1, 0, n-1, 0, -n ,0)$.

  To explain the claim that these metrics are in fact Ricci-flat, we recall that if
  \[
    \bgk(\tau) = (x_\alpha(\tau), y_\alpha(\tau), \Gamma(\tau), 0)
  \]
  is the B\"ohm soliton that results from the construction above, then according to~\eqref{eq-stationary-soliton},
  \[
    \mf p(\tau) = (x_\alpha(\tau), y_\alpha(\tau), \Gamma(\tau), e^{2\tau})
  \]
  is a solution of~\eqref{eq-BlowUpSystem-Polynomial} for \(\lambda=0\).  For this solution, we also have \(F(\tau) = \Gamma(\tau)+\sum_\alpha p_\alpha (1+y_\alpha(\tau)) \equiv 0\) for all \(\tau\in\R\).  The soliton vector field $\mathfrak X=f\pd/\pd s$ (which generates the diffeomorphisms by which the soliton metric flows) satisfies $sf = F-\lambda\sigma \equiv 0$, as follows easily from~\eqref{eq-define-Gamma}.  Hence we have both \(\mf X=0\) and \(\lambda=0\), so that the B\"ohm soliton \(\bgk\) is Ricci-flat.
\end{proof}

\section{Transversality of stable and unstable manifolds}

\label{sec-Transverse}

We have shown so far that the B\"ohm steady soliton $\bgk$ is the unique orbit in the intersection of the unstable and stable manifolds of the $\gf$ and $\rfc$, respectively.  This section is devoted to a proof of the fact that the intersection is transverse, \emph{i.e.,}
\begin{equation}\label{eq-Wu-Ws-transverse}
  \wug \transv \wsr .
\end{equation}
\subsection{Defining equations for $\wsr$}
\itshape If $\fp\in \wsr$, then $J=\sigma=0$ at $\fp$, and there is a small neighborhood $\mc N\subset \R^6$ of $\fp$ such that
\begin{equation}\label{eq-WsRFC-locally-defined}
  \wsr \cap\mc N
  = \bigl\{\tilde{\fp}\in\mc N : \sigma(\tilde{\fp}) = J(\tilde{\fp}) = 0\bigr\}.
\end{equation}
Moreover, the differentials $\dd \sigma$ and $\dd J$ are linearly independent along $\wsr$.\upshape

\begin{proof}
  Consider any point $\fp_0\in \wsr$, and let $\fp(t)$ be the orbit of the soliton flow starting at $\fp(0) = \fp_0$.  By definition, $\fp(t) \to\rfc$ as $t\to\infty$, so that $\sigma' = 2\sigma$ implies that we must have $\sigma\big(\fp(t)\big)=0$ for all $t\in\R$.  Since $\sigma$ vanishes along the orbit $\fp(t)$, it follows from equations~\eqref{eq-ivey-F-evolution} that $J' = 2J$ along the trajectory $\fp(t)$; since $J\big(\fp(t)\big)$ remains bounded as $t\to\infty$, this then implies that $J(\fp(t))\equiv 0$.  It follows that $\wsr$ is contained in the joint zero-set of $\sigma$ and $J$.  By definition of $J$, this joint zero-set is given by the equations
  \[
    \sigma=0,\qquad \tsum_\alpha p_\alpha x_\alpha = \Gamma^2 - 2\tsum p_\alpha y_\alpha(1+y_\alpha),
  \]
  from which it is clear that $\sigma^{-1}(0) \cap J^{-1}(0)\subset\R^6$ is a smooth embedded submanifold.

  In \S~\ref{sec-Lin-RFC}, we found that the $\rfc$ is a hyperbolic fixed point of the soliton flow with four stable eigenvalues, so general \textsc{ode} theory implies that the stable manifold $\wsr$ is a real analytic immersed submanifold of $\R^6$ of codimension two.  As it is contained in the four dimensional embedded submanifold $\sigma^{-1}(0) \cap J^{-1}(0)$, it must be a relatively open subset of $\sigma^{-1}(0) \cap J^{-1}(0)$, which we claimed in~\eqref{eq-WsRFC-locally-defined}.

  To complete the proof, we note that
  \[
    \dd J = \tsum_\alpha \tfrac12 p_\alpha\dd x_\alpha + \tsum_\alpha p_\alpha (1+2y_\alpha)\dd y_\alpha - \Gamma\dd\Gamma.
  \]
  The $\dd x_\alpha$ components of $\dd J$ are thus everywhere nonzero, so that $\dd\sigma$ and $\dd J$ are indeed linearly independent everywhere.
\end{proof}

\subsection{Proof of transversality}

\label{sec-transversality-proof}

The intersection $\wug \cap \wsr$ contains exactly one orbit, namely the B\"ohm steady soliton $\bgk$.  The tangent spaces to invariant manifolds for the soliton flow $g^t$ are themselves invariant under the tangent flow $\dd g^t$, so we only have to prove that $\wug$ and $\wsr$ meet transversally at one point $\bgk(t)$ along the orbit.  Since we have a global description of $\wsr$ as an open subset of $\sigma^{-1}(0)\cap J^{-1}(0)$, while we only have local description of $\wug$, it will be to our advantage if we consider a point $\bgk (t)$ that is very close to $\wug$.

We found in \S~\ref{sec-Lin-GF} --- see \eqref{eq-GF-unstable-eigenvectors} --- that the tangent space to $\wug$ at $\gf$ is spanned by the vectors
\begin{gather*}
  \vE_1 = -(p_1-1)\frac{\pd}{\pd x_1} + \frac{\pd}{\pd y_1} + 2p_1 \frac{\pd}{\pd \Gamma},
  \\
  \vE_{2} = (p_1+1)\frac{\pd}{\pd x_2} + \frac{\pd}{\pd y_2},\quad \text{ and }\quad \vE_\sigma = -\lambda\frac{\pd}{\pd x_2} + \frac{\pd}{\pd\sigma}\,.
\end{gather*}
At the $\gf$ fixed point, we have
\begin{align*}
  \dd\sigma\cdot \vE_\sigma & = 1,
  & \dd\sigma\cdot \vE_1    & =0,       \\
  \dd J\cdot \vE_\sigma     & = -\frac{\lambda} {2} p_2,
  & \dd J\cdot \vE_1        & = \tfrac32 p_1(p_1+1)\,.
\end{align*}
This implies that at the fixed point $\gf$, the unstable manifold $\wug$ intersects the submanifold $\sigma^{-1}(0)\cap J^{-1}(0)$ transversally.  By continuity of the tangent spaces, $\wug\transv \sigma^{-1}(0)\cap J^{-1}(0)$ holds in a neighborhood $\mc U$ of $\gf$.  If we choose $t\in\R$ sufficiently close to $-\infty$, then $\bgk(t)\in \mc U$, and it follows that $\wug\transv\wsr$ at $\bgk(t)$, because near $\bgk(t)$, the submanifolds $\wsr$ and $\sigma^{-1}(0)\cap J^{-1}(0)$ coincide.  This completes the proof of \eqref{eq-Wu-Ws-transverse}.

\section{Behavior of smooth solutions near $\bgk \cap \rfes = \rfc$ }

\subsection{The unstable manifold of the $\rfc$}
\label{sec-WuRFC} In \S~\ref{sec-Lin-RFC}, we saw that the eigenvalues of the linearization $\dd X_\rfc$ are $\{-(n-1), -A\pm i\Omega,-1, +2, +2\}$.  So the $\rfc$ is hyperbolic and has a two-dimensional unstable manifold.  Just as in the case of the $\gf$ fixed points, there are again no resonances between the unstable eigenvalues, so the flow $g^t|\wur$ is analytically conjugate to the linear flow $\bs x\mapsto e^{2t}\bs x$ on $\R^2$.

Those orbits in the unstable manifold $\wur$ that extend to $0<\sigma<\infty$ correspond to metrics on $(0, \infty) \times \mc S^{p_1}\times \mc S^{p_2}$ that define expanding or shrinking Ricci flow solitons with a singularity at $\sigma=0$ that is asymptotically like the Ricci-flat cone.

\begin{lemma}
  The unstable manifold $\wur$ is contained in the equal radii subspace $\equalradii$, \emph{i.e.,} the difference variables $x_{12}, y_{12}$ vanish on $\wur$.
\end{lemma}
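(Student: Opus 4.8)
The plan is to identify $\wur$ with the unstable manifold of $\rfc$ computed \emph{inside} the invariant subspace $\equalradii$, exploiting the uniqueness of unstable manifolds of hyperbolic fixed points. The ingredients are all in hand. By \S~\ref{sec-equal-radii}, the four-dimensional subspace $\equalradii=\{x_1=x_2,\ y_1=y_2\}$ is invariant under the soliton flow $g^t$ and contains $\rfc$. By the linearization computed in \S~\ref{sec-Lin-RFC}, the eigenvalues of $\dd X_\rfc$ are $\{-(n-1),\,-A\pm i\Omega,\,-1,\,+2,\,+2\}$, and the two-dimensional unstable subspace $E^u$ (the eigenspace for the eigenvalue $+2$) is spanned by $\partial/\partial\sigma$ and the vector $\vV_2$ of~\eqref{eq-rfc-equal-radii-eigenvectors}. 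Since $\vV_2$ is written purely in the averaged coordinates $(\xi,y,\gamma)$, both of these eigenvectors have vanishing $x_{12}$- and $y_{12}$-components; equivalently, $E^u\subseteq T_\rfc\equalradii$. What makes this work is precisely that the eigenvalues of the decoupled difference block on the plane spanned by $\{\partial/\partial x_{12},\partial/\partial y_{12}\}$ are the complex numbers $-A\pm i\Omega$ with $A=(n-1)/2>0$, hence \emph{stable}: no unstable direction points out of $\equalradii$.

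Given this, I would apply the unstable manifold theorem to the flow $g^t$ restricted to the invariant submanifold $\equalradii$. There $\rfc$ is still hyperbolic, with unstable subspace $E^u$ (two eigenvalues $+2$) and stable subspace spanned by $\vV_{-1},\vV_{-(n-1)}$, so it possesses a local unstable manifold $W^u_{\mathrm{loc},\equalradii}(\rfc)\subset\equalradii$ of dimension two, locally invariant under $g^t$ (because $\equalradii$ is invariant) and tangent to $E^u$ at $\rfc$. The unstable manifold theorem in $\R^6$ yields the local unstable manifold $W^u_{\mathrm{loc}}(\rfc)$, also two-dimensional and tangent to $E^u$. Because the local unstable manifold of a hyperbolic fixed point is the unique locally invariant manifold tangent to its unstable subspace, the two coincide, so $W^u_{\mathrm{loc}}(\rfc)\subset\equalradii$. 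Globalizing by $\wur=\bigcup_{t\ge0}g^t\bigl(W^u_{\mathrm{loc}}(\rfc)\bigr)$ and using that each $g^t$ preserves $\equalradii$, we conclude $\wur\subset\equalradii$, i.e., $x_{12}\equiv y_{12}\equiv 0$ on $\wur$.

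There is no genuinely hard step; the one point requiring care is the inclusion $E^u\subseteq T_\rfc\equalradii$, which would fail — and so would the lemma — if the difference block~\eqref{eq-two-by-two-subsystem} contributed an eigenvalue with positive real part. A more hands-on alternative avoids the uniqueness of unstable manifolds entirely: on any orbit in $\wur$ one has $\bigl(x_{12}(t),y_{12}(t)\bigr)\to(0,0)$ as $t\to-\infty$, while~\eqref{eq-difference} exhibits $(x_{12},y_{12})$ as a solution of a system $u'=M(t)u+Q(u)$ whose matrix $M(t)$ tends, as $t\to-\infty$, to the difference block of $\dd X_\rfc$, all of whose eigenvalues have real part $-A<0$; the standard asymptotic-integration argument (roughness of the exponential dichotomy, with the quadratic term $Q$ absorbed since $|Q(u)|=o(|u|)$ near $\rfc$) then forces any solution that remains bounded as $t\to-\infty$ to vanish identically, giving the same conclusion. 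I would present the invariant-manifold argument as the main proof, since it matches the dynamical-systems framework already used in the paper.
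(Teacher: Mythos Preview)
Your main argument is correct and is essentially the same as the paper's: both observe that $\equalradii$ is $g^t$-invariant, that the unstable eigenspace at $\rfc$ lies in $T_\rfc\equalradii$, and then conclude by comparing the two-dimensional unstable manifold of the restricted flow $g^t|\equalradii$ with $\wur$ via uniqueness of unstable manifolds. Your write-up is more explicit about the uniqueness step and the globalization, and your alternative asymptotic-integration argument is a nice bonus not in the paper, but the core proof is the same.
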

\begin{proof}
  As we observed in \S~\ref{sec-equal-radii}, the equal radii subspace $\equalradii \subset\R^6$ consisting of all points with $x_{12}=y_{12}=0$ is invariant under the soliton flow $g^t$.  This subspace contains the fixed point $\rfc$, and the unstable eigenvectors of the linearization of the flow at $\rfc$ are tangential to $\equalradii$.  Therefore the flow $g^t|\equalradii$ has a smooth two-dimensional unstable manifold at $\rfc$.  Since the unstable manifold at $\rfc$ of the flow $g^t$ on the whole space $\R^6$ also is a two-dimensional immersed submanifold, it must coincide with the unstable manifold of $g^t|\equalradii$ at $\rfc$.  Hence $\wur\subset\equalradii$.
\end{proof}

\subsection{The tangent space \(T_{\rfes}\wur\)}
In a small neighborhood of the \(\rfc\), the variables \((\sigma, J)\) form analytic coordinates on \(\wur\), so that we have a parametrization \(\fp=\fw^{u}(\sigma, j)\) of a small neighborhood of \(\rfc\) in \(\wur\).

The tangent space to the \(\rfes\) is therefore the range of \(\dd\fw^u\), which is spanned by the partial derivatives \(\fw^{u}_{\sigma}(\sigma, 0)= \frac{\pd\fw^u}{\pd\sigma}\) and \(\fw^{u}_{j}(\sigma,0)= \frac{\pd\fw^u}{\pd j}\).  Of these, the first derivative is
\[
  \fw^u_\sigma = \frac{1}{2s} \frac{\pd}{\pd s},
\]
which is a multiple of the tangent vector \(\frac\pd{\pd s}\) to the \(\rfes\).

To get a second tangent vector to the \(\rfes\), we choose a particular small value of \(s\), say \(s=a\), and let \(W(a) = \fw^u_j(a^2,0)\).  Since \(\wur\) is invariant under the soliton flow, the vector \(\dd g^t\cdot W(a)\) is tangential to \(\wur\) at \(g^t(\fw^u(a^2,0)) = \fw^u\bigl((e^{t}a)^2, 0\bigr)\).  We therefore define for any \(s>0\),
\[
  W(s) = \dd g^{\log(s/a)}\cdot W(a).
\]
Then \(W(s) \in T_\rfes\wur\), \(W(s)\neq0\), and \(\dd s\cdot W(s) = 0\) for all \(s>0\).  This implies that \(\{\frac\pd{\pd s}, W(s)\} \) is a basis for \(T_\rfes\wur\) at each point.  Moreover, \(W(s)\) is a solution of the linearization \eqref{eq-three-by-three-subsystem} of the system for the averaged variables \((\xi, y, \gamma)\).  Thus
\[
  W(s) = c_0\Phi_0(s) + c_1^\pm\Phi_1^\pm(s) + c_2^\pm\Phi_2^\pm(s)
\]
for some constants \(c_j^\pm\).

We also know that \(W(s)\to0\) as \(s\to0\), because \(\dd g^t\) decays exponentially on \(\wur\) as \(t\to-\infty\).  Both \(\Phi_0(s)\) and \(\Phi_1^\pm(s)\) become unbounded as \(s\to0\), so we must have \(c_0=c_1^\pm=0\).  Hence \(W(s)\) is a nonzero multiple of \(\Phi_2^\pm(s)\).

A consequence of this is that
\begin{equation}
  \label{eq-wu-expansion-at-a}
  \fw^u(a^2, j) = c_2^\pm j \Phi_2^\pm(a) + \cO(j^2),\qquad
  (j\to 0).
\end{equation}
Therefore, \(\fw^u_j(a^2,0) = c_2^\pm \Phi_2^\pm(a)\).  Moreover, because \(\fw^u\) is a real analytic function, we have
\begin{equation}
  \label{eq-wu-expansion-at-a-deriv}
  \fw^u_j(a^2, j) = c_2^\pm  \Phi_2^\pm(a) + \cO(j),\qquad
  (j\to 0).  
\end{equation}

\begin{figure}[h]
  \centering\sffamily \includegraphics[width=0.7\textwidth]{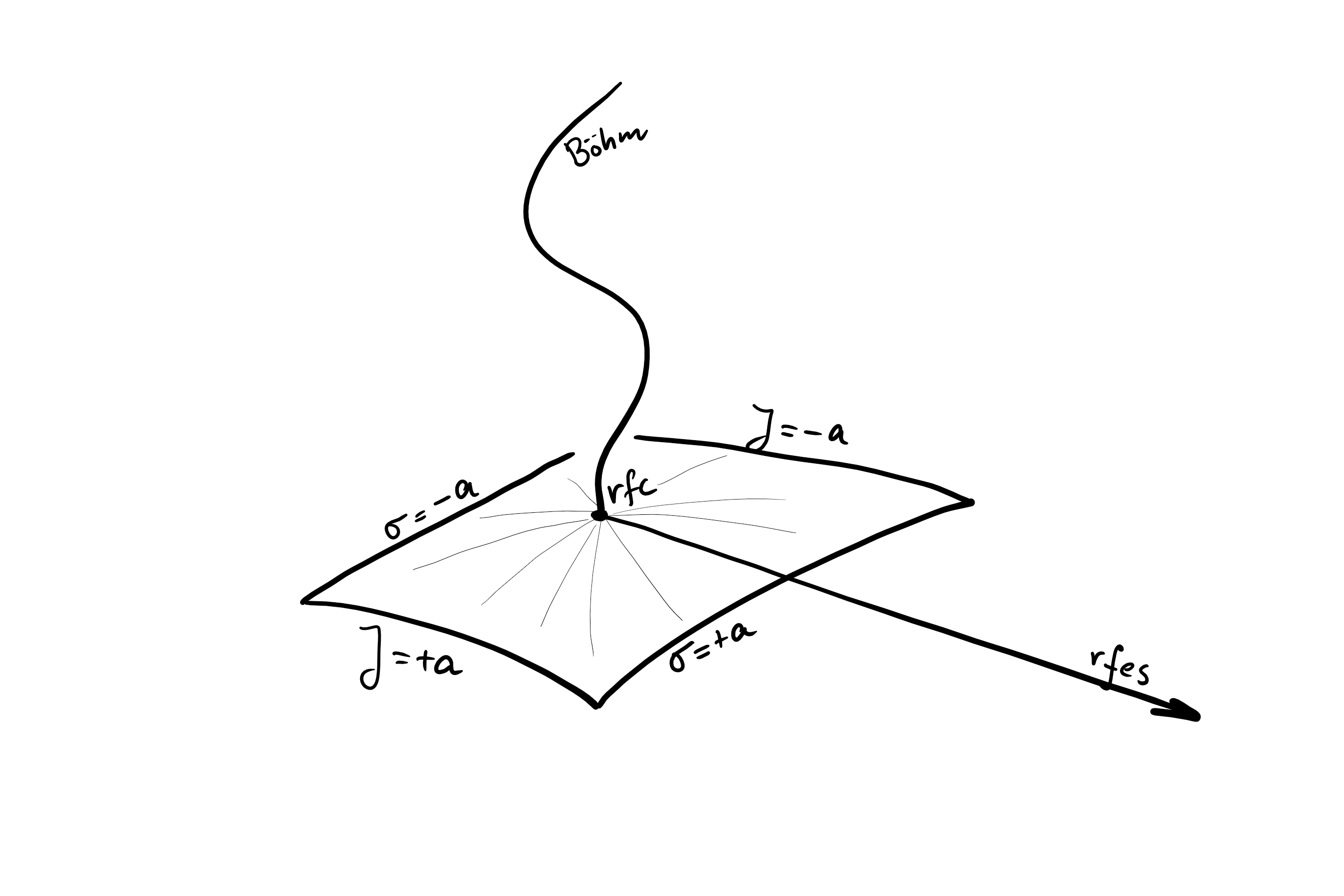}
  \caption{\(\wur\) near \(\rfc\)}
  \label{fig:wur-near-rfc}
\end{figure}

\subsection{An isolating block for \(\rfc\)}
Since the \(\rfc\) is a hyperbolic fixed point, we can write \(\R^6=E^u\oplus E^s\), where \(E^u\) and \(E^s\) are the unstable and stable subspaces of \(\R^6\), respectively, for the linearization \(\dd X_\rfc\).  Let \(\pi^s:\R^6\to E^s\), \(\pi^u:\R^6\to E^u\) be the projections onto those invariant subspaces, with null spaces \(N(\pi^u) = E^s\) and \(N(\pi^s) = E^u\).  Since the stable eigenvalues of \(\dd X_\rfc\) are \(\{-1,-A\pm i\Omega, -(n-1)\}\), one can find\footnote{ If \(V_1\), \(V_2\pm iV_3\), and \(V_4\) are eigenvectors of \(dX_\rfc\) corresponding to the eigenvalues \(-1\), \(-A\pm i\Omega\), and \(-n+1\), then define \(\langle\cdot,\cdot\rangle_s\) by declaring \(\{V_1, V_2, V_3, V_4\}\) to be orthonormal.}  an inner product \(\langle \cdot,\cdot\rangle_s \) on \(E^s\) such that
\[
  \langle v, \dd X_\rfc\cdot v\rangle_s \leq -\langle v, v\rangle_s
\]
for all \(v\in E^s\).

By the Grobman--Hartman theorem \cite{Gro62,Ha60}, there is a neighborhood \(U\subset\R^6\) of the \(\rfc\) on which the flow \(g^t\) is topologically conjugate to the linearized flow near the origin.  In particular, we can choose the neighborhood \(U\) so that any point \(\fp\in U\) with \(g^{[0,\infty)}(\fp) \subset U\) belongs to \(\wsr\).  Similarly, any \(\fp\in U\) with \(g^{(-\infty,0]}(\fp)\subset U\) belongs to \(\wur\).

For small \(a>0\), we now consider the compact neighborhood of the \(\rfc\) defined by
\begin{equation}
  \cQ_a = \left\{
    \fp\in\R^6 \mid
    |\sigma(\fp)|\leq a^2,\  |J(\fp)|\leq a^2, \ \|\pi^s (\fp - \rfc)\|_s\leq a^2
  \right\},
\end{equation}
where \(\|v\|_s = \sqrt{\langle v, v\rangle_s }\).

For sufficiently small \(a>0\), \(\cQ_a\subset U\), and \(\cQ_a\) is an \emph{isolating block} in the sense of Conley and Easton \cite{CE71,conley1978isolated}: for any \(\fp\in\pd \cQ_a\), one has
\begin{align}
  |\sigma(\fp)| = a^2 &\implies \frac{d}{dt}|\sigma(g^t\fp)|\Big|_{t=0} > 0,\\
  |J(\fp)| = a^2 &\implies \frac{d}{dt}|J(g^t\fp)|\Big|_{t=0} > 0,\\
  \|\pi^s(\fp - \rfc)\|_s = a^2
                      &\implies \frac{d}{dt}\left\|\pi^s\bigl(g^t(\fp) - \rfc\bigr)\right\|_s\Big|_{t=0}<0.
\end{align}
Thus the flow has no internal tangencies to the boundary \(\pd \cQ_a\), \emph{i.e.,} for every \(\fq\in\pd Q_a\), there exists \(\tau_0>0\) such that \(g^{(0,\tau_0)}(\fq) \subset \R^6\setminus \cQ_a\) or \(g^{(-\tau_0,0)}(\fq) \subset \R^6\setminus \cQ_a\) (or both).  It follows that for every \(\fp\in \cQ_a\), the exit time
\[
  T_a(\fp) \stackrel{\rm def}{=} \inf \{t>0 \mid g^t(\fp) \not\in \cQ_a\}
\]
is well defined.  If \(g^t(\fp)\in \cQ_a\) for all \(t\geq0\), then we define \(T_a(\fp) = +\infty\).  We note that the exit time \(T_a : \cQ_a\to [0,\infty]\) is a continuous function.

\subsection{A slice within \texorpdfstring{\(\wug\)}{Wu(GF)} transverse to the \texorpdfstring{\(\bgk\)}{B\"ohm} soliton}
\label{sec-slice-chosen}
Let \(\bgk(t_a)\) be a point on the B\"ohm soliton with \(t_a\) so large that \(\bgk(t_a)\) lies in the interior of \(\cQ_a\).  Since \(\wug\transv\wsr\), there is a two-dimensional slice \(\Sigma_a:\mathbb D^2\hookrightarrow \wug\cap\cQ_a\) with \(\Sigma_a(0) = \bgk(t_a)\), and such that \(\Sigma_a\transv \wsr\).  We will abuse notation and identify the map \(\Sigma_a\) with its image, and write \(\Sigma_a\) for both.

We choose \(\Sigma_a\) so small that the only intersection of \(\Sigma_a\) and \(\wsr\) is the point \(\bgk(t_a)\) on the B\"ohm soliton.  It follows that \(T_a(\fp) < \infty\) for all \(\fp\in\Sigma_a\setminus\{\bgk(t_a)\}\).

Orbits in \(\cQ_a\) exit either through the sides \(\sigma=\pm a^2\), the sides \(J=\pm a^2\), or through the edges on which \(\sigma=\pm a^2\) and \(J=\pm a^2\) both hold.  If the orbit starting at \(\fp\in\Sigma_a\) exits through a point on \(\pd\cQ_a\) with \(\sigma=+a^2\), then we have a simple expression for the exit time in terms of the \(\sigma\) coordinate of \(\fp\).  Namely, since \(a^2 = \sigma\big(g^{T_a(\fp)}\fp\big) = e^{2T_a(\fp)}\sigma(\fp)\), we have
\[
  T_a(\fp) = \log a - \frac12 \log \sigma(\fp).
\]

\begin{figure}[h]
  \centering \includegraphics[width=0.7\textwidth]{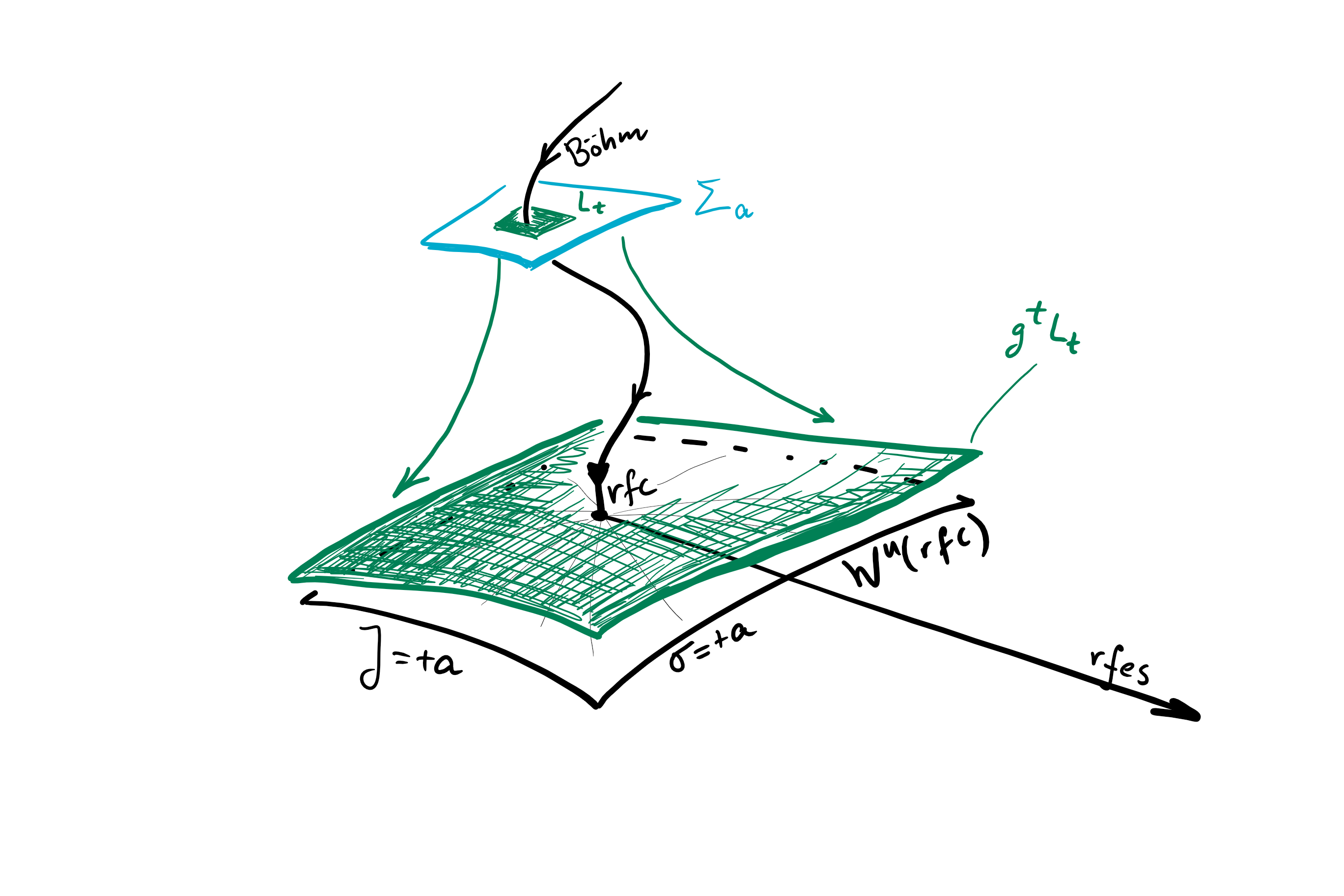}
  \caption{Application of the \(\lambda\)-lemma to a slice \(\Sigma_a\) transverse to \(\bgk\).  For all large enough \(t\), there is a subset \(L_t\subset\Sigma_a\) such that \(g^tL_t\) is \(C^1\) close to the given rectangle \(\wur\cap\{|\sigma|\leq a^2, |J|\leq a^2\}\).}
  \label{fig-apply-lambda}
\end{figure}

\subsection{Application of the \texorpdfstring{\(\lambda\)}{lambda}-lemma}
\label{sec-apply-lambda}
In general, if \(g^t\) is a smooth local flow on a manifold \(\mc M\) with a hyperbolic fixed point \(p\in \mc M\), then the flow has smooth immersed stable and unstable manifolds \(W^s(p)\) and \(W^u(p)\), respectively.  Jacob Palis' \(\lambda\)-lemma \cite{Pal68} states that if a smooth submanifold \(\Sigma_a\subset \mc M\) with the same dimension as \(W^u(p)\) is transverse to \(W^s(p)\), and if \(K\subset W^u(p)\) is compact, then for any \(\eta>0\) and all sufficiently large \(t>0\), there is a compact subset \(L_t\subset \Sigma_a\) such that \(g^t(L_t)\) is \(\eta\)-close to \(K\) in \(C^1\).

We intend to apply this lemma to the slice \(\Sigma_a\) and the fixed point \(\rfc\).  The \(\lambda\)-lemma as we have just stated it is a local result, so we may consider only those parts of \(\wur\) and \(g^t\Sigma_a\) that remain in the neighborhood \(\cQ_a\).

For small \(a>0\), the intersection of \(\wur\) with \(\cQ_a\) is
\[
  \mc R_a^u \stackrel{\rm def}{=} \wur \cap \cQ_a = \{\fp\in\wur : |\sigma(\fp)|\leq a^2, \ |J(\fp)|\leq a^2\},
\]
which we will take as our compact set $K$ and whose boundary is diffeomorphic to a rectangle
\[
  \wur\cap\pd\cQ_a = \{\fp\in\wur : \sigma=\pm a^2, |J|\leq a^2\} \cup \{\fp\in\wur : J=\pm a^2, |\sigma|\leq a^2\}.
\]
Using our analytic local parametrization \(\fw^u\) of \(\wur\), we parametrize the edge of \(\wur\cap\pd\cQ_a\) on which \(\sigma=+a^2\) by an analytic curve
\[
  \wur\cap\pd\cQ_a\cap\{\sigma=a^2\} = \{\fw^u(a^2, j) \mid -a^2\leq j\leq a^2\}.
\]

For large \(T\) and small \(a>0\), we consider the intersection \(g^T\Sigma_a \cap \partial \cQ_a\), or more precisely,
\[
  \cC_{a,T} = \{g^T\fp \mid \fp\in\Sigma_a\text{ and } T=T_a(\fp)\}.
\]
The \(\lambda\)-lemma implies that for any \(\eta>0\), there is a \(t_\eta>0\) such that \(\mc C_{a,T}\) is \(\eta\)-close in \(C^1\) to \(\cR_a^u\) for all \(T\geq t_\eta\).  In fact, \(\cC_{a,T}\) is a graph of a \(C^1\) function over \(\cR_a^u\).

\begin{lemma}\label{lem-qjT-defined-lambda-applied}
  For all \(j\in[-a^2,a^2]\) and sufficiently large \(T>0\), there exists a unique \(\fq(j, T) \in\wug\cap\pd\cQ_a\) with
  \[
    J(\fq(j, T)) = j, \qquad \sigma(\fq(j, T)) = a^2, \qquad g^{-T}\fq(j, T) \in \Sigma_a.
  \]
  The map \((j, T)\mapsto \fq(j, T)\) is smooth.

  As \(T\to\infty\), one has
  \[
    \fq(j, T)\to \fw^u(a^2, j) \qquad\text{ and }\qquad \frac{\pd\fq(j,T)}{\pd j} \to \frac{\pd\fw^u(a^2,j)}{\pd j}
  \]
  uniformly for \(|j|\leq a^2\).
\end{lemma}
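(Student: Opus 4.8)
The plan is to deduce the whole statement from the $C^1$ conclusion of the $\lambda$-lemma (\S\ref{sec-apply-lambda}) by transporting it back through the flow, so the first task is to install convenient coordinates on $\Sigma_a$. Near $\bgk(t_a)$ the manifold $\wsr$ coincides with the codimension-two submanifold $\{\sigma=J=0\}$, and $\Sigma_a$ is a two-dimensional disk meeting it transversally \emph{only} at $\bgk(t_a)$; hence $\dd(\sigma,J)$ restricts to an isomorphism of $T_{\bgk(t_a)}\Sigma_a$ onto $\R^2$, and after shrinking $\Sigma_a$ I may use $(\sigma,J)$ as analytic coordinates identifying $\Sigma_a$ with a neighbourhood of $(0,0)$, with $\bgk(t_a)\leftrightarrow(0,0)$. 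For large $T$, define $\gamma_T\subset\Sigma_a$ to be the set of points whose $\sigma$-coordinate equals $a^2e^{-2T}$ and whose forward orbit leaves $\cQ_a$ through the face $\{\sigma=+a^2\}$. By the computation in \S\ref{sec-slice-chosen}, every $\fp\in\gamma_T$ has exit time $T_a(\fp)=\log a-\tfrac12\log\sigma(\fp)=T$, so $g^T$ sends $\gamma_T$ into $\pd\cQ_a\cap\{\sigma=a^2\}$; write $\Theta_T\isdef g^T(\gamma_T)$.

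Next I would apply the $\lambda$-lemma with the hyperbolic point $\rfc$, the transverse slice $\Sigma_a$, and the compact set $K=\wur\cap\cQ_a$. Because $\sigma\circ g^T=e^{2T}\sigma$ exactly, $g^T$ carries the level $\{\sigma=a^2e^{-2T}\}$ onto $\{\sigma=a^2\}$; combining this with the $C^1$ conclusion of the lemma shows that, for all large $T$, $\Theta_T$ is a connected arc that is $C^1$-close to the analytic edge $\wur\cap\pd\cQ_a\cap\{\sigma=a^2\}=\{\fw^u(a^2,j):|j|\le a^2\}$. On that edge $J$ restricts to the coordinate $j$, so $\dd J\neq0$ along it; by $C^1$-closeness the same holds along $\Theta_T$, and --- after a slight fattening of $\Sigma_a$ so that $\Theta_T$ overshoots the corners $|J|=a^2$ --- $J$ maps $\Theta_T$ diffeomorphically onto an interval containing $[-a^2,a^2]$. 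Define $\fq(j,T)$ to be the point of $\Theta_T$ with $J=j$; by construction $\fq(j,T)\in\wug\cap\pd\cQ_a$, $\sigma(\fq(j,T))=a^2$, and $g^{-T}\fq(j,T)\in\gamma_T\subset\Sigma_a$.

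For uniqueness within $\wug\cap\pd\cQ_a$: if $\fq'$ satisfies $J(\fq')=j$, $\sigma(\fq')=a^2$, and $\fp'\isdef g^{-T}\fq'\in\Sigma_a$, then $\sigma(g^t\fp')=a^2e^{2(t-T)}$ first reaches $a^2$ at $t=T$, and the isolating-block structure of $\cQ_a$ then forces $T_a(\fp')=T$ and exit through $\{\sigma=a^2\}$, i.e. $\fp'\in\gamma_T$; since $J$ is injective on $\Theta_T$ and $g^T$ is injective, $\fp'=g^{-T}\fq(j,T)$ and thus $\fq'=\fq(j,T)$. Smoothness of $(j,T)\mapsto\fq(j,T)$ is then the implicit function theorem: parametrizing $\gamma_T$ by $b\mapsto\fp(b,T)$ (the point with coordinates $(a^2e^{-2T},b)$, jointly smooth in $(b,T)$) and setting $\Xi(b,T)\isdef J\bigl(g^T\fp(b,T)\bigr)$, one has $\pd_b\Xi\neq0$, so there is a smooth $b(j,T)$ with $\Xi(b(j,T),T)=j$ and $\fq(j,T)=g^T\fp\bigl(b(j,T),T\bigr)$. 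Finally, $\Theta_T$ and its $C^1$-limit are both graphs parametrized by the common coordinate $J$ with nowhere-vanishing derivative, so $C^1$-convergence of the curves is equivalent to $C^1$-convergence of their parametrizations; that is exactly $\fq(j,T)\to\fw^u(a^2,j)$ together with $\pd_j\fq(j,T)\to\pd_j\fw^u(a^2,j)$, uniformly for $|j|\le a^2$.

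The delicate point is the second paragraph: reconciling the isolating-block geometry of $\cQ_a$ with the slice $\Sigma_a$ so that (i) the points of $\gamma_T$ genuinely exit through the $\{\sigma=a^2\}$ face precisely at time $T$ (and do not leave and re-enter earlier), and (ii) after the mild enlargement of $\Sigma_a$ the exit arc $\Theta_T$ sweeps the \emph{entire} parameter window $[-a^2,a^2]$ rather than slightly less. Everything else --- uniqueness, smoothness, and the $C^1$ limit --- is a formal consequence of the injectivity of $J$ along $\Theta_T$, the implicit function theorem, and the $\lambda$-lemma, respectively.
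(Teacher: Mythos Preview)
Your argument is correct and is essentially the same approach as the paper's: the paper's proof consists of the single sentence ``This follows from the \(\lambda\)-lemma,'' relying on the setup immediately preceding the lemma (where \(\cC_{a,T}\) is declared to be a \(C^1\) graph over \(\cR_a^u\)), and your write-up is a careful unpacking of exactly that claim, using \((\sigma,J)\) as coordinates on \(\Sigma_a\) and \(J\) as the parametrizing coordinate on the exit arc. The two ``delicate points'' you flag at the end are precisely the content that the paper leaves implicit in its appeal to the isolating-block structure and the \(\lambda\)-lemma's \(C^1\) conclusion.
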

\begin{proof}
  This follows from the \(\lambda\)-lemma.
\end{proof}
The map \((j,T)\mapsto \fq(j,T)\) is well-defined and smooth for \(|j|\leq a^2\) and \(T_0\leq T<\infty\), if \(T_0\) is sufficiently large.  If we now extend \(\fq\) by setting
\[
  \fq(j,\infty) = \fw^u(a^2, j),
\]
then \(\fq:[-a^2, a^2]\times [T_0,\infty]\to\R^6\) is a continuous map.

This lemma provides an approximation for the orbits in \(\wug\) that pass close to the \(\rfc\) as they exit the isolating block \(\cQ_a\).  The description can be improved by considering the \((x_{12}, y_{12})\)-component of \(\fq(j,T)\).  While it is much smaller than \(\fq(j, T) - \fw^u(a^2, T)\), it satisfies a set of homogeneous linear equations, and this allows us to give a precise estimate for \((x_{12}, y_{12})\) at exit points in \(\wug\cap\pd \cQ_a\).

We therefore consider orbit segments \(\{g^t\fp\}\) of the flow that spend a long time in the block \(\cQ_a\).  Our first observation is that such orbits must be close to the stable and unstable manifolds of the fixed point \(\rfc\).

\begin{lemma}
  \label{lem-close-to-invariant-mfds}
  Let \(\fp_i\in \Sigma_a\) be a sequence of points with \(\fp_i\to\bgk(t_a)\).  Define \(T_i = T_a(\fp_i)\) and assume that \(\fq_i=g^{T_i}(\fp_i)\) converges to \( \fq\in \pd \cQ_a\) .  Then the orbit segments starting at \(\fp_i\) and ending at the exit point \(\fq_i\) converge in the Hausdorff metric:
  \[
    g^{[0,T_i]}(\fp_i) \longrightarrow \bgk\left([t_a, \infty)\right) \cup \{\rfc\} \cup g^{(-\infty, 0]}(\fq).
  \]
\end{lemma}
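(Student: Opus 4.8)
The plan is to run a compactness argument built around the three pieces of the claimed limit set $L := \bgk([t_a,\infty)) \cup \{\rfc\} \cup g^{(-\infty,0]}(\fq)$, using only continuous dependence on initial data together with the hyperbolic structure already set up near $\rfc$. First I would record that $\bgk(t_a)\in\wsr$, so its forward orbit never leaves $\cQ_a$ and $T_a(\bgk(t_a))=+\infty$; since $T_a$ is continuous, this forces $T_i = T_a(\fp_i)\to\infty$. Next I would check that the limiting exit point $\fq$ lies on $\wur$: for each fixed $t\ge 0$ and all large $i$ we have $T_i\ge t$, so $g^{-t}(\fq_i) = g^{T_i-t}(\fp_i)\in\cQ_a$, and letting $i\to\infty$ shows that $g^{-t}(\fq)$ is defined and lies in $\cQ_a$ for every $t\ge 0$ (the escape-from-compact criterion applies); since $\cQ_a\subset U$ and $U$ was chosen so that any point whose backward orbit stays in $U$ lies in $\wur$, we get $\fq\in\wur$, hence $g^{-t}(\fq)\to\rfc$ as $t\to\infty$.

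The heart of the argument is a \emph{no-long-transients} estimate. Because $\cQ_a\subset U$, any point whose full orbit stays in $\cQ_a$ lies in $\wsr\cap\wur$, which for $a$ small enough is exactly $\{\rfc\}$ (the local stable and unstable manifolds of a hyperbolic fixed point meet only there); thus $\{\rfc\}$ is the maximal invariant set of $g^t$ in the compact block $\cQ_a$. From this I would deduce, by the usual contradiction-plus-compactness argument, that for every $\varepsilon>0$ there is an $N=N(\varepsilon)$ such that whenever $g^{[0,T]}(\fp)\subset\cQ_a$ with $T\ge 2N$, one has $g^t(\fp)\in B_\varepsilon(\rfc)$ for all $t\in[N,T-N]$. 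After possibly enlarging $N$, I would also arrange $\bgk([t_a+N,\infty))\subset B_\varepsilon(\rfc)$ and $g^{(-\infty,-N]}(\fq)\subset B_\varepsilon(\rfc)$, using the two limits onto $\rfc$ just established.

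With $N$ fixed, I would split the orbit segment (for $i$ large enough that $T_i>2N$) into the initial piece over $[0,N]$, the middle piece over $[N,T_i-N]$, and the final piece over $[T_i-N,T_i]$. Continuous dependence on the compact interval $[0,N]$, together with $\fp_i\to\bgk(t_a)$, gives $g^{[0,N]}(\fp_i)\to\bgk([t_a,t_a+N])$ in the Hausdorff metric; continuous dependence on $[-N,0]$, applied to $\fq_i\to\fq$, gives $g^{[T_i-N,T_i]}(\fp_i)=g^{[-N,0]}(\fq_i)\to g^{[-N,0]}(\fq)$; and the no-long-transients estimate puts the middle piece inside $B_\varepsilon(\rfc)$. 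Assembling these, together with the two tails $\bgk([t_a+N,\infty))$ and $g^{(-\infty,-N]}(\fq)$ both contained in $B_\varepsilon(\rfc)$ and the middle piece meeting $B_\varepsilon(\rfc)$, shows in both directions that the Hausdorff distance between $g^{[0,T_i]}(\fp_i)$ and the (compact) set $L$ is at most a fixed multiple of $\varepsilon$ for $i$ large; since $\varepsilon>0$ was arbitrary, the asserted convergence follows.

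I expect the main obstacle to be the no-long-transients step: verifying cleanly that the maximal invariant set in $\cQ_a$ reduces to $\{\rfc\}$ (which requires choosing $a$ small enough that $\wsr\cap\cQ_a$ and $\wur\cap\cQ_a$ are the local invariant manifolds and meet only at $\rfc$) and then extracting from this the uniform dwelling-near-$\rfc$ estimate via a limiting argument. The remaining inputs — continuity of $T_a$, the identification $\fq\in\wur$, and continuous dependence on compact time intervals — are routine and already prepared in the preceding subsections.
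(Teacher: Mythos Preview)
Your argument is correct and rests on the same mechanism the paper uses: the only complete orbit of \(g^t\) contained in the block \(\cQ_a\) is the fixed point \(\rfc\), so any orbit segment that stays in \(\cQ_a\) for a long time must be near \(\rfc\) away from its endpoints. The paper packages this more tersely as a pure subsequence argument: take any convergent sequence \(\mf m_i=g^{t_i}\fp_i\to\mf m\) along the orbit segments and split into the three cases \(t_i\) bounded, \(T_i-t_i\) bounded, or both unbounded; the third case is exactly your no-long-transients lemma, proved by the same contradiction-plus-compactness you describe. Your explicit three-piece \(\varepsilon\)--\(N\) decomposition is the constructive unfolding of that subsequence argument and has the small bonus of handling both Hausdorff inclusions symmetrically, whereas the paper only writes out the harder (\(\limsup\)) direction and leaves the easy direction implicit.
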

\begin{proof}
  Let \(\mf m_i = g^{t_i}\fp_i\) be a subsequence of points on the orbit \(g^{[0,T_i]}(\fp_i)\) for which \(\mf m = \lim \mf m_i\) exists.  We will show that \(\mf m\in \bgk\left([t_a, \infty)\right) \cup \{\rfc\} \cup g^{(-\infty, 0]}(\fq)\).

  If \(t_i\) is bounded, then we may assume that \(t_i\to t_*\) for some \(t_*\geq 0\), and thus \(\mf m_i = g^{t_i}\fp_i \to g^{t_*}\bgk(t_a)\).  In this case we have \(\mf m\in\bgk([t_a, \infty))\).

  If on the other hand \(t_i' = T_i-t_i\) is bounded, then we may assume that \(-t_i'\to t_*\geq 0\), and we find that \(g^{t_i}\fp_i = g^{-t_i'}\fq_i \to g^{-t_*}\fq\).  In this case \(\mf m\in g^{(-\infty, 0]}\fq\).

  The remaining possibility is that both \(t_i\to\infty\) and \(T_i-t_i\to\infty\).  In this situation we have \(g^{[-t_i, T-t_i]}(\mf m_i) \subset \cQ_a \) for all \(i\geq1\) so that in the limit we find that the entire orbit \(\{g^t\mf m\mid t\in\R\}\) lies in \(\cQ_a\).  The only \(\mf m\in\cQ_a\) with this property is \(\mf m=\rfc\).
\end{proof}

\begin{lemma}
  \label{lem-zetajT-at-exit-of-Qa}
  There exist a constant \(C_*>0\) and a continuous complex-valued function \(\epsilon_a(j, T)\) such that
  \begin{equation}
    \label{eq-zetajT-at-exit-of-Qa}
    \zeta\bigl(\fq(j, T)\bigr) = 
    (1+\epsilon_a(j,T)) e^{-(A-i\Omega)T} \zeta\bigl(\bgk(t_a)\bigr),
  \end{equation}
  where
  \[
    |\epsilon_a(j,T)|\leq C_*a^2
  \]
  for \(j\in[-a^2, a^2]\), all sufficiently large \(T<\infty\), and all small enough \(a>0\).
\end{lemma}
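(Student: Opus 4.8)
\emph{Proof plan.} The idea is to collapse the difference subsystem into a single scalar nonautonomous equation for $\zeta$ whose linear part at the $\rfc$ is exactly the complex eigenvalue $-(A-i\Omega)$, and then to run a Gronwall estimate that exploits the hyperbolicity of the $\rfc$ to bound the error uniformly in $T$. Along the full soliton flow the difference variables obey the matrix equation~\eqref{eq-difference-system-matrix-form}; writing that coefficient matrix as $M_0+M_1$, where $M_0=\tmat 0&-2(n-1)\\1&-(n-1)\trix$ is the linearization on the difference plane at the $\rfc$ (eigenvalues $-A\pm i\Omega$) and every entry of $M_1$ is one of $y,\xi,\gamma,\lambda s^2,y_{12}$ (all of which vanish at the $\rfc$), a direct computation using $A^2+\Omega^2=2(n-1)$ yields
\[
  \zeta' = -(A-i\Omega)\,\zeta + R, \qquad |R|\le C\,d(\cdot)\,|\zeta|,
\]
where $d(\fp)=\|\fp-\rfc\|$, $C=C(n,p_1,p_2)$, and we have used that $\zeta$ is a fixed real-linear isomorphism of $(x_{12},y_{12})$ onto $\C$, so $|\zeta|\asymp|x_{12}|+|y_{12}|$. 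Putting $u(t)=e^{(A-i\Omega)t}\zeta(g^t\fp)$ turns this into $|u'(t)|\le C\,d(g^t\fp)\,|u(t)|$ along any orbit that stays in $\cQ_a$.

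The crux is the uniform bound $\int_0^{T}d(g^t\fp)\,\dd t\le C_*a^2$ for every segment $g^{[0,T]}(\fp)$ with $\fp\in\Sigma_a$ and $T=T_a(\fp)$, with $C_*$ independent of $\fp,T$ and of $a$ for $a$ small. Estimating $d$ crudely by $a^2$ is useless, since it would leave an $e^{Ca^2T}$ in Gronwall; instead one splits $d(t)\le\|\pi^s(g^t\fp-\rfc)\|_s+\|\pi^u(g^t\fp-\rfc)\|$. Because $\cQ_a$ is an isolating block and $\dd X_\rfc$ contracts at rate $\ge1$ on $E^s$ in the chosen norm, for $a$ small the stable part decays forward, $\|\pi^s(g^t\fp-\rfc)\|_s\le a^2e^{-t/2}$; running the flow backward from the exit point $\fq:=g^{T}\fp$, where $|\sigma(\fq)|=a^2$ and $|J(\fq)|\le a^2$ force $\|\pi^u(\fq-\rfc)\|\le Ca^2$, and using $\sigma'=2\sigma$ together with the $+2$ growth of the remaining unstable coordinate $J$ (up to nonlinear corrections that are small in $\cQ_a$), one gets $\|\pi^u(g^t\fp-\rfc)\|\le Ca^2e^{-(T-t)}$; integrating the two exponentials gives the claim. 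Gronwall applied to $|u'|\le Cd(t)|u|$ then yields $\sup_{[0,T]}|u|\le(1+\cO(a^2))|u(0)|$ and $|u(T)-u(0)|\le C\int_0^T d\,|u|\,\dd t\le\cO(a^2)|u(0)|$; since $\zeta$, hence $u$, is nonzero near $\bgk(t_a)$, unwinding gives
\[
  \zeta(\fq(j,T)) = (1+\epsilon')\,e^{-(A-i\Omega)T}\,\zeta\bigl(\fp(j,T)\bigr), \qquad |\epsilon'|\le C_*a^2,
\]
with $\fp(j,T):=g^{-T}\fq(j,T)\in\Sigma_a$, uniformly for $|j|\le a^2$.

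It remains to replace $\zeta(\fp(j,T))$ by $\zeta(\bgk(t_a))$ to the same precision. The point $\bgk(t_a)$ is the unique point of $\Sigma_a\cap\wsr$, and near it $\wsr=\{\sigma=J=0\}$, so $(\sigma,J)$ are analytic coordinates on $\Sigma_a$ there. Running $\sigma'=2\sigma$ and $J'=2J+\cO(\sigma)$ backward from $\fq(j,T)$ gives $\sigma(\fp(j,T))=a^2e^{-2T}$ and $J(\fp(j,T))=je^{-2T}+\cO(a^2e^{-2T})$, hence $\|\fp(j,T)-\bgk(t_a)\|\le Ca^2e^{-2T}$; since $\zeta$ is a fixed linear form in $(x_{12},y_{12})$, this gives $|\zeta(\fp(j,T))-\zeta(\bgk(t_a))|\le Ca^2e^{-2T}$. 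On the other hand $\zeta$ does not vanish identically along the real-analytic orbit $\bgk$ (because $\bgk\not\subset\equalradii$), and applying the $u$-estimate to the B\"ohm tail $\{\bgk(t):t\ge t_a\}\subset\cQ_a$ — along which $\int_{t_a}^\infty d\,\dd t\le C_*a^2$, as $\bgk(t)\to\rfc$ exponentially and $\bgk(t_a)-\rfc\in E^s$ — shows $\zeta(\bgk(t_a))\neq0$ with $|\zeta(\bgk(t_a))|\ge c\,e^{-At_a}$ for a fixed $c>0$. Thus once $T\ge T_0(a)$ is large enough that $Ca^2e^{-2T}\le C_*a^2|\zeta(\bgk(t_a))|$, we obtain $\zeta(\fp(j,T))=(1+\epsilon'')\zeta(\bgk(t_a))$ with $|\epsilon''|\le C_*a^2$, and combining with the previous display proves~\eqref{eq-zetajT-at-exit-of-Qa} with $\epsilon_a=\epsilon'+\epsilon''+\epsilon'\epsilon''$.

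The main obstacle, I expect, is the uniform-in-$T$ estimate $\int_0^{T}d(g^t\fp)\,\dd t\le C_*a^2$: this is exactly the place where soft bounds fail and one must use the in/out exponential behavior near the hyperbolic fixed point, with constants uniform over all orbits through $\cQ_a$ and over $a$, which takes care in absorbing the nonlinear corrections to the linearized flow. A secondary subtlety is the precision demanded in the last step — the comparison of $\zeta(\fp(j,T))$ with $\zeta(\bgk(t_a))$ must be accurate to multiplicative order $1+\cO(a^2)$, forcing one to track that $\fp(j,T)\to\bgk(t_a)$ at the super-exponential rate $e^{-2T}$, much faster than the $T$-independent size $\asymp e^{-At_a}$ of $\zeta(\bgk(t_a))$.
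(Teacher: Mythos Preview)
Your approach is essentially the same as the paper's: reduce the difference system to a scalar equation for \(\zeta\) with linear part \(-(A-i\Omega)\) and remainder controlled by the distance to \(\rfc\), then use a Gronwall argument once you have the uniform integral bound \(\int_0^T d(g^t\fp)\,\dd t\le C_*a^2\). The paper isolates exactly this integral bound as a standalone result (the ``Analysis Lemma'' in Appendix~\ref{AnalysisAppendix}), proved by variation of constants and a Green's function comparison rather than by the direct stable/unstable in--out split you sketch; your sketch captures the right idea but the coupling between the stable and unstable projections through the nonlinear terms is precisely the care you anticipate, and the paper handles it separately.

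For the final step --- replacing \(\zeta(\fp(j,T))\) by \(\zeta(\bgk(t_a))\) --- you work harder than necessary. The paper simply uses that \(\fp(j,T)\to\bgk(t_a)\) as \(T\to\infty\) (Lemma~\ref{lem-close-to-invariant-mfds}, from the \(\lambda\)-lemma setup) together with continuity of \(\zeta\): since \(\zeta(\bgk(t_a))\neq0\), there is a \(T_a\) such that \(\bigl|\log\frac{\zeta(\fp(j,T))}{\zeta(\bgk(t_a))}\bigr|\le a^2\) for all \(T\ge T_a\). Your quantitative \(e^{-2T}\) rate and the lower bound \(|\zeta(\bgk(t_a))|\ge ce^{-At_a}\) are correct but not needed; the statement allows \(T\) to be ``sufficiently large'' depending on \(a\), so bare continuity suffices.
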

\begin{proof}
  Equation \eqref{eq-zetajT-at-exit-of-Qa} effectively defines the function \(\epsilon_a(j,T)\), so we really only have to prove that \(|\epsilon_a|\leq C_* a^2\) for some constant \(C_*\)

  Consider \(\fp(j,T) = g^{-T}(\fq(j,T)) \in \Sigma_a\).  Then the orbit segment \(g^{[0,T]}(\fp(j,T))\) is contained in \(\cQ_a\), and if \(T\) is sufficiently large, then Lemma~\ref{lem-close-to-invariant-mfds} implies that \(\fp(j, T)\) will be close to \(\bgk(t_a)\).

  By rewriting the difference system \eqref{eq-difference-system-matrix-form} in terms of the complex quantity \(\zeta\), we find that along the orbit segment \(g^{[0,T]}(\fp(j, T))\), one has
  \[
    \zeta' = M\bigl(g^t\fp(j,T)\bigr) \zeta + N\bigl(g^t\fp(j,T)\bigr)\bar\zeta,
  \]
  where \(M, N\) are complex-valued functions that satisfy
  \[
    M(\fp) = -A+i\Omega + \cO(\|\fp-\rfc\|) \qquad\mbox{ and }\qquad N(\fp) =\cO\bigl(\|\fp-\rfc\|\big).
  \]
  Since \(\fp(j,T)\) is close to \(\bgk(t_a)\), the \(\zeta\) component \(\zeta\bigl(\fp(j, T)\bigr)\) is close to \(\zeta(\bgk(t_a))\).

  For \(t\geq 0\), we then have
  \begin{equation} \label{eq-log-zeta-growth} \frac{d}{dt}\log \zeta\bigl(g^t\fp(j,T)\bigr) = -A+i\Omega + \cO(\|g^t\fp(j,T) - \rfc\|).
  \end{equation}
  The Analysis Lemma~\ref{lem-AnalysisLemma} in Appendix~\ref{AnalysisAppendix} implies that there is a constant \(C>0\) that does not depend on \(a\) such that
  \[
    \int_0^T \left\|g^t\fp(j,T) - \rfc \right\|\,\mr dt \leq C \sup_{0\le t\le T} \left\|g^t\fp(j,T) - \rfc \right\| \leq Ca^2,
  \]
  for all sufficiently small \(a\), all \(j\in[-a^2, a^2]\), and all sufficiently large \(T\) (depending on \(a\)).

  Upon integrating \eqref{eq-log-zeta-growth}, we find that
  \[
    \log\frac{\zeta\bigl(\fq(j, T)\bigr)}{\zeta\bigl(\fp(j, T)\bigr)} = -(A-i\Omega)T + \cO(a^2).
  \]
  For large enough \(T\), the initial point \(\fp(j, T)\) is arbitrarily close to \(\bgk(t_a)\). So there exists \(T_a<\infty\) such that for \(T\geq T_a\), we have
  \[
    \left| \log\frac{\zeta\bigl(\fp(j, T)\bigr)} {\zeta\bigl(\bgk(t_a)\bigr)} \right| \leq a^2.
  \]
  Combining the last two estimates then shows us that
  \[
    \left| \log\frac{\zeta\bigl(\fq(j, T)\bigr)} {\zeta\bigl(\bgk(t_a)\bigr)} +(A-i\Omega)T \right| \leq Ca^2,
  \]
  which implies that \(|\epsilon(j,T)|\leq C_*a^2\) for some constant \(C_*\).
\end{proof}

\section{Continuation of the soliton flow from \(s=a\) to \(s\sim \ell\)}
\label{sec:s-goes-to-ell}
\subsection{Definition of \(\fq(s;j,T)\)}
We consider orbits \(\{g^t\bigl(\fq(j, T)\bigr) \mid t\geq 0 \}\) of the soliton flow for sufficiently small \(j\) and sufficiently large \(T\), and follow them until they reach the hyperplane \(s=\ell\) for some fixed large \(\ell\gg a\).  For \(t>0\), the \(s\)-coordinate of \(g^t\bigl(\fq(j,T)\bigr)\) is \(s=e^t a\), so we consider the orbit \(g^t\bigl(\fq(j, T)\bigr)\) for \(t\in[0, \log\frac{\ell}{a}]\).  We simplify notation by writing
\begin{equation}
  \label{eq-qsjT-defined}
  \fq(s; j,T) \isdef g^{\log(s/a)}\bigl(\fq(j,T)\bigr).
\end{equation}
Similarly we write
\[
  \fq(s;j,\infty) \isdef g^{\log(s/a)} \bigl(\fq(j,\infty)\bigr)
\]
for the result of flowing \(\fq(j,\infty) = \fw^u(a^2,j)\) by \(\log s/a\).

\begin{lemma} \label{lem-qsjt-exists} There exist \(\iota_{a,\ell}>0\) and \(T_{a,\ell}<\infty\) such that \(\fq(s;j,T)\) is defined for all \(j\in(-\iota_{a,\ell}, \iota_{a,\ell})\), \(T\in[T_{a,\ell},\infty]\), and \(s\in[a,\ell]\).  Furthermore, the limit
  \[
    \fq(s;j,\infty)=\lim_{T\to\infty} \fq(s;j, T)
  \]
  exists $C^1$-uniformly for \(|j|\leq \iota_{a,\ell}\) and \(a\leq s\leq \ell\).
\end{lemma}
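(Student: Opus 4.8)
The plan is to reduce the entire statement to continuous dependence of the soliton flow on initial conditions over the \emph{fixed, finite} flow-time $\log(\ell/a)$, using that at $j=0$ the relevant orbit is completely explicit. Recall that $\fw^u(\sigma,0)$, $\sigma>0$, parametrizes the $\rfes$, which is the globally defined orbit $\tau\mapsto(n-1,0,n-1,0,-n,e^{2\tau})$ of~\eqref{eq-BlowUpSystem-Polynomial}; consequently $\fq(s;0,\infty)=g^{\log(s/a)}\bigl(\fw^u(a^2,0)\bigr)$ is the $\rfes$ point with $\sigma=s^2$ and is defined for every $s>0$. Since the flow domain $\{(\fp,t):T_-(\fp)<t<T_+(\fp)\}$ is open in $\R^7$ and contains $(\fw^u(a^2,0),t)$ for all $t\in\R$, the tube lemma provides an open ball $V$ around $\fw^u(a^2,0)$, of some radius $\rho>0$ and with $\overline{V}$ compact, such that $\overline{V}\times[0,\log(\ell/a)]$ lies in the flow domain, so that $g^t$ is defined and — being the soliton flow — real analytic in $(\fp,t)$ there. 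Everything then hinges on getting $\fq(j,T)$ into $V$ for $|j|$ small and $T$ large.

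To choose the constants I would first use continuity of the analytic map $\fw^u$ to pick $\iota_{a,\ell}\in(0,a^2]$ with $\|\fw^u(a^2,j)-\fw^u(a^2,0)\|<\rho/2$ for $|j|<\iota_{a,\ell}$; this already puts $\fq(j,\infty)=\fw^u(a^2,j)$ in $V$. Next, by Lemma~\ref{lem-qjT-defined-lambda-applied} the convergence $\fq(j,T)\to\fw^u(a^2,j)$ is uniform for $|j|\le a^2$, so there is $T_{a,\ell}<\infty$ — which we take at least as large as the threshold $T_0$ of that lemma — with $\|\fq(j,T)-\fw^u(a^2,j)\|<\rho/2$ for $|j|\le a^2$ and $T\ge T_{a,\ell}$. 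The triangle inequality then gives $\fq(j,T)\in V$ for all $|j|<\iota_{a,\ell}$ and $T\in[T_{a,\ell},\infty]$, so, since $\log(s/a)\in[0,\log(\ell/a)]$ for $s\in[a,\ell]$, the point $\fq(s;j,T)=g^{\log(s/a)}\bigl(\fq(j,T)\bigr)$ is defined on the whole stated range. For each fixed $T$ it is real analytic in $(s,j)$, because $\fq(j,T)$ — respectively $\fw^u(a^2,j)$ when $T=\infty$ — is analytic in $j$ and $g^t$ is analytic in $(\fp,t)$.

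For the $C^1$-uniform limit, the plan is to transport the $C^1$-uniform convergence $\fq(j,T)\to\fw^u(a^2,j)$ (which Lemma~\ref{lem-qjT-defined-lambda-applied} supplies, together with $\partial_j\fq(j,T)\to\partial_j\fw^u(a^2,j)$) through the map $g^{\log(s/a)}$. Every base point $\fq(j,T)$, $T\in[T_{a,\ell},\infty]$, stays in the compact set $\overline{V}$, on which $g^t$ and its differential $\dd g^t$, viewed as functions of base point and of $t\in[0,\log(\ell/a)]$, are uniformly continuous; composing this with the uniform convergence of $\fq(j,T)$ gives $\fq(s;j,T)\to\fq(s;j,\infty)$ uniformly in $(s,j)$. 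Differentiating the composition, $\partial_j\fq(s;j,T)=\dd g^{\log(s/a)}\!\cdot\partial_j\fq(j,T)$ with the differential taken at $\fq(j,T)$, so the uniform convergences of $\fq(j,T)$ and of $\partial_j\fq(j,T)$ together with uniform continuity of $\dd g^t$ force $\partial_j\fq(s;j,T)\to\dd g^{\log(s/a)}\!\cdot\partial_j\fq(j,\infty)=\partial_j\fq(s;j,\infty)$ uniformly; and for the $s$-derivative one uses $\partial_s\fq(s;j,T)=\tfrac1s\,X\bigl(\fq(s;j,T)\bigr)$ together with continuity of the polynomial vector field $X$ on the compact set $g^{[0,\log(\ell/a)]}(\overline{V})$. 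I do not expect a serious obstacle here: the only substantive input is the reduction at $j=0$ to the explicit $\rfes$, after which everything is soft ODE continuity. The step needing the most care is bookkeeping the \emph{two} limits — $j\to0$ to land in $V$ and $T\to\infty$ to reach $\fw^u$ — and carrying the uniformity across the composition with the flow; fixing the finite flow-time $\log(\ell/a)$ at the outset is precisely what keeps this routine.
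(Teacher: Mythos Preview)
Your proof is correct and follows essentially the same approach as the paper: reduce to the explicit $\rfes$ orbit at $(j,T)=(0,\infty)$, then invoke continuity of the local flow over the finite time interval $[0,\log(\ell/a)]$ and the $C^1$-uniform convergence $\fq(j,T)\to\fw^u(a^2,j)$ from Lemma~\ref{lem-qjT-defined-lambda-applied}. The paper's own proof is simply a terse version of yours --- it states the $\rfes$ identification and then appeals in one line each to ``continuity of the local flow'' for existence and to smoothness of $g^t$ for the $C^1$ limit --- whereas you have spelled out the tube-lemma step, the two-part triangle inequality, and the chain-rule computation for $\partial_j$ and $\partial_s$.
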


\begin{proof}
  For \(j=0\) and \(T=\infty\), we have
  \begin{equation}
    \label{eq-jzeroTinfty-is-rfec}
    \fq(s; 0,\infty) = (s, 0,0,0,0,0) = \rfes(s)
  \end{equation}
  in \((s, \xi, y, \gamma, x_{12}, y_{12})\) coordinates.  In particular, \(g^{\log s/a}(\fq(0,\infty))\) is defined for all \(s\geq a\).  Continuity of the local flow \(g^t\) implies that \(g^{\log s/a}(\fq(j,T))\) is defined for all \(s\in[a, \ell]\) for any fixed \(\ell>a\), provided \(T\) is large and \(|j|\) is small enough, say for \(|j|\leq \iota_{a,\ell}\) and \(T\geq T_{a,\ell}\).

  As \(T\to\infty\), we saw in Lemma~\ref{lem-qjT-defined-lambda-applied} that \(\fq(j, T) \to \fq(j, \infty) = \fw^u(a^2, j)\).  Since the flow \(g^t\) is smooth, it follows that \(\fq(s;j,T) \to \fq(s; j,\infty)\) as \(T\to\infty\).
\end{proof}

For \(|j|\leq \iota_{a,\ell}\), \(T\geq T_{a,\ell}\), and all \(s\geq\ell\), we write \(\Phi_*(s; j, T)\) and \( \zeta_*(s;j, T)\) for the averaged and difference components of \(\fq(s; j,T)\); in other words, we have
\begin{equation}
  \label{eq-between-a-and-ell}
  \fq(s; j,T) = \bigl(s, \Phi_*(s;j, T), \zeta_*(s;j, T)\bigr)
\end{equation}
in \((s, \xi, y,\gamma,\zeta)\) coordinates.  The coordinate functions \(\Phi_*:[a, \ell]\to\R^3\) and \(\zeta_*:[a, \ell]\to\C\) are solutions of the nonlinear equations~\eqref{eq-rfes-average-nonlin} and~\eqref{eq-rfes-difference-nonlin}, respectively.

\begin{lemma}\label{lem-Phi-at-ell}
  The limits
  \begin{align*}
    \lim_{T\to\infty}\Phi_*(\ell; j, T) &= \Phi\bigl( \fq(\ell;j,\infty) \bigr), \\
    \lim_{T\to\infty} \zeta_*(\ell; j, T) &= 0,
  \end{align*}
  exist \(C^1\)-uniformly for \(|j|\leq\iota_{a,\ell}\).  In particular,
  \[
    \lim_{T\to\infty}\Phi_*(\ell; j, T) = c_2^\pm j\Phi_2^\pm(\ell) + \cO(j^2), \qquad(j\to 0),
  \]
  where \(c_2^\pm\) are as in~\eqref{eq-wu-expansion-at-a}.
\end{lemma}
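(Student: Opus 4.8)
The plan is to read off this lemma from Lemma~\ref{lem-qsjt-exists} together with two structural facts already in hand: that $\wur$ lies inside the equal-radii subspace $\equalradii$, and that the averaged variables along the $\rfes$ are governed by the linear system~\eqref{eq-three-by-three-subsystem}, of which $\Phi_2^\pm$ is a distinguished solution. First I would observe that $\Phi_*(\ell;j,T)$ and $\zeta_*(\ell;j,T)$ are simply the images of $\fq(\ell;j,T)$ under fixed linear maps --- the projection onto the averaged coordinates $(\xi,y,\gamma)$ and the real-linear functional $\zeta = (A+i\Omega)x_{12}-(A^2+\Omega^2)y_{12}$ of~\eqref{eq-Phi-zeta-introduced}, respectively. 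Since Lemma~\ref{lem-qsjt-exists} already provides $C^1$-uniform convergence $\fq(\ell;j,T)\to\fq(\ell;j,\infty)$ for $|j|\le\iota_{a,\ell}$, applying these linear maps gives $C^1$-uniform convergence of $\Phi_*(\ell;j,T)$ to $\Phi\bigl(\fq(\ell;j,\infty)\bigr)$ and of $\zeta_*(\ell;j,T)$ to $\zeta\bigl(\fq(\ell;j,\infty)\bigr)$. The remaining work is purely the identification of these two limit values.

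For the $\zeta$-limit, I would note that $\fq(\ell;j,\infty) = g^{\log(\ell/a)}\bigl(\fw^u(a^2,j)\bigr)$ lies on an orbit through a point of $\wur$; by the Lemma of \S\ref{sec-WuRFC} we have $\wur\subset\equalradii = \{x_{12}=y_{12}=0\}$, and since $\equalradii$ is invariant under the soliton flow the whole orbit remains in $\equalradii$, so $x_{12}=y_{12}=0$ and hence $\zeta=0$ at $\fq(\ell;j,\infty)$. This yields $\lim_{T\to\infty}\zeta_*(\ell;j,T)=0$. (If one prefers not to invoke the structure of $\wur$ directly, one can instead quote Lemma~\ref{lem-zetajT-at-exit-of-Qa}, which gives $\zeta\bigl(\fq(j,T)\bigr)\to0$ as $T\to\infty$, and then run Gr\"onwall on the linear difference system~\eqref{eq-difference-system-matrix-form} along the uniformly bounded family of orbit segments $\{\fq(s;j,T):s\in[a,\ell]\}$ to conclude $|\zeta_*(\ell;j,T)|\le C\,|\zeta(\fq(j,T))|\to0$; but the $\equalradii$ argument is cleaner and also delivers the $C^1$ statement at no extra cost.)

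For the averaged limit and the ``in particular'' clause, I would transport the first-order expansion~\eqref{eq-wu-expansion-at-a-deriv}, namely $\fw^u(a^2,j) = \rfes(a) + c_2^\pm j\,\Phi_2^\pm(a) + \cO(j^2)$ as $j\to0$, forward by $g^{\log(\ell/a)}$. The base point $\rfes(a)$ maps to $\rfes(\ell)$, which has vanishing averaged and difference coordinates; differentiating in $j$ at $j=0$, the leading coefficient is carried by the linearization $\dd g^{\log(\ell/a)}$ along the $\rfes$ orbit, and on the averaged-variable block this linearization is precisely the solution operator of~\eqref{eq-three-by-three-subsystem}, which by construction sends the solution $\Phi_2^\pm$ from $s=a$ to $s=\ell$; thus the coefficient becomes $c_2^\pm j\,\Phi_2^\pm(\ell)$. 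Real-analyticity of $\fw^u$ and of the flow $g^t$ (both established earlier) makes $j\mapsto\fq(\ell;j,\infty)$ analytic near $j=0$, so the remainder is genuinely $\cO(j^2)$; projecting onto the averaged coordinates (the difference part of $\Phi_2^\pm$ vanishes) then gives $\Phi\bigl(\fq(\ell;j,\infty)\bigr) = c_2^\pm j\,\Phi_2^\pm(\ell) + \cO(j^2)$, which together with the first paragraph completes the argument.

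Finally, a word on the main obstacle: there is none of real substance --- the statement is a bookkeeping consequence of Lemma~\ref{lem-qsjt-exists}. The two points that need genuine care are (i) pinning the $\zeta$-limit down to exactly zero rather than merely ``small'', which is exactly where the invariance of $\equalradii$ is used, and (ii) verifying that the leading-order-in-$j$ behaviour of $\fw^u$ at $s=a$ propagates unchanged, in the $\Phi_2^\pm$ basis, to $s=\ell$ through the nonlinear flow --- this rests on recognising~\eqref{eq-three-by-three-subsystem} as the variational equation along the $\rfes$ together with analyticity for control of the $\cO(j^2)$ remainder.
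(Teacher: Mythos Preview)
Your proposal is correct and takes essentially the same approach as the paper, which gives a very terse two-sentence proof invoking continuity of the flow and~\eqref{eq-wu-expansion-at-a}. You have simply spelled out the details the paper leaves implicit --- most notably, why the $\zeta$-limit is exactly zero (via $\wur\subset\equalradii$) and how the expansion at $s=a$ transports to $s=\ell$ through the variational equation along the $\rfes$.
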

\begin{proof}
  We know that \(\fq(j, T)\to\fq(j, \infty) = \fw^u(a^2, j)\) as \(T\to\infty\).  Since \(g^t\) is continuous, the limiting values \(\Phi_*(s;j,\infty)\) and \(\zeta_*(s;j,\infty)\) follow.

  The second assertion follows from \eqref{eq-wu-expansion-at-a}.
\end{proof}

\subsection{Asymptotics of the difference variables}
Lemma~\ref{lem-Phi-at-ell} gives us a good approximation of \(\Phi_*(\ell;j,T)\) as \(T\to\infty\) for small \(j\).  But for \(\zeta_*\), it only says that \(\zeta_*\to0\), without providing more details.  To get more information, we use the matrix form~\eqref{eq-difference-system-matrix-form} of the homogeneous equation \eqref{eq-rfes-difference-nonlin} satisfied by \(\{x_{12}, y_{12}\}\),
\[
  s\frac{\dd}{\dd s}\mat x_{12} \\ y_{12}\rix= \cM(s; j,T) \mat x_{12}\\ y_{12}\rix,
\]
where
\[
  \cM(s; j,T) = \mat
  y+(p_2-p_1)y_{12} & -2(n-1)+\xi \\
  1 & -n+1+\gamma-\lambda s^2 \rix,
\]
and where \((\xi, y, \gamma, x_{12}, y_{12})\) are to be evaluated at \(\fq(s;j,T)\).

The coefficient matrix \(\cM(s; j,T)\) depends continuously on \((s, j, T) \in [a,\ell]\times[-\iota_{a,\ell},\iota_{a,\ell}]\times[T_{a,\ell},\infty]\), and thus as \(T\to\infty\) and \(|j|\to 0\), we get
\[
  \cM(s;j, T) \to \cM(s; 0,\infty) = \mat
  0 & -2(n-1) \\
  1 & -n+1+\gamma-\lambda s^2 \rix.
\]
We denote the fundamental solution starting at \(s=a\) of the linear system by \(\cU(s;j,T)\).  So by definition, \(\cU\) satisfies
\[
  s\frac{\dd}{\dd s} \cU(s; j, T) = \cM(s; j,T)\cU(s; j, T), \qquad \cU(a;j, T) = \mat 1 & 0 \\ 0 & 1 \rix,
\]
and the solution \(\tmat x_{12}(s;j,T)\\y_{12}(s;j,T)\trix\) is given by
\[
  \mat x_{12}(s;j,T)\\y_{12}(s;j,T)\rix = \cU(s;j,T) \mat x_{12}(a;j,T)\\y_{12}(a;j,T)\rix.
\]
Continuous dependence of solutions of differential equations on their parameters implies that as \(|j|\to 0\) and \(T\to\infty\), the fundamental solution \(\cU\) also converges, namely,
\[
  \cU(s; j, T) \to \cU(s ; 0, \infty),
\]
where \(\cU(s; 0, \infty)\) satisfies
\[
  s\frac{\dd}{\dd s} \cU(s; 0, \infty) = \cM(s; 0, \infty)\cU(s; 0, \infty), \qquad \cU(a;0, \infty) = \mat 1 & 0 \\ 0 & 1 \rix.
\]
This last system of equations for \(\tmat x_{12} \\ y_{12}\trix\) is equivalent to the second order equation~\eqref{eq-chi-second-order} for \(\chi=x_{12}\), for which we have found fundamental solutions \(\{\chi_1^\pm, \chi_2^\pm\}\) described in Tables~\ref{tab-expander-asymptotics},~\ref{tab-shrinker-asymptotics}.

We now have
\[
  \mat x_{12}(\ell;j,T)\\y_{12}(\ell;j,T)\rix = \cU(\ell;0,\infty) \cdot \cU(\ell;0,\infty)^{-1}\cU(\ell;j,T)\cdot \mat x_{12}(a;j,T)\\y_{12}(a;j,T)\rix.
\]
The solution at \(s=\ell\) is therefore given by
\[
  \mat x_{12}(\ell;j,T)\\y_{12}(\ell;j,T)\rix = \cU(\ell;0,\infty)\cdot \mat \tilde x_{12}(j,T) \\ \tilde y_{12}(j,T) \rix,
\]
where
\[
  \mat \tilde x_{12}(j,T) \\ \tilde y_{12}(j,T) \rix = \cU(\ell;0,\infty)^{-1}\cU(\ell;j,T)\cdot \mat x_{12}(a;j,T)\\y_{12}(a;j,T)\rix.
\]

In other words, the solution of the nonlinear differential system is given by the solution of the linearization at the \(\rfes\) with a slightly modified initial value at \(s=a\), namely \(\tmat \tilde x_{12}(j,T) \\ \tilde y_{12}(j,T) \trix\).  This modified initial value is obtained by multiplying the actual value of \(\tmat x_{12}(a;j,T) \\ y_{12}(a;j,T)\trix\) with the matrix \(\cU(\ell;0,\infty)^{-1}\cU(\ell;j,T)\), which converges to the identity as \(T\to\infty\) and \(j\to0\), \emph{i.e.,}
\[
  \lim_{j\to0} \lim_{T\to\infty} \cU(\ell;0,\infty)^{-1}\cU(\ell;j,T) = \mat 1&0\\0&1\rix.
\]

We next derive an asymptotic description of the actual initial value \(\tmat x_{12}(a;j,T)\\y_{12}(a;j,T)\trix\) from expression~\eqref{eq-zetajT-at-exit-of-Qa} for \(\zeta(\fq(j,T))\).  In doing so, we have to go back and forth between the real variables \((x_{12}, y_{12})\) and the complex variable \(\zeta\), which will be easier if we define a real-linear map \(\cC:\R^2\to \C\) and its inverse \(\cC^{-1}:\C\to\R^2\) by
\[
  \cC\mat x\\y \rix =(A+i\Omega)x - (A^2+\Omega^2)y\qquad\text{and}\qquad \cC^{-1}\zeta = \frac1\Omega \mat \Im \zeta \\ \Im (\zeta/(A+i\Omega)) \rix.
\]
We have then just shown that the \(\zeta\) component of the solution at \(s=\ell\) coincides with the solution to the linearized equation for \(\zeta\) with initial data at \(s=a\) given by
\[
  \tilde\zeta(j,T) = \cC \mat \tilde x_{12}(j,T) \\ \tilde y_{12}(j,T) \rix.
\]
Recall from equation~\eqref{eq-zeta-from-chi} that \(\zeta_1^\pm=(\chi_1^\pm)^\prime+(A+i\Omega)\chi_1^\pm\) and \(\zeta_2^\pm=(\chi_2^\pm)^\prime+(A+i\Omega)\chi_2^\pm\) are a pair of linearly independent solutions of the linearized \(\zeta\) equations, where the asymptotic behaviors of $\chi^\pm_{1,2}$ are given in Tables~\ref{tab-expander-asymptotics} and~\ref{tab-shrinker-asymptotics}. Thus we have
\begin{equation}
  \label{eq-zeta-is-linear-combination}
  \zeta(\ell;j,T) = c_1^\pm(j,T)\zeta_1^\pm(\ell) 
  + c_2^\pm(j,T)\zeta_2^\pm(\ell),
\end{equation}
where the \emph{real} constants \(c_i^\pm(j,T)\) should satisfy
\begin{align*}
  c_1^\pm(j,T)\zeta_1^\pm(a) 
  &+ c_2^\pm(j,T)\zeta_2^\pm(a)
    = \cC \mat \tilde x_{12}(j,T) \\ \tilde y_{12}(j,T) \rix \\
  &=\cC \cU(\ell;0,\infty)^{-1}\cU(\ell;j,T)\cdot
    \mat x_{12}(a;j,T)\\y_{12}(a;j,T)\rix   \\
  &=\cC \cU(\ell;0,\infty)^{-1}\cU(\ell;j,T)\cdot
    \cC^{-1} \zeta\bigl(\fq(j,T)\bigr)\\
  &= \bigl(I + o(1)\bigr) \cdot \zeta\bigl(\fq(j,T)\bigr).
\end{align*}
Here \(I\) is the \(2\times2\) identity matrix, and \(o(1)\) is a real-linear transformation of \(\C\).  It follows that there is a complex number \(\epsilon(j,T)\) with \(|\epsilon(j,T)| = o(1)\) such that
\begin{equation}
  \label{eq-c1c2computation-1}
  c_1^\pm(j,T)\zeta_1^\pm(a) 
  + c_2^\pm(j,T)\zeta_2^\pm(a)
  = \bigl(1 + \epsilon(j,T)\bigr) \cdot \zeta\bigl(\fq(j,T)\bigr).
\end{equation}

Recall that according to Lemma~\ref{lem-zetajT-at-exit-of-Qa}, for large \(T\) and small \(j\), we have
\begin{equation}
  \label{eq-c1c2computation-2}
  \zeta(\fq(j,T)) = \bigl(1+\epsilon_a(j,T)\bigr)\zeta\bigl(\bgk(t_a)\bigr)
  e^{-(A-i\Omega)T},
\end{equation}
with \(|\epsilon_a(j,T)|\leq Ca^2\).  According to~\eqref{eq-zeta-small-s}, we also have
\begin{align}
  \label{eq-c1c2computation-3}
  c_1^\pm\zeta_1^\pm(a) + c_2^\pm\zeta_2(a) &= 
                                              \Omega k^\pm \bigl(c_1^\pm + ic_2^\pm\bigr) a^{-A+i\Omega}
                                              +(c_1^\pm\delta_1^\pm(a)+c_2^\pm\delta_2(a)) a^{-A+i\Omega} \\
                                            &= \bigl(1+\delta_3^\pm(a)\bigr) 
                                              \Omega k^\pm \bigl(c_1^\pm + ic_2^\pm\bigr) a^{-A+i\Omega},
                                              \notag
\end{align}
in which \(\delta_k^\pm\) are complex functions with \(\delta_k^\pm(a) = \cO(a^2)\).  Combining \eqref{eq-zeta-is-linear-combination}, \eqref{eq-c1c2computation-1}, \eqref{eq-c1c2computation-2}, and \eqref{eq-c1c2computation-3}, we find that
\begin{align*}
  c^\pm_1 +i c^\pm_2 &=
                       \frac{(1+\epsilon(j,T))(1+\epsilon_a(j,T))}{(1+\delta_3^\pm(a))}
                       \frac{\zeta(\bgk(t_a))}{\Omega k^\pm}
                       a^{A-i\Omega}e^{(-A+i\Omega)T} \\
                     &=\bigl(1+\epsilon_2(a;j,T)\bigr)
                       \frac{\zeta(\bgk(t_a))}{\Omega k^\pm}
                       a^{A-i\Omega}e^{(-A+i\Omega)T},
\end{align*}
where \(|\epsilon_2(a;j,T)|\leq \delta_4(a)\) for sufficiently small \(j\) and large \(T\), and \(\delta_4(a)\to0\) as \(a\to0\).

We now define \(R(a,j,T)\) and \(\phi(a,j,T)\) by
\[
  R(a,j,T)e^{i\phi(a,j,T)} \isdef \bigl(1+\epsilon_2(a;j,T)\bigr) \frac{\zeta(\bgk(t_a))}{\Omega k^\pm} \, a^{A-i\Omega}.
\]
We then have
\begin{equation}
  \label{eq-R-defined}
  R(a,j,T)  = a^A \, \frac{\left|\zeta\bigl(\bgk(t_a)\bigr)\right|}{\Omega|k^\pm|}\,
  \big|1+\epsilon_2(a;j,T)\big| 
\end{equation}
and
\begin{equation}
  \label{eq-phi-defined}
  \phi(a,j,T) = \arg \frac{\zeta(\bgk(t_a))}{k^\pm} -\Omega \log a
  + \cO\bigl(|\epsilon_2(a;j,T)|\bigr),
\end{equation}
which imply that
\[
  c^\pm_1 +i c^\pm_2 = R(a,j,T) e^{(-A+i\Omega)T + i\phi(a,j,T)},
\]
and hence
\begin{equation}
  \left\{
    \begin{aligned}
      c_1^\pm(a, j, T)
      &= R(a,j,T)e^{-AT} \cos\bigl(\Omega T +\phi(a,j,T)\bigr),\\
      c_2^\pm(a,j,T) &= R(a,j,T)e^{-AT}\sin\bigl(\Omega T +\phi(a,j,T)\bigr).
    \end{aligned}
  \right.
\end{equation}
This leads to
\begin{equation}
  \label{eq-zeta-at-ell}
  \zeta(\ell;j,T)
  = Re^{-AT} \cos(\Omega T+\phi)\zeta_1^\pm(\ell) +
  Re^{-AT} \sin(\Omega T+\phi)\zeta_2^\pm(\ell),
\end{equation}
where \(\phi=\phi(a,j,T)\) and \(R=R(a,j,T)\).  We note that the \(\zeta_{1,2}^\pm(\ell)\) that appear in this expression are the exact solutions of the linearized equation, and that all error terms have been absorbed in the coefficients \(R(a,j,T)\) and \(\phi(a,j,T)\). Thus we have the following.

\begin{lemma}\label{lem-about-R-and-phi}
  For any \(\delta>0\), there exist \(\iota_\delta>0\), \(T_\delta<\infty\), and \(a_\delta>0\) such that for all \(a<a_\delta\), \(|j|\leq \iota_\delta\), and \(T\geq T_\delta\), one has
  \[
    R(a,j,T) = R_a (1+\epsilon_3(a,j,T)) \qquad\text{and}\qquad \phi(a,j,T) = \phi_a + \epsilon_4(a,j,T),
  \]
  where
  \[
    |\epsilon_3(a,j,T)| + |\epsilon_4(a,j,T)| \leq \delta,
  \]
  and where the constants \(R_a\) and \(\phi_a\) are given by
  \[
    R_a = a^A \frac{|\zeta\bigl(\bgk(t_a)\bigr)|}{\Omega |k^{\pm}|} \qquad\text{and}\qquad \phi_a = \arg \frac{\zeta\bigl(\bgk(t_a)\bigr)}{k^\pm} - \Omega \log a.
  \]
\end{lemma}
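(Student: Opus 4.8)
The plan is to read the statement essentially off the two explicit formulas \eqref{eq-R-defined} and \eqref{eq-phi-defined} derived just above, combined with the bound $|\epsilon_2(a;j,T)|\le\delta_4(a)$ (valid for $j$ small and $T$ large) together with the fact that $\delta_4(a)\to0$ as $a\to0$. The underlying observation is simply that the constants $R_a$ and $\phi_a$ named in the statement are exactly the parts of $R(a,j,T)$ and $\phi(a,j,T)$ that do not involve $\epsilon_2$.

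Carrying this out, I would first rewrite \eqref{eq-R-defined} as $R(a,j,T)=R_a\,\bigl|1+\epsilon_2(a;j,T)\bigr|$ and accordingly set $\epsilon_3(a,j,T):=\bigl|1+\epsilon_2(a;j,T)\bigr|-1$, so that the reverse triangle inequality gives $|\epsilon_3|\le|\epsilon_2|$. Likewise \eqref{eq-phi-defined} reads $\phi(a,j,T)=\phi_a+\cO\bigl(|\epsilon_2(a;j,T)|\bigr)$, the error term being $\arg(1+\epsilon_2)$, so that with $\epsilon_4(a,j,T):=\phi(a,j,T)-\phi_a$ one has $|\epsilon_4|\le C|\epsilon_2|$ for an absolute constant $C$ (for instance $C=\pi$, using $|\arg(1+z)|\le\pi|z|$ for $|z|\le\tfrac12$). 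Hence $|\epsilon_3|+|\epsilon_4|\le(1+C)|\epsilon_2|\le(1+C)\,\delta_4(a)$ on the parameter range where the bound of Lemma~\ref{lem-zetajT-at-exit-of-Qa} is available. Then, given $\delta>0$, I would use $\delta_4(a)\to0$ to pick $a_\delta>0$ so small that $(1+C)\,\delta_4(a)\le\delta$ for every $a<a_\delta$, and take $\iota_\delta$ and $T_\delta$ to be thresholds on which Lemma~\ref{lem-qjT-defined-lambda-applied}, Lemma~\ref{lem-zetajT-at-exit-of-Qa}, and the convergence $\cU(\ell;0,\infty)^{-1}\cU(\ell;j,T)\to I$ used in deriving \eqref{eq-zeta-at-ell} all hold; with these choices $|\epsilon_3|+|\epsilon_4|\le\delta$ follows at once.

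I expect the only real work to be bookkeeping of the quantifiers. The composite error $\epsilon_2(a;j,T)$ absorbs the contribution $\epsilon(j,T)$ coming from $\cU(\ell;0,\infty)^{-1}\cU(\ell;j,T)\to I$, the contribution $\epsilon_a(j,T)$ from Lemma~\ref{lem-zetajT-at-exit-of-Qa}, and the terms $\delta_k^\pm(a)=\cO(a^2)$ appearing in \eqref{eq-c1c2computation-3}; what ultimately forces $|\epsilon_2|\le\delta$ is the smallness of $a$ (through $\delta_4$), while $j$ small and $T$ large serve only to reach the regime in which the earlier lemmas apply, and one must keep the order of these choices straight (and, if one insists that $\iota_\delta$ and $T_\delta$ be uniform in $a<a_\delta$, shrink the $j$-interval and raise the $T$-threshold accordingly). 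Beyond this there is no new analytic input: the genuinely hard estimates — the application of the $\lambda$-lemma to the slice $\Sigma_a$, the Analysis Lemma bound on $\int_0^T\|g^t\fp(j,T)-\rfc\|\,\mr dt$, and the continuous dependence of the fundamental matrices $\cU$ on $(j,T)$ — have all been carried out above, so Lemma~\ref{lem-about-R-and-phi} amounts to a repackaging of \eqref{eq-R-defined}--\eqref{eq-phi-defined} into the form the subsequent counting argument needs.
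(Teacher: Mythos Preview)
Your proposal is correct and matches the paper's approach exactly: the paper offers no separate proof of this lemma but simply writes ``Thus we have the following'' after deriving \eqref{eq-R-defined} and \eqref{eq-phi-defined}, treating the statement as an immediate repackaging of those formulas. Your bookkeeping of the error terms and the order of quantifier choices is more explicit than what the paper records, but the content is the same.
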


Our next step is to translate the complex variable \(\zeta\) at \(s=\ell\) back to the difference variables \(x_{12}\) and \(y_{12}\).  Applying \(\cC^{-1}\) to both sides of \eqref{eq-zeta-at-ell}, we get
\[
  \mat x_{12}(\ell;j,T) \\ y_{12}(\ell;j,T) \rix = Re^{-AT} \cos(\Omega T+\phi) \mat x_{12,1}^\pm(\ell) \\ y_{12,1}^\pm(\ell) \rix + Re^{-AT} \sin(\Omega T+\phi) \mat x_{12,2}^\pm(\ell) \\ y_{12,2}^\pm(\ell) \rix,
\]
where we have defined
\[
  x_{12}(s;j,T) = x_{12}\bigl(\fq(s;j,T)\bigr),\qquad y_{12}(s;j,T) = y_{12}\bigl(\fq(s;j,T)\bigr),
\]
and where \(x_{12,i}^\pm(s)\) and \(y_{12,i}^\pm(s)\) are the solutions of the linearized equations \eqref{eq-two-by-two-subsystem} corresponding to \(\chi_i^\pm(s)\) from Tables~\ref{tab-expander-asymptotics} and~\ref{tab-shrinker-asymptotics}.  Using \eqref{eq-zeta-from-chi}, we can write this as
\begin{multline*}
  \mat x_{12}(\ell;j,T) \\ y_{12}(\ell;j,T) \rix = Re^{-AT}
  \cos(\Omega T+\phi) \mat \chi_1^\pm(\ell) \\ -(\chi_1^\pm)'(\ell)/2(n-1) \rix \\
  + Re^{-AT} \sin(\Omega T+\phi) \mat \chi_2^\pm(\ell) \\ -(\chi_2^\pm)'(\ell)/2(n-1) \rix.
\end{multline*}
Our expansion of \(\chi_i^\pm(s)\) as \(s\to\infty\) implies that as \(\ell\to\infty\),
\[
  \chi_1^\pm(\ell) = 1+\cO\bigl(\ell^{-2}\bigr), \qquad \bigl(\chi_1^\pm\bigr)'(\ell) = \cO\bigl(\ell^{-2}\bigr),
\]
and
\[
  \chi_2^\pm(\ell) = \bigl(C_\chi^\pm +\cO\bigl(\ell^{-2}\bigr)\bigr)\ell^{-n-1}e^{\mp\ell^2/2}, \qquad \bigl(\chi_2^\pm\bigr)'(\ell) = \bigl(C_\chi^\pm +\cO\bigl(\ell^{-2}\bigr)\bigr)\ell^{-n+1}e^{\mp\ell^2/2}.
\]
We then get these expressions for \(\tmat x_{12}\\ y_{12} \trix\):
\begin{align}\label{eq-x12-y12-at-ell}
  \mat x_{12}(\ell;j,T) \\ y_{12}(\ell;j,T) \rix 
  &= Re^{-AT}
    \cos(\Omega T+\phi)
    \mat 1+\cO\bigl(\ell^{-2}\bigr) \\ \cO\bigl(\ell^{-2}\bigr) \rix
  \\
  &\qquad\qquad+ Re^{-AT}
    \sin(\Omega T+\phi)
    \ell^{-n+1}e^{\mp\ell^2/2}
    \mat \cO\bigl(\ell^{-2}\bigr) \\
  C_\chi^\pm+\cO\bigl(\ell^{-2}\bigr)\rix.
  \notag
\end{align}
Here \(R=R(a,j,T)\) and \(\phi=\phi(a,j,T)\) are as in Lemma~\ref{lem-about-R-and-phi}.

\section{Stable manifolds of the conical ends}
\label{sec-stable-mfds-of-cone-ends}

\subsection{Definition of $\mc W$, $\mc W_{\rm ex}$, and $\mc W_{\rm sh}$}
Metrics with conical ends correspond to solutions of the soliton flow for which $x_\alpha\to n-1+\xi_{\alpha,\infty}>0$ as $s\to\infty$.  Here we study such solutions for $s\gg 0$.

We define the \emph{stable set of conical ends} to be the set of all $\fp \in \R^6$ whose forward trajectory \(\{g^t\fp \mid t\geq0\}\) under the soliton flow satisfies
\begin{equation}
  \sup |\xi_\alpha| < \infty,\qquad
  \sup |y_\alpha| < \infty,\qquad
  \sup \left| \frac\gamma s\right| <\infty.
\end{equation}
We denote this set by $\mc W_{\rm sh}$ for \emph{shrinkers} $(\lambda=-1)$ and $\mc W_{\rm ex}$ for \emph{expanders} $(\lambda=+1)$, and we write $\mc W$ to refer to either $\mc W_{\rm sh}$ or $\mc W_{\rm ex}$.

The sets $\mc W$ are invariant under the soliton flow $g^t$, and they are nonempty because they always contain the $\rfes$ orbit.

For any $\epsilon>0$ and $\ell>0$, we define
\[
  \mc W^{\epsilon,\ell} \isdef \bigl\{(\xi_\alpha, y_\alpha, \gamma, s)\in \mc W ~:~ s > \ell, |\xi_\alpha| < \epsilon, |y_\alpha| < \epsilon, |\gamma|<\epsilon s\bigr\}.
\]
It seems very likely that $\mc W$ is a smooth submanifold of $\R^6$.  In this paper, we will only need to know that the part of $\mc W$ that is close to the $\rfes$ is smooth, which is what we show in this section:

\begin{theorem}\label{thm-conical-stable-set-is-smooth}
  For small enough $\epsilon>0$ and large enough $\ell > 0$, $\mc W^{\epsilon,\ell}$ is a smooth submanifold of $\R^6$.  More precisely:
  \begin{enumerate}
  \item $\mc W_{\rm ex}^{\epsilon,\ell}$ is an open subset of $\R^6$; and
  \item $\mc W_{\rm sh}^{\epsilon,\ell}$ is a 4-dimensional submanifold of $\R^6$.
  \end{enumerate}
  The tangent space to $\mc W$ at the $\rfes$ orbit is spanned by solutions of the linearized flow $\dd g^t$ along the $\rfes$ that are $\cO(s)$ as $s\to\infty$.

\end{theorem}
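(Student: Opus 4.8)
The plan is to work in the modified average/difference coordinates $(s,\xi,y,\gamma,x_{12},y_{12})$, in which the $\rfes$ orbit is the coordinate line $\{(s,0,0,0,0,0)\mid s>0\}$ and the flow is governed by the nonlinear system \eqref{eq-rfes-average-nonlin}--\eqref{eq-rfes-difference-nonlin}. Everything turns on the dichotomy, visible in Tables~\ref{tab-expander-asymptotics} and~\ref{tab-shrinker-asymptotics}, between the fundamental solutions of the linearization along $\rfes$ that are $\cO(s)$ as $s\to\infty$ and those that are not: for $\lambda=+1$ all five transverse fundamental solutions are $\cO(s)$ (the $\gamma$-slot of $\Phi_0$ grows only linearly, $\Phi_1^+$ and $\chi_2^+$ decay like $s^{-(n+1)}e^{-s^2/2}$, and $\Phi_2^+,\chi_1^+$ tend to constants), whereas for $\lambda=-1$ the two solutions $\Phi_2^-$ and $\chi_2^-$ grow like $s^{-(n+1)}e^{+s^2/2}$ while $\Phi_0,\Phi_1^-,\chi_1^-$ remain $\cO(s)$. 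I will also use repeatedly that the linearization decouples the averaged block $(\xi,y,\gamma)$ from the difference block $(x_{12},y_{12})$, that both blocks carry the strongly damping coefficient $-s^2$ in the $y$- (resp.\ $y_{12}$-) equation, and the conversion formulas of Lemma~\ref{lem-reduction-to-scalar-eqns} relating $\Phi$ to $\xi$ and $\zeta$ to $\chi$.

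\emph{Part~(1), expanders.} I would construct an explicit trapping region. For large $s$ the $-s^2y$ and $-s^2y_{12}$ terms slave $y$ and $y_{12}$, so that after a short transient $y=\cO(s^{-2})(|\xi|+|\gamma|)$ and $y_{12}=\cO(s^{-2})|x_{12}|$; substituting back, $\xi'$ and $x_{12}'$ are $\cO(s^{-2})$ times the state, so $\xi$ and $x_{12}$ converge, while $\gamma'=\gamma+\cO(s^{-3})\cdot(\text{state})$ permits at most linear growth $\gamma=\cO(s)$. Packaging this as a region $\{|\xi|,|x_{12}|\le\epsilon,\ |y|,|y_{12}|\le C\epsilon s^{-2},\ |\gamma|\le\epsilon s\}$ and verifying that the vector field of \eqref{eq-rfes-average-nonlin}--\eqref{eq-rfes-difference-nonlin} with $\lambda=+1$ points inward along its boundary for $\ell$ large and $\epsilon$ small shows that every point of the box defining $\mc W_{\rm ex}^{\epsilon,\ell}$ stays in $\mc W_{\rm ex}$ under the forward flow; hence $\mc W_{\rm ex}^{\epsilon,\ell}$ equals that box and is in particular open. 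The tangent-space claim is then immediate here: $T_{\rfes(s)}\mc W_{\rm ex}=\R^6$, spanned by $\frac{\pd}{\pd s}$ together with the five linearized fundamental solutions, all $\cO(s)$ for $\lambda=+1$.

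\emph{Part~(2), shrinkers.} This is the crux, and I expect building $\mc W_{\rm sh}^{\epsilon,\ell}$ as a $4$-manifold to be the main obstacle: $\rfes$ is not a hyperbolic fixed point but a noncompact orbit along which the decay/growth rates are Gaussian, $e^{\pm s^2/2}$, rather than exponential in $t=\log s$, so no off-the-shelf (center-)stable manifold theorem applies. The plan is a Duhamel/contraction-mapping construction of a ``stable manifold at $s=\infty$''. Fixing the slice $s=\ell$, write the system as $u'=L(s)u+Q(s,u)$ with $u=(\xi,y,\gamma,x_{12},y_{12})$ and $Q$ quadratic, and let $\mc G$ be the Green's operator for $L$ on $[\ell,\infty)$ that propagates the $\cO(s)$ fundamental solutions $\{\Phi_0,\Phi_1^-,\chi_1^-\}$ forward from $s=\ell$ and the growing solutions $\{\Phi_2^-,\chi_2^-\}$ backward from $s=+\infty$; the Gaussian decay of $\Phi_2^-$ and $\chi_2^-$ (already exploited for $\Phi_2^\pm,\chi_2^\pm$ in \S\ref{sec:s-goes-to-ell}), together with the strong damping of the $y$- and $y_{12}$-equations, makes $\mc G$ bounded on a suitable weighted sup-norm space $\mc B$ of continuous $u$ on $[\ell,\infty)$ (controlling $|\xi|,|x_{12}|$, the decay of $|y|,|y_{12}|$, and $|\gamma|/s$). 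On a ball of radius $\cO(\epsilon)$ in $\mc B$, the map $u\mapsto u_{\rm hom}[c]+\mc G\bigl[Q(\cdot,u)\bigr]$, with $u_{\rm hom}[c]=c_0\Phi_0+c_1\Phi_1^-+c_\chi\chi_1^-$ carrying three free real parameters $c=(c_0,c_1,c_\chi)$, is a contraction once $\ell$ is large and $\epsilon$ small; this produces, for each small $c$, a unique solution $u(\cdot\,;c)\in\mc W_{\rm sh}$, depending analytically on $c$ by uniformity of the contraction and analyticity of $Q$. Conversely, any forward orbit remaining in the $\epsilon$-box must, by the Gaussian growth of $\Phi_2^-$ and $\chi_2^-$, have vanishing $\Phi_2^-$- and $\chi_2^-$-components in its variation-of-parameters decomposition, hence coincide with some $u(\cdot\,;c)$; therefore $c\mapsto u(\ell;c)\in\R^6$ parametrizes $\mc W_{\rm sh}^{\epsilon,\ell}\cap\{s=\ell\}$ as a $3$-manifold inside the $5$-dimensional slice, and saturating under the soliton flow (which is transverse to $\{s=\ell\}$ since $s'=s>0$) presents $\mc W_{\rm sh}^{\epsilon,\ell}$ as a smooth $4$-manifold.

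\emph{Part~(3), tangent space.} Because $\mc W$ is invariant under $g^t$ and $\rfes$ is an orbit, $\dd g^t$ carries $T_{\rfes(s)}\mc W$ onto $T_{\rfes(e^ts)}\mc W$, so $T\mc W$ along $\rfes$ is spanned by solutions of the linearized flow $\dd g^t$. Differentiating the parametrization of part~(2) at $c=0$ (resp.\ using the openness established in part~(1)) shows this span contains $\frac{\pd}{\pd s}$ and precisely the $\cO(s)$ fundamental solutions, while the trapping and growth estimates above show it can contain no solution that fails to be $\cO(s)$: such a solution is the linearization of trajectories that leave $\mc W$. This is exactly the asserted description, and completes the proposed proof.
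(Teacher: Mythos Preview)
Your Part~(2) and Part~(3) are essentially the paper's approach: set up a fixed-point problem on a weighted sup-norm space of $\R^5$-valued functions on $[\ell,\infty)$ and apply the Implicit Function Theorem (equivalently, a contraction argument). The paper makes this concrete by rewriting the soliton system as explicit integral equations for $(\xi_\alpha,y_\alpha,\gamma)$, integrating the $y_\alpha$-equation from $\ell$ forward (expanders) or from $\infty$ backward (shrinkers), and showing the linearization is a contraction on the space $\solspace$ with norm $\max\{\|\xi_\alpha\|_0,\|y_\alpha\|_{1/2},\|\gamma\|_{-1}\}$. The paper parametrizes the shrinker stable manifold by the slice data $(\xi_{1,\ell},\xi_{2,\ell},\gamma_\ell)$ rather than by coefficients $(c_0,c_1,c_\chi)$ of fundamental solutions as you do, but these are equivalent choices of coordinates.

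The genuine difference is your Part~(1): the paper does \emph{not} switch to a trapping-region argument for expanders, but runs the same IFT machinery with five free parameters $(\xi_{\alpha,\ell},y_{\alpha,\ell},\gamma_\ell)$ instead of three, so that the solution map covers an open set. Your trapping argument, as written, has a gap: on the faces $|\xi_\alpha|=\epsilon$ (or $|\xi|=\epsilon$, $|x_{12}|=\epsilon$) the vector field does \emph{not} point inward, because $\xi_\alpha'=-2(n-1+\xi_\alpha)y_\alpha$ has the sign of $-y_\alpha$, which is unconstrained. So the region you describe is not positively invariant, and the step ``verifying that the vector field \ldots\ points inward along its boundary'' fails. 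The heuristic that $y_\alpha$ is slaved to $\cO(s^{-2})$ and hence $\xi_\alpha$ moves only by $\cO(\ell^{-1})$ is correct, but turning it into a proof requires closing the circularity (you need $\xi_\alpha$ bounded to control $y_\alpha$, and vice versa), which is exactly what the paper's contraction/IFT setup does cleanly and uniformly for both signs of $\lambda$.
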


Our proof proceeds by rewriting the system~\eqref{eq-BlowUpSystem-Polynomial} of differential equations as a fixed-point problem involving integral equations, and then using the Implicit Function Theorem to conclude smoothness of $\mc W$ near the $\rfes$.  We will begin by showing that orbits in $\mc W$ actually have more precise asymptotics for $s\to\infty$; but before we do that, we record some elementary estimates involving Gaussian integrals that will show up a few times.

\begin{lemma} \label{lem-Gaussian-integral-estimate} If $\nu:[\ell, \infty)\to\R$ is bounded and measurable, then
  \[
    \left|\int_s^\infty e^{(s^2-\varsigma^2)/2} \nu(\varsigma)\,\dd\varsigma\right| \leq \frac{1}{s} \sup_{\varsigma\geq \ell} |\nu(\varsigma)|,
  \]
  and
  \[
    \left|\int_\ell^s e^{(\varsigma^2-s^2)/2}\nu(\varsigma)\,\dd\varsigma\right| \leq \frac{2}{s} \sup_{\varsigma\geq\ell}|\nu(\varsigma)|.
  \]
\end{lemma}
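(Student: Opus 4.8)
The plan is to reduce both inequalities to the case $\nu\equiv 1$ and then estimate the resulting Gaussian integrals by elementary comparisons. Since the kernels $e^{(s^2-\varsigma^2)/2}$ and $e^{(\varsigma^2-s^2)/2}$ are positive, one pulls $\sup_{\varsigma\ge\ell}|\nu(\varsigma)|$ out of each integral, so it suffices to bound $e^{s^2/2}\int_s^\infty e^{-\varsigma^2/2}\,\dd\varsigma$ by $1/s$ and $e^{-s^2/2}\int_\ell^s e^{\varsigma^2/2}\,\dd\varsigma$ by $2/s$, for $s\ge\ell$ with $\ell>0$ (which is the only regime in which this lemma is invoked later).

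For the first bound, on the range of integration one has $\varsigma\ge s>0$, hence $1\le\varsigma/s$, and therefore
\[
  \int_s^\infty e^{-\varsigma^2/2}\,\dd\varsigma \;\le\; \frac1s\int_s^\infty \varsigma\, e^{-\varsigma^2/2}\,\dd\varsigma \;=\; \frac1s e^{-s^2/2},
\]
which is exactly the claim after multiplying through by $e^{s^2/2}$. This is the classical Mills-ratio estimate.

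For the second bound the integrand $e^{\varsigma^2/2}$ is increasing, so the $\varsigma/s$ trick is unavailable; instead I would use the factorization $\varsigma^2-s^2=(\varsigma-s)(\varsigma+s)\le s(\varsigma-s)$ for $\ell\le\varsigma\le s$ (valid because $\varsigma-s\le 0$ while $\varsigma+s\ge s>0$). Then $e^{(\varsigma^2-s^2)/2}\le e^{s(\varsigma-s)/2}$, and integrating the right-hand side explicitly gives
\[
  \int_\ell^s e^{s(\varsigma-s)/2}\,\dd\varsigma \;=\; \frac2s\bigl(1-e^{s(\ell-s)/2}\bigr)\;\le\;\frac2s .
\]
There is essentially no obstacle here: the lemma is a pure calculus fact, and the only point that takes any thought is selecting comparisons that yield the clean constants $1$ and $2$ rather than larger ones, the asymmetry between the two cases reflecting that the integrand is monotone decreasing in the first and increasing in the second.
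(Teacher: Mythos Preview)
Your proof is correct. For the second inequality your argument is identical to the paper's. For the first inequality you use the Mills-ratio trick $1\le\varsigma/s$ to make the integral explicitly computable, whereas the paper instead applies the same factorization $(\varsigma^2-s^2)/2\ge s(\varsigma-s)$ that you (and it) use for the second inequality, obtaining $\int_s^\infty e^{-s(\varsigma-s)}\,\dd\varsigma=1/s$; both are one-line computations yielding the same constant, so the difference is cosmetic rather than substantive.
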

\begin{proof}
  To prove the first inequality, we note that for $\varsigma\geq s\geq \ell$, we have
  \[
    (\varsigma^2-s^2)/2 = (\varsigma-s)(\varsigma+s)/2\geq s(\varsigma-s),
  \]
  so that if $M=\sup_{s\geq \ell}|\nu(s)|$, then
  \[
    \left|\int_s^\infty e^{(s^2-\varsigma^2)/2} \nu(\varsigma)\,\dd\varsigma\right|
    \leq \int_s^\infty e^{-s(\varsigma-s)} M\,\dd\varsigma = \frac{M}{s}.
  \]
 
  For the second inequality, we argue that for $0 < \varsigma < s$, one has
  \[(s^2-\varsigma^2)/2 = (s-\varsigma)(s+\varsigma)/2 \geq (s-\varsigma)s/2,\] so that
  \[
    \left|\int_\ell^s e^{(\varsigma^2-s^2)/2}\nu(\varsigma)\,\dd\varsigma\right|
    \leq M \int_\ell^s e^{(\varsigma^2-s^2)/2}\,\dd\varsigma \leq M\int_\ell^s e^{(\varsigma-s)s/2}\,\dd\varsigma \leq \frac 2s M,
  \]
  as claimed.
\end{proof}

\begin{lemma} \label{lem-better-asymptotics} If $(\xi_\alpha, y_\alpha, \gamma)$ is a solution whose graph lies in $\mc W$, then as $s\to\infty$, we have
  \[
    \xi_\alpha = \xi_{\alpha,\infty} + \cO(1/s) ,\qquad y_\alpha = \mc O(1/s), \qquad \gamma = K_\infty s + \cO(1/s),
  \]
  for certain constants $\xi_{\alpha,\infty}$ and $K_\infty$.
\end{lemma}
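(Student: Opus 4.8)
The plan is to recover the asymptotics one variable at a time, and the decisive step is $y_\alpha$, since $y_\alpha$ is governed by the only equation in~\eqref{eq-xi-y-gamma-system} containing the Gaussian-type coefficient $-\lambda s^2$. Writing ${}' = s\,\dd/\dd s$ and dividing the $y_\alpha$-equation by $s$, it becomes the linear scalar \textsc{ode}
\[
  \frac{\dd y_\alpha}{\dd s} + \lambda s\, y_\alpha = b_\alpha(s), \qquad b_\alpha(s) \isdef \frac{1}{s}\Bigl(\xi_\alpha + \gamma + (\gamma - n + 1)\,y_\alpha\Bigr).
\]
The first observation is that $b_\alpha$ is \emph{bounded} on the ray $\{s\geq\ell\}$: this is immediate from the defining bounds of $\mc W$, because $\sup|\xi_\alpha|$, $\sup|y_\alpha|$ and $\sup|\gamma/s|$ are all finite, so each of $\gamma/s$, $(\gamma/s)\,y_\alpha$, $\xi_\alpha/s$ and $y_\alpha/s$ is bounded.

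Next I would solve this \textsc{ode} by the integrating factor $e^{\lambda s^2/2}$, treating the two values of $\lambda$ separately. For $\lambda=+1$ the homogeneous solution decays, so I integrate forward from $s=\ell$ to get $y_\alpha(s) = e^{(\ell^2 - s^2)/2}\,y_\alpha(\ell) + \int_\ell^s e^{(\varsigma^2 - s^2)/2}\, b_\alpha(\varsigma)\,\dd\varsigma$; the first term is exponentially small and the integral is $\cO(1/s)$ by the second estimate of Lemma~\ref{lem-Gaussian-integral-estimate}. For $\lambda=-1$ the homogeneous solution $e^{+s^2/2}$ grows, so forward integration is useless; instead I use that $y_\alpha$ is bounded (this is precisely where one needs that the orbit lies in $\mc W$, not merely that the forcing $b_\alpha$ is bounded), which forces $e^{-s^2/2}y_\alpha(s)\to 0$ as $s\to\infty$; integrating $\dd\bigl(e^{-s^2/2}y_\alpha\bigr)/\dd s = e^{-s^2/2}b_\alpha$ from $s$ to $\infty$ then gives $y_\alpha(s) = -\int_s^\infty e^{(s^2 - \varsigma^2)/2}\, b_\alpha(\varsigma)\,\dd\varsigma$, which is $\cO(1/s)$ by the first estimate of Lemma~\ref{lem-Gaussian-integral-estimate}. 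In both cases $y_\alpha = \cO(1/s)$.

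Once $y_\alpha=\cO(1/s)$ is known, the remaining two asymptotics are routine. The $\xi_\alpha$-equation reads $\dd\xi_\alpha/\dd s = -2(n-1+\xi_\alpha)\,y_\alpha/s = \cO(1/s^2)$, an integrable tail, so $\xi_\alpha$ has a limit $\xi_{\alpha,\infty}$ and $\xi_\alpha - \xi_{\alpha,\infty} = \int_s^\infty 2(n-1+\xi_\alpha)\,y_\alpha/\varsigma\,\dd\varsigma = \cO(1/s)$. Similarly, rewriting the $\gamma$-equation as $\dd(\gamma/s)/\dd s = s^{-2}\sum_\alpha p_\alpha\bigl(2y_\alpha + y_\alpha^2\bigr) = \cO(1/s^3)$, integrability yields $\gamma/s\to K_\infty$ with $\gamma/s - K_\infty = \cO(1/s^2)$, i.e.\ $\gamma = K_\infty s + \cO(1/s)$.

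I expect the only genuinely delicate point to be the shrinker case $\lambda=-1$ of the $y_\alpha$-step: there the integrating factor decays, one cannot integrate forward, and the argument must use that the trajectory lies in $\mc W$ (boundedness of $y_\alpha$) in order to pin down the constant of integration by fixing $\lim_{s\to\infty} e^{-s^2/2}y_\alpha(s) = 0$ — this is the same structural feature that makes $\mc W_{\rm sh}$ of codimension two. Everything else is a routine application of Lemma~\ref{lem-Gaussian-integral-estimate} together with the integrability of $\cO(s^{-2})$ and $\cO(s^{-3})$ tails.
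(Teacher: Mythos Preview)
Your proposal is correct and follows essentially the same route as the paper's proof: you rewrite the $y_\alpha$-equation with the integrating factor $e^{\lambda s^2/2}$, integrate forward from $\ell$ when $\lambda=+1$ and backward from $\infty$ when $\lambda=-1$ (the latter justified by the boundedness of $y_\alpha$ coming from membership in $\mc W$), invoke Lemma~\ref{lem-Gaussian-integral-estimate} on the bounded forcing to get $y_\alpha=\cO(1/s)$, and then feed this into the $\xi_\alpha$- and $\gamma$-equations to obtain the remaining asymptotics via integrable tails. The only cosmetic difference is that the paper writes the $\xi_\alpha$-step via the exponential formula $x_\alpha(s)=x_{\alpha,\infty}\exp\bigl(-2\int_s^\infty y_\alpha\,\dd\varsigma/\varsigma\bigr)$ rather than directly integrating $\dd\xi_\alpha/\dd s$, but the content is identical.
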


\begin{proof}
  We rewrite the soliton system~ \eqref{eq-BlowUpSystem-Polynomial} as integral equations.  To integrate the equation for $y_\alpha$, we first rewrite it by using $\Gamma=\gamma-n$, obtaining
  \begin{equation}
    s\frac{\dd y_\alpha}{\dd s} + \lambda s^2 y_\alpha
    = \xi_\alpha + (\gamma-n+1)y_\alpha + \gamma.
  \end{equation}
  We integrate this equation using the integrating factor $s^{-1}e^{\lambda s^2/2}$.  We have to choose the domain of integration according to the sign of $\lambda$.  In the case of \emph{expanders}, where $\lambda=+1$, we assume initial values $y_\alpha(\ell) = y_{\alpha,\ell}$ are given at $s=\ell$, and find for $s\geq \ell$ that
  \begin{subequations}
    \begin{equation}
      \label{eq-y-alpha-integral-expanders}
      y_\alpha(s) = e^{(\ell^2-s^2)/2}y_{\alpha,\ell}
      +
      \int_\ell^s e^{(\varsigma^2-s^2)/2} \Bigl\{
      \xi_\alpha(\varsigma)
      + \big(\gamma(\varsigma)-n+1\big)y_\alpha(\varsigma)
      +\gamma(\varsigma)\Bigr\}\,
      \frac{\dd\varsigma}{\varsigma}.
    \end{equation}
    In the case of \emph{shrinkers}, where $\lambda=-1$, we cannot specify the values of $y_\alpha(\ell)$.  Instead we get
    \begin{equation}
      \label{eq-y-alpha-integral-shrinkers}
      y_\alpha(s) = - \int_s^\infty e^{-(\varsigma^2-s^2)/2} \Bigl\{
      \xi_\alpha(\varsigma)
      + \big(\gamma(\varsigma)-n+1\big)y_\alpha(\varsigma)
      +\gamma(\varsigma)
      \Bigr\}\,
      \frac{\dd\varsigma}{\varsigma}.
    \end{equation}
  \end{subequations}
  If $\xi_\alpha=\cO(s)$, $\gamma = \cO(s)$, and $y_\alpha = \cO(1)$ as $s\to\infty$, then we have
  \[
    \frac{\xi_\alpha + (\gamma-n+1)y_\alpha+\gamma}{s} = \cO(1) \qquad (s\to\infty).
  \]
  In either case,~\eqref{eq-y-alpha-integral-expanders} or \eqref{eq-y-alpha-integral-shrinkers} combined with Lemma~\ref{lem-Gaussian-integral-estimate} implies that $y_\alpha = \cO(1/s)$ as $s\to\infty$.

  Next we consider $\xi_\alpha$.  The equation
  \[
    s\frac{\dd x_\alpha}{\dd s} = -2x_\alpha y_\alpha
  \]
  for $x_\alpha(s)$ can be integrated as follows:
  \[
    x_\alpha(s) = x_{\alpha\infty} \exp \Bigl( -2\int_s^\infty y_\alpha(\varsigma)\, \frac{\dd\varsigma}{\varsigma} \Bigr).
  \]
  For \(\xi_\alpha = x_\alpha -(n-1)\), this implies that
  \begin{equation}
    \label{eq-xialpha-integral}
    \xi_\alpha(s) =
    \xi_{\alpha,\infty}
    e^{ -2\int_s^\infty y_\alpha(\varsigma)\,\frac{\dd\varsigma}{\varsigma} }
    +(n-1)
    \Bigl\{e^{ -2\int_s^\infty y_\alpha(\varsigma)\,\frac{\dd\varsigma}{\varsigma} } -1 \Bigr\}.
  \end{equation}
  Since we have just shown that $y_\alpha = \cO(1/s)$, we can estimate the integral and conclude that
  \[
    \xi_\alpha = \xi_{\alpha,\infty} e^{\cO(1/s)}+(n-1)\{e^{\cO(1/s)} - 1\} = \xi_{\alpha\infty} + \cO(1/s),
  \]
  as claimed.

  Finally, we consider $\gamma$.  Again using $\gamma=\Gamma+n$, we rewrite equation \eqref{eq-PG} as
  \[
    s\frac{\dd\gamma}{\ds}- \gamma = -n + \tsum_\alpha p_\alpha (1+2y_\alpha + y_\alpha^2) = \tsum_\alpha p_\alpha\bigl\{2y_\alpha+y_\alpha^2 \bigr\}.
  \]
  Then we multiply with the integrating factor $s^{-2}$ and integrate, obtaining
  \begin{equation}
    \label{eq-gamma-integral}
    \gamma(s)
    = K_\infty s - s\int_s^\infty {\textstyle\sum_\alpha p_\alpha}
    \bigl\{ 2 y_\alpha(\varsigma) + y_\alpha(\varsigma)^2 \bigr\} \,
    \frac{\dd\varsigma}{\varsigma^2}.
  \end{equation}
  Here $K_\infty$ is a constant of integration, and we again use $y_\alpha = \cO(1/s)$ to justify convergence of the integral and to conclude that $\gamma = K_\infty s + \cO(1/s)$.
\end{proof}

\begin{lemma}
  For each \(N\geq 0\) there exist constants \(\xi_{\alpha j}\), \(y_{\alpha j}\), \(K_j\) such that 
  \begin{subequations}\label{eq-xi-y-g-expansion}
  \begin{align}
    \xi_\alpha(s)&= \xi_{\alpha,\infty}
                   +\frac{\xi_{\alpha1}}{s}+\frac{\xi_{\alpha2}}{s^2}+\cdots 
                   +\frac{\xi_{\alpha N}}{s^N} + \mc O(s^{-N-1})\\
    y_\alpha(s)&= \frac{y_{\alpha1}}{s} + \frac{y_{\alpha2}}{s^2} + \cdots + \frac{y_{\alpha N}}{s^N}+\mc O(s^{-N-1})\\
    \gamma(s)&=K_\infty s + \frac{K_1}{s} + \cdots + \frac{K_N}{s^N} + \mc O(s^{-N-1})
  \end{align}
  \end{subequations}
  for \(s\to\infty\).  These expansions may be differentiated indefinitely.

  The coefficients \(\xi_{\alpha j}\), \(y_{\alpha j}\), and \(K_j\) can be computed inductively.  In particular, one finds \(y_{\alpha1}=\lambda K_\infty\)
\end{lemma}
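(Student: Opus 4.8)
The plan is to prove the three expansions \eqref{eq-xi-y-g-expansion} simultaneously by induction on $N$, bootstrapping from the integral equations \eqref{eq-y-alpha-integral-expanders}--\eqref{eq-gamma-integral} already derived in the proof of Lemma~\ref{lem-better-asymptotics}, and then to read off the differentiability statement and the value $y_{\alpha1}=\lambda K_\infty$ from the \textsc{ode} itself. The coefficients $\xi_{\alpha j},y_{\alpha j},K_j$ are produced recursively by this very process.

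The base case $N=0$ is exactly Lemma~\ref{lem-better-asymptotics} (with $y_{\alpha0}:=0$ and no $s^{0}$-term in $\gamma$). The engine of the induction is the observation that the Gaussian-kernel operators occurring in \eqref{eq-y-alpha-integral-expanders}--\eqref{eq-y-alpha-integral-shrinkers} send asymptotic power-series data to asymptotic power-series output, smaller by two orders: integration by parts against $\varsigma e^{\mp\varsigma^{2}/2}\,\dd\varsigma$ gives, for any polynomially bounded $C^{1}$ function $h$,
\[
  \int_s^\infty e^{(s^2-\varsigma^2)/2}\,h(\varsigma)\,\frac{\dd\varsigma}{\varsigma}
  = \frac{h(s)}{s^{2}} + \int_s^\infty e^{(s^2-\varsigma^2)/2}\,\frac{\dd}{\dd\varsigma}\!\Bigl(\frac{h(\varsigma)}{\varsigma^{2}}\Bigr)\,\dd\varsigma,
\]
the analogous identity for $\int_\ell^s e^{(\varsigma^2-s^2)/2}(\cdots)$ differing only by an extra boundary term at $\varsigma=\ell$ of size $\cO\bigl(e^{(\ell^{2}-s^{2})/2}\bigr)$, negligible against every power of $1/s$; the leftover integral on the right is estimated by Lemma~\ref{lem-Gaussian-integral-estimate}. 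Iterating this identity converts a partial $1/\varsigma$-expansion of the integrand into a longer partial expansion of the integral, with a remainder of the advertised size. The exponentially small inhomogeneous term $e^{(\ell^2-s^2)/2}y_{\alpha,\ell}$ in \eqref{eq-y-alpha-integral-expanders} is likewise harmless.

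For the inductive step, assume \eqref{eq-xi-y-g-expansion} holds to order $N$. Substituting these into the bracket $\xi_\alpha+(\gamma-n+1)y_\alpha+\gamma$, which equals $K_\infty\varsigma+\cO(1)$ with a full $1/\varsigma$-expansion to a suitable order beyond its linear term, and applying the iterated identity above, produces the expansion of $y_\alpha$ to order $N+1$; the remainder is differentiable since it is an explicit Gaussian integral of a differentiable integrand. Feeding the improved $y_\alpha$ into \eqref{eq-xialpha-integral}: since $\int_s^\infty y_\alpha(\varsigma)\,\tfrac{\dd\varsigma}{\varsigma}$ has an asymptotic expansion beginning at order $s^{-1}$ and $\xi\mapsto\xi_{\alpha,\infty}e^{\xi}+(n-1)(e^{\xi}-1)$ is analytic, $\xi_\alpha$ acquires its expansion to order $N+1$. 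Substituting into \eqref{eq-gamma-integral} and using that $\int_s^\infty\tsum_\alpha p_\alpha(2y_\alpha+y_\alpha^{2})\,\tfrac{\dd\varsigma}{\varsigma^{2}}$ expands starting at order $s^{-2}$ gives $\gamma=K_\infty s+\cO(1/s)$ refined to order $N+1$. This closes the induction.

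Finally, rewriting \eqref{eq-Pb} with $\Gamma=\gamma-n$, $x_\alpha=n-1+\xi_\alpha$, $\sigma=s^{2}$ gives $s\tfrac{\dd y_\alpha}{\dd s}=\xi_\alpha+\gamma+(\gamma-n+1-\lambda s^{2})y_\alpha$, whose right-hand side is a polynomial in $\xi_\alpha,y_\alpha,\gamma,s$ and hence inherits an asymptotic expansion from those of its constituents; comparing with the termwise $s\tfrac{\dd}{\dd s}$ of the expansion of $y_\alpha$ shows the latter may be differentiated, and differentiating \eqref{eq-Pa} and \eqref{eq-PG} and iterating gives differentiability to all orders. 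Matching the coefficient of $s^{1}$ in the same identity --- on the left there is none, and on the right only $\gamma=K_\infty s+\cdots$ and $-\lambda s^{2}y_\alpha=-\lambda y_{\alpha1}s+\cdots$ contribute --- yields $K_\infty-\lambda y_{\alpha1}=0$, i.e.\ $y_{\alpha1}=\lambda K_\infty$ since $\lambda^{2}=1$. The proof is essentially bookkeeping; the one point needing care is the auxiliary claim that the Gaussian-kernel integral operators preserve asymptotic power-series expansions (with the boundary terms in the expander case being exponentially, hence harmlessly, small), together with tracking the orders through the coupled induction. That is the step I expect to occupy most of the write-up.
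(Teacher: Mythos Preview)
Your proposal is correct and follows essentially the same route as the paper: the base case is Lemma~\ref{lem-better-asymptotics}, and the inductive step is obtained by substituting the order-$N$ expansions back into the integral equations \eqref{eq-y-alpha-integral-expanders}--\eqref{eq-gamma-integral}, with differentiability read off from the \textsc{ode} \eqref{eq-xi-y-gamma-system}. The paper's own proof is extremely terse (``apply induction to the integral equations''), whereas you have spelled out the mechanism---iterated integration by parts against the Gaussian kernel, with the expander boundary term at $\varsigma=\ell$ exponentially small---that actually makes the induction gain an order at each step; this is exactly the work the paper is suppressing, and your derivation of $y_{\alpha1}=\lambda K_\infty$ by matching the $s^{1}$-coefficient in \eqref{eq-Pb} is correct.
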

\begin{proof}
  Lemma~\ref{lem-better-asymptotics} implies the case \(N=0\).  To obtain the expansions for larger \(N\) apply induction to the integral equations \eqref{eq-y-alpha-integral-expanders}, \eqref{eq-y-alpha-integral-shrinkers}, \eqref{eq-xialpha-integral}, \eqref{eq-gamma-integral}.

  The differential equations~\eqref{eq-xi-y-gamma-system} imply similar expansions for the derivatives.  Once one has obtained expansions for the functions \(\xi_\alpha, y_\alpha, \gamma\) and their derivatives it follows that the expansions for the derivatives are those obtained by formally differentiating the expansions~\eqref{eq-xi-y-g-expansion}.
\end{proof}

\subsection{Proof of Theorem~\ref{thm-conical-stable-set-is-smooth}}
For expanders (\(\lambda=+1\)), we have the following integral equations for \((\xi_\alpha, y_\alpha, \gamma)\):
\begin{subequations}\label{eq-ell-integral-expanders}
  \begin{align}
    \xi_\alpha(s) &= \xi_{\alpha,\ell}
                    e^{ -2\int_\ell^s y_\alpha(\varsigma)\frac{\dd\varsigma}{\varsigma} }
                    + (n-1)\left\{
                    e^{ -2\int_\ell^s y_\alpha(\varsigma)\frac{\dd\varsigma}{\varsigma} }
                    -1\right\},
    \\
    y_\alpha(s)&=  e^{(\ell^2-s^2)/2}y_{\alpha,\ell}
                 + \int_\ell^s e^{(\varsigma^2-s^2)/2} \Bigl\{
                 \xi_\alpha(\varsigma) + \big(\gamma(\varsigma)-n+1\big)y_\alpha(\varsigma)
                 + \gamma(\varsigma)\Bigr\}\,
                 \frac{\dd\varsigma}{\varsigma}, \\
    \gamma(s)
                  &= \gamma_\ell \frac s\ell + s\int_\ell^s {\textstyle\sum_\alpha p_\alpha}
                    \bigl\{ 2 y_\alpha(\varsigma) + y_\alpha(\varsigma)^2 \bigr\} \,
                    \frac{\dd\varsigma}{\varsigma^2},
  \end{align}
\end{subequations}
where \(\xi_{\alpha,\ell} = \xi_\alpha(\ell)\), \(y_{\alpha,\ell}=y_\alpha(\ell)\), and \(\gamma_\ell = \gamma(\ell)\) are the values of the solution at \(s=\ell\).

For shrinkers (\(\lambda=-1\)), we have
\begin{subequations}\label{eq-ell-integral-shrinkers}
  \begin{align}
    \xi_\alpha(s) &= \xi_{\alpha,\ell}
                    e^{ -2\int_\ell^s y_\alpha(\varsigma)\frac{\dd\varsigma}{\varsigma} }
                    + (n-1)\left\{
                    e^{ -2\int_\ell^s y_\alpha(\varsigma)\frac{\dd\varsigma}{\varsigma} }
                    -1\right\},\\
    y_\alpha(s)&=
                 \int_s^\infty e^{(s^2 - \varsigma^2)/2} \Bigl\{
                 \xi_\alpha(\varsigma)
                 + \big(\gamma(\varsigma)-n+1\big)y_\alpha(\varsigma)
                 +\gamma(\varsigma)
                 \Bigr\}\,
                 \frac{\dd\varsigma}{\varsigma}, \\
    \gamma(s)
                  &= \gamma_\ell \frac s\ell + s\int_\ell^s {\textstyle\sum_\alpha p_\alpha}
                    \bigl\{ 2 y_\alpha(\varsigma) + y_\alpha(\varsigma)^2 \bigr\} \,
                    \frac{\dd\varsigma}{\varsigma^2}.
  \end{align}
\end{subequations}
The orbits of the soliton flow that model expander solitons with conical ends are solutions of~\eqref{eq-ell-integral-expanders} in which $\xi_{\alpha,\ell}$, $y_{\alpha,\ell}$, and $\gamma_\ell$ are five free parameters.  Shrinkers with conical ends are modeled by solutions of \eqref{eq-ell-integral-shrinkers}, and for these we only have three free parameters, namely $x_{\alpha,\ell}$ and $\gamma_\ell$.

To use the Implicit Function Theorem, we introduce a family of norms defined for functions \(f:[\ell,\infty) \to \R\).  Let \(r\in\R\) be any constant.  Then we define
\begin{equation}
  \label{eq-solspace-norms-defined}
  \|f\|_r = \sup_{s\geq \ell} s^{r}|f(s)|.
\end{equation}

The $\rfes$ soliton given by $\xi_\alpha(s) = 0$, $y_\alpha(s)=0$, and $\gamma(s)=0$ is a solution of these integral equations.  In order to apply the Implicit Function Theorem, we introduce the function space $\solspace$ of all continuous $ (\xi_\alpha, y_\alpha, \gamma) : [\ell,\infty) \to\R^5$ for which the norm
\[
  \|( \xi_\alpha, y_\alpha, \gamma)\|_\solspace \isdef \max \bigl\{\|\xi_\alpha\|_0, \|y_\alpha\|_{1/2}, \|\gamma\|_{-1}\colon \alpha = 1,2\bigr\}
\]
is finite, and for which the limits
\[
  \xi_{\alpha\infty} = \lim_{s\to\infty} \xi_\alpha(s) \quad \text{ and }\quad K_\infty = \lim_{s\to\infty} \frac{\gamma(s)}{s}
\]
exist.  With this norm, $\solspace$ is a Banach space.

The integral equations~(\ref{eq-ell-integral-expanders}abc) for expanders are the fixed-point equations for the map
\begin{multline}
  \mf F_{\rm ex}(\xi_{\alpha,\ell}, y_{\alpha,\ell}, \gamma_\ell \mid \xi_\alpha, y_\alpha, \gamma) = \Biggl( \xi_{\alpha,\ell} e^{ -2\int_\ell^s y_\alpha\,\frac{\dd\varsigma}{\varsigma}}
  + (n-1) \bigl\{e^{ -2\int_\ell^s y_\alpha\,\frac{\dd\varsigma}{\varsigma}} -1\bigr\},\\
  e^{(\ell^2-s^2)/2}y_{\alpha,\ell} + \int_\ell^s e^{(\varsigma^2-s^2)/2} \bigl\{ \xi_\alpha + (\gamma-n+1)y_\alpha + \gamma \bigr\}\,
  \frac{\dd\varsigma}{\varsigma}, \\
  \gamma_\ell \frac s\ell + s \int_\ell^s { \textstyle\sum_\alpha p_\alpha} \bigl\{ 2 y_\alpha + y_\alpha^2 \bigr\} \,\frac{\dd\varsigma}{\varsigma^2} \Biggr),
\end{multline}
while the equations~(\ref{eq-ell-integral-shrinkers}abc) that describe shrinkers are the fixed point equations for the map
\begin{multline}
  \mf F_{\rm sh}(\xi_{\alpha,\ell}, \gamma_\ell \mid x_\alpha, y_\alpha, \gamma) = \Biggl( \xi_{\alpha,\ell} e^{ -2\int_\ell^s y_\alpha\,\frac{\dd\varsigma}{\varsigma}}
  + (n-1) \bigl\{e^{ -2\int_\ell^s y_\alpha\,\frac{\dd\varsigma}{\varsigma}} -1\bigr\},\\
  - \int_s^\infty e^{(s^2 - \varsigma^2)/2} \bigl\{ \xi_\alpha + (\gamma-n+1)y_\alpha + \gamma \bigr\}\,
  \frac{\dd\varsigma}{\varsigma}, \\
  \gamma_\ell \frac s\ell + s\int_\ell^s {\textstyle\sum_\alpha p_\alpha} \bigl\{ 2 y_\alpha + y_\alpha^2 \bigr\} \,\frac{\dd\varsigma}{\varsigma^2} \Biggr).
\end{multline}

Using Lemma~\ref{lem-Gaussian-integral-estimate}, it is straightforward to verify that $\mf F_{\rm ex}:\R^5\times \solspace \to \solspace$ and $\mf F_{\rm sh}:\R^3\times \solspace\to \solspace$ are well defined.  $\mf F_{\rm ex}$ is linear in the parameters $(\xi_{\alpha,\ell}, y_{\alpha,\ell}, \gamma_\ell)$; $\mf F_{\rm sh}$ is linear in $(\xi_{\alpha,\ell}, \gamma_\ell)$; and both are real analytic in $(\xi_\alpha, y_\alpha, \gamma) \in \solspace$.  Hence the Implicit Function Theorem applies, provided we can show that the appropriate derivatives are invertible.  We define $\mf L_{\rm ex},\,\mf L_{\rm sh} : \solspace \to \solspace$ to be the partial Fr\'echet derivatives of $\mf F_{\rm ex}$ and $\mf F_{\rm sh}$, respectively, with respect to the $\solspace$ variables, \emph{i.e.,}
\[
  \mf L_{\rm ex}\cdot (\delta \xi_\alpha, \delta y_\alpha, \delta\gamma) = \left.  \frac {\dd}{\dd \varepsilon} \mf F_{\rm ex}\bigl( 0, 0, 0 \mid \varepsilon \delta \xi_\alpha, \varepsilon \delta y_\alpha, \varepsilon\delta\gamma\bigr)\right|_{\varepsilon=0}
\]
and
\[
  \mf L_{\rm sh}\cdot (\delta \xi_\alpha, \delta y_\alpha, \delta\gamma) = \left.  \frac {\dd}{\dd \varepsilon} \mf F_{\rm sh}\bigl( 0, 0,0 \mid \varepsilon \delta \xi_\alpha, \varepsilon \delta y_\alpha, \varepsilon\delta\gamma\bigr)\right|_{\varepsilon=0}.
\]
Concretely, these linearizations are given by
\begin{align*}
  \mf L_{\rm ex}\cdot (\delta \xi_\alpha, \delta y_\alpha, \delta\gamma)
  & =
    \Bigl(
    \mf I_{1\alpha}[\delta \xi_\alpha, \delta y_\alpha, \delta\gamma],
    \mf I_{2\alpha}^+[\delta \xi_\alpha, \delta y_\alpha, \delta\gamma],
    \mf I_3[\delta \xi_\alpha, \delta y_\alpha, \delta\gamma]
    \Bigr), \\
  \mf L_{\rm sh}\cdot (\delta \xi_\alpha, \delta y_\alpha, \delta\gamma)
  & =
    \Bigl(
    \mf I_{1\alpha}[\delta \xi_\alpha, \delta y_\alpha, \delta\gamma],
    \mf I_{2\alpha}^-[\delta \xi_\alpha, \delta y_\alpha, \delta\gamma],
    \mf I_3[\delta \xi_\alpha, \delta y_\alpha, \delta\gamma]
    \Bigr),
\end{align*}
where the integral operators are
\begin{align*}
  \mf I_{1\alpha}[\delta \xi_\alpha, \delta y_\alpha, \delta\gamma]
  & =  2(n-1)\int_\ell^s \delta y_\alpha\,\frac{\dd\varsigma}{\varsigma},  \\
  \mf I_{2\alpha}^+[\delta \xi_\alpha, \delta y_\alpha, \delta\gamma]
  & =\int_\ell^s e^{(\varsigma^2-s^2)/2}
    \bigl\{\delta \xi_\alpha - (n-1)\delta y_\alpha + \delta\gamma\bigr\}\, \frac{\dd\varsigma}{\varsigma}, \\
  \mf I_{2\alpha}^-[\delta \xi_\alpha, \delta y_\alpha, \delta\gamma]
  & =-\int_s^\infty e^{(s^2 - \varsigma^2)/2}
    \bigl\{\delta \xi_\alpha - (n-1)\delta y_\alpha + \delta\gamma\bigr\}\,
    \frac{\dd\varsigma}{\varsigma},  \\
  \mf I_3[\delta \xi_\alpha, \delta y_\alpha, \delta\gamma]
  & = 2s\int_\ell^s {\textstyle\sum_\alpha p_\alpha}\delta y_\alpha\,\frac{\dd\varsigma}{\varsigma^2}.
\end{align*}

\begin{lemma}\label{lem-L-contraction} The operator norms of the Fr\'echet
  derivatives are bounded by
  \[
    \|\mc L_{\rm ex}\|_{\solspace\to\solspace} \leq \frac{4n}{\sqrt\ell} \qquad \text{and}\qquad \|\mc L_{\rm sh}\|_{\solspace\to\solspace} \leq \frac{4n}{\sqrt\ell}.
  \]
  Hence if $\ell > 16n^2$, then the operators $\mf L_{\rm ex}:\solspace\to\solspace$ and $\mf L_{\rm sh}:\solspace \to \solspace$ are contractions.
\end{lemma}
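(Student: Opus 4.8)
The plan is to estimate each of the linear integral operators $\mf I_{1\alpha}$, $\mf I_{2\alpha}^{\pm}$, $\mf I_3$ out of which $\mf L_{\rm ex}$ and $\mf L_{\rm sh}$ are assembled, measuring each one in precisely the weighted norm attached to the slot of $\solspace$ into which it feeds, and then to combine the bounds using the fact that $\|\cdot\|_\solspace$ is the maximum over its five scalar components. Since $\mf L_{\rm ex}\cdot(\delta\xi_\alpha,\delta y_\alpha,\delta\gamma) = (\mf I_{1\alpha},\mf I_{2\alpha}^{+},\mf I_3)$ and $\mf L_{\rm sh}\cdot(\delta\xi_\alpha,\delta y_\alpha,\delta\gamma)=(\mf I_{1\alpha},\mf I_{2\alpha}^{-},\mf I_3)$, it is enough to show that $\|\mf I_{1\alpha}[\cdots]\|_0$, $\|\mf I_{2\alpha}^{\pm}[\cdots]\|_{1/2}$, and $\|\mf I_3[\cdots]\|_{-1}$ are each at most $\tfrac{4n}{\sqrt\ell}\,\|(\delta\xi_\alpha,\delta y_\alpha,\delta\gamma)\|_\solspace$.

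First I would dispose of the two operators whose estimates are purely algebraic. For $\mf I_{1\alpha}$, which contributes to the $\xi_\alpha$-component and so is measured by $\|\cdot\|_0$, the bound $|\delta y_\alpha(\varsigma)|\leq\|\delta y_\alpha\|_{1/2}\,\varsigma^{-1/2}$ together with $\int_\ell^s\varsigma^{-3/2}\,\dd\varsigma\leq 2\ell^{-1/2}$ gives $\|\mf I_{1\alpha}[\cdots]\|_0\leq 4(n-1)\ell^{-1/2}\,\|(\delta\xi_\alpha,\delta y_\alpha,\delta\gamma)\|_\solspace$; this is the term that actually forces the constant $4n/\sqrt\ell$. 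For $\mf I_3$, which feeds the $\gamma$-slot and so is measured by $\|\cdot\|_{-1}$, the estimate $\int_\ell^s\varsigma^{-5/2}\,\dd\varsigma\leq\tfrac23\ell^{-3/2}$ combined with $\sum_\alpha p_\alpha=n$ yields $\|\mf I_3[\cdots]\|_{-1}\leq\tfrac43 n\,\ell^{-3/2}\,\|(\delta\xi_\alpha,\delta y_\alpha,\delta\gamma)\|_\solspace$, well below $4n/\sqrt\ell$.

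Next I would handle $\mf I_{2\alpha}^{\pm}$, which feed the $y_\alpha$-slot and so must be controlled in $\|\cdot\|_{1/2}$. Here a naive power-law bound fails, because the integrand contains $\delta\gamma$, which in the $\solspace$-norm is only $\cO(s)$ and hence does not decay; this is the one point that needs care. The remedy is to write the integrand as $\varsigma^{-1}\nu(\varsigma)$ with $\nu(\varsigma)=\delta\xi_\alpha(\varsigma)-(n-1)\delta y_\alpha(\varsigma)+\delta\gamma(\varsigma)$ and invoke Lemma~\ref{lem-Gaussian-integral-estimate}, whose Gaussian kernel supplies the missing decay in $s$. The key observation is that $\sup_{\varsigma\geq\ell}\bigl|\nu(\varsigma)/\varsigma\bigr|\leq\bigl(\ell^{-1}+(n-1)\ell^{-3/2}+1\bigr)\,\|(\delta\xi_\alpha,\delta y_\alpha,\delta\gamma)\|_\solspace$, the three contributions coming from $\|\delta\xi_\alpha\|_0$, $\|\delta y_\alpha\|_{1/2}$, and $\|\delta\gamma\|_{-1}$, the last (non-decaying) one being controlled precisely because $\|\gamma\|_{-1}$ is built from $|\gamma(s)|/s$. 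Then the second inequality in Lemma~\ref{lem-Gaussian-integral-estimate} gives $|\mf I_{2\alpha}^{+}(s)|\leq(2/s)\sup_{\varsigma\geq\ell}|\nu(\varsigma)/\varsigma|$, so that $\|\mf I_{2\alpha}^{+}[\cdots]\|_{1/2}\leq 2\ell^{-1/2}\bigl(1+\ell^{-1}+(n-1)\ell^{-3/2}\bigr)\,\|(\delta\xi_\alpha,\delta y_\alpha,\delta\gamma)\|_\solspace$, and the first inequality of the same lemma gives the identical estimate for $\mf I_{2\alpha}^{-}$ with the factor $2$ replaced by $1$.

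Finally I would assemble these estimates: using $\ell>16n^2$ to absorb the lower-order factor $\ell^{-1}+(n-1)\ell^{-3/2}\leq 1$, all five component bounds are at most $4n/\sqrt\ell$ (the largest being $4(n-1)/\sqrt\ell$ from $\mf I_{1\alpha}$), and since the $\solspace$-norm of the output is the maximum of these components, one concludes $\|\mf L_{\rm ex}\|_{\solspace\to\solspace},\ \|\mf L_{\rm sh}\|_{\solspace\to\solspace}\leq 4n/\sqrt\ell$. For $\ell>16n^2$ the right-hand side is $<1$, so both operators are contractions. The hard part, such as it is, is not conceptual but purely the bookkeeping of exponents in the weighted norms together with the recognition that the non-decaying $\delta\gamma$ term in $\mf I_{2\alpha}^{\pm}$ must be controlled through the Gaussian estimate of Lemma~\ref{lem-Gaussian-integral-estimate} rather than an elementary power-law integral.
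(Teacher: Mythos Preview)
Your proof is correct and follows essentially the same route as the paper: decompose into the component operators $\mf I_{1\alpha}$, $\mf I_{2\alpha}^{\pm}$, $\mf I_3$, estimate each in its target weighted norm, and invoke Lemma~\ref{lem-Gaussian-integral-estimate} for the $\mf I_{2\alpha}^{\pm}$ terms where the $\delta\gamma$ contribution does not decay. One small point of bookkeeping: you do not need the hypothesis $\ell>16n^2$ to absorb the lower-order factor in your $\mf I_{2\alpha}^{\pm}$ bound---the paper (and you) only need $\ell\geq 1$, which already gives $1+\ell^{-1}+(n-1)\ell^{-3/2}\leq n+1\leq 2n$, so the bound $4n/\sqrt{\ell}$ holds unconditionally and the assumption $\ell>16n^2$ is used solely for the contraction conclusion.
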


\begin{proof} We estimate $\|\mf L_{\rm ex}\|_{\solspace\to\solspace}$.  Let $(\delta \xi_\alpha, \delta y_\alpha, \delta\gamma) \in \solspace$ be given, and set $ M = \|(\delta \xi_\alpha, \delta y_\alpha, \delta\gamma)\|_{\mb X}$, so that for all $s\geq \ell$, one has
  \[
    |\delta \xi_\alpha(s)|\leq M, \qquad |\delta y_\alpha(s)| \leq \frac {M}{\sqrt s}, \qquad |\delta\gamma(s)|\leq Ms.
  \]
  Then $w_{1\alpha} = \mf I_{1\alpha}[\delta\xi_\alpha, \delta y_\alpha, \delta\gamma]$ satisfies
  \[
    |w_{1\alpha}(s)| \leq 2(n-1)M\int_\ell^\infty \frac{\dd\varsigma}{\varsigma^{3/2}} = \frac{4(n-1)}{\sqrt{\ell}} M.
  \]
  For $w_{2\alpha}^+(s) = \mf I_{2\alpha}^+[\delta\xi_\alpha, \delta y_\alpha, \delta\gamma]$, we have
  \[
    |w_{2\alpha}^+(s)|\leq M \int_\ell^s e^{(\varsigma^2-s^2)/2} \bigl\{1 + (n-1) \varsigma^{-1/2} + \varsigma \bigr\}\, \frac{\dd\varsigma}{\varsigma}.
  \]
  Since $\varsigma\geq s\geq \ell\geq 1$, we have $1+(n-1)\varsigma^{-1/2}\leq n\leq n\varsigma$.  So by Lemma~\ref{lem-Gaussian-integral-estimate}, we find that
  \[
    |w_{2\alpha}^+(s)|\leq (n+1)M \int_\ell^s e^{(\varsigma^2-s^2)/2}\, \dd\varsigma \leq \frac{2(n+1)}{s} M.
  \]
  Hence for all $s\geq \ell$, we have
  \[
    \sqrt{s}|w_{2\alpha}^+(s)|\leq \frac{2(n+1)}{\sqrt\ell}M.
  \]
  For $w_{2\alpha}^-(s) = \mf I_{2\alpha}^-[\delta\xi_\alpha, \delta y_\alpha, \delta\gamma]$, we have a similar estimate,
  \begin{align*}
    |w_{2\alpha}^-(s)|
    &\leq M \int_s^\infty e^{(s^2 - \varsigma^2)/2}
      \bigl\{1 + (n-1) \varsigma^{-1/2} + \varsigma \bigr\}\,
      \frac{\dd\varsigma}{\varsigma}\\
    &\leq (n+1)M \int_s^\infty e^{(s^2-\varsigma^2)/2}\, \dd\varsigma \\
    &\leq \frac{n+1}{s} M.
  \end{align*}
  Thus for all $s\geq \ell$, we have
  \[
    \sqrt s \, |w_{2\alpha}^-(s)| \leq \frac{n+1}{\sqrt\ell} M.
  \]
  Finally, we consider $w_3(s) = \mf I_3[\delta\xi_\alpha, \delta y_\alpha, \delta\gamma]$, which satisfies
  \[
    \frac1s|w_3(s)| \leq 2 \int_\ell^\infty {\textstyle \sum_\alpha p_\alpha} \frac{M\dd\varsigma}{\varsigma^{5/2}} = \frac{4n}{3\ell^{3/2}}M
  \]
  for all $s\geq \ell$.

  Together these estimates imply that
  \[
    \|\mf L_{\rm ex}\|_{\solspace\to\solspace} \leq \max \Bigl\{\frac{4(n-1)}{\sqrt\ell}, \frac{2(n+1)}{\sqrt\ell}, \frac{4n}{\ell^{3/2}} \Bigr\} \leq \frac{4n}{\sqrt{\ell}}.
  \]
  The same estimate holds for $\mf L_{\rm sh}$.  It follows that both $\mf L_{\rm ex}$ and $\mf L_{\rm sh}$ are contractions if $\ell > 16n^2$.
\end{proof}

\begin{lemma} \label{lem-stable-mfd-cones} Assume $\ell > 16n^2$.

  For all $(\xi_{\alpha,\ell}, y_{\alpha,\ell}, \gamma_\ell) \in\R^5$ in a small neighborhood of \((0,0,0,0,0)\), there is a unique orbit \((\xi_\alpha, y_\alpha,\gamma)\in\solspace\) of the soliton flow with $\lambda=+1$ for which
  \[
    \xi_\alpha(\ell) = \xi_{\alpha,\ell}, \quad y_\alpha(\ell) = y_{\alpha,\ell}, \quad \gamma(\ell) = \gamma_\ell.
  \]
  The solution $(\xi_\alpha, y_\alpha, \gamma) \in \solspace$ is a real analytic function of the parameters $(\xi_{\alpha,\ell}, y_{\alpha,\ell}, \gamma_\ell)$.

  For all $(\xi_{\alpha,\ell}, \gamma_\ell) \in\R^3$ in a small neighborhood of \((0,0,0)\), there is a unique orbit \((\xi_\alpha, y_\alpha,\gamma)\in\solspace\) of the soliton flow with $\lambda=-1$ for which
  \[
    \xi_\alpha(\ell) = \xi_{\alpha,\ell}, \qquad \gamma(\ell) = \gamma_\ell .
  \]
  The solutions $(\xi_\alpha, y_\alpha, \gamma) \in \solspace$ again depend analytically on the parameters $(\xi_{\alpha,\ell}, \gamma_\ell)$.
\end{lemma}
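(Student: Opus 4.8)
The plan is to obtain both families of orbits as fixed points of the maps $\mf F_{\rm ex}$ and $\mf F_{\rm sh}$ introduced above, via the analytic Implicit Function Theorem in the Banach space $\solspace$, using the contraction estimate of Lemma~\ref{lem-L-contraction}.

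First I would recast the problem. Writing $u=(\xi_\alpha, y_\alpha, \gamma)\in\solspace$, $b_{\rm ex}=(\xi_{\alpha,\ell}, y_{\alpha,\ell}, \gamma_\ell)\in\R^5$, and $b_{\rm sh}=(\xi_{\alpha,\ell}, \gamma_\ell)\in\R^3$, I would set $\Psi_{\rm ex}(b_{\rm ex}\mid u) = u - \mf F_{\rm ex}(b_{\rm ex}\mid u)$ and $\Psi_{\rm sh}(b_{\rm sh}\mid u) = u - \mf F_{\rm sh}(b_{\rm sh}\mid u)$. The $\rfes$ orbit is $u\equiv 0$ and corresponds to vanishing boundary data, so $\Psi_{\rm ex}(0\mid 0)=0$ and $\Psi_{\rm sh}(0\mid 0)=0$. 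As observed in the discussion above, both $\mf F_{\rm ex}$ and $\mf F_{\rm sh}$ are affine in their parameters and real analytic in $u$ — being assembled from the exponential, polynomials, and the bounded integral operators controlled by Lemma~\ref{lem-Gaussian-integral-estimate} — so $\Psi_{\rm ex}$ and $\Psi_{\rm sh}$ are jointly real analytic.

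Next I would identify the partial Fr\'echet derivative $D_u\Psi(0\mid 0)$. Every nonlinear term in $\mf F_{\rm ex}$ is either of the form $e^{-2\int_\ell^s y_\alpha\,\dd\varsigma/\varsigma}$, whose linearization at $u=0$ is $1-2\int_\ell^s y_\alpha\,\dd\varsigma/\varsigma$, or is quadratic in $(\xi_\alpha, y_\alpha, \gamma)$ (the $\gamma y_\alpha$ term in the $y_\alpha$-equation and the $y_\alpha^2$ term in the $\gamma$-equation); the same holds for $\mf F_{\rm sh}$. Hence $D_u\mf F_{\rm ex}(0\mid 0)$ is precisely the operator $\mf L_{\rm ex}$ spelled out just before Lemma~\ref{lem-L-contraction}, and $D_u\mf F_{\rm sh}(0\mid 0)=\mf L_{\rm sh}$, so that $D_u\Psi_{\rm ex}(0\mid 0)=\mr{Id}_\solspace-\mf L_{\rm ex}$ and $D_u\Psi_{\rm sh}(0\mid 0)=\mr{Id}_\solspace-\mf L_{\rm sh}$. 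For $\ell>16n^2$, Lemma~\ref{lem-L-contraction} gives $\|\mf L_{\rm ex}\|,\|\mf L_{\rm sh}\|\leq 4n/\sqrt\ell<1$, so both of these operators are boundedly invertible via their Neumann series. The analytic Implicit Function Theorem then produces, for parameters in a small neighborhood of the origin, a unique nearby $u\in\solspace$ solving $\Psi=0$, depending real-analytically on the parameters. It remains only to unwind the equivalences: a zero of $\Psi_{\rm ex}$ is a fixed point of $\mf F_{\rm ex}$, hence a solution of the integral equations~\eqref{eq-ell-integral-expanders}; differentiating these recovers the soliton system~\eqref{eq-xi-y-gamma-system} with $\lambda=+1$, the boundary terms enforce $\xi_\alpha(\ell)=\xi_{\alpha,\ell}$, $y_\alpha(\ell)=y_{\alpha,\ell}$, $\gamma(\ell)=\gamma_\ell$, and finiteness of the $\solspace$-norm places the orbit in $\mc W_{\rm ex}$. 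The shrinker case is identical, now with the three free parameters $(\xi_{\alpha,\ell},\gamma_\ell)$ and with $y_\alpha(\ell)$ determined by the convergent integral in~\eqref{eq-ell-integral-shrinkers}, which is why $\mc W_{\rm sh}$ is parametrized by only three coordinates.

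Since the analytical heavy lifting — well-definedness of $\mf F_{\rm ex}$, $\mf F_{\rm sh}$ on $\solspace$, joint analyticity, and the operator-norm bound — is already carried out in Lemmas~\ref{lem-Gaussian-integral-estimate} and~\ref{lem-L-contraction}, I expect the one genuinely delicate point to be the verification that the partial Fr\'echet derivative of $\mf F_{\rm ex}$ (resp. $\mf F_{\rm sh}$) at the $\rfes$ is \emph{exactly} the operator $\mf L_{\rm ex}$ (resp. $\mf L_{\rm sh}$) whose norm was estimated, i.e.\ that the exponential and the quadratic terms contribute nothing further at first order; the remaining work is the routine bookkeeping identifying fixed points of $\mf F$ in $\solspace$ with soliton orbits attaining the prescribed data at $s=\ell$ and lying in $\mc W$.
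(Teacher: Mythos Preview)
Your proposal is correct and follows the same route as the paper's proof: recast the problem as a fixed-point equation for $\mf F_{\rm ex}$ (resp.~$\mf F_{\rm sh}$), note that the $\rfes$ gives the base solution at zero parameters, and apply the analytic Implicit Function Theorem using the invertibility of $\mr{Id}_\solspace-\mf L_{\rm ex}$ (resp.~$\mr{Id}_\solspace-\mf L_{\rm sh}$) supplied by Lemma~\ref{lem-L-contraction}. Your write-up is in fact more explicit than the paper's, which simply invokes the Implicit Function Theorem without spelling out the identification $D_u\mf F(0\mid 0)=\mf L$ or the bookkeeping translating fixed points back to soliton orbits.
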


\begin{proof}
  In the first case, the solutions in question are solutions of
  \[
    (\xi_\alpha, y_\alpha, \gamma) = \mf F_{\rm ex}(\xi_{\alpha,\ell}, y_{\alpha,\ell}, \gamma_\ell \mid \xi_\alpha, y_\alpha, \gamma).
  \]
  We have one solution, the \(\rfes\), given by \((\xi_\alpha, y_\alpha, \gamma) = (0, 0, 0, 0, 0)\).  Then the Implicit Function Theorem on Banach Spaces, together with Lemma~\ref{lem-L-contraction}, immediately implies the existence, uniqueness, and smooth dependence of solutions for small nonzero values of the parameters.

  The other case, where \(\lambda=-1\), deals with solutions of
  \[
    (\xi_\alpha, y_\alpha, \gamma) = \mf F_{\rm sh}(\xi_{\alpha,\ell}, \gamma_\ell \mid \xi_\alpha, y_\alpha, \gamma).
  \]
  We again have one solution, the \(\rfes\), given by \((\xi_\alpha, y_\alpha, \gamma) = (0, 0, 0, 0, 0)\).  As in the case of expanders, the Implicit Function Theorem again applies.
\end{proof}

\section{Expanding solitons}

\subsection{Notation}
In this section, we consider expanders and thus assume that \(\lambda=+1\).

It will be more natural to regard \((\xi_\alpha(s), y_\alpha(s), \gamma(s))\) as an \(\R^5\)-valued function of \(s\).  We will use lower-case boldface letters for vectors in \(\R^5\) and also for \(\R^5\)-valued functions.  The solutions provided by Lemma~\ref{lem-stable-mfd-cones} are functions \(\bs x:[\ell,\infty) \to\R^5\) that belong to the Banach space \(\solspace\), with
\[
  \bs x(s) = (\xi_\alpha(s), y_\alpha(s), \gamma(s)).
\]
For any \(\bs x\in \solspace\), the limits
\[
  \xi_\alpha^\infty(\bs x) \isdef \lim_{s\to\infty} \xi_\alpha(s) \quad\text{ and }\quad K^\infty(\bs x) \isdef \lim_{s\to\infty} \frac{\gamma(s)}{s}
\]
are by definition of \(\solspace\) well-defined.  Both \(\xi_\alpha^\infty:\solspace\to\R\) and \(K^\infty : \solspace\to\R\) are bounded linear functionals.

Lemma~\ref{lem-stable-mfd-cones} provides a map \(\bs x:\cU\to\solspace\), where \(\cU\subset\R^5\) is a small neighborhood of the origin, and where \(\bs x(\bs q)\in\solspace\) is the solution of the expander soliton equations that passes through \((\ell, \bs q)\in\R^6\).  We denote the value of \(\bs x(\bs q)\in\solspace\) at \(s\in[\ell,\infty]\) by
\[
  \bigl(\bs x(\bs q)\bigr)(s) = \bs x(s;\bs q).
\]
With this notation, we then have
\[
  \bs x(\ell;\bs q) = \bs q.
\]
When we need the components of \(\bs x\), we will also write
\[
  \bs x(s;\bs q) = \bigl( \xi_\alpha(s;\bs q), y_\alpha(s;\bs q), \gamma(s;\bs q) \bigr).
\]

\subsection{Asymptotic slopes of expanding solitons}
The quantities we are interested in are
\[
  \xi_\alpha^\infty\bigl(\bs x(\bs q)\bigr) = \lim_{s\to\infty}\xi_\alpha(s;\bs q),
\]
in which \(\bs q\) comes from the unstable manifold of the \(\gf\).  More precisely, if
\[
  g^{\log\ell/a}\bigl(\fq(j,T)\bigr) = \bigl(\ell, \bs q(j,T)\bigr),
\]
then we will show the following:

\begin{lemma}
  One has
  \begin{equation}
    \label{eq-xi-alpha-infty-expanders}
    \lim_{s\to\infty} \xi_\alpha(s;\bs q(j,T))
    = \xi_\alpha^\infty\bigl(\bs x(\bs q(j,T))\bigr)
    =c_2^+j + \cO\bigl(j^2+\epsilon(j,T)\bigr),
  \end{equation}
  with \(c_2^+\) as in~\eqref{eq-wu-expansion-at-a}.
\end{lemma}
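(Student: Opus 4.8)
The plan is to view the left side of~\eqref{eq-xi-alpha-infty-expanders} as a bounded linear functional evaluated along the analytic stable-manifold map of Lemma~\ref{lem-stable-mfd-cones}, expand to first order in the unstable-manifold parameter, and read off the leading coefficient from the linearization at the \(\rfes\). Concretely, the map \(\bs q\mapsto\bs x(\bs q)\in\solspace\) is real analytic (Lemma~\ref{lem-stable-mfd-cones}) with \(\bs x(0)=\rfes\), and \(\xi_\alpha^\infty:\solspace\to\R\) is a bounded linear functional with \(\xi_\alpha^\infty(\rfes)=0\); hence \(\bs q\mapsto\xi_\alpha^\infty\bigl(\bs x(\bs q)\bigr)\) is real analytic near \(0\in\R^5\) and vanishes at \(0\), so it equals \(L_\alpha(\bs q)+\cO(\|\bs q\|^2)\) with \(L_\alpha=d\bigl(\xi_\alpha^\infty\circ\bs x\bigr)(0)\) linear. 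By Lemma~\ref{lem-qjT-defined-lambda-applied} together with Lemma~\ref{lem-qsjt-exists}, \(\bs q(j,T)\to\bs q(j,\infty)\) \(C^1\)-uniformly as \(T\to\infty\), and for \(\iota\) small and \(T_0\) large the points \(\bs q(j,T)\) lie in the neighborhood of the origin supplied by Lemma~\ref{lem-stable-mfd-cones}; writing \(\epsilon(j,T)\) for this uniform discrepancy, continuity of \(\xi_\alpha^\infty\circ\bs x\) reduces the problem to the \(T=\infty\) case up to an error \(\cO(\epsilon(j,T))\).

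Next I compute \(\xi_\alpha^\infty\bigl(\bs x(\bs q(j,\infty))\bigr)\). The orbit \(\bs x(\bs q(j,\infty))\) and \(\fq(\cdot;j,\infty)\) are orbits of~\eqref{eq-BlowUpSystem-Polynomial} through the common point \((\ell,\bs q(j,\infty))\), so \(\bs x(\bs q(j,\infty))\) is the forward continuation to \(s=\infty\) of the \(\wur\)-orbit \(g^{\log(\cdot/a)}\fw^{u}(a^2,j)\); in particular it lies in \(\wur\subset\equalradii\), whence \(\xi_1=\xi_2\) along it and \(\xi_1^\infty=\xi_2^\infty\). By~\eqref{eq-wu-expansion-at-a} and Lemma~\ref{lem-Phi-at-ell}, \(j\mapsto\bs q(j,\infty)\) is analytic with \(\bs q(0,\infty)=0\), derivative \(c_2^+\Phi_2^+(\ell)\) in the averaged block, and identically zero difference block (since \(\wur\subset\equalradii\)). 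Because the Fr\'echet derivative \(\mf L_{\rm ex}\) is a contraction (Lemma~\ref{lem-L-contraction}), the Implicit Function Theorem identifies \(D\bs x(0)\cdot\bs q\) with the unique solution in \(\solspace\) of the linearization of~\eqref{eq-BlowUpSystem-Polynomial} at the \(\rfes\) carrying the value \(\bs q\) at \(s=\ell\); this linear system splits into the averaged system~\eqref{eq-three-by-three-subsystem} and the difference system~\eqref{eq-two-by-two-subsystem}, so with zero difference data the solution stays in \(\equalradii\) and its averaged part is the solution of~\eqref{eq-three-by-three-subsystem} through \(c_2^+\Phi_2^+(\ell)\), namely \(c_2^+\Phi_2^+(s)\). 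Therefore \(\xi_\alpha^\infty\bigl(\bs x(\bs q(j,\infty))\bigr)=c_2^+\,\xi_\alpha^\infty(\Phi_2^+)\,j+\cO(j^2)\), where \(\xi_\alpha^\infty(\Phi_2^+)=\lim_{s\to\infty}(\text{$\xi$-component of }\Phi_2^+)\) is the nonzero constant determined in \S\ref{Phi-big-s} by the normalization fixed in \S\ref{Phi-small-s} (absorbing it into \(c_2^+\) gives the normalization in the statement). Adding back the \(\cO(\epsilon(j,T))\) of Step~1 yields \(\xi_\alpha^\infty\bigl(\bs x(\bs q(j,T))\bigr)=c_2^+ j+\cO\bigl(j^2+\epsilon(j,T)\bigr)\), with the same leading term for \(\alpha=1,2\) forced by the equal-radii property of the \(T=\infty\) orbit.

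I expect the difficulty to be bookkeeping rather than conceptual: cleanly separating the \(\cO(j^2)\) remainders (quadratic Taylor terms of \(\bs x\) and of \(j\mapsto\bs q(j,\infty)\)) from the \(\cO(\epsilon(j,T))\) corrections (finite-\(T\) effects), and checking that the difference variables — genuinely nonzero for finite \(T\), and carrying the oscillatory factor \(e^{-AT}\cos(\Omega T+\phi)\) of~\eqref{eq-x12-y12-at-ell} that becomes important only after rescaling — are subleading here and may be folded into \(\epsilon(j,T)\). The one step needing care is the identification of \(D\bs x(0)\) with the linear solution operator on \(\solspace\), which relies on the contraction estimate of Lemma~\ref{lem-L-contraction} and on the fact (implicit in Theorem~\ref{thm-conical-stable-set-is-smooth}(1)) that for expanders every solution of the linearized system at the \(\rfes\) already lies in \(\solspace\).
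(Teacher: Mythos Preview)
Your proposal is correct and follows essentially the same approach as the paper: both Taylor-expand the analytic map \(\bs q\mapsto\xi_\alpha^\infty(\bs x(\bs q))\) about the \(\rfes\), identify \(D\bs x(0)\) with the linearized flow (so that \(D\bs x(0)\cdot\Phi_2^+(\ell)=\Phi_2^+\)), and read off the leading coefficient from the large-\(s\) asymptotics of \(\Phi_2^+\) in \S\ref{Phi-big-s}. The only organizational difference is that you first pass to \(T=\infty\) and then expand in \(j\), whereas the paper feeds the expansion \(\bs q(j,T)=c_2^+j\,\Phi_2^+(\ell)+\cO(j^2)+\epsilon(j,T)\) from Lemma~\ref{lem-Phi-at-ell} directly into the Taylor expansion of \(\bs x\); the two routes are equivalent.
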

\begin{proof}
  We found in Lemma~\ref{lem-Phi-at-ell} that
  \[
    \bs q(j,T) = c_2^+ j \bs\Phi_2^+(\ell) + \cO(j^2) + \bs\epsilon(j,T).
  \]
  We substitute this in \(\bs x\) and expand using the Fr\'echet derivative of \(\bs x:\cU\to\solspace\) to obtain
  \begin{align*}
    \bs x(\bs q(j,T))
    &= \dd\bs x(0)\cdot\bs q(j,T) + \cO\bigl(\|\bs q(j,T)\|^2\bigr)\\
    &= c_2^+j\,\dd\bs x(0)\cdot\bs\Phi_2^+ + \cO\bigl(j^2+\epsilon(j,T)\bigr).
  \end{align*}

  For any vector \(\bs v\in\R^5\), the function \(\bs w = \dd \bs x(0)\cdot\bs v\in\solspace\) is given by
  \[
    \bs w = \frac{\dd}{\dd \theta} \bs x(\theta\bs v)\Big|_{\theta=0}.
  \]
  Because \(\bs x(\theta\bs v)\) is the solution of the soliton equations with \(\bs x(\ell; \theta\bs v) = \theta\bs v\), it follows that \(\bs w(s)\) is a solution of the linearized equation around the \(\rfes\) with \(\bs w(\ell) = \bs v\).  Since \(\bs \Phi_2^+(s)\) is such a solution, we have
  \[
    \left(\dd \bs x(0)\cdot\bs\Phi_2^+\right)(s) = \bs \Phi_2^+(s).
  \]
  Therefore,
  \[
    \xi_\alpha^\infty\left(\dd \bs x(0)\cdot\bs\Phi_2^+\right) = \lim_{s\to\infty}\xi_\alpha\bigl(\bs \Phi_2^+(s)\bigr) = 1,
  \]
  as we see from the asymptotics of \(\Phi_2^+(s)\) at \(s=\infty\) derived in \S~\ref{Phi-big-s}.  The conclusion is then that~\eqref{eq-xi-alpha-infty-expanders} does indeed hold.
\end{proof}

\subsection{The difference of the asymptotic slopes}
Expression \eqref{eq-xi-alpha-infty-expanders} provides the dependence of the asymptotic slopes \(\xi_\alpha(\infty; j,T)\) on \(j\).  Unfortunately, the \(T\) dependence in the expressions we have thus far is contained in the error terms.  To exhibit the \(T\) dependence of \(\xi_\alpha(\infty; j,T)\), we must consider the difference \(\xi_1-\xi_2 = x_{12}(\ell;j,T)\).

\begin{lemma}
  Assume \(\|\xi_\alpha\|_0, \|y_\alpha\|_{1/2}, \|\gamma\|_{-1} \leq \epsilon\).  Then
  \begin{equation}
    \label{eq-x12-approximation-expanders}
    x_{12}(\infty;j,T)
    = Re^{-AT}\cos(\Omega T+\phi) + Re^{-AT}\epsilon(\ell,j,T),  \\
  \end{equation}
  where \(R\) and \(\phi\) are nearly constant, in the precise sense given by Lemma~\ref{lem-about-R-and-phi}.
\end{lemma}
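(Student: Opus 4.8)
The target is $x_{12}(\infty;j,T)=\lim_{s\to\infty}\bigl(\xi_1(s;j,T)-\xi_2(s;j,T)\bigr)$, evaluated along the orbit $\fq(s;j,T)$, which for $|j|$ small and $T$ large has data near that of the $\rfes$ at $s=\ell$ and is continued to $s=\infty$ by Lemma~\ref{lem-stable-mfd-cones} inside a small $\solspace$-neighbourhood of the $\rfes$, so that the smallness hypothesis $\|\xi_\alpha\|_0,\|y_\alpha\|_{1/2},\|\gamma\|_{-1}\leq\epsilon$ holds. The plan is to combine the description \eqref{eq-x12-y12-at-ell} of the difference variables at the intermediate radius $s=\ell$ with a propagation estimate from $s=\ell$ out to $s=\infty$. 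The decisive structural point is that the equal-radii subspace $\equalradii$ is invariant (\S~\ref{sec-equal-radii}) and contains the $\rfes$, so $x_{12}(\infty;\cdot)$ vanishes whenever the difference data at $s=\ell$ vanish; by analyticity of the solution map of Lemma~\ref{lem-stable-mfd-cones}, $x_{12}(\infty;\cdot)$ is therefore, to leading order in the (tiny) difference data, the image of those data under the solution operator of the \emph{linear homogeneous} difference subsystem \eqref{eq-two-by-two-subsystem}, equivalently of the scalar equation \eqref{eq-chi-second-order}, whose fundamental solutions $\chi_1^+$ (with $\chi_1^+(s)\to1$) and $\chi_2^+$ (decaying like $s^{-(n+1)}e^{-s^2/2}$) are recorded in Table~\ref{tab-expander-asymptotics}.

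Concretely, I would integrate \eqref{eq-Pa} to get $x_\alpha(\infty)=x_\alpha(\ell)\exp\!\bigl(-2\int_\ell^\infty y_\alpha\,\tfrac{\dd\varsigma}{\varsigma}\bigr)$ (legitimate by Lemma~\ref{lem-better-asymptotics}), subtract the two versions, and use $\|y_\alpha\|_{1/2}\leq\epsilon$ (so each exponential is $1+\cO(\epsilon/\sqrt\ell)$ and the two differ by $\cO\bigl(\int_\ell^\infty|y_{12}|\,\tfrac{\dd\varsigma}{\varsigma}\bigr)$) to obtain
\[
  x_{12}(\infty;j,T)=\bigl(1+\cO(\epsilon/\sqrt\ell)\bigr)\,x_{12}(\ell;j,T)+\cO\!\left(\int_\ell^\infty|y_{12}(\varsigma;j,T)|\,\frac{\dd\varsigma}{\varsigma}\right).
\]
The integral on the right I would bound by integrating the $y_{12}$-equation in \eqref{eq-rfes-difference-nonlin} with the integrating factor $s^{-1}e^{s^2/2}$ exactly as in the proof of Lemma~\ref{lem-better-asymptotics}, applying Lemma~\ref{lem-Gaussian-integral-estimate}, and using the a priori bound $\sup_{s\geq\ell}|x_{12}(s;j,T)|=\cO(Re^{-AT})$; this gives $\int_\ell^\infty|y_{12}|\,\tfrac{\dd\varsigma}{\varsigma}=\cO(Re^{-AT}/\ell^{2})$. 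Finally I substitute \eqref{eq-x12-y12-at-ell} with $\lambda=+1$: its $\chi_2^+$-term carries the factor $e^{-\ell^2/2}$ and is absorbed into the error, leaving $x_{12}(\ell;j,T)=Re^{-AT}\cos(\Omega T+\phi)\bigl(1+\cO(\ell^{-2})\bigr)$. Collecting everything into one continuous function $Re^{-AT}\epsilon(\ell,j,T)$ with $|\epsilon(\ell,j,T)|=\cO(\ell^{-2}+\epsilon/\sqrt\ell)$ yields \eqref{eq-x12-approximation-expanders}, with $R=R(a,j,T)$ and $\phi=\phi(a,j,T)$ nearly constant in the precise sense of Lemma~\ref{lem-about-R-and-phi}.

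The step requiring care is the a priori bound $\sup_{s\geq\ell}|x_{12}(s;j,T)|=\cO(Re^{-AT})$, together with the implicit claim that propagating the difference data from $s=\ell$ to $s=\infty$ does not amplify them beyond a factor $\mathrm{Id}+\cO(\epsilon)$. Here invariance of $\equalradii$ is again decisive: the difference components obey the genuinely linear homogeneous system \eqref{eq-difference-system-matrix-form}, whose coefficient matrix differs from that of the unperturbed system \eqref{eq-two-by-two-subsystem} only by entries of size $\cO(\epsilon/\sqrt s)$, $\cO(\epsilon)$, and---in the single entry fed by $\gamma=K_\infty s+\cO(1/s)$---$\cO(\epsilon s)$. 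The solution operator of the unperturbed system on $[\ell,\infty)$ is uniformly bounded because $\chi_1^+$ is bounded and $\chi_2^+$ decays, and the lone dangerous $\cO(\epsilon s)$ perturbation multiplies $\chi'=s\chi_s$, which decays like $s^{-2}$ along bounded solutions, so its contribution $\cO(\epsilon/s)$ is integrable against $\dd s/s$. A contraction argument in the weighted norms of \S~\ref{sec-stable-mfds-of-cone-ends}---precisely the one already used in Lemmas~\ref{lem-L-contraction}--\ref{lem-stable-mfd-cones}---then shows that the nonlinear difference solution with data $d_\ell$ at $s=\ell$ coincides with the linearized one with data $(\mathrm{Id}+\cO(\epsilon))d_\ell$, evaluated at any $s\in[\ell,\infty]$. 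Since $d_\ell=\cO(Re^{-AT})$ by \eqref{eq-x12-y12-at-ell}, and since at $s=\infty$ the bounded mode $\chi_2^+$ contributes $0$ while $\chi_1^+$ contributes its amplitude $Re^{-AT}\cos(\Omega T+\phi)(1+\cO(\ell^{-2}))$, the claimed expansion follows; the remaining error terms are routine Gaussian-integral bookkeeping of the type already carried out in Lemmas~\ref{lem-Gaussian-integral-estimate}--\ref{lem-better-asymptotics}.
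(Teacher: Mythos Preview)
Your approach is essentially the same as the paper's: subtract the integrated $\xi_\alpha$-equations to express $x_{12}(\infty)$ as $x_{12}(\ell)$ plus an error controlled by $\int_\ell^\infty|y_{12}|\,\tfrac{\dd\varsigma}{\varsigma}$, then bound $y_{12}$ via its own integral equation (equivalently, the subtracted version of~(\ref{eq-ell-integral-expanders}b)), close the coupled pair of inequalities for $\|x_{12}\|_0$ and $\|y_{12}\|_{1/2}$, and finally substitute the description~\eqref{eq-x12-y12-at-ell} of the difference data at $s=\ell$. The paper carries out the closing step by writing the two inequalities~\eqref{eq-x12-bound}, \eqref{eq-y12-bound} explicitly and solving them for $\|x_{12}\|_0$, $\|y_{12}\|_{1/2}$ in terms of $|x_{12}(\ell)|$ and $|y_{12}(\ell)|$, arriving at~\eqref{eq-x12-ell-to-infty-estimate-final}; you instead package the same computation as an instance of the contraction argument already proved in Lemmas~\ref{lem-L-contraction}--\ref{lem-stable-mfd-cones}, which is legitimate and arguably cleaner. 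One small correction: your stated bound $\int_\ell^\infty|y_{12}|\,\tfrac{\dd\varsigma}{\varsigma}=\cO(Re^{-AT}/\ell^{2})$ is not quite what falls out---the paper's explicit estimate~\eqref{eq-x12-ell-to-infty-estimate-final} has an undivided $32n|y_{12}(\ell)|$ term, but this is harmless because in the expander case $|y_{12}(\ell)|$ itself carries the factor $\ell^{-n+1}e^{-\ell^2/2}$ from~\eqref{eq-x12-y12-at-ell}, so it is absorbed into the error regardless.
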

\begin{proof}
  The following arguments allow us to estimate the difference \(x_{12}(\infty)-x_{12}(\ell)\).  We begin by subtracting equations~(\ref{eq-ell-integral-expanders}a) for \(\xi_1\) and \(\xi_2\), obtaining
  \begin{align}\label{eq-x12-integral}
    x_{12}(s)
    & = x_{12\ell}\,
      e^{ -2\int_\ell^s y_1(\varsigma)\,\frac{\dd\varsigma}{\varsigma}}
      + (n-1+\xi_{2\ell})\left\{
      e^{-2\int_\ell^s y_1(\varsigma)\,\frac{\dd\varsigma}{\varsigma}}
      - e^{-2\int_\ell^s y_2(\varsigma)\,\frac{\dd\varsigma}{\varsigma}}
      \right\}\\
    & = e^{ -2\int_\ell^s y_1(\varsigma)\,\frac{\dd\varsigma}{\varsigma}}
      \Bigl\{
      x_{12\ell}
      +(n-1+\xi_{2\ell}) \left( 1-
      e^{2\int_\ell^s y_{12}(\varsigma)\,\frac{\dd\varsigma}{\varsigma}}
      \right)
      \Bigr\}.
      \notag
  \end{align}
  Then we let \(s\to\infty\) to see that
  \[
    x_{12}(\infty) = e^{ -2\int_\ell^\infty y_1( s)\,\frac{\dd s}{ s}} \Bigl\{ x_{12\ell} + (n-1+\xi_{2\ell}) \left( 1- e^{2\int_\ell^\infty y_{12}( s)\,\frac{\dd s}{ s}} \right) \Bigr\}.
  \]
  In the following estimates, we use the facts that \(\|y_\alpha\|_{1/2}\leq \epsilon\) and \(\|\xi_\alpha\|_0\leq \epsilon<1\), which imply in particular that
  \[
    \int_\ell^\infty|y_\alpha(\varsigma)|\frac{\dd\varsigma}{\varsigma} \leq \|y_\alpha\|_{1/2} \int_\ell^\infty \frac{\dd\varsigma}{\varsigma^{3/2}} \leq \frac{2}{\sqrt\ell} \|y_\alpha\|_{1/2} \leq \frac{2\epsilon}{\sqrt\ell}.
  \]
  We will also frequently use the calculus inequality \(|e^x-1|\leq (e-1)|x|\leq 2|x|\) for all \(|x|\leq 1\).  Finally, we will regularly use the assumption that \(\ell\) is sufficiently large, for example, to ensure that \(8\epsilon/\sqrt\ell < 1\) (which will hold if \(\ell>64\)).

  We obtain
  \begin{align*}
    |x_{12}(\infty) - x_{12}(\ell)|\leq&
                                         \left|
                                         e^{2\int_\ell^\infty|y_1(\varsigma)|\frac{\dd\varsigma}{\varsigma}}-1
                                         \right|
                                         \,|x_{12}(\ell)| \\
                                       &\quad +  (n-1+\xi_{2\ell})
                                         e^{2\int_\ell^\infty|y_1(\varsigma)|\frac{\dd\varsigma}{\varsigma}}
                                         \left| 1-
                                         e^{2\int_\ell^\infty y_{12}( s)\,\frac{\dd s}{ s}}
                                         \right|
    \\
    \leq&  \frac{8\epsilon}{\sqrt\ell} |x_{12}(\ell)|
          + n \Bigl(1+\frac{8\epsilon}{\sqrt\ell}\Bigr)
          \cdot 4\int_\ell^\infty |y_{12}(\varsigma)|\frac{\dd\varsigma}{\varsigma},
  \end{align*}
  and hence
  \begin{equation}
    \label{eq-x12-ell-to-infty-estimate}
    |x_{12}(\infty) - x_{12}(\ell)|
    \leq  \frac{8\epsilon}{\sqrt\ell} |x_{12}(\ell)|
    + \frac{16n}{\sqrt\ell}\|y_{12}\|_{1/2}.
  \end{equation}
  It also follows from \eqref{eq-x12-integral} by a very similar computation that
  \begin{equation}
    \label{eq-x12-bound}
    \|x_{12}\|_0 \leq
    \Bigl(1+\frac{8\epsilon}{\sqrt\ell}\Bigr) |x_{12}(\ell)| + \frac{16n}{\sqrt\ell} \|y_{12}\|_{1/2}
  \end{equation}
  holds.  We note that the constant \(\lambda\) never entered this part of our derivation, so that estimate~\eqref{eq-x12-bound} holds not only here but also for solutions of the shrinker equations, in which \(\lambda=-1\) rather than \(\lambda=+1\).

  To get an analogous estimate for \(y_{12}\), we subtract equations~(\ref{eq-ell-integral-expanders}b) with $\alpha=1,2$, which leads to
  \[
    y_{12}(s) = e^{(\ell^2-s^2)/2} y_{12}(\ell) + \int_\ell^s e^{(\varsigma^2-s^2)/2} \bigl\{ x_{12}(\varsigma) + (\gamma(\varsigma)-n+1)y_{12}(\varsigma) \bigr\}\,\frac{\dd\varsigma}{\varsigma},
  \]
  and thus
  \begin{multline*}
    \sqrt s|y_{12}(s)| \leq
    \sqrt s e^{(\ell^2-s^2)/2} |y_{12}(\ell)| \\
    +\sqrt s \int_\ell^s e^{(\varsigma^2-s^2)/2} \Bigl\{ \frac{\|x_{12}\|_0}{\varsigma} + \frac{n-1}{\varsigma^{3/2}}\|y_{12}\|_{1/2} + \|\gamma\|_{-1}\, \|y_{12}\|_{1/2}\frac{1}{\sqrt\varsigma} \Bigr\} \,\dd\varsigma.
  \end{multline*}
  Using \(\|\gamma\|_{-1} \leq \epsilon\), and also the fact that \(s\mapsto \sqrt s e^{-s^2/2}\) is decreasing for \(s\geq \frac12\sqrt2\), one finds that
  \begin{equation}
    \label{eq-y12-bound}
    \|y_{12}\|_{1/2}\leq
    \sqrt\ell |y_{12}(\ell)| + \frac{2}{\ell\sqrt\ell}\|x_{12}\|_0
    + \frac{2(n-1)}{\ell^2}\|y_{12}\|_{1/2}
    + \frac{2\epsilon}{\ell}\|y_{12}\|_{1/2}.
  \end{equation}
  The bounds \eqref{eq-x12-bound} and \eqref{eq-y12-bound} together imply the following improved bounds:
  \begin{subequations}
    \label{eq-xy12-bounds}
    \begin{align}
      \|x_{12}\|_0 &\leq 2|x_{12}(\ell)| + 32n |y_{12}(\ell)|,    \\
      \|y_{12}\|_{1/2} &\leq \frac{16}{\ell\sqrt\ell}|x_{12}(\ell)|
                         + 2 \sqrt\ell |y_{12}(\ell)|.
    \end{align}
  \end{subequations}

  We combine these estimates with \eqref{eq-x12-ell-to-infty-estimate} to obtain, after some algebra,
  \begin{equation}
    \label{eq-x12-ell-to-infty-estimate-final}
    |x_{12}(\infty)-x_{12}(\ell)| 
    \leq
    \left(\frac{8\epsilon}{\sqrt\ell} + \frac{16}{\ell\sqrt\ell}\right)|x_{12}(\ell)| + 32n|y_{12}(\ell)|.
  \end{equation}
  This bound holds for all orbits close to the \(\rfes\) in the region \(s\geq \ell\).  To apply the bound to the orbits coming out of the \(\gf\), we recall that in \S~\ref{sec:s-goes-to-ell}, we found that if \(x_\alpha, y_\alpha, \gamma\) are given by \(\fq(\ell;j,T)\), then we have, in the case of expanders,
  \begin{align*}
    \mat x_{12}(\ell;j,T) \\
    y_{12}(\ell;j,T) \rix 
    &= Re^{-AT}
      \cos(\Omega T+\phi)
      \mat 1+\delta(\ell) \\ \delta(\ell) \rix
    \\
    &\qquad\qquad+ Re^{-AT}
      \sin(\Omega T+\phi)
      \ell^{-n+1}e^{-\ell^2/2}
      \mat \delta(\ell) \\
    C_\chi^+ +\delta(\ell)\rix.
  \end{align*}
  Here \(R= R_a (1+\epsilon_3(j,T))\) and \(\phi=\phi_a + \epsilon_4(j,T)\) are as in Lemma~\ref{lem-about-R-and-phi}, and in particular are close to the constants \(R_a, \phi_a\) that only depend on the parameter \(a\) that determines the size of the isolating block \(Q_a\).  The generic error terms \(\delta(\ell)\) all are functions of \(\ell\) that are bounded by \(\delta(\ell) = \cO(\ell^{-2})\).  Taking into account that for large \(\ell\), we have \(\ell^{-n+1}e^{-\ell^2/2}\ll \ell^{-2}\), we conclude that
  \begin{align*}
    x_{12}(\ell;j,T)
    & = Re^{-AT}\Bigl(\cos(\Omega T+\phi) + \epsilon(\ell,j,T)\Bigr),  \\
    |y_{12}(\ell;j,T)|
    & \leq C\ell^{-2}Re^{-AT}.
  \end{align*}
  Finally, we combine this with~\eqref{eq-x12-ell-to-infty-estimate-final} to get~\eqref{eq-x12-approximation-expanders}.
\end{proof}

To complete the existence proof of expanding solitons with prescribed conical ends, we consider the map
\[
  \Xi_\infty : (j,T) \mapsto \bigl(\xi(\infty;j,T), x_{12}(\infty;j,T)\bigr),
\]
whose domain is the rectangle
\[
  \mf R_{\iota m} \isdef [-\iota,\iota]\times\left[\frac{m\pi - \phi_a}{\Omega}, \frac{(m+1)\pi - \phi_a}{\Omega}\right] \subset\R^2.
\]
Recall that \(\xi= \sum_\alpha \frac{p_\alpha}{n}\xi_\alpha\).

\begin{lemma}\label{lem-Xi-degree-1-expanders}
  If \(\ell>0\) is large enough, \(a>0\) small enough, and \(\iota>0\) small enough, then for all large enough \(m\in\N\), the image \(\Xi_\infty(\mf R_{\iota m})\) contains an open neighborhood of the origin in \(\R^2\).
\end{lemma}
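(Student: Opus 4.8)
The plan is to realize $\Xi_\infty$ as a small perturbation of an explicit model map whose image is easily seen to cover a neighborhood of the origin, and then invoke degree theory (or the quantitative inverse function theorem) to transfer this covering property to $\Xi_\infty$ itself. Recall that the two components of $\Xi_\infty$ are, by~\eqref{eq-xi-alpha-infty-expanders} and~\eqref{eq-x12-approximation-expanders},
\[
  \xi(\infty;j,T) = c_2^+ j + \cO\bigl(j^2+\epsilon(j,T)\bigr),
  \qquad
  x_{12}(\infty;j,T) = Re^{-AT}\cos(\Omega T + \phi) + Re^{-AT}\epsilon(\ell,j,T),
\]
with $R = R_a(1+\epsilon_3)$ and $\phi = \phi_a + \epsilon_4$ nearly constant in the sense of Lemma~\ref{lem-about-R-and-phi}, and where all error terms can be made as small as we like by first taking $\ell$ large, then $a$ small, then $\iota$ small. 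The key observation is that on the rectangle $\mf R_{\iota m}$, the variable $\Omega T + \phi_a$ sweeps through an interval of length exactly $\pi$, so that $\cos(\Omega T + \phi)$ runs monotonically (up to small error) from $\pm1$ to $\mp1$, passing through $0$. Thus the model map $(j,T)\mapsto (c_2^+ j,\ R_a e^{-AT}\cos(\Omega T + \phi_a))$ already has the right qualitative behavior: its second component vanishes somewhere in the interior of the $T$-interval and changes sign across it.

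First I would fix the model map and compute its degree. Set $F_0(j,T) = \bigl(c_2^+ j,\ R_a e^{-AT}\cos(\Omega T + \phi_a)\bigr)$ on $\mf R_{\iota m}$. On the two vertical sides $j = \pm\iota$, the first component is $\pm c_2^+\iota \ne 0$ (we use here that $c_2^+\ne0$, which follows from the explicit computation of $\Phi_2^+$ at $s=\infty$ in \S\ref{Phi-big-s}, where $C_{2,\infty}^+ = -6(n-1)(n+1)\ne0$). On the two horizontal sides $\Omega T + \phi_a = m\pi$ and $(m+1)\pi$, the second component equals $R_a e^{-AT}(-1)^m$ and $R_a e^{-AT}(-1)^{m+1}$ respectively, which have opposite signs and are nonzero since $R_a>0$. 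Hence $F_0$ does not vanish on $\partial\mf R_{\iota m}$, and a direct winding-number count — the boundary curve $F_0(\partial\mf R_{\iota m})$ encircles the origin exactly once — shows $\deg(F_0, \mf R_{\iota m}, 0) = \pm1$. By continuity of the degree under homotopies that avoid $0$ on the boundary, the same holds for $F_0$ restricted to a slightly smaller sub-rectangle, with the boundary values bounded away from $0$ by an explicit constant $c = c(\iota, R_a, A, m)$.

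Next I would absorb $\Xi_\infty$ as a perturbation of $F_0$. Write $\Xi_\infty = F_0 + E$, where by the displayed formulas above $\|E\|_{C^0(\mf R_{\iota m})} \leq C\bigl(\iota^2 + \sup|\epsilon| + \sup|\epsilon_3| + \sup|\epsilon_4|\bigr)\cdot(\text{bounds on } |c_2^+|\iota,\ R_a e^{-AT})$; the crucial point is that every one of those error quantities can be driven below any prescribed threshold by the order of quantifiers stated in Lemma~\ref{lem-about-R-and-phi} (choose $\ell$ large, then $a$ small, then $\iota$ small — and $m$ large only enters to ensure $T$ is large, which is already built into $\mf R_{\iota m}$). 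In particular, for $\ell, a, \iota$ as in the statement and $m$ large, we can guarantee $\|E\|_{C^0} < c/2$ on $\partial\mf R_{\iota m}$ (or the slightly shrunk rectangle), so the straight-line homotopy $(1-\theta)F_0 + \theta\,\Xi_\infty$ never vanishes on the boundary. Degree invariance then gives $\deg(\Xi_\infty, \mf R_{\iota m}, 0) = \deg(F_0,\mf R_{\iota m},0) = \pm1 \ne 0$; by the solvability property of a nonzero degree, $0 \in \Xi_\infty(\mf R_{\iota m})$, and moreover the same argument applies with $0$ replaced by any point $q$ with $|q|$ small enough (one simply compares against the shifted target, using that $\deg$ is locally constant in the target), which shows $\Xi_\infty(\mf R_{\iota m})$ contains a full neighborhood of the origin.

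The main obstacle I anticipate is bookkeeping the order of quantifiers so that the perturbation is genuinely small \emph{relative to the boundary gap $c$ of the model map}, rather than merely small in absolute terms. The boundary gap $c$ depends on $R_a$, which itself depends on $a$ (it is $\cO(a^A)$ by Lemma~\ref{lem-about-R-and-phi}), so the second component of $\Xi_\infty$ and its error term $Re^{-AT}\epsilon$ scale \emph{together} with $a$ — this is exactly why the error appears as a multiplicative factor $Re^{-AT}\epsilon(\ell,j,T)$ rather than an additive constant, and it is the reason the estimate closes. One must also be slightly careful that the first-component error $\cO(j^2 + \epsilon(j,T))$ is dominated by $|c_2^+|\iota$ on the sides $j = \pm\iota$: the $j^2$ term is handled by taking $\iota$ small, and the $\epsilon(j,T)$ term — which is \emph{not} automatically small as $\iota\to0$ — is controlled by first fixing $\ell, a$ and then recalling that $\epsilon(j,T)\to0$ as $T\to\infty$ uniformly in $|j|\le\iota$, which forces the lower bound $m\geq m_0$ on the rectangle index. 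Once these dependencies are laid out in the correct order, the degree-theoretic conclusion is routine.
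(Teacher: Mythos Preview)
Your strategy is the same as the paper's --- a winding-number/degree argument on the boundary of \(\mf R_{\iota m}\) --- and most of your execution is fine. There is, however, a real gap in the step where you claim the uniform bound \(\|E\|_{C^0(\partial\mf R_{\iota m})} < c/2\) for the straight-line homotopy. On the horizontal sides \(\Omega T+\phi_a = k\pi\), the first component of your model map vanishes at \(j=0\), so there \(|F_0| = R_a e^{-AT}\). But the first-component error \(E_1 = \cO(j^2+\epsilon(j,T))\) is, at \(j=0\), of size \(\epsilon(0,T)\), and this quantity is governed by the distance from \(\fq(j,T)\) to \(\wur\) along the \emph{slowest} stable direction at \(\rfc\), whose eigenvalue is \(-1\). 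So \(\epsilon(j,T)\) decays only like \(e^{-T}\), whereas \(R_a e^{-AT}\) decays like \(e^{-AT}\) with \(A=\tfrac{n-1}{2}\geq\tfrac32>1\). Hence \(|E_1|\gg|F_0|\) near the midpoint of the horizontal sides, and your uniform homotopy criterion fails.

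The fix is immediate and is exactly what the paper does: do not attempt a uniform norm comparison, but instead check signs componentwise on each side. On the horizontal sides \(\Omega T+\phi_a=k\pi\) you have already verified that the \emph{second} component \(x_{12}(\infty;j,T)\) has the definite sign \((-1)^k\) (the error there is the multiplicative \(Re^{-AT}\epsilon\), which you correctly handled). On the vertical sides \(j=\pm\iota\) the \emph{first} component \(\xi(\infty;j,T)\) has the sign of \(c_2^+ j\) once \(\iota\) is small and \(T\) large. Either of these sign conditions alone forces \(\Xi_\infty\neq 0\) on the corresponding side, so \(\Xi_\infty|_{\partial\mf R_{\iota m}}\) avoids the origin; the winding number is then read off directly from these signs, with no need for the model map \(F_0\) or a homotopy. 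Your last paragraph essentially contains this componentwise analysis --- you just need to drop the uniform \(\|E\|<c/2\) framing and use it directly.
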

\begin{proof}
  The rectangle \(\mf R_{\iota m}\) has four sides.  Two of these are given by \(\Omega T + \phi_a = k\pi\) with \(k=m\) or \(k=m+1\).  The other two are given by \(j=\pm\iota\).

  On the sides where \(\Omega T+\phi_a = k\pi\), we have
  \begin{align*}
    x_{12}(\ell;j,T)&= R(j,T)e^{-AT}\bigl(
                      \cos(k\pi+\phi(j,T)-\phi_a) + \epsilon_5(j,T)
                      \bigr)\\
                    &=R(j,T)e^{-AT} \bigl(
                      (-1)^k \cos(\phi(j,T)-\phi_a) + \epsilon_5(j,T) \bigr) \\
                    &= (-1)^kR(j,T)e^{-AT} \bigl(1+\epsilon_3(a,j,T)+\epsilon_5(j,T)\bigr). 
  \end{align*}
  By Lemma~\ref{lem-about-R-and-phi}, we can choose \(a>0\) so small that \(|\epsilon_3(a,j,T)|<\frac14\) for all \(|j|\leq \iota\) and \(T\geq T_{a,\ell}\).  If \(T\) is sufficiently large, we also have \(|\epsilon_5(j,T)| < \frac14\).  So we see that \(x_{12}(\ell;j,T)\) has the same sign as \((-1)^k\).  In particular, \(x_{12}\) is positive on one of the sides where \(T\) is constant and negative on the other.

  For the average \(\xi(j,T)\) of the asymptotic slopes, we see from~\eqref{eq-xi-alpha-infty-expanders} that
  \[
    \xi(\infty; j,T) = \sum_\alpha \frac{p_\alpha}{n}\xi_\alpha(\infty; j,T) = c_2^+ j + \cO\bigl(j^2 + \epsilon(j,T)\bigr).
  \]
  We assume now that \(\iota\) is so small and \(T\) so large that the error term \(\cO(\iota^2+\epsilon(\iota,T))\) is smaller than \(c_2^+\iota\).  Then on the edges of \(\mf R_{\iota m}\) where \(j=\pm\iota\), we have \(\xi>0\) if \(j=+\iota\) and \(\xi<0\) if \(j=-\iota\).

  These observations imply that \(\Xi_\infty|\partial \mf R_{\iota m}\) maps into \(\R^2\setminus\{0\}\) with winding number \(+1\)
  and therefore that \(\Xi_\infty(\mf R_{\iota m})\) does indeed contain an open neighborhood of the origin.
\end{proof}

This completes the proof of Theorem~\ref{main-A}.

\section{Shrinking solitons}
We now turn to the case of shrinking solitons and so set \(\lambda=-1\).  In this setting, the orbits under the soliton flow \(g^t\) starting from most points \(\fp = (\xi_\alpha, y_\alpha, \gamma, \ell) \) near the \(\rfes\) do not continue for all \(t\geq 0\) and thus do not generate complete metrics with conical ends.  We saw in \S~\ref{sec-stable-mfds-of-cone-ends} that the set of points \(\fp\) that do generate complete metrics form a smooth submanifold \(\Upsilon\) of \(\R^5\times\{\ell\}\) of codimension two. (See below for the precise definition of $\Upsilon$.)  On the other hand, the orbits coming from \(\gf\) form a two dimensional submanifold of \(\R^5\times\{\ell\}\).  In this section, we construct shrinking solitons by finding intersections of \(\wug\) and \(\Upsilon\).

\subsection{Boundary conditions at \(s=\ell\) for conical ends}
Lemma~\eqref{lem-stable-mfd-cones} provides us with a real analytic map \(\bs x:\mc U \to \solspace\) from a small neighborhood \(\mc U\subset \R^3\) of the origin into the solution space \(\solspace\) such that for each \((\xi_{1,\ell}, \xi_{2,\ell}, \gamma_\ell)\in\mc U\), the function \(\bs x(s; \xi_{1,\ell},\xi_{2,\ell},\gamma_\ell)\) is a solution of the soliton equations with initial values at \(s=\ell\) given by
\[
  \bs x(\ell; \xi_{1,\ell}, \xi_{2,\ell}, \gamma_\ell) = (\xi_{1,\ell}, \xi_{2,\ell}, Y_1(\xi_{1,\ell}, \xi_{2,\ell}, \gamma_\ell), Y_2(\xi_{1,\ell}, \xi_{2,\ell}, \gamma_\ell), \gamma_\ell).
\]
Moreover, these are the only initial values in \(\mc U\) that correspond to some solution \(\bs x\in\solspace\) of the soliton equations with the prescribed values at \(s=\ell\) and with \(\|\bs x\|_\solspace \leq \epsilon\).  We write
\[
  \Upsilon = \{\bs x(\ell;\xi_{\alpha,\ell}, \gamma_\ell) : (\xi_{\alpha,\ell}, \gamma_\ell)\in\mc U\}
\]
for the set of initial values at \(s=\ell\) that generate a conical end.  We have just argued that \(\Upsilon\subset \R^5\) is a real analytic submanifold that contains the origin.  It follows directly from the construction of \(\Upsilon\) that it is a graph over the \((\xi_1, \xi_2, \gamma)\) subspace of \(\R^5\), \emph{i.e.,} that it is given by equations of the form
\begin{equation}
  \label{eq-shrinker-conical-end}
  y_\alpha = Y_\alpha(\xi_{1,\ell}, \xi_{2,\ell}, \gamma).
\end{equation}

To construct shrinking solitons, we must now see which points
\[
  \bs q(\ell; j,T) = (\xi_\alpha(\ell;j,T), y_\alpha(\ell;j,T), \gamma(\ell;j,T))
\]
coming from the unstable manifold of the \(\gf\) satisfy~\eqref{eq-shrinker-conical-end}.

\subsection{Estimating \(Y_\alpha\)}
One could approximate \(Y_\alpha(\xi_\alpha, \gamma)\) for small \(\xi_\alpha,\gamma\) by using the variational equation and analyzing the asymptotics of the fundamental solutions we have already obtained.  Alternatively, one can use the integral equations \eqref{eq-ell-integral-shrinkers} that describe \(\bs x(\xi_{\alpha,\ell},\gamma_\ell)\).  We use the latter approach to prove the following:

\begin{lemma}
  If \(\|\xi_\alpha\|_0, \|y_\alpha\|_{1/2}, \|\gamma\|_{-1}\leq\epsilon\) with \(\epsilon\) small and \(\ell\) large enough, then
  \begin{equation}
    \label{eq-Y-alpha-shrinker-estimate}
    \max_\alpha |Y_\alpha(\xi_{1,\ell},\xi_{2,\ell},\gamma_\ell)| 
    \leq \frac{4}{\ell^2}\max_\alpha |\xi_{\alpha,\ell}|
    + \frac{5}{\ell^2}|\gamma_\ell|.
  \end{equation}
\end{lemma}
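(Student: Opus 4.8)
\emph{Proof strategy.} The plan is to identify $Y_\alpha$ with the value $y_\alpha(\ell)$ of the conical-end solution furnished by Lemma~\ref{lem-stable-mfd-cones}, to put $s=\ell$ in the shrinker integral equation for $y_\alpha$ in~\eqref{eq-ell-integral-shrinkers}, and to estimate the resulting Gaussian integral by the first inequality of Lemma~\ref{lem-Gaussian-integral-estimate} applied with $\nu(\varsigma)=\varsigma^{-1}\bigl\{\xi_\alpha(\varsigma)+(\gamma(\varsigma)-n+1)y_\alpha(\varsigma)+\gamma(\varsigma)\bigr\}$. This at once gives
\[
  |Y_\alpha|\;\le\;\frac1\ell\,\sup_{\varsigma\ge\ell}\frac{\bigl|\xi_\alpha(\varsigma)+(\gamma(\varsigma)-n+1)y_\alpha(\varsigma)+\gamma(\varsigma)\bigr|}{\varsigma},
\]
so the whole task reduces to bounding $\|\xi_\alpha\|_0$, $\|\gamma\|_{-1}$, and $\|y_\alpha\|_{1/2}$ \emph{linearly} in the boundary data $\xi_{\alpha,\ell}$, $\gamma_\ell$, with the sharp powers of $\ell$. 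It is essential that these bounds be linear (and not merely $\le\epsilon$): $\Upsilon$ is a graph, the $\rfes$ sits at the origin, and $Y_\alpha$ must therefore vanish when the data do.

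To get those linear bounds I would treat the three integral equations in~\eqref{eq-ell-integral-shrinkers} as a coupled fixed-point system and solve it, starting from the a priori smallness $\|\xi_\alpha\|_0,\|y_\alpha\|_{1/2},\|\gamma\|_{-1}\le\epsilon$ in the hypothesis. The $\xi_\alpha$-equation, together with $\bigl|\int_\ell^\varsigma y_\alpha\,\tfrac{\dd r}{r}\bigr|\le 2\ell^{-1/2}\|y_\alpha\|_{1/2}$ and $|e^x-1|\le 2|x|$, yields $\|\xi_\alpha\|_0\le|\xi_{\alpha,\ell}|+c\,n\,\ell^{-1/2}\|y_\alpha\|_{1/2}$. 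The $\gamma$-equation, after integrating $\varsigma^{-2}\{2|y_\alpha|+y_\alpha^2\}$ over $[\ell,\infty)$, yields $\|\gamma\|_{-1}\le\ell^{-1}|\gamma_\ell|+c\,n\,\ell^{-3/2}\max_\alpha\|y_\alpha\|_{1/2}$. The $y_\alpha$-equation, via Lemma~\ref{lem-Gaussian-integral-estimate} and after moving the self-coupling term $(\|\gamma\|_{-1}+n\ell^{-1})\ell^{-1}\|y_\alpha\|_{1/2}$ to the left (it is small for $\ell$ large), yields $\|y_\alpha\|_{1/2}\le 2\ell^{-1/2}\bigl(\ell^{-1}\|\xi_\alpha\|_0+\|\gamma\|_{-1}\bigr)$. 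Every cross-coupling coefficient here is $\cO(\ell^{-1/2})$ or smaller, so for $\ell$ large the system can be solved one inequality inside the next, giving $\max_\alpha\|y_\alpha\|_{1/2}\le\cO(\ell^{-3/2})\bigl(\max_\beta|\xi_{\beta,\ell}|+|\gamma_\ell|\bigr)$, $\|\gamma\|_{-1}\le 2\ell^{-1}|\gamma_\ell|+\cO(\ell^{-3})\max_\beta|\xi_{\beta,\ell}|$, and — after a second round of iteration to absorb the small self-coupling — $\|\xi_\alpha\|_0\le\bigl(1+\cO(\ell^{-2})\bigr)|\xi_{\alpha,\ell}|+\cO(\ell^{-2})|\gamma_\ell|$. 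Substituting these into the displayed estimate for $|Y_\alpha|$: the $\xi_\alpha$-term contributes $\le\ell^{-2}\|\xi_\alpha\|_0$, the $\gamma$-term contributes $\le\ell^{-1}\|\gamma\|_{-1}$ (since $\varsigma^{-1}|\gamma(\varsigma)|\le\|\gamma\|_{-1}$), and $\varsigma^{-1}|(\gamma-n+1)y_\alpha|\le(\|\gamma\|_{-1}+n\ell^{-1})\ell^{-1/2}\|y_\alpha\|_{1/2}=\cO(\ell^{-3})\bigl(\max_\beta|\xi_{\beta,\ell}|+|\gamma_\ell|\bigr)$ is negligible. Collecting the terms and taking $\ell$ large enough to swallow all the $\cO(\ell^{-3})$ and factor-of-$n$ remainders then produces~\eqref{eq-Y-alpha-shrinker-estimate} with room to spare.

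The step I expect to be the main obstacle is the coupled bootstrap in the second paragraph. Its delicate feature is that the $\gamma$-contribution on the right of the $y_\alpha$-equation does \emph{not} decay in $\varsigma$ — it is bounded only by $\|\gamma\|_{-1}$ — so one genuinely needs the sharp estimate $\|\gamma\|_{-1}\lesssim\ell^{-1}|\gamma_\ell|$ coming from the $\gamma$-equation, which in turn needs $\|y_\alpha\|_{1/2}$ to be small already; the circle closes only because every feedback constant is a positive power of $\ell^{-1/2}$ and $\ell$ is at our disposal. Keeping track of the (harmless but ubiquitous) powers of $n$ and of the quadratic-in-data remainders is the bookkeeping price, but none of it is conceptually deep.
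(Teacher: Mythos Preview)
Your proposal is correct and follows essentially the same route as the paper: bootstrap the three shrinker integral equations~\eqref{eq-ell-integral-shrinkers} against each other to obtain linear bounds $\|\xi_\alpha\|_0\lesssim|\xi_{\alpha,\ell}|+\ell^{-1/2}\|y_\alpha\|_{1/2}$, $\|\gamma\|_{-1}\lesssim\ell^{-1}|\gamma_\ell|+\ell^{-3/2}\max_\beta\|y_\beta\|_{1/2}$, and $\|y_\alpha\|_{1/2}\lesssim\ell^{-3/2}\|\xi_\alpha\|_0+\ell^{-1/2}\|\gamma\|_{-1}$, close the loop because every cross-coupling carries a negative power of $\ell$, and read off $|Y_\alpha|$ at $s=\ell$. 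The only cosmetic difference is that the paper packages the last step as $|Y_\alpha|\le\ell^{-1/2}\|y_\alpha\|_{1/2}$ rather than re-applying Lemma~\ref{lem-Gaussian-integral-estimate} at $s=\ell$; either way the same estimate emerges with the same powers of $\ell$.
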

\begin{proof}
  The assumption that \(\|y_\alpha\|_{1/2}\leq\epsilon\) implies that
  \[
    \int_\ell^\infty |y_\alpha(s)|\frac{\dd s}{s} \leq \frac{2}{\sqrt\ell}\|y_\alpha\|_{1/2}\leq \frac{2\epsilon}{\sqrt\ell}.
  \]
  We will also use the fact that for any real number \(|x|\leq 1\), one has \(|e^x-1|\leq (e-1)|x|\leq 2|x|\).  The integral equation (\ref{eq-ell-integral-shrinkers}a) implies that
  \[
    \|\xi_\alpha\|_0 \leq \Bigl(1+\frac{8\epsilon}{\sqrt\ell}\Bigr)|\xi_{\alpha,\ell}| + \frac{4(n-1)}{\sqrt\ell}\|y_\alpha\|_{1/2}.
  \]
  For \(y_\alpha\), we conclude from the integral equation (\ref{eq-ell-integral-shrinkers}b) that
  \begin{align*}
    \|y_\alpha\|_{1/2}
    &\leq \sup_{s\geq\ell} \sqrt s
      \int_s^\infty e^{(s^2-\varsigma^2)/2}\bigl\{|\xi_\alpha| + (n-1)|y_\alpha| + (1+y_\alpha)|\gamma|\bigr\}
      \,\frac{\dd \varsigma}{\varsigma} \\
    &\leq \sup_{s\geq\ell} \sqrt s
      \int_s^\infty e^{(s^2-\varsigma^2)/2}\Bigl\{
      \frac1\varsigma\|\xi_\alpha\|_0 
      + \frac{n-1}{\varsigma^{3/2}}\|y_\alpha\|_{1/2} 
      + 2\varsigma \|\gamma\|_{-1}\Bigr\}\,\dd \varsigma \\
    &\leq \frac{1}{\ell^{3/2}}\|\xi_\alpha\|_0
      + \frac{n-1}{\ell^2}\|y_\alpha\|_{1/2}
      + \frac{2}{\sqrt\ell}\|\gamma\|_{-1}.
  \end{align*}
  Finally, for \(\gamma\), we have
  \begin{align*}
    \|\gamma\|_{-1}
    &\leq \frac{|\gamma_\ell|}{\ell}
      + \int_\ell^\infty \sum_\beta p_\beta |y_\beta|(2+ y_\beta)\,\frac{\dd\varsigma}{\varsigma^2}\\
    &\leq  \frac{|\gamma_\ell|}{\ell}
      + 3n \max_\beta \int_\ell^\infty \|y_\beta\|_{1/2}\,\frac{\dd\varsigma}{\varsigma^{5/2}}\\
    &\leq  \frac{|\gamma_\ell|}{\ell}
      + \frac{2n}{\ell^{3/2}} \max_{\beta} \|y_\beta\|_{1/2}.  
  \end{align*}
  By combining the inequalities for \(\|y_\alpha\|_{1/2}\) and \(\|\gamma\|_{-1}\), we get
  \begin{align*}
    \max_\alpha\|y_\alpha\|_{1/2}
    &\leq  \frac{1}{\ell^{3/2}}\max_\alpha\|\xi_\alpha\|_0
      + \frac{n-1}{\ell^2}\max_\alpha\|y_\alpha\|_{1/2}
      + \frac{2|\gamma_\ell|}{\ell^{3/2}}
      + \frac{4n}{\ell^2} \max_\alpha \|y_\alpha\|_{1/2} \\
    & = \frac{1}{\ell^{3/2}}
      \bigl\{\max_\alpha\|\xi_\alpha\|_0 + 2|\gamma_\ell|\bigr\}
      + \frac{5n-1}{\ell^2} \max_\alpha\|y_\alpha\|_{1/2}.
  \end{align*}
  For sufficiently large \(\ell\), we have \(\frac{5n-1}{\ell^2} <\frac12\), and hence
  \[
    \max_\alpha \|y_\alpha\|_{1/2} \leq \frac{2}{\ell^{3/2}} \max_\alpha \|\xi_\alpha\|_0 + \frac{4}{\ell^{3/2}}|\gamma_\ell|.
  \]
  Applying this to our inequality for \(\|\xi_\alpha\|_0\), we arrive at
  \[
    \max_\alpha\|\xi_\alpha\|_0 \leq \Bigl(1+\frac{8\epsilon}{\sqrt\ell}\Bigr)|\xi_{\alpha,\ell}| + \frac{8(n-1)}{\ell^2} \max_\alpha\|\xi_\alpha\|_0 +\frac{16(n-1)}{\ell^2}|\gamma_\ell|.
  \]
  For sufficiently large \(\ell\), we can again remove the term with \(\|\xi_\alpha\|_0\) on the right and estimate
  \begin{equation}
    \label{eq-xialpha-shrinker-norm-bound}
    \max_\alpha \|\xi_\alpha\|_0 
    \leq 2\max_\alpha|\xi_{\alpha,\ell}| + \frac{32n}{\ell^2}|\gamma_\ell|.
  \end{equation}
  Going back to \(\|y_\alpha\|_{1/2}\), we conclude that for large enough \(\ell\), one has
  \[
    \max_\alpha \|y_\alpha\|_{1/2} \leq \frac{4}{\ell^{3/2}} \max_\alpha|\xi_{\alpha,\ell}| + \frac{5}{\ell^{3/2}}|\gamma_\ell|.
  \]
  Because \(|Y_\alpha| \leq \ell^{-1/2}\|y_\alpha\|_{1/2}\), this implies \eqref{eq-Y-alpha-shrinker-estimate}.
\end{proof}

\subsection{Estimating the difference \(Y_1-Y_2\)}
The approximations we have for \(Y_1\) and \(Y_2\) are identical, and differ only in the error terms.  As in the case of expanders, we can find a better estimate for the difference \(Y_{12} = Y_1-Y_2\) in terms of the difference \(\xi_{1,\ell} - \xi_{2,\ell}\).

\begin{lemma}
  \label{lem-Y12-shrinker-estimate}
  If \(\|\xi_\alpha\|_0, \|y_\alpha\|_{1/2}, \|\gamma\|_{-1}\leq\epsilon\), with \(\epsilon\) small and \(\ell\) large enough, then
  \begin{equation}
    \label{eq-Y12-shrinker-estimate}
    |Y_1(\xi_{1,\ell},\xi_{2,\ell},\gamma_\ell) -
    Y_2(\xi_{1,\ell},\xi_{2,\ell},\gamma_\ell)| 
    \leq \frac{4}{\ell^2} |\xi_{1,\ell} - \xi_{2,\ell}|.
  \end{equation}
\end{lemma}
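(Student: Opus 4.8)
The plan is to follow the expander argument of the preceding subsection almost verbatim, exploiting two structural facts: $Y_\alpha$ is nothing but the value $y_\alpha(\ell)$ of the shrinker solution $\bs x(\xi_{1,\ell},\xi_{2,\ell},\gamma_\ell)\in\solspace$, and the difference $y_{12}=y_1-y_2$ satisfies a \emph{homogeneous} integral equation in which the parameter $\gamma_\ell$ no longer appears. First I would subtract the shrinker integral equations \eqref{eq-ell-integral-shrinkers} for $\alpha=1$ and $\alpha=2$. For the $\xi_\alpha$-equations this reproduces exactly \eqref{eq-x12-integral}, whose derivation never used the value or sign of $\lambda$, so the bound \eqref{eq-x12-bound},
\[
  \|x_{12}\|_0 \leq \Bigl(1+\tfrac{8\epsilon}{\sqrt\ell}\Bigr)|x_{12}(\ell)| + \tfrac{16n}{\sqrt\ell}\|y_{12}\|_{1/2},
\]
remains available here. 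Subtracting the $y_\alpha$-equations (\ref{eq-ell-integral-shrinkers}b), the two $\gamma(\varsigma)$ terms cancel, leaving
\[
  y_{12}(s) = \int_s^\infty e^{(s^2-\varsigma^2)/2}\bigl\{x_{12}(\varsigma) + (\gamma(\varsigma)-n+1)\,y_{12}(\varsigma)\bigr\}\,\frac{\dd\varsigma}{\varsigma},
\]
where I have used $\xi_1-\xi_2=x_1-x_2=x_{12}$ and $x_{12}(\ell)=\xi_{1,\ell}-\xi_{2,\ell}$.

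Second, I would estimate $\|y_{12}\|_{1/2}$ directly from this integral equation using the first inequality of Lemma~\ref{lem-Gaussian-integral-estimate}. Writing the integrand as $e^{(s^2-\varsigma^2)/2}\nu(\varsigma)$ with $\nu(\varsigma)=\bigl\{x_{12}(\varsigma)+(\gamma(\varsigma)-n+1)y_{12}(\varsigma)\bigr\}/\varsigma$, and using $|\gamma(\varsigma)|\leq\epsilon\varsigma$ together with the stated hypotheses, one gets $|\nu(\varsigma)|\leq \ell^{-1}\|x_{12}\|_0 + \bigl((n-1)\ell^{-3/2}+\epsilon\,\ell^{-1/2}\bigr)\|y_{12}\|_{1/2}$ for all $\varsigma\geq\ell$, hence $\sqrt s\,|y_{12}(s)|\leq \ell^{-1/2}\sup_{\varsigma\geq\ell}|\nu(\varsigma)|$ and therefore
\[
  \|y_{12}\|_{1/2} \leq \frac{\|x_{12}\|_0}{\ell^{3/2}} + \Bigl(\frac{n-1}{\ell^2}+\frac{\epsilon}{\ell}\Bigr)\|y_{12}\|_{1/2}.
\]

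Finally I would close the loop by inserting the bound on $\|x_{12}\|_0$ into the last inequality, obtaining $\|y_{12}\|_{1/2}\leq (1+8\epsilon/\sqrt\ell)|x_{12}(\ell)|\,\ell^{-3/2} + \bigl((17n-1)\ell^{-2}+\epsilon\,\ell^{-1}\bigr)\|y_{12}\|_{1/2}$; for $\ell$ large and $\epsilon$ small the self-coefficient is $<\tfrac12$ and $1+8\epsilon/\sqrt\ell<2$, so $\|y_{12}\|_{1/2}\leq 4|x_{12}(\ell)|/\ell^{3/2}$, and then $|Y_1-Y_2|=|y_{12}(\ell)|\leq\ell^{-1/2}\|y_{12}\|_{1/2}\leq 4|x_{12}(\ell)|/\ell^2=(4/\ell^2)|\xi_{1,\ell}-\xi_{2,\ell}|$, which is \eqref{eq-Y12-shrinker-estimate}. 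I expect the only real obstacle to be the circular dependence between $\|x_{12}\|_0$ and $\|y_{12}\|_{1/2}$ — one must confirm that their combined self-coefficient is genuinely $<1$ for $\ell\gg1$ — but this is automatic, since every term in it carries a negative power of $\ell$. The conceptual point, which is why $\gamma_\ell$ drops out of the estimate entirely, is that the $y_{12}$ equation above is homogeneous and integrated from $s$ to $\infty$ rather than from $\ell$ to $s$, so it carries neither a free boundary value $y_{12}(\ell)$ nor a $\gamma_\ell$ term.
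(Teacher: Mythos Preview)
Your proposal is correct and essentially identical to the paper's proof: both subtract the shrinker integral equations to obtain the homogeneous $y_{12}$ equation, reuse the $\lambda$-independent bound \eqref{eq-x12-bound} on $\|x_{12}\|_0$, apply the Gaussian integral estimate to get $\|y_{12}\|_{1/2}\leq \ell^{-3/2}\|x_{12}\|_0 + ((n-1)\ell^{-2}+\epsilon\ell^{-1})\|y_{12}\|_{1/2}$, and close the loop. The only cosmetic difference is that the paper first absorbs the $y_{12}$ self-term to get $\|y_{12}\|_{1/2}\leq 2\ell^{-3/2}\|x_{12}\|_0$ and then feeds this back into \eqref{eq-x12-bound} to obtain $\|x_{12}\|_0\leq 2|x_{12}(\ell)|$ before returning to $\|y_{12}\|_{1/2}$, whereas you substitute \eqref{eq-x12-bound} directly and absorb once; the arithmetic and the final constant $4/\ell^2$ are the same.
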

\begin{proof}

In \eqref{eq-x12-bound}, we found an initial estimate for \(\|x_{12}\|_0\), namely
that for \(\ell\) large enough, one has
\[
  \|x_{12}\|_0 \leq \Bigl(1+\frac{8\epsilon}{\sqrt\ell}\Bigr) |x_{12}(\ell)| + \frac{16n}{\sqrt\ell} \|y_{12}\|_{1/2}.
\]
In the case of shrinkers, we subtract equations~(\ref{eq-ell-integral-shrinkers})
for \(y_1, y_2,\) which yields the integral equation \begin{equation}
  \label{eq-y12-integral_sh} y_{12}(s) = -\int_s^\infty e^{-(\varsigma^2-s^2)/2}
  \bigl\{ x_{12}(\varsigma) + (\gamma(\varsigma)-n+1)y_{12}(\varsigma)
  \bigr\}\,\frac{\dd\varsigma}{\varsigma}.
\end{equation}
Assuming as above that \(\|\xi_\alpha\|_0, \|y_\alpha\|_{1/2}, \|\gamma\|_{-1} <
\epsilon\), this integral equation for \(y_{12}\) implies that
\begin{align*}
  |y_{12}(s)|
  &\leq \int_s^\infty e^{(s^2-\varsigma^2)/2} 
  \Bigl\{\|x_{12}\|_0 + (n-1+\epsilon\varsigma)\|y_{12}\|_{1/2} \varsigma^{-1/2}\Bigr\}
  \, \frac{\dd\varsigma}{\varsigma} \\
  &\leq \frac{1}{s^2}\|x_{12}\|_0 + \frac{n-1}{s^{5/2}}\|y_{12}\|_{1/2}
  + \frac{\epsilon}{s^{3/2}} \|y_{12}\|_{1/2}.
\end{align*}
Hence,
\[
  \|y_{12}\|_{1/2} = \sup_{s\geq \ell} \sqrt s|y_{12}(s)| \leq \frac{1}{\ell^{3/2}}\|x_{12}\|_0 +\frac{n-1}{\ell^2} \|y_{12}\|_{1/2} +\frac{\epsilon}{\ell} \|y_{12}\|_{1/2}.
\]
If \(\ell\) is large enough to ensure that \((n-1)\ell^{-3/2} + \epsilon\ell^{-1} \leq \frac12\), then we get
\[
  \|y_{12}\|_{1/2} \leq \frac{2}{\ell^{3/2}} \|x_{12}\|_{0}.
\]
We already have \eqref{eq-x12-bound}, which combined with our new estimate for \(\|y_{12}\|_{1/2}\) implies that
\[
  \|x_{12}\|_{0} \leq \Bigl(1+\frac{8\epsilon}{\sqrt\ell}\Bigr)|x_{12}(\ell)| + \frac{32n}{\ell^2} \|x_{12}\|_0.
\]
For large enough \(\ell\), this leads to
\[
  \|x_{12}\|_0 \leq 2|x_{12}(\ell)|,
\]
and hence also yields
\[
  \|y_{12}\|_{1/2} \leq \frac{4}{\ell^{3/2}}|x_{12}(\ell)|.
\]
Because \(|Y_{12}(\xi_{1,\ell}, \xi_{2,\ell}, \gamma_\ell)| \leq \ell^{-1/2}\|y_{12}\|_{1/2}\), estimate~\eqref{eq-Y12-shrinker-estimate} follows.
\end{proof}

\subsection{Matching \(\fq(\ell; j,T)\) and \(\Upsilon\)}
We now consider the map
\[
  \Xi(j,T) = \mat
  y(\ell; j, T) - Y(\xi_\alpha(\ell; j, T), \gamma(\ell; j, T)) \\[1ex]
  y_{12}(\ell; j, T) - Y_{12}(\xi_\alpha(\ell; j, T), \gamma(\ell; j, T)) \rix.
\]
Since \(y = \frac{p_1}{n}y_1+\frac{p_2}{n}y_2\) and \(y_{12} = y_1-y_2\), while the functions \(Y_1\) and \(Y_2\) satisfy similar relations, the equations
\begin{align*}
  y_1(\ell;j,T) &= Y_1\big(\xi_\alpha(\ell;j,T), \gamma(\ell;j,T)\big),
  \\
  y_2(\ell;j,T) &= Y_2\big(\xi_\alpha(\ell;j,T), \gamma(\ell;j,T)\big),
\end{align*}
that determine which points \(\fq(\ell;j,T)\) belong to \(\Upsilon\) are equivalent to the equation \(\Xi(j,T) = 0\).

To evaluate the terms in this map, we revisit the approximations we found for \(\fq(\ell;j,T)\) in the context of shrinking solitons.  It follows from Lemma~\ref{lem-Phi-at-ell} that
\[
  \Phi(\ell;j,T) = c_2^- j\Phi_2^-(\ell) + j^2B(j) + E(j,T),
\]
where \(B(j)\) is bounded as \(j\to0\), and \(E(j,T) \to 0\) uniformly in \(j\) as \(T\to\infty\).  Using the asymptotic expansion~\eqref{eq-Phi2-shrink-at-infty} of \(\Phi_2^-(\ell)\) for large values of \(\ell\), we find that
\begin{subequations}
  \label{eq-xi-y-gamma-at-ell-shrinkers}
  \begin{align}
    \xi(\ell;j, T)& = C_\xi(\ell) j \ell^{-n-1}e^{\ell^2/2}
                    +b_\xi(j)j^2 + \epsilon_\xi(j,T), \\
    y(\ell;j, T) & = C_y(\ell)j \ell^{-n+1}e^{\ell^2/2}
                   +b_y(j)j^2 + \epsilon_y(j,T),  \\
    \gamma(\ell;j, T) &= C_\gamma(\ell) j \ell^{-n-1}e^{\ell^2/2}
                        +b_\gamma(j)j^2 + \epsilon_\gamma(j,T).
  \end{align}
\end{subequations}
Here, \(b_\xi,b_y, b_\gamma\) and \(\epsilon_\xi,\epsilon_y,\epsilon_\gamma\) are the components of \(B(j)\) and \(E(j,T)\), respectively, while \(C_\xi(\ell)\), \(C_y(\ell)\), and \(C_\gamma(\ell)\) are the coefficients in the expansion~\eqref{eq-Phi2-shrink-at-infty} of \(\Phi_2^-(\ell)\).  For large \(\ell\), they satisfy
\[
  C_\xi(\ell) = C_{\xi,\infty} + \cO(\ell^{-2}), \qquad C_y(\ell) = C_{y,\infty} + \cO(\ell^{-2}), \qquad C_\gamma(\ell) = C_{\gamma,\infty} + \cO(\ell^{-2}),
\]
in which none of the constants \(C_{\xi,\infty}\), \(C_{y,\infty}\), and \(C_{\gamma,\infty}\) vanish.

From equation \eqref{eq-x12-y12-at-ell} for the difference variables, we know that at \(\fq(\ell;j,T)\), one has
\begin{subequations}
  \label{eq-x12-y12-at-ell-shrinkers}
  \begin{align}
    x_{12} &= C R(j,T)e^{-AT}\ell^{-n-1}e^{\ell^2/2} 
             \bigl\{\sin(\Omega T+\phi)  + \epsilon(\ell,j,T)\bigr\},\\
    y_{12} &=  C R(j,T)e^{-AT}\ell^{-n+1}e^{\ell^2/2} 
             \bigl\{\sin(\Omega T+\phi)  + \epsilon(\ell,j,T)\bigr\}.
  \end{align}
\end{subequations}
As in the case of expanding solitons, we consider a sequence of rectangles and compute the degree of the map \(\Xi\) on these rectangles.  Specifically, we consider the domains
\[
  \mf R_{\iota m}^* = [-\iota, \iota] \times \left[ \frac{(m-\frac12)\pi-\phi_a}{\Omega}, \frac{(m+\frac12)\pi-\phi_a}{\Omega} \right] .
\]

\begin{lemma}\label{lem-Xi-degree-1-shrinkers}
  If \(\iota>0\) is small enough, then \(\Xi\) maps \(\partial\mf R_{\iota m}^*\) into \(\R^2\setminus\{(0,0)\}\) with winding number \(+1\) for each large enough \(m\in\N\).
\end{lemma}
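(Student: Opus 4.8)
The plan is to run the degree argument from the proof of Lemma~\ref{lem-Xi-degree-1-expanders}: I will show that on each of the four sides of $\mf R^*_{\iota m}$ one of the two components of $\Xi$ has a fixed nonzero sign, and that the cyclic pattern of these signs around $\partial\mf R^*_{\iota m}$ is the one produced by a map of winding number $+1$. The parameters are to be chosen in the order $\ell\gg1$ (so the estimates of the preceding subsections apply), then $a\ll1$, then $\iota\ll1$, and finally $m\gg1$. Since on $\mf R^*_{\iota m}$ one has $\Omega T+\phi_a\in[(m-\tfrac12)\pi,(m+\tfrac12)\pi]$, letting $m\to\infty$ forces $T\to\infty$, hence $e^{-AT}\to0$ and the error terms $\epsilon_\xi,\epsilon_y,\epsilon_\gamma,\epsilon(\ell,j,T)$ tend to $0$ uniformly in $j$. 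I will use the expansions~\eqref{eq-xi-y-gamma-at-ell-shrinkers} and~\eqref{eq-x12-y12-at-ell-shrinkers} of $\fq(\ell;j,T)$ together with Lemma~\ref{lem-about-R-and-phi}, which makes $R=R(a,j,T)$ and $\phi=\phi(a,j,T)$ as close as desired to the $a$-dependent constants $R_a>0$ and $\phi_a$.

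First I would treat the two $T$-edges, where $\Omega T+\phi_a=(m-\tfrac12)\pi$ or $(m+\tfrac12)\pi$. There $\sin(\Omega T+\phi)=\pm(-1)^m\cos(\phi-\phi_a)+\cdots$, which is bounded away from $0$ once $a$ is small, and which takes opposite signs on the two edges. By~\eqref{eq-x12-y12-at-ell-shrinkers}, $y_{12}(\ell;j,T)$ is then a nonzero multiple of $Re^{-AT}\ell^{-n+1}e^{\ell^2/2}$ whose sign flips between the two edges, while $|x_{12}(\ell;j,T)|=\cO\bigl(Re^{-AT}\ell^{-n-1}e^{\ell^2/2}\bigr)$. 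Hence Lemma~\ref{lem-Y12-shrinker-estimate} gives $|Y_{12}|\le\tfrac{4}{\ell^2}|x_{12}(\ell;j,T)|=\cO(\ell^{-4})\,|y_{12}(\ell;j,T)|$, so the second component $\Xi_2=y_{12}-Y_{12}$ inherits the nonzero, sign-flipping behavior of $y_{12}$ across the two $T$-edges.

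Next I would treat the $j$-edges $j=\pm\iota$. By~\eqref{eq-xi-y-gamma-at-ell-shrinkers}, the averaged first component is $y(\ell;j,T)=C_y(\ell)j\,\ell^{-n+1}e^{\ell^2/2}+b_y(j)j^2+\epsilon_y(j,T)$ with $C_y(\ell)=C_{y,\infty}+\cO(\ell^{-2})$ and $C_{y,\infty}\ne0$; here $b_y(j)j^2=\cO(\iota^2)$ is negligible against the linear-in-$j$ term for $\iota$ small, and $\epsilon_y(j,T)\to0$ as $m\to\infty$. At the same time $\max_\alpha|\xi_\alpha(\ell;j,T)|$ and $|\gamma(\ell;j,T)|$ are, by~\eqref{eq-xi-y-gamma-at-ell-shrinkers} and~\eqref{eq-x12-y12-at-ell-shrinkers}, of order $(\iota+Re^{-AT})\,\ell^{-n-1}e^{\ell^2/2}$, so estimate~\eqref{eq-Y-alpha-shrinker-estimate} gives $|Y|\le\max_\alpha|Y_\alpha|=\cO\bigl((\iota+Re^{-AT})\ell^{-n-3}e^{\ell^2/2}\bigr)$, which is $\cO(\ell^{-4})\bigl(1+Re^{-AT}/\iota\bigr)\,|y(\ell;j,T)|$ --- small for $\ell$ large and for $m$ large enough that $Re^{-AT}\lesssim\iota$. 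Therefore $\Xi_1=y-Y$ has the sign of $\sign(C_{y,\infty})\cdot j$ on the $j$-edges, i.e.\ opposite nonzero signs at $j=+\iota$ and $j=-\iota$.

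Combining these, $\Xi$ omits the origin on all of $\partial\mf R^*_{\iota m}$ (at the corners both components are nonzero), and the cyclic sign pattern of $(\Xi_1,\Xi_2)$ along the boundary is that of a degree $\pm1$ map; tracking the orientation exactly as in the proof of Lemma~\ref{lem-Xi-degree-1-expanders} gives winding number $+1$. As in the expander case, the consequence to be used afterward is that $\Xi(\mf R^*_{\iota m})$ contains a neighborhood of $(0,0)$, so some $(j,T)\in\mf R^*_{\iota m}$ satisfies $\Xi(j,T)=0$, i.e.\ $\fq(\ell;j,T)\in\Upsilon$, which yields a complete shrinking soliton with a conical end. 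The main difficulty is not conceptual but the bookkeeping of the error terms through the nested parameter choices: on each edge one must confirm that the displayed leading term dominates $b_\ast(j)j^2$, $\epsilon_\ast(j,T)$, $\epsilon(\ell,j,T)$, and the outputs $Y,Y_{12}$ of the conical-end stable-manifold map, the single delicate error being $Re^{-AT}$, which is controlled only after $\iota$ has been fixed, by taking $m$ large.
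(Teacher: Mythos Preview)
Your argument is correct and follows essentially the same approach as the paper's proof: you show that the first component $\Xi_1=y-Y$ has fixed opposite nonzero signs on the two $j$-edges via~\eqref{eq-xi-y-gamma-at-ell-shrinkers} and~\eqref{eq-Y-alpha-shrinker-estimate}, and that the second component $\Xi_2=y_{12}-Y_{12}$ has fixed opposite nonzero signs on the two $T$-edges via~\eqref{eq-x12-y12-at-ell-shrinkers} and Lemma~\ref{lem-Y12-shrinker-estimate}, then conclude the winding number is~$+1$. The only differences are cosmetic (you treat the $T$-edges before the $j$-edges) and that you are somewhat more explicit than the paper about the order of parameter choices, particularly the need to take $m$ large after $\iota$ is fixed so that $Re^{-AT}\lesssim\iota$.
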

\begin{proof}
  We first consider the sides of \(\mf R_{\iota m}^*\) on which \(|j|=\iota\).  Since we assume \(\iota\) is small enough, we may assume that
  \begin{equation}
    \label{eq-Bjj-small}
    |B(\iota)\iota| \leq
    \frac12 \min \{|C_{\xi,\infty}|, |C_{y,\infty}|, |C_{\gamma,\infty}|\} \ell^{-n-1}e^{\ell^2/2}.
  \end{equation}
  By \eqref{eq-xi-y-gamma-at-ell-shrinkers}, it then follows that for all \(j\in[-\iota,\iota]\), one has
  \begin{equation}
    \label{eq-yljT-lower-bound}
    |y(\ell; j ,T)| \geq
    \frac12 |C_{y,\infty}j|  \ell^{-n+1}e^{\ell^2/2}
    -\max_{|j|\leq\iota}\epsilon_y(j ,T).
  \end{equation}
  For \(j=\pm\iota\), we find that
  \[
    |y(\ell;\pm\iota,T)| \geq \frac14 |C_{y,\infty}j| \ell^{-n+1}e^{\ell^2/2}
  \]
  if \(T\) is large enough, because \(\epsilon_y(j, T)\to 0 \) uniformly in \(j\in[-\iota,\iota]\) as \(T\to\infty\).

  It also follows from our expression (\ref{eq-xi-y-gamma-at-ell-shrinkers}b) for \(y(\ell;j,T)\) that \(y(\ell;\pm\iota, T)\) have opposite signs once \(T\) is large enough, assuming that \(\iota>0\) is so small that \eqref{eq-Bjj-small} holds.

  According to \eqref{eq-Y-alpha-shrinker-estimate}, we further have
  \[
    \left|Y\left(\xi_\alpha(\ell;j,T), \gamma(\ell;j,T)\right)\right| \leq \frac{C}{\ell^2} \left(\max_\alpha |\xi_\alpha(\ell;j,T)| + |\gamma(\ell;j,T)|\right).
  \]
  Using assumption~\eqref{eq-Bjj-small} again, we find that
  \[
    \Big|Y\Bigl(\xi_\alpha(\ell; \pm\iota,T), \gamma(\ell; \pm\iota,T)\Bigr)\Big| \leq C \ell^{-n-3}e^{\ell^2/2}\iota.
  \]
  The constants \(C\) and \(C_{y,\infty}\) do not depend on \(\ell\), \(\iota\), or \(T\).  So we may assume that \(\ell\) is so large that \(C\ell^{-2} \leq \frac14|C_{y,\infty}|\).  This lets us conclude that for \(\xi_\alpha = \xi_\alpha(\ell; \pm\iota,T)\) and \(\gamma = \gamma(\ell; \pm\iota,T)\), one has
  \begin{equation}
    \label{eq-y-Y-matching-shrinker}
    |y(\ell; \pm\iota, T) - Y(\xi_\alpha, \gamma)| \geq
    \frac14 |C_{y,\infty}| \iota  \ell^{-n+1}e^{\ell^2/2} -\epsilon_y(\pm\iota ,T)
    >0
  \end{equation}
  for all large enough \(T\).  Since \(|Y|\) is smaller than \(|y(\ell;\iota, T)|\) if \(T\) is large enough, the signs of \(y(\ell;\pm\iota,T)\) and \(y(\ell;\pm\iota, T) - Y(\xi_\alpha, \gamma)\) coincide, and hence the terms \(y(\ell;+\iota, T) - Y(\xi_\alpha, \gamma)\) and \(y(\ell;-\iota, T) - Y(\xi_\alpha, \gamma)\) have opposite signs.

  We now consider the sides of \(\partial\mf R_{\iota m}^*\) where \(T=T_m^\pm\) is constant, \emph{i.e.,} where
  \[
    \Omega T_m^\pm=\left(m\pm \frac12\right)\pi - \phi_a.
  \]
  Here we have, because of \eqref{eq-x12-y12-at-ell-shrinkers},
  \begin{align*}
    x_{12}(\ell;j,T_m^\pm) &= C R(j,T_m^\pm)e^{AT_m^\pm}\ell^{-n-1}e^{\ell^2/2} 
                             \bigl\{\pm(-1)^m\cos(\phi(j,T_m^\pm)-\phi_a)  + \epsilon(\ell,j,T_m^\pm)\bigr\},\\
    y_{12}(\ell;j,T_m^\pm) &=  C R(j,T_m^\pm)e^{AT_m^\pm}\ell^{-n+1}e^{\ell^2/2} 
                             \bigl\{\pm(-1)^m\cos(\phi(j,T_m^\pm)-\phi_a)  + \epsilon(\ell,j,T_m^\pm)\bigr\}.
  \end{align*}
  We recall that in \S~\ref{sec-apply-lambda} we chose \(a>0\) so small that \(\cos \bigl(\phi(j,T_m^\pm) - \phi_a\bigr) \geq \frac12\) for all \(j\in[-\iota, \iota]\) and for all large enough \(m\in\N\).  We have also shown in Lemma~\ref{lem-Y12-shrinker-estimate} that
  \[
    \left|Y_{12}(\xi_\alpha(\ell;j,T_m^\pm), \gamma(\ell;j,T_m^\pm))\right| \leq \frac{4}{\ell^2} \left|x_{12}(\ell;j,T_m^\pm)\right|,
  \]
  so that we have
  \begin{multline*}
    \left| y_{12}(\ell;j,T_m^\pm)- Y_{12}(\xi_\alpha(\ell;j,T_m^\pm), \gamma(\ell; j,T_m^\pm)) \right|
    \geq |y_{12}(\ell;j,T_m^\pm)| - \frac{4}{\ell^2} |x_{12}(\ell;j,T_m^\pm)|\\
    \geq CR(j,T_m^\pm)e^{AT_m^\pm}\ell^{-n+1}e^{\ell^2/2}\Bigl\{\frac12 - \frac{C}{\ell^2} - \epsilon(\ell; j,T_m^\pm)\Bigr\}.
  \end{multline*}
  Since \(\epsilon(\ell;j,T_m^\pm)\to0\) as \(m\to\infty\), and since we may assume that \(\frac12 - \frac{C}{\ell^2} \geq \frac14\), it follows that for all large enough \(m\), one has
  \[
    \left| y_{12}(\ell;j,T_m^\pm)- Y_{12}(\xi_\alpha(\ell;j,T_m^\pm), \gamma(\ell; j,T_m^\pm)) \right| \geq \frac18 CR(j,T_m^\pm)e^{AT_m^\pm}\ell^{-n+1}e^{\ell^2/2} > 0
  \]
  for \(j\in[-\iota,\iota]\), while \(y_{12}(\ell;j,T_m^+)-Y_{12}(\ell;j,T_m^+)\) and \(y_{12}(\ell;j,T_m^-)-Y_{12}(\ell;j,T_m^-)\) have opposite signs.

  It follows from these considerations that \(\Xi(j,T) \neq 0\) for all \((j,T)\in\partial\mf R_{\iota m}^*\).  Moreover, by checking the signs of the coordinates of \(\Xi(j,T)\) on opposite sides of the rectangle \(\partial\mf R_{\iota m}^*\), one sees that \(\Xi\) has winding number \(1\).  The final conclusion is that there exist \((j_m,T_m) \in \mf R_{\iota m}^*\) such that \(\Xi(j_m,T_m) = 0\), and thus that the orbit \(\{g^t\fq(j_m, T_m) \mid t\in\R\}\) is complete and yields a soliton metric with a good fill compactification at \(s=0\) and a conical end at \(s=\infty\).
\end{proof}

At this point we have proved the existence claim in Theorem~\ref{main-B}, and we only have to prove that the asymptotic apertures \(x_{m,\alpha}^-(\infty)\) converge to those of the Ricci-flat cone as \(m\to\infty\).  This is the content of the following Lemma.

\begin{lemma}
  We have
  \[
    \lim_{m\to\infty} j_m = 0, \qquad \lim_{m\to\infty} T_m = \infty,
  \]
  and hence
  \[
    \lim_{m\to\infty} \xi_\alpha(\infty; j_m, T_m) = 0.
  \]
\end{lemma}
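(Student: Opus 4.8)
Of the three assertions, the first is immediate: any \((j_m,T_m)\in\mf R_{\iota m}^*\) has \(T_m\ge\bigl((m-\tfrac12)\pi-\phi_a\bigr)/\Omega\to\infty\) as \(m\to\infty\). The other two I would extract from the defining relation \(\Xi(j_m,T_m)=0\) together with the asymptotics of \(\fq(\ell;j,T)\) and of the maps \(Y_\alpha\) established above. Throughout, \(\ell\) is a large constant and \(\iota,\epsilon,a\) are small constants, all fixed before \(m\to\infty\); in particular, for \(|j|\le\iota\) and \(T\) large the point \((\xi_\alpha(\ell;j,T),\gamma(\ell;j,T))\) stays in the domain \(\mc U\) of the \(Y_\alpha\), and every error of the form \(\epsilon_\bullet(j_m,T_m)\) or \(\sup_{|j|\le\iota}|x_{12}(\ell;j,T_m)|\) tends to \(0\) as \(m\to\infty\) — the latter because \(\zeta_*(\ell;j,T)\to0\) \(C^1\)-uniformly in \(|j|\le\iota\) by Lemma~\ref{lem-Phi-at-ell}, i.e.\ \(x_{12}(\ell;j,T),y_{12}(\ell;j,T)\to0\).

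To prove \(j_m\to0\), I would use the first component of \(\Xi(j_m,T_m)=0\), namely \(y(\ell;j_m,T_m)=Y(\xi_\alpha(\ell;j_m,T_m),\gamma(\ell;j_m,T_m))\). The left side is bounded below by \eqref{eq-yljT-lower-bound}:
\[
  |y(\ell;j_m,T_m)|\ \ge\ \tfrac12|C_{y,\infty}|\,\ell^{-n+1}e^{\ell^2/2}\,|j_m|\ -\ \delta_1(T_m),
\]
with \(C_{y,\infty}\neq0\) and \(\delta_1(T_m)\to0\). For the right side, \eqref{eq-Y-alpha-shrinker-estimate} gives \(|Y|\le\tfrac{C}{\ell^2}\bigl(\max_\alpha|\xi_\alpha(\ell;j_m,T_m)|+|\gamma(\ell;j_m,T_m)|\bigr)\), and the expansions \eqref{eq-xi-y-gamma-at-ell-shrinkers} for \(\xi,\gamma\) (whose leading coefficients carry the factor \(\ell^{-n-1}e^{\ell^2/2}\)), together with \(x_{12}(\ell;j_m,T_m)\to0\) used to pass from \(\xi\) to the individual \(\xi_\alpha\), give \(\max_\alpha|\xi_\alpha(\ell;j_m,T_m)|+|\gamma(\ell;j_m,T_m)|\le C'\ell^{-n-1}e^{\ell^2/2}|j_m|+\delta_2(T_m)\) with \(\delta_2(T_m)\to0\). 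Hence \(|Y|\le C''\ell^{-n-3}e^{\ell^2/2}|j_m|+\tfrac{C}{\ell^2}\delta_2(T_m)\), and the defining relation yields
\[
  \Bigl(\tfrac12|C_{y,\infty}|\ell^{-n+1}-C''\ell^{-n-3}\Bigr)e^{\ell^2/2}|j_m|\ \le\ \delta_1(T_m)+\tfrac{C}{\ell^2}\delta_2(T_m).
\]
Since \(\ell\) is large, the bracket is at least \(\tfrac14|C_{y,\infty}|\ell^{-n+1}>0\), so \(|j_m|\le C(\ell)\bigl(\delta_1(T_m)+\delta_2(T_m)\bigr)\to0\).

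With \(j_m\to0\) and \(T_m\to\infty\) in hand, I would finish as follows. By Lemma~\ref{lem-Phi-at-ell}, \(\Phi_*(\ell;j_m,T_m)\to c_2^-j_m\Phi_2^-(\ell)+\cO(j_m^2)+E(j_m,T_m)\to0\) and \(\zeta_*(\ell;j_m,T_m)\to0\); since \(\Phi_*\) carries \((\xi,y,\gamma)\) and \(\zeta_*\) carries \((x_{12},y_{12})\), this forces the boundary data \(\bs q(\ell;j_m,T_m)=(\xi_\alpha,y_\alpha,\gamma)(\ell;j_m,T_m)\to0\), the \(\rfes\) point at \(s=\ell\). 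Because \(\Xi(j_m,T_m)=0\) means \(\bs q(\ell;j_m,T_m)\in\Upsilon\), for \(m\) large it lies in \(\mc U\), on which Lemma~\ref{lem-stable-mfd-cones} provides the analytic (hence continuous) solution map \(\bs x:\mc U\to\solspace\) with \(\bs x(0)=0\) (the \(\rfes\)). Therefore \(\bs x(\bs q(\ell;j_m,T_m))\to0\) in \(\solspace\), and since \(\xi_\alpha^\infty:\solspace\to\R\) is a bounded linear functional,
\[
  \lim_{m\to\infty}\xi_\alpha(\infty;j_m,T_m)=\lim_{m\to\infty}\xi_\alpha^\infty\bigl(\bs x(\bs q(\ell;j_m,T_m))\bigr)=\xi_\alpha^\infty(0)=0.
\]

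The one delicate point will be the estimate of \(j_m\): one has to make sure that, once all error terms coming from \eqref{eq-x12-y12-at-ell-shrinkers}, Lemma~\ref{lem-about-R-and-phi}, and the expansions \eqref{eq-xi-y-gamma-at-ell-shrinkers} are collected, the coefficient of \(|j_m|\) on the \(y\)-side genuinely dominates that on the \(Y\)-side for the fixed large \(\ell\) (the decisive gain being the power \(\ell^{4}\) between \(\ell^{-n+1}e^{\ell^2/2}\) and \(\ell^{-n-3}e^{\ell^2/2}\)), and that the residual errors \(\delta_1,\delta_2\) really do tend to \(0\) as \(m\to\infty\) with \(\ell\) held fixed; this last point is exactly where the \(C^1\)-uniformity of Lemma~\ref{lem-Phi-at-ell} and the \(e^{-AT}\)-decay of the difference variables are used. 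The concluding continuity argument is then routine.
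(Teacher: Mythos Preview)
Your proposal is correct and follows essentially the same line as the paper's proof: the same matching equation \(y=Y\) combined with the lower bound \eqref{eq-yljT-lower-bound} and the upper bound \eqref{eq-Y-alpha-shrinker-estimate}, exploiting the \(\ell^{2}\) gap between the two sides to force \(j_m\to0\). Your organization differs only cosmetically---you work directly at finite \(T_m\) and get \(|j_m|\le C(\ell)\delta(T_m)\to0\), whereas the paper first passes to a subsequential limit \(j_*\) and derives a contradiction unless \(j_*=0\); and for \(\xi_\alpha(\infty)\to0\) you invoke continuity of the solution map \(\bs x:\mc U\to\solspace\) from Lemma~\ref{lem-stable-mfd-cones}, whereas the paper cites the explicit bound \eqref{eq-xialpha-shrinker-norm-bound}---but these are interchangeable.
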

\begin{proof}
  By definition of \(\mf R_{\iota m}^*\), we have \(T_m = \frac m\Omega + \cO(1)\) as \(m\to\infty\), so \(T_m\to\infty\).  Since \(|j_m|\leq \iota\), we may assume that, after passing to a subsequence, \(j_m\to j_*\) for some \(j_*\in[-\iota,\iota]\).  We will show that \(j_*=0\) is the only possible limit and hence that the whole sequence \(j_m\) converges to zero.

  It follows from~\eqref{eq-x12-y12-at-ell-shrinkers} that \(x_{12}(\ell; j_m, T_m) \to 0\) and \(y_{12}(\ell; j_m, T_m) \to 0\).

  For the averaged variables, we have
  \[
    \xi(\ell; j_m, T_m) \to \xi(\ell;, j_*, \infty),\quad y(\ell; j_m, T_m) \to y(\ell;, j_*, \infty),\quad \gamma(\ell; j_m, T_m) \to \gamma(\ell;, j_*, \infty).
  \]
  For all \(m\), we have
  \[
    y_\alpha(\ell; j_m, T_m) = Y_\alpha(\xi_1(\ell; j_m, T_m), \xi_2(\ell; j_m, T_m), \gamma(\ell; j_m, T_m)).
  \]
  So averaging over \(\alpha\) and taking the limit as \(m\to\infty\), we get
  \[
    y(\ell; j_*, \infty) = Y\bigl(\xi(\ell; j_*, \infty), \xi(\ell; j_*, \infty), \gamma(\ell; j_*, \infty)\bigr).
  \]
  Passing to the limit \(T\to\infty\) in~\eqref{eq-yljT-lower-bound}, we get
  \begin{equation}
    \label{eq-yljinfty-upperbound}
    |y(\ell; j_*, \infty)| \geq \frac12|C_y(0)|\ell^{-n+1} e^{\ell^2/2} |j_*|.
  \end{equation}
  On the other hand, it follows from~\eqref{eq-Y-alpha-shrinker-estimate} that
  \begin{equation}
    \label{eq-Yljinfty-upperbound}
    \big|Y\bigl(\xi(\ell; j_*, \infty), \xi(\ell; j_*, \infty),
    \gamma(\ell; j_*, \infty)\bigr)\big|
    \leq \frac{4}{\ell^2}|\xi(\ell; j_*, \infty)| 
    + \frac{5}{\ell^2} |\gamma(\ell; j_*, \infty)|.
  \end{equation}
  To bound the right-hand side, we let \(T\to\infty\) in \eqref{eq-xi-y-gamma-at-ell-shrinkers}.  As \(T\to\infty\), both \(\epsilon_\xi(j, T)\) and \(\epsilon_\gamma(j,T)\) vanish, so that
  \begin{align*}
    |\xi(\ell; j_*, \infty)| &\leq|C_\xi(\ell)|\ell^{-n-1}e^{\ell^2/2}|j_*|
                               + |b_\xi(j_*)|\,|j_*|^2, \\ 
    |\gamma(\ell; j_*, \infty)| &\leq|C_\gamma(\ell)|\ell^{-n-1}e^{\ell^2/2}|j_*|
                                  + |b_\gamma(j_*)|\,|j_*|^2.
  \end{align*}
  The hypothesis~\eqref{eq-Bjj-small} combined with the facts that \(C_\xi(\ell)\to C_{\xi,\infty}\) and \(C_\gamma(\ell)\to C_{\gamma,\infty}\) for \(\ell\to\infty\) allow us to bound the right-hand sides here as follows:
  \begin{align*}
    |\xi(\ell; j_*, \infty)| &\leq 2|C_{\xi,\infty}|\ell^{-n-1}e^{\ell^2/2}|j_*|, \\ 
    |\gamma(\ell; j_*, \infty)| &\leq 2|C_{\gamma,\infty}|\ell^{-n-1}e^{\ell^2/2}|j_*|.
  \end{align*}
  Applying this to~\eqref{eq-Yljinfty-upperbound}, we find that
  \[
    \big|Y\bigl(\xi(\ell; j_*, \infty), \xi(\ell; j_*, \infty), \gamma(\ell; j_*, \infty)\bigr)\big| \leq C\ell^{-n-3}e^{\ell^2/2}|j_*|.
  \]
  We combine this upper bound for \(|Y(\cdots)|\) with the lower bound~\eqref{eq-yljinfty-upperbound} we have for \(|y(\cdots)|\) to obtain
  \[
    \frac12|C_{y,\infty}|\ell^{-n+1} e^{\ell^2/2} |j_*| \leq C\ell^{-n-3}e^{\ell^2/2}|j_*|.
  \]
  The constants \(C\) and \(C_{y,\infty}\) do not depend on \(\ell\), so we may assume one more time that \(\ell\) is so large that
  \[
    \frac12|C_{y,\infty}|\ell^{-n+1} e^{\ell^2/2} > C\ell^{-n-3}e^{\ell^2/2},
  \]
  which then implies \(j_*=0\), as claimed.

  At this point, we have shown that \(T_m\to\infty\) and \(j_m\to0\).  It follows that \(\xi_\alpha(\ell;j_m, T_m)\to0\) and \(\gamma(\ell;j_m, T_m)\to0\) as \(m\to\infty\).  The upper bound~\eqref{eq-Y-alpha-shrinker-estimate} for \(Y_\alpha\) further implies that \(y_\alpha(\ell; j_m, T_m)\to 0\) as \(m\to\infty\).  Next, we apply the upper bound~\eqref{eq-xialpha-shrinker-norm-bound} to conclude that \(\xi_\alpha(s; j_m, T_m)\to0\) uniformly in \(s\geq \ell\), which proves the last claim in the Lemma, namely that \(\xi_\alpha(\infty; j_m, T_m) \to 0\) as \(m\to\infty\).
\end{proof}

\section{Constructing Ricci flow spacetimes}
\label{app-diffeo}
\subsection{Ricci flows from solitons}
We have constructed Ricci-solitons $(G, \mf X, \lambda)$ on the manifold $(0,\infty)\times \mc M$, where
$\mc M=\mc S^{p_1}\times \mc S^{p_2}$ and where the expansion factor is $\lambda\in\{\pm1\}$. 
The metric and soliton vector field are of the form
\[
  G = (\mr d s)^2 + s^2 g_{\mc M}(s)\qquad\text{and}\qquad \mf X = f(s)\frac{\partial}{\partial s},
\]
where
\[
  g_{\mc M}(s) =\frac{p_1-1}{x_1(s)}g_{\mc S^{p_1}}+ \frac{p_2-1}{x_2(s)}g_{\mc S^{p_2}},
\]
and where $x_\alpha(s)$ are solutions of the soliton equations~\eqref{eq-Pa}--\eqref{eq-Pd}.

Given such a soliton, there exists a family of diffeomorphisms $\phi_t:(0,\infty)\times\mc M \to (0,\infty)\times\mc M$ defined for $\lambda t>0$ and generated by the time dependent vector field $(2\lambda t)^{-1}\mf X$.  Thus $\frac{\dd}{\dd t}\phi_t(p) = (2\lambda t)^{-1}\mf X(\phi_t(p))$ for all $t>0$ if $\lambda=+1$ or $t<0$ if $\lambda=-1$; moreover, $\phi_{1/2\lambda}=\mr {id}_{\mc N}$.
Then the time-dependent family of metrics \(g(t)\) given by
\[
  g(t) = 2\lambda t\, \phi_{t}^*(G)
\]
evolves by Ricci flow.  See \cite[\S~1.1]{CCGGIIKLLN} with $\lambda = 2\varepsilon$.

\subsection{Properties of the soliton vector field}
The form of the soliton vector field $\mf X=f(s)\pd_s$ allows us to write the
diffeomorphisms $\phi_t$ as
\[
  \phi_t(s, \omega) = \left(S_{\theta(t)}(s), \omega\right), \qquad \theta(t)\stackrel{\rm def}={\frac1{2\lambda}\log 2\lambda t},
\]
where $\{S_\theta\mid \theta\in\R\}$ is the flow on $[0, \infty)$ obtained by solving
\begin{equation}
  \frac{\pd S_\theta(s)}{\pd\theta} =  f\left(S_\theta(s)\right)\quad\mbox{ with initial condition }\quad
  S_0(s)=s.
\end{equation}

\begin{lemma}~		\label{lem-f-properties}
  \begin{enumerate}
  \item $f$ is an odd real analytic function of $s$.  In particular, $f(0)=0$.
  \item For any \(N\in\N\) there exist $K_\infty, K_1, K_2, \dots, K_N \in\R$ such that
  \begin{equation}\label{eq-app-f-asymptotics}
    f(s) = -\lambda s + K_\infty + \frac{K_1}{s}+\cdots + \frac{K_N}{s^N} + \mc O(s^{-N-1}),\qquad (s\to\infty).
  \end{equation}
  The expansion may be differentiated indefinitely.
  
  \item There exists a largest $s_0\geq 0$ such that $f(s_0)=0$.
  \item The flow $S_\theta(s)$ is defined for all $s\geq 0$ and $\theta\in\R$.
  \item The interval $[0,s_0]$ is invariant under the flow $S_\theta$; \emph{i.e.,} $S_\theta([0,s_0])=[0,s_0]$ for all $\theta\in\R$.
  \item For each $s\in(s_0, \infty)$, one has $S_\theta(s)\to\infty$ as $\lambda\theta\to-\infty$.
  \end{enumerate}
\end{lemma}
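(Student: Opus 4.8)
The plan is to prove the six assertions in turn, each using the previous ones together with results already established in the excerpt. Everything flows from a rearrangement of the definition \eqref{eq-define-Gamma} of $\Gamma$: since $\tsum_\alpha p_\alpha = n$, one has
$sf(s) = \Gamma - \lambda s^2 + n + \tsum_\alpha p_\alpha y_\alpha$,
or equivalently, with $\gamma = \Gamma + n$ as in \eqref{eq-xi-y-gamma-def}, $sf(s) = \gamma - \lambda s^2 + \tsum_\alpha p_\alpha y_\alpha$.

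\emph{Parts (1) and (2).} Near $s=0$ the orbit lies in $\wug$, so by Lemma~\ref{lem-Wu-gf-analytic} the functions $x_\alpha$, $y_\alpha$, $\Gamma$ are real analytic functions of $\sigma = s^2$; hence $sf(s)$ is a real analytic even function of $s$ near $0$. Evaluating the displayed identity at the $\gf$ point $(p_1-1,0,0,-1,-p_1,0)$ gives $sf(s)\big|_{s=0} = -p_1 + n + p_2\cdot(-1) = 0$, so after dividing by $s$ we obtain that $f$ is odd, real analytic, and vanishes at $s=0$. For (2), near $s=\infty$ the orbit lies in $\mc W$, so the expansions \eqref{eq-xi-y-g-expansion} hold and may be differentiated indefinitely; substituting $\gamma = K_\infty s + \cO(1/s)$ and $y_\alpha = \cO(1/s)$ into $f(s) = \gamma/s - \lambda s + s^{-1}\tsum_\alpha p_\alpha y_\alpha$ and collecting powers of $1/s$ yields \eqref{eq-app-f-asymptotics}, with the same $K_\infty$ and with the expansion differentiable term by term.

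\emph{Parts (3)--(6).} These are elementary facts about the scalar ODE $\pd_\theta S = f(S)$, built on (1) and (2). By (2), $|f(s)|\to\infty$ as $s\to\infty$; in particular $f\not\equiv0$ and $f$ has no zeros for large $s$, so the zero set $f^{-1}(0)\cap[0,\infty)$ is nonempty (it contains $0$ by (1)), bounded, and discrete by analyticity, hence finite, and therefore has a largest element $s_0$: this is (3). For (4), combining (2) with continuity of $f$ on compact intervals gives a bound $|f(s)|\le C(1+s)$ on $[0,\infty)$, so Gr\"onwall's inequality rules out finite-$\theta$ blowup of $S_\theta(s)$; and since $f(0)=0$, the point $s=0$ is a fixed point, so by uniqueness no orbit starting in $[0,\infty)$ leaves it, giving global existence. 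For (5), $0$ and $s_0$ are both fixed points of the flow, and each $S_\theta$ is a homeomorphism of $[0,\infty)$ fixing $0$, hence increasing, hence maps the closed interval $[0,s_0]$ between consecutive fixed points onto itself. For (6), since $s_0$ is the largest zero of $f$, the function $f$ keeps a constant sign on $(s_0,\infty)$, and the leading term $-\lambda s$ of \eqref{eq-app-f-asymptotics} shows that sign is $-\lambda$; hence along an orbit with $S_\theta(s)>s_0$ one has $\lambda\,\pd_\theta S_\theta(s) = \lambda f(S_\theta(s)) < 0$, so $S_\theta(s)$ increases monotonically as $\lambda\theta$ decreases. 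Its supremum $L$ satisfies $L>s_0$; if $L<\infty$, then $\pd_\theta S_\theta = f(S_\theta)\to f(L)$, and $f(L)\neq 0$ would make $S_\theta$ unbounded, a contradiction, so $f(L)=0$, contradicting the maximality of $s_0$. Hence $L=\infty$, i.e.\ $S_\theta(s)\to\infty$ as $\lambda\theta\to-\infty$.

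I do not expect a genuine obstacle: the only points requiring care are the bookkeeping ones --- confirming that the orbits in play lie in $\wug$ near $s=0$ and in $\mc W$ near $s=\infty$ so that Lemma~\ref{lem-Wu-gf-analytic} and the expansions \eqref{eq-xi-y-g-expansion} apply, and tracking the sign of $\lambda$ through parts (3) and (6).
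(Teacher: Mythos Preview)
Your proof is correct and follows essentially the same route as the paper: parts (1) and (2) come from the relation $sf = \gamma - \lambda s^2 + \sum_\alpha p_\alpha y_\alpha$ combined with Lemma~\ref{lem-Wu-gf-analytic} and the expansions~\eqref{eq-xi-y-g-expansion}, and parts (3)--(6) are then elementary facts about the scalar flow generated by $f$. Your argument for (6) is in fact more careful than the paper's one-line sketch.
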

 
\begin{proof}
Statement (1) follows from Lemma~\ref{lem-Wu-gf-analytic} on the unstable manifold of
the Good Fill fixed points and the relation $\gamma(s) = \Gamma(s) +n = sf(s) +
\lambda s^2 - \sum_\alpha p_\alpha y_\alpha$; see~\eqref{eq-define-Gamma}.

(2) follows from the asymptotic expansion~\eqref{eq-xi-y-g-expansion}  of
$\Gamma(s)=\gamma(s)-n$ for large $s$ and the relation between $\Gamma$ and $f$.

Statement (2) directly implies (3). It may well be that $s_0=0$ for the solitons that we construct here. 
However, we did not see an easy argument that would prove this.

(4) follows from the fact that $f(0)=0$ and that $f(s)$ grows linearly for large $s$.

(5) holds because $f(0)=f(s_0)=0$.

(6) follows from the asymptotic expansion in (2), which implies that $\lambda f(s)
>0$ for $s>s_0$.
\end{proof}

\subsection{Convergence of the evolving metrics as $t\to0$}

The evolving metrics \(g(t)\) on \((0,\infty)\times\mc M\) are given by
\begin{equation}\label{eq-app-evolving-metrics}
g(t) = 2\lambda t \bigl(S_{\theta(t)}'(s)\bigr)^2 (\mr d s)^2 
  + 2\lambda t \bigl(S_{\theta(t)}(s)\bigr)^2 g_{\mc M}(S_{\theta(t)}(s)) .
\end{equation}
The form of this metric simplifies if we introduce a different coordinate. For each $t$ with $\lambda t>0$,
 we define a diffeomorphism $\varrho_t:[0,\infty)\to [0,\infty)$ by setting
\[
  \varrho_t(s) = \sqrt{2\lambda t}\,S_{\theta(t)}(s).
\]
In terms of $r = \varrho_t(s)$, we have $\dd r = \sqrt{2\lambda t}S_{\theta(t)}'(s)\dd s$, and thus
\[
  g(t) = \bigl(\varrho_t\times\mathrm{id}_{\mc M}\bigr)^*\bigl(\bar g(t)\bigr),\quad \text{ where }\quad
  \bar g(t) =  (\dd r)^2 + r^2 g_{\mc M}\left(\frac{r}{\sqrt{2\lambda t}}\right).
\]
The metrics  $\bar g(t) $ are defined on \(\{(r, \omega_1,\omega_2)\in \R\times\mc S^{p_1}\times\mc S^{p_2} \mid r>0\}\).

Since $x_\alpha(s)\to \bar x_\alpha$ as $s\to\infty$, we have
$$
g_{\mc M}(s) \to \frac{p_1-1}{\bar x_1}g_{\mc S^{p_1}}+ \frac{p_2-1}{\bar x_2}g_{\mc S^{p_2}}, \qquad(s\to\infty),
$$
so that the metrics \(\bar g(t)\)  converge smoothly on \((0,\infty)\times\mc S^{p_1}\times\mc S^{p_2}\)
to a cone metric as in~\eqref{eq-cone-metric},
$$
\bar g(t) \to \bar G = (\dd r)^2
+ r^2 \left[\frac{p_1-1}{\bar x_1}g_{\mc S^{p_1}}
+ \frac{p_2-1}{\bar x_2}g_{\mc S^{p_2}} \right],
\qquad (\lambda t\searrow 0).
$$

To construct a global Ricci flow spacetime, we show that the metrics $g(t)$ themselves converge.  This will follow from the convergence of $\varrho_t(s)$ as $\lambda t\searrow 0$.

\begin{lemma}\label{lem-convergence-of-varrho}
  The limit
  \[
    \lim_{\lambda t\searrow0} \varrho_t(s) = \varrho_0(s)
  \]
  exists for all $s\geq0$.  The convergence is uniform for $s\geq 0$, and in $C^\infty_{\rm loc}$ for $s>s_0$.
  For all $s\in[0,s_0]$, we have $\varrho_t(s)\to0$.
\end{lemma}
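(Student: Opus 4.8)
The plan is to reduce everything to the single function $W(\theta,s)\isdef e^{\lambda\theta}S_\theta(s)$. Since $\lambda\theta(t)=\tfrac12\log(2\lambda t)$ we have $e^{\lambda\theta(t)}=(2\lambda t)^{1/2}$, so $\varrho_t(s)=W(\theta(t),s)$, and the limit $\lambda t\searrow0$ is exactly the limit $\lambda\theta(t)\to-\infty$. First I would differentiate and use $\partial_\theta S_\theta(s)=f(S_\theta(s))$ to get
\[
  \partial_\theta W(\theta,s)=\lambda e^{\lambda\theta}S_\theta(s)+e^{\lambda\theta}f(S_\theta(s))=e^{\lambda\theta}\,h\bigl(S_\theta(s)\bigr),\qquad h(u)\isdef\lambda u+f(u).
\]
The key point is that $h$ is \emph{globally} bounded on $[0,\infty)$: by Lemma~\ref{lem-f-properties}(1) it is continuous there, and Lemma~\ref{lem-f-properties}(2) gives $f(u)=-\lambda u+K_\infty+\cO(1/u)$ as $u\to\infty$, so $h(u)\to K_\infty$ and $C_0\isdef\sup_{u\ge0}|h(u)|<\infty$. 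This yields the uniform bound $|\partial_\theta W(\theta,s)|\le C_0\,e^{\lambda\theta}$ for all $s\ge0$ and all $\theta\in\R$.

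From this, existence and uniformity of the limit follow by a Cauchy argument: for any $\theta,\theta'$, integrating $\partial_\theta W$ over the $\theta$-interval between them gives $|W(\theta,s)-W(\theta',s)|\le C_0|\lambda|^{-1}|e^{\lambda\theta}-e^{\lambda\theta'}|$, uniformly in $s\ge0$; since $e^{\lambda\theta}\to0$ as $\lambda\theta\to-\infty$, the family $W(\theta,\cdot)$ is uniformly Cauchy and converges uniformly on $[0,\infty)$ to a limit $\varrho_0$, with rate $\sup_{s\ge0}|\varrho_t(s)-\varrho_0(s)|=\cO\bigl((2\lambda t)^{1/2}\bigr)$. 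For $s\in[0,s_0]$ I would simply invoke Lemma~\ref{lem-f-properties}(5), which gives $S_\theta(s)\in[0,s_0]$, whence $\varrho_t(s)=e^{\lambda\theta(t)}S_{\theta(t)}(s)\le s_0(2\lambda t)^{1/2}\to0$ and $\varrho_0\equiv0$ on $[0,s_0]$.

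For the $C^\infty_{\mathrm{loc}}$ statement on $(s_0,\infty)$ the plan is to differentiate in $s$. On a compact $[s_1,s_2]\subset(s_0,\infty)$, invariance of $(s_0,\infty)$ under $S_\theta$ together with $\tfrac{d}{d\theta}\log S_\theta=f(S_\theta)/S_\theta\to-\lambda$ (using $S_\theta\to\infty$ from Lemma~\ref{lem-f-properties}(6)) gives $c_1e^{-\lambda\theta}\le S_\theta(s)\le c_2e^{-\lambda\theta}$ once $\lambda\theta$ is below a threshold, and then the variational equations for $\partial_s S_\theta,\partial_s^2 S_\theta,\dots$ (solved by variation of constants) give $|\partial_s^k S_\theta(s)|\le C_ke^{-\lambda\theta}$ there for each $k\ge1$. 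Combining with the differentiated asymptotics $h^{(m)}(u)=\cO(u^{-m-1})$ for $m\ge1$ — legitimate because, by Lemma~\ref{lem-f-properties}(2), the expansion of $f$ may be differentiated indefinitely — the higher-order chain rule yields $|\partial_s^k\partial_\theta W(\theta,s)|=|e^{\lambda\theta}\partial_s^k\bigl(h(S_\theta(s))\bigr)|\le C_k'e^{2\lambda\theta}$ on $[s_1,s_2]$ for each $k\ge1$, which is integrable as $\lambda\theta\to-\infty$; hence every $\partial_s^k W(\theta,\cdot)$ converges uniformly on $[s_1,s_2]$.

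I expect the routine parts to be the boundedness of $h$ and the Cauchy estimate; the step needing genuine care is the last paragraph, where the exponential \emph{growth} $\partial_s^k S_\theta\sim e^{-\lambda\theta}$ of the $s$-derivatives must be matched against the exponential \emph{decay} $h^{(m)}(S_\theta)=\cO(e^{(m+1)\lambda\theta})$ coming from the asymptotics of $f$ — it is precisely the clause of Lemma~\ref{lem-f-properties}(2) permitting indefinite differentiation of the expansion of $f$ that makes this bookkeeping close.
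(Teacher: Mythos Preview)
Your argument for existence, uniform convergence, and the vanishing on $[0,s_0]$ is essentially identical to the paper's: both set $W(\theta,s)=e^{\lambda\theta}S_\theta(s)$, bound $\partial_\theta W = e^{\lambda\theta}h(S_\theta)$ by $C e^{\lambda\theta}$ using that $h(u)=f(u)+\lambda u$ is globally bounded, and integrate.

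For the $C^\infty_{\rm loc}$ part on $(s_0,\infty)$ the two routes diverge. The paper exploits the closed form $S_\theta'(s)=f(S_\theta(s))/f(s)$ obtained from $\theta=\int_s^{S_\theta(s)}\!d\varsigma/f(\varsigma)$, writes $\varrho_t'(s)$ and $\varrho_t''(s)$ explicitly, and shows these converge by hand, deferring higher derivatives to ``repeating these arguments.'' You instead bound $|\partial_s^k S_\theta|\le C_k e^{-\lambda\theta}$ via the variational equations, match this against the decay $h^{(m)}(S_\theta)=\cO(e^{(m+1)\lambda\theta})$, and apply Fa\`a di Bruno to conclude $|\partial_s^k\partial_\theta W|\le C_k' e^{2\lambda\theta}$, which is integrable. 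Your bookkeeping is correct: each Fa\`a di Bruno term $h^{(j)}(S_\theta)\prod(\partial_s^{m_i}S_\theta)^{a_i}$ with $\sum a_i=j$ is $\cO(e^{(j+1)\lambda\theta})\cdot\cO(e^{-j\lambda\theta})=\cO(e^{\lambda\theta})$, and the inductive bound on $\partial_s^k S_\theta$ goes through by the same mechanism applied to $\partial_s^k(f(S_\theta))$. The paper's route gives explicit limits (e.g., $\varrho_t'(s)\to-\lambda\varrho_0(s)/f(s)$) not needed for the lemma; yours handles all $k$ uniformly without ad hoc computation at each order.
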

\begin{proof}  
We have $\sqrt{2\lambda t} = e^{\lambda\theta}$, which implies that $\varrho_t(s) = e^{\lambda\theta} S_{\theta}(s)$.  

Since the smooth function $f$ is bounded on any compact interval $[0,s_1]$,  the large-$s$ asymptotic
expansion~\eqref{eq-app-f-asymptotics} for $f(s)$ implies that for all $s\geq 0$, we have
\[
\big\vert f(s)  + \lambda s \big\vert \leq C
\]
for some $C>0$.  Hence
\begin{equation*}
  \left|\frac{\dd}{\dd\theta}e^{\lambda\theta}S_\theta(s)\right|
  = e^{\lambda\theta}\bigl| \lambda S_\theta(s) + f(S_\theta(s)) \bigr|
  \leq C e^{\lambda\theta}.
\end{equation*}
Integrating this, we find that $e^{\lambda\theta}S_\theta(s)$ converges as $\lambda\theta\to-\infty$, and that if $e^{\lambda\theta}S_\theta(s) \to \varrho_0(s)$, then
\[
  \left| e^{\lambda\theta}S_\theta(s)- \varrho_0(s)\right| \leq Ce^{\lambda\theta}=C\sqrt{2\lambda t}
\]
for all $s \geq 0$ and $\lambda t>0$.  We have shown that $e^{\lambda\theta}S_\theta(s)$ converges uniformly.  

If $0\leq s\leq s_0$ then $0\leq S_\theta(s) \leq s_0$ so that
$e^{\lambda\theta}S_\theta(s)\to0$ uniformly. This implies that $\varrho_0(s)=0$ for
all $s\in[0, s_0]$.

Finally, we show that $\varrho_t(s)$ converges in $C^\infty_{\rm loc}$ in the region $(s_0, \infty)$.
Let $s>s_0$ be given.  Then $S_\theta(s) > s$ for all $\lambda\theta<0$.
Integrating the differential equation $\frac{\dd}{\dd \theta}S_\theta(s) = f(S_\theta(s))$, we find that
$$
\theta = \int_{s}^{S_\theta(s)} \frac{\dd \varsigma}{f(\varsigma)}.
$$
We differentiate with respect to $s$ to get
$$
\frac{S_\theta'(s)}{f(S_\theta(s))} - \frac{1}{f(s)} = 0 \quad\implies\quad
S_\theta'(s) = \frac{f(S_\theta(s))}{f(s)}.
$$
Hence
\begin{align*}
  \varrho_t'(s) &= e^{\lambda\theta}S_\theta'(s) =  \frac{e^{\lambda\theta}f(S_\theta(s))}{f(s)}\\
  &= -\lambda \frac{e^{\lambda\theta}S_\theta(s)}{f(s)}
  + \frac{e^{\lambda\theta}}{f(s)}\{\lambda S_\theta(s) + f(S_\theta(s))\}.
\end{align*}
Since $f(S)+\lambda S$ is uniformly bounded and $e^{\lambda\theta}S_\theta(s)$ converges uniformly as $\lambda\theta\to-\infty$,
we conclude that
\[
  \varrho_t'(s) \to -\lambda\frac{\varrho_0(s)}{f(s)}
\]
uniformly for $s\geq s_1$, for any $s_1>s_0$.  This proves local $C^1$ convergence of $\varrho_t(s)$.

To get local $C^2$ convergence, we differentiate again and use $S_\theta'(s) = f(S_\theta(s))/f(s)$ to find that
\[
  S_\theta''(s) = \frac{f'(S_\theta(s))S_\theta'(s)}{f(s)} - \frac{f(S_\theta(s))f'(s)}{f(s)^2}
  =\frac{f(S_\theta(s))}{f(s)^2} \left\{ f'(S_\theta(s)) - f'(s) \right\}.
\]
As $\lambda\theta\to-\infty$, we have $S_\theta(s)\to\infty$, and thus $f'(S_\theta(s)) \to -\lambda$ by part~(3) of Lemma~\ref{lem-f-properties}.  We already have shown that $e^{\lambda\theta}f(S_\theta(s)) \to -\lambda\varrho_0(s)$.  It follows that $\varrho_t''(s) = e^{\lambda\theta}S_\theta''(s)$ also converges uniformly as $\lambda t\searrow 0$ for $s\geq s_1$, for any $s_1>s_0$.

By repeating these arguments, one proves that all higher derivatives
\(\partial_s^j\varrho_t(s)\) also converge locally uniformly for \(s>s_0\).
\end{proof}

\subsection{Gluing solitons}
For a given shrinking soliton \((G_-, \mf X_-)\) with \(\mf X_-=f_-(s)\pd_s\), we
define an ancient spacetime \((\Omega_-, \mf t_-, \pdt, g_-)\), where
\[
  \Omega_- = (-\infty,0)\times\R^{p_1+1}\times\mc S^{p_2},
\]
and where \(\mf t_-\) is the projection on the first factor \((-\infty,0)\), with
\(\pdt\) determined by \(\pdt\mf t=1\) and the requirement that the projection of
\(\pdt\) on the second factor \(\R^{p_1+1}\times\mc S^{p_2}\) vanishes.  The metric
\(g_-(t)\) on \(\{t\}\times(\R^{p_1+1}\setminus\{0\})\times\mc S^{p_2}\) is given by
\(g(t)=\sqrt{-2t}\phi_{-,t}^*(G^-)\).  Since our soliton metric satisfies the Good
Fill boundary condition, these metrics extend to smooth metrics on the spacelike
timeslice $\mf t_-^{-1}(t)=\{t\}\times\R^{p_1+1}\times\mc S^{p_2}$ for each \(t<0\).

Let \(s_-\) be the largest \(s\in[0,\infty)\) with \(f_-(s_-) = 0\), and define
\[
  \mc E_-=\left(\R^{p_1+1}\setminus\mc B_{s_-}^{p_1+1}\right) \times \mc S^{p_2}
  = (s_-,\infty)\times\mc S^{p_1}\times\mc S^{p_2}.
\]
Then Lemma~\ref{lem-convergence-of-varrho} implies that the metrics \(g_-(t)\) converge
smoothly to a metric \(g_-(0)\) on \(\mc E_-\) as \(t\nearrow0\).  Moreover, there is a
smooth isometry \(\Phi_-\) from \(\bigl(\mc E_-, g_-(0)\bigr)\) to the cone metric
\(\bar G\) on \((0,\infty)\times\mc S^{p_1}\times\mc S^{p_2}\) given
by~\eqref{eq-cone-metric}.  It follows that the ancient spacetime
\((\Omega_-, \mf t_-, \pdt, g_-(t))\) extends to a Ricci flow spacetime-with-boundary
\((\bar\Omega_-, \mf t_-, \pdt, g_-(t))\), in which
\[
  \bar\Omega_- = \Omega_-\cup \pd\Omega_-, \qquad
  \pd \Omega_- = \{0\}\times \mc E_- .
\]

We repeat the same construction for our expanding soliton \((G_+, \mf X_+)\),
this time obtaining a future spacetime \((\Omega_+, \mf t_+, \pdt, g_+)\), with
\( \Omega_+ = (0,\infty) \times \R^{p_1+1} \times \mc S^{p_2} \), in which \(\mf
t_+\) is again projection on the first component, and \(\pdt\) is determined by
the same conditions as in the case of shrinkers.  Let \(s_+\) be the largest
\(s\geq 0\) for which \(f_+(s)=0\), and define \(\mc E_+=(s_+, \infty)\times\mc
S^{p_1}\times\mc S^{p_2}\).  The metrics \(g_+(t) =
\sqrt{2t}\,\phi_{+,t}^*(G_+)\) converge smoothly on \(\mc E_+\) as
\(t\searrow0\), which allows us to extend the spacetime to include a boundary
\(\pd\Omega_+ = \{0\}\times \mc E_+\).  We again have a smooth isometry
\(\Phi_+\) from \((\mc E_+, g_+(0))\) to a cone metric.

Since we have chosen the two solitons so that their asymptotic cones have the
same apertures, it follows that the isometry \(\Phi_+\) is with the same
cone~\eqref{eq-cone-metric} as the isometry \(\Phi_-\).  Therefore one can glue
the two spacetimes \((\Omega_\pm, \mf t_\pm, \pdt, g_\pm)\) into a larger
spacetime \(\mc M = \bar\Omega_-\sqcup_{\Phi_\pm}\bar\Omega_+\) by identifying
the two boundary components \(\pd\Omega_\pm\) via the isometries \(\Phi_\pm\),
and by choosing the differentiable structure on \(\bar\Omega_+ \sqcup_{\Phi_\pm}
\bar\Omega_+\) so that the vector field \(\pdt\) is smooth across the common
boundary at \(\mf t=0\).  The two metrics \(g_\pm\) together induce a smooth
quadratic form \(\hat g(t)\) on the tangent space to each spacelike timeslice
\(\mf t_\pm^{-1}(t)\).  This metric $\mf g$ is a smooth solution of Ricci flow
on the (compact, incomplete) manifold $\mc M$ in the precise sense that $\mc
L_{\pdt} \hat g = -2\Rc[\hat g]$.  Hence $\mc M$ is a Ricci flow spacetime.

\subsection{Solutions with changing topology}
A minor variation on the preceding construction leads us to Ricci flow
spacetimes whose time slices change topology at \(\mf t=0\).  Namely, one can
exchange the order of the factors $\mc S^{p_1}\times\mc S^{p_2}$ for $t>0$ while
keeping the same cone aperture to obtain the change in topology claimed in the
Main Theorem, and also Theorem~C.   This completes our construction of Ricci
flow spacetimes.

\subsection{Maximality of the glued spacetime}
Theorem~\ref{main-C} and our Main Theorem claim that the spacetimes we construct are maximal.
To show this, suppose \((\hat{\mc M}, \mf t, \pdt, \mf g)\) is an extension of one of
the Ricci flow spacetimes  \((\mc M, \mf t, \pdt, \mf g)\) we construct above.
Consider the spacetime metric \(\mf g=(\dd\mf t)^2 + \hat g\), which is
canonically associated to a Ricci flow spacetime, and let \(\hat{\mc M}^*\)
and \(\mc M^*\) be the metric space completions of \(\hat{\mc M}\) and \(\mc
M\) with respect to the spacetime metric \(\mf g\).  Since \(\mc M\subset
\hat{\mc M}\), we get a natural inclusion \(\mc M^* \subset \hat{\mc M}^*\).
It follows from our construction that \(\mc M^*\) is the one-point
compactification of \(\mc M\) that adds the vertex \(V\) of the cone at time \(\mf
t=0\). 

If \(\mc M\subsetneqq\hat{\mc M}\), then the closure of \(\mc M\) in
\(\hat{\mc M}\) contains at least one point \(v\notin\mc M\).  This point must be a limit
in \(\hat{\mc M}\) of a sequence of points \(v_i\in\mc M\).  The sequence
\(v_i\) is a Cauchy sequence for the spacetime metric \(\mf g\) on \(\hat{\mc
M}\), and hence also a Cauchy sequence in \(\mc M\).  Therefore \(v_i\) either
converges to a point in \(\mc M\), which is impossible because then \(v=\lim
v_i\) would belong to \(\mc M\), or the sequence converges to the vertex \(V\)
in the metric completion \(\mc M^*\).  The sectional curvatures of the metric
\(\mf g\) on the time slice \(\mf t^{-1}(0)\) are unbounded near the vertex \(V\)
(see Appendix~\ref{app-cones}).  It follows that the sectional curvatures of
\(\hat{\mc M}\) are also unbounded near the point \(v\), which contradicts the
assumption that \(\tilde{\mc M}\) is a smooth Ricci flow spacetime.

\appendix

\section{Doubly-warped product geometries}
\label{DWP-geometries}
We observe that the Ricci flow spacetimes that we have constructed above are
maximal in the sense that 

\subsection{The general case}

It is well known (see, \emph{e.g.,}~\cite{Pet16}) that all curvatures of a
doubly-warped product metric
\[
  g = (\mathrm ds)^2 + \varphi_1^2\,g_{\mc S^{p_1}} + \varphi_2^2\,g_{\mc S^{p_2}}
\]
on $\mb R_+\times\mc S^{p_1}\times\mc S^{p_2}$ are convex linear combinations of
the five sectional curvatures\footnote{Our notation here is as follows: the
first parameter denotes the spherical factor(s) involved, while the second
indicates the highest-order derivative that appears.}
\[
  \kappa_{\alpha,1}=\frac{1-\varphi_{\alpha,s}^2}{\varphi_\alpha^2},\quad 
  \kappa_{\alpha,2}=-\frac{\varphi_{\alpha,ss}}{\varphi_\alpha},\quad 
  \kappa_{12,1}=-\frac{\varphi_{1,s} \varphi_{2,s}}{\varphi_1\varphi_2}.
\]
We note that the functions $\kappa_{\alpha,1}$ ($\alpha\in\{1,2\}$) are the
sectional curvatures of orthonormal planes tangent to $\mc S^{p_\alpha}$.  The
functions $\kappa_{\alpha,2}$ are the sectional curvatures of orthonormal planes
spanned by $\frac{\partial}{\partial s}$ and vectors tangent to $\mc
S^{p_\alpha}$. And $\kappa_{12,1}$ is the sectional curvature of a plane spanned
by one vector tangent to $\mc S^{p_1}$ and one tangent to $\mc S^{p_2}$.

It follows easily that the Ricci tensor of such a metric is given by
\begin{align*}
  \Rc&=\big\{p_1\kappa_{1,2} +p_2\kappa_{2,2}\big\}(\mr ds)^2\\
     &+\big\{\kappa_{1,2}+ (p_1-1)\kappa_{1,1}+p_2\kappa_{12,1}\big\}\varphi_1^2g_{\mc S^{p_1}}\\
     &+\big\{\kappa_{2,2}+(p_2-1)\kappa_{2,1}+p_1\kappa_{12,1}\big\}\varphi_2^2g_{\mc S^{p_2}}.
\end{align*}

Now let $\mf X = f(s)\,\frac{\partial}{\partial s}$ denote the gradient of a potential
function $F(s)$.  Applying the general formula
\[
  (\mc L_{\mf X}g)(V_1,V_2) = \mf X\big\{g(V_1,V_2)\big\}+g(\nabla_{V_1}\mf X,V_2)+g(V_1,\nabla_{V_2}\mf X)
\]
for the Lie derivative of a covariant 2-tensor to this special case, one sees that
\[
  \mc L_{\mf X} g = 2f_s(\mr d s)^2+2f\varphi_1\varphi_{1,s}\,g_{\mc S^{p_1}}+2f\varphi_2\varphi_{2,s}\,g_{\mc S^{p_2}}.
\]

In the main body of this work, we apply the equations above to the soliton condition
\[
  -2\Rc[g] = 2\lambda g + \mc L_{\mf X}g,
\]
were $\lambda\in\{-1,0,+1\}$ controls the rescaling of the soliton.\footnote{Compare to
  equation~(1.8) of~\cite{CCGGIIKLLN}, with $\epsilon=2\lambda$.}

\subsection{Cones}
\label{app-cones}
In the case of a cone metric, 
\[
  \varphi_1= s\,\sqrt{\frac{p_1-1}{\bar x_1}}
  \qquad\mbox{and}\qquad
  \varphi_2= s\, \sqrt{\frac{p_2-1}{\bar x_2}}
\]
with asymptotic apertures $\bar x_1,\,\bar x_2$, the sectional curvature $\kappa_{12,1}$ is given by
\[
  \kappa_{12,1} = -\frac{1}{s^2}.
\]
In particular, the norm of the curvature tensor of the cone becomes unbounded as $s\searrow0$.


\section{A representation as a mechanical system on $\R^3$}
\label{MechanicalSystem}
As a curiosity, we observe that our assumption that $p_\alpha\geq2$ allows us to define
\[
  u_\alpha = \log\vp_\alpha-\tfrac12 \ln(p_\alpha-1),\quad\alpha\in\{1,2\}, \qquad\text{ and }\qquad v = f - p_1\frac{\mr du_1}{\mr ds} - p_2\frac{\mr du_2}{\mr ds}.
\]
In these variables, the differential equations~\eqref{eq-LittleSolitonSystem} become
\begin{subequations}
  \label{eq-Mechanical}
  \begin{align}
    \ddot u_\alpha &= v \dot u_\alpha +  e^{-2u_\alpha}+\lambda,\qquad \alpha\in\{1,2\},\\
    \dot v &= p_1\dot u_1^2 + p_2\dot u_2^2 - \lambda.
  \end{align}%
\end{subequations}
These equations can be interpreted as a mechanical system in which \(s\) is ``time'' and where (in this section only) we write \(s\)-derivatives as fluxions.  In this interpretation, \(u_1\) and \(u_2\) are the coordinates of two unit-mass particles on the real line that are each subject to a force field given by \(F(u) = e^{-2u}+\lambda\), and whose motion is subject to friction with friction coefficient \(v\).  The only unusual aspect of this system from the point of view of mechanics is that the friction coefficient \(v\) can be either positive or negative, and that it is itself a function of time that satisfies an \textsc{ode}.

The derivation of the equations for \(u_1, u_2, v\) from \eqref{eq-LittleSolitonSystem} is a simple calculus exercise.  Even though it appears simpler than the original equations \eqref{eq-LittleSolitonSystem}, we will not use the mechanical system \eqref{eq-Mechanical} in this paper.  It does however make several cameo appearances.  For example, the Ivey invariant for the stationary soliton flow can be interpreted as the energy dissipation in the mechanical system.  Indeed, if \(\lambda=0\), then any solution of~\eqref{eq-Mechanical} satisfies
\[
  \frac{\mr d}{\mr ds}\left\{ \frac{p_1}{2} \bigl(\dot u_1^2 + e^{-2u_1} \bigr) + \frac{p_2}{2} \bigl(\dot u_2^2 + e^{-2u_1} \bigr) \right\} = v\dot v = \frac{\mr d\frac12 v^2}{\mr ds},
\]
which implies that the quantity \(I = p_1(\dot u_1^2+e^{-2u_1}) + p_2(\dot u_2^2+e^{-2u_2}) - v^2\) is preserved along solutions of~\eqref{eq-Mechanical}.  Similarly, the non-obvious Lyapunov function \(W\) in Gastel and Kronz' construction of the B\"ohm soliton (see \S~\ref{thm-BGK}) can be interpreted as Kinetic\(+\)Potential Energy for a renormalized version of the mechanical system~\eqref{eq-Mechanical}.

\section{An estimate for orbits near a hyperbolic fixed point}
\label{AnalysisAppendix}

\subsection{A model nonlinear system}
Consider a system
\begin{equation}\label{eq-hyperbolic-system}
  x_-' = -A_- x_- + B_-(x)x, \qquad x_+' = A_+ x_+ + B_+(x)x,
\end{equation}
where $x = (x_-, x_+) \in \R^{k_-}\times \R^{k_+}$, where $A_- : \R^{k_-} \to \R^{k_-} $, $A_+ : \R^{k_+}\to \R^{k_+} $ are constant linear maps, and where $B_\pm$ are smooth functions on some neighborhood of the origin in $\R^{k_-+k_+} $ such that $B_-(x) $ is a linear map from $\R^{k_-+k_+} $ to $\R^{k_-}$ and $B_+(x) $ is a linear map from $\R^{k_-+k_+} $ to $\R^{k_+}$.  We assume furthermore that $B_\pm (0) = 0$.

The origin $(0,0)$ is a fixed point for our system \eqref{eq-hyperbolic-system}.  The linearization of this system at the origin has the matrix
\[
  \begin{bmatrix}
    -A_- & 0 \\ 0 & A_+
  \end{bmatrix}.
\]
We make one more assumption, namely that the eigenvalues of both $A_\pm$ all have strictly positive real parts.

\subsection{An Analysis Lemma}

\label{lem-AnalysisLemma}\itshape
There is a constant $C\in \R$ that only depends on the matrices $A_\pm$ and the nonlinear functions $B_\pm$, such that for all $T>0$ and for any solution $x:[0, T]\to \R^{k_-+k_+}$ of~\eqref{eq-hyperbolic-system} such that $\sup_{0\leq t\leq T} \|x(t)\|$ is sufficiently small, one has
\[
  \|x(t)\|\leq C\bigl( e^{-\epsilon t} + e^{-\epsilon(T-t)} \bigr)\sup_{0\leq t\leq T} \|x(t)\|
\]
and
\[
  \int_0^T \|x(t)\|\, \dd t \leq C \sup_{0\leq t\leq T} \|x(t)\|.
\]
\upshape\medskip

\begin{proof}
  Briefly, we use a Gronwall-type argument to establish an exponential upper bound for $\|x(t)\|$ in the interval $[0,T]$, and then integrate this upper bound to get the claimed estimate.

  There is a $\delta > 0 $ such that all eigenvalues $ \mu$ of $A_+$ and $A_-$ satisfy $\Re \mu \geq \delta$.  Furthermore, there is a constant $C_A>0$ such that
  \[
    \|e^{-tA_\pm}\| \leq C_A e^{-\delta t}
  \]
  holds for all $t\geq 0$.  Applying the variation of constants formula to the system \eqref{eq-hyperbolic-system}, we find that on the interval $[0, T]$, both $x_+$ and $x_-$ are given by
  \begin{align*}
    x_-(t) & = e^{-t A_-}x_-(0) + \int_0^t e^{-(t-s)A_-} B_-(x(s))x(s) \,\dd s,     \\
    x_+(t) & = e^{-(T-t)A_+} x_+(T) - \int_t^T e^{-(s-t)A_+} B_+(x(s))x(s)\, \dd s.
  \end{align*}
  Since $B_\pm(0) = 0$, there is a constant $C_B>0$ such that $\| B_\pm(x)\| \leq C_B\|x\|$ holds for all sufficiently small $x$.  Thus we get
  \begin{align*}
    \|x_-(t)\| & \leq C_A e^{-\delta t}\|x_-(0)\| + C_AC_B \int_0^t e^{-\delta(t-s)} \|x(s)\|^2 \,\dd s,       \\
    \|x_+(t)\| & \leq C_A e^{-\delta(T-t)} \| x_+(T) \| + C_AC_B \int_t^T e^{-\delta(s-t)} \|x(s)\|^2 \,\dd s.
  \end{align*}
  For $K$ to be fixed below, we may assume that $\|x(s)\| \leq K$ for all $s\in [0,T]$, whence we get
  \begin{align*}
    \|x_-(t)\| & \leq C_AKe^{-\delta t} + C_AC_BK \int_0^t e^{-\delta(t-s)} \|x(s)\| \,\dd s,    \\
    \|x_+(t)\| & \leq C_AKe^{-\delta(T-t)} + C_AC_BK \int_t^T e^{-\delta(s-t)} \|x(s)\| \,\dd s.
  \end{align*}
  After adding these two inequalities, we find there exists $C_0=C_0(C_A,C_B)$ such that that for all $t\in [0, T]$, one has
  \begin{align}
    \|x(t)\| & \leq \|x_-(t)\| + \|x_+(t)\| \notag                    \\
    \label{eq-x-gronwall}
             & \leq C_0K \bigl(e^{-\delta t} + e^{-\delta(T-t)}\bigr)
               + C_0K \int _0^T e^{-\delta |s-t|} \|x(s)\|\,\dd s.
  \end{align}

  If we define
  \[
    \rho(t) = \frac{1}{2\delta} \int _0^T e^{-\delta |s-t|} \|x(s)\|\,\dd s,
  \]
  then since $G(t) = (2\delta)^{-1}e^{-|t|}$ is the Green's function for $-\frac{\dd^2}{\dd t^2} + \delta^2$, the quantity $\rho$ satisfies
  \[
    -\rho ''(t) + \delta^2 \rho(t) = \|x(t)\|
  \]
  for all $t\in(0, T)$.  We can therefore rewrite the integral inequality~\eqref{eq-x-gronwall} as
  \begin{equation}
    -\rho ''(t) + \bigl(\delta^2 - 2\delta C_0K \bigr)\rho(t)
    \leq C_0 \bigl(e^{-\delta t} + e^{-\delta (T-t)}\bigr),
    \qquad (0<t<T).
    \label{eq-phi-diff-inequality}
  \end{equation}
  Moreover, we have the boundary conditions
  \begin{equation}
    \rho(0) =  \frac{1}{2\delta}\int_0^Te^{-\delta t}\|x(t)\|\,\dd t \leq K,\quad\mbox{and}\quad
    \rho(T) \leq \frac K\delta.
    \label{eq-phi-boundary-condition}
  \end{equation}

  For any constant $M$, the function $\bar\rho(t) = M\bigl(e^{-\epsilon t}+e^{-\epsilon(T-t)}\bigr)$ satisfies $\bar\rho\,'' = \epsilon^2\bar \rho $, so that
  \[
    -\bar\rho\,''(t) +\bigl(\delta^2 - 2\delta C_0K\bigr)\bar\rho = \bigl(\delta^2 - 2\delta C_0K - \epsilon^2\bigr) \bar\rho.
  \]
  If we now choose $K$ small enough that $2C_0K < \frac12 \delta$ and then choose $\epsilon = \frac12 \delta$, we have
  \[
    \delta^2 - 2\delta CK - \epsilon^2 > \tfrac12 \delta^2 - \epsilon^2 = \tfrac 14 \delta^2.
  \]
  So the function $\bar\rho$ becomes a supersolution of the boundary-value problem~(\ref{eq-phi-diff-inequality}, \ref{eq-phi-boundary-condition}) provided that $M=C_1K$, where $C_1$ is a constant that depends on $\delta,\epsilon$, and $C_0$.  By the maximum principle, we conclude that $\rho\leq \bar\rho$, and thus that
  \[
    \rho(t) \leq C_1 K \bigl( e^{-\epsilon t} + e^{-\epsilon(T-t)} \bigr).
  \]
  Applying this to \eqref{eq-x-gronwall} and using the fact that $\epsilon < \delta$ implies that
  \[
    e^{-\delta t} + e^{-\delta(T-t)} \leq e^{-\epsilon t} + e^{-\epsilon(T-t)}.
  \]
  Since we may assume that $K\leq1$, we find that
  \[
    \|x(t)\| \leq (C_2K + C_2K^2) \bigl( e^{-\epsilon t} + e^{-\epsilon(T-t)} \bigr) \leq C_3K \bigl( e^{-\epsilon t} + e^{-\epsilon(T-t)} \bigr).
  \]

  We complete the proof by integrating over $[0,T]$, obtaining
  \[
    \int_0^T \|x(t)\|\dd t \leq \frac{2C_3}{\epsilon} K = C_4 K,
  \]
  where the constant $C$ only depends on $\delta$, $C_A$, and $C_B$, but not on $T$.
\end{proof}

\section{Asymptotics of \texorpdfstring{\(\chi(s)\)}{chi(s)} as \texorpdfstring{\(s\to\infty\)}{s->infty} }
\label{appendix-chi}
Asymptotic expansions for the solutions \(\chi\) of \eqref{eq-chi-s} are well documented and can be derived in a number of ways.  Here we indicate one possible real-variable approach.

If \(\chi:\R\to\R\) is a solution of \eqref{eq-chi-s}, namely
\[
  \chi_{ss} + \left(\frac ns+\lambda s\right) \chi_s + \frac{2(n-1)}{s^2} \chi = 0,
\]
then the function
\[
  Z(s) = \frac{\chi_s(s)}{s\chi(s)}
\]
satisfies
\begin{equation}
  \label{eq-appendix-Z-ode}
  \mc G(s,Z)  \isdef 
  \frac{1}{s}\frac{\dd Z}{\dd s} 
  =
  \frac{2(n-1)}{s^4} - \frac{n+1}{s^2}Z - \lambda Z - Z^2.
\end{equation}
As \(s\to\infty\), this equation becomes
\[
  \frac{\dd Z}{\dd (s^2/2)} = - Z(Z+\lambda) + \cO(s^{-2}),
\]
which has two constant (approximate) solutions, \(Z=0\) and \(Z=-\lambda\).

Direct substitution reveals that \(Z_4(s) = 2(n-1)/s^4\) satisfies
\[
  \frac{1}{s}\frac{\dd Z_4}{\dd s} - \mc G(s, Z_4(s)) = \cO(s^{-6}), \qquad (s\to\infty),
\]
while
\[
  Z_4^{\pm}(s) \isdef \frac{2(n-1)\pm 1}{s^4}
\]
satisfies
\[
  \frac{1}{s}\frac{\dd Z_4^\pm}{\dd s} - \mc G(s, Z_4^\pm(s)) = \pm \frac{\lambda}{s^4} + \cO(s^{-6}), \qquad (s\to\infty).
\]
If \(\lambda>0\), then this implies that \(Z_4^-(s) < Z_4^+(s)\) are lower and upper barriers for the \textsc{ode} \eqref{eq-appendix-Z-ode}, and therefore that there is a solution \(Z(s)\) with \(Z_4^-(s) \leq Z(s) \leq Z_4^+(s)\) for large \(s\).  In fact, if \(s_0\gg 1\), then any solution \(Z(s)\) of \eqref{eq-appendix-Z-ode} that satisfies \(Z_4^-(s_0) \leq Z(s_0) \leq Z_4^+(s_0)\) will continue to satisfy \(Z_4^-(s) \leq Z(s) \leq Z_4^+(s)\) for all \(s\geq s_0\).

If \(\lambda<0\), then \(Z_4^-(s)\) is an upper barrier, and \(Z_4^+(s)\) is a lower barrier.  Since \(Z_4^-(s) < Z_4^+(s)\) for all \(s\geq s_0\) if \(s_0\) is large enough, we can apply a Wa\.zewski argument and conclude that there exists at least one \(Z^*\in\big(Z_4^-(s_0) , Z_4^+(s_0)\big)\) such that the solution of~\eqref{eq-appendix-Z-ode} with \(Z(s_0) = Z^*\) satisfies \(Z_4^-(s) \leq Z(s) \leq Z_4^+(s)\) for all \(s\geq s_0\).

In either case, the conclusion is that there exists a solution \(Z(s)\) of~\eqref{eq-appendix-Z-ode} with
\[
  Z(s) = \bigl(2(n-1)+\cO(1)\bigr) s^{-4}, \qquad (s\to\infty).
\]
By repeating this argument, one finds that for any \(m\in\N\), there exists a solution that satisfies the expansion
\[
  Z(s) = \frac{A_4}{s^4} + \frac{A_6}{s^6} + \frac{A_8}{s^8} + \cdots + \frac{A_{2m}}{s^{2m}} + \cO\bigl(s^{-2m-2}\bigr), \qquad (s\to\infty).
\]
The coefficients \(A_{2j}\) can be computed inductively by substituting the formal expansion; one finds for example that \(A_4 = 2\lambda(n-1)\).

Integration then shows that \(\chi\) satisfies
\[
  \chi(s) = e^ {\int Z(s) s\dd s} = e^{C - \lambda \frac{n-1}{s^2} + \cO(s^{-4})} = e^C \Bigl\{1 - \lambda \frac{n-1}{s^2} + \cO(s^{-4})\Bigr\}, \qquad (s\to\infty).
\]

As we noted above, there exists another solution with \(Z(s)= -\lambda + o(1)\) for large \(s\).  Similar reasoning then leads to an expansion of the form
\[
  Z(s) = -\lambda + \frac{n+1}{s^2} + \frac{B_4}{s^4} + \cdots,
\]
which after integration leads to
\[
  \chi(s) = e^{-\lambda s^2/2} s^{-n-1} \bigl\{1 + \cO(s^{-2})\bigr\}, \qquad (s\to\infty).
\]


\begin{thebibliography}{WWW88}

\bibitem[ACK12]{ACK12} \textsc{Angenent, Sigurd B.; Caputo, M.  Cristina; Knopf, Dan.}  Minimally invasive surgery for Ricci flow singularities.  \emph{J.~Reine Angew.~Math.}~\textbf{672} (2012), 39--87.

\bibitem[AIV]{fattening} \textsc{Angenent, Sigurd B.; Ilmanen, Tom; Vel\'azquez, J.J.L.}  Fattening from smooth initial data in mean curvature flow.  \emph{In preparation.}

\bibitem[Arn88]{Arnold88} \textsc{Arnol'd, V.~I.}  \emph{Geometrical methods in the theory of ordinary differential equations.}  Second edition.  Grundlehren der Mathematischen Wissenschaften, \textbf{250}.  Springer-Verlag, New York, 1988.

\bibitem[BK17]{BK17} \textsc{Bamler, Richard H.; Kleiner, Bruce.}  Uniqueness and stability of Ricci flow through singularities \texttt{arXiv:\allowbreak 1709.04122v1}

\bibitem[Bo99]{Bohm} \textsc{B\"ohm, Christoph.}  Non-compact cohomogeneity one Einstein manifolds.  \emph{Bull.~Soc.  Math.~France} \textbf{127} (1999), no.~1, 135--177.

\bibitem[Car16]{Car16} \textsc{Carson, Timothy.}  Ricci flow emerging from rotationally symmetric degenerate neckpinches.  \emph{Int.~Math.~Res.~Not.  (IMRN)} 2016, no.~12, 3678--3716.

\bibitem[Car18]{Car18} \textsc{Carson, Timothy.}  Ricci flow from some spaces with asymptotically cylindrical singularities.  \texttt{arXiv:\allowbreak1805.09401}.

\bibitem[Cao97]{Cao97} \textsc{Cao, H.-D.}  Limits of solutions to the K\"ahler-Ricci flow, \emph{J.~Differential\ Geom.}~\textbf{45} (1997), 257--272.

\bibitem[CTZ11]{CTZ11} \textsc{Chen, Xiuxiong; Tian, Gang.; Zhang, Zhou.}  On the weak K\"ahler--Ricci flow.  \emph{Trans.~Amer.  Math.~Soc.}~\textbf{363} (2011), no.~6, 2849--2863.

\bibitem[Cetal07]{CCGGIIKLLN} \textsc{Chow, Bennett; Chu, Sun-Chin; Glickenstein, David; Guenther, Christine; Isenberg, James; Ivey, Tom; Knopf, Dan; Lu, Peng; Luo, Feng; Ni, Lei}.  \emph{The Ricci flow: techniques and applications. {P}art {I}}, Mathematical Surveys and Monographs, \textbf{135}, {American Mathematical Society, Providence, RI}, {2007}.

\bibitem[CE71]{CE71} \textsc{Conley, Charles C,; Easton, Robert W.}  Isolated Invariant Sets and Isolating Blocks.  \emph{Trans.~Amer. Math.~Soc.}~\textbf{158} (1971), no.~1, 35--61.

\bibitem[Con78]{conley1978isolated} \textsc{Conley, Charles C.}  Isolated Invariant Sets and the Morse Index.  \emph{CBMS Regional Conference Series in Mathematics,} \textbf{38}.  American Mathematical Society, Providence, R.I., 1978.

\bibitem[DHW13]{DancerHallWang} \textsc{Dancer, Andrew S.; Hall, Stuart J.; Wang, McKenzie Y.}  Cohomogeneity one shrinking Ricci solitons: An analytic and numerical study.  \emph{Asian Journal of Mathematics,} \textbf{17} (2013), 33--61.

\bibitem[EGZ16]{EGZ16} \textsc{Eyssidieux, Philippe; Guedj, Vincent; Zeriahi, Ahmed.}  Weak solutions to degenerate complex Monge-Amp\`ere flows II.  \emph{Adv.  Math.}~\textbf{293} (2016), 37--80.

\bibitem[FIK03]{FIK03} \textsc{Feldman, Mikhail; Ilmanen, Tom; Knopf, Dan.}  Rotationally symmetric shrinking and expanding gradient K\"ahler--Ricci solitons.  \emph{J.~Differential Geom.}~\textbf{65} (2003), no.~2, 169--209.

\bibitem[GK04]{GK04} \textsc{Gastel, Andreas; Kronz, Manfred.}  A family of expanding Ricci solitons.  \emph{Variational problems in Riemannian geometry,} 81--93, Progr.~Nonlinear Differential Equations Appl., \textbf{59}, Birkh\"auser, Basel, 2004.

\bibitem[GT11]{GT11} \textsc{Giesen, Gregor; Topping, Peter Miles.}  Existence of Ricci flows of incomplete surfaces.  \emph{Comm.~Partial Differential Equations} \textbf{36}, (2011), no.~10, 1860--1880.

\bibitem[GT13]{GT13} \textsc{Giesen, Gregor; Topping, Peter Miles.}  Ricci flows with unbounded curvature.  \emph{Math.~Z.}~\textbf{273}, (2013), no.~1--2, 449--460.

\bibitem[Gro62]{Gro62} \textsc{Grobman, D.~M.}, Topological classification of neighborhoods of a singularity in \(n\)-space. (Russian) \emph{Mat.~Sb.~(N.S.)}  \textbf{56} (98) 1962, 77--94.

\bibitem[Ham97]{Ham97} \textsc{Hamilton, Richard S.}  Four-manifolds with positive isotropic curvature.  \emph{Comm.~Anal.~Geom.}  \textbf{5} (1997), no.~1, 1--92.

\bibitem[Ha60]{Ha60} \textsc{Hartman, Philip}.  A lemma in the theory of structural stability of differential equations.  \emph{Proc.~Amer. Math.~Soc.}~\textbf{11} (1960), 610--620.


\bibitem[HN18]{HN18} \textsc{Haslhofer, Robert; Naber, Aaron.}  Characterizations of the Ricci flow.  \emph{J.~Eur.~Math.~Soc.~(JEMS)} \textbf{20} (2018), no.~5, 1269--1302.

\bibitem[Ilm98]{Trieste} \textsc{Ilmanen, Tom.}  Lectures on Mean Curvature Flow and Related Equations.  (Trieste notes, originally entitled ``Some Techniques in Geometric Heat Flow'') \url{https://people.math.ethz.ch/~ilmanen/papers/notes.pdf}

\bibitem[Ive94]{Ivey94} \textsc{Ivey, Thomas.}  New examples of complete Ricci solitons.  \emph{Proc.~Amer.~Math.~Soc.}  \textbf{122} (1994), no.~1, 241--245.

\bibitem[KL12]{KL12} \textsc{Koch, Herbert; Lamm, Tobias.}  Geometric flows with rough initial data.  \emph{Asian J.~Math.}~\textbf{16} (2012), no.~2, 209--235.

\bibitem[KL17]{KL17} \textsc{Kleiner, Bruce; Lott, John.}  Singular Ricci flows I.  \emph{Acta Math.}~\textbf{219} (2017), no.~1, 65--134.

\bibitem[KL18]{KL18} \textsc{Kleiner, Bruce; Lott, John.}  Singular Ricci flows II.  Chapter in: Chen J., Lu P., Lu Z., Zhang Z. (eds.)  \emph{Geometric Analysis. Progress in Mathematics,} vol.~333. Birkh\"auser, Cham.

\bibitem[KW15]{KW15} \textsc{Kotschwar, Brett; Wang, Lu.}  Rigidity of asymptotically conical shrinking gradient Ricci solitons.  \emph{J.~Differential Geom.}~\textbf{100} (2015), no.~1, 55--108.

\bibitem[MT10]{MT10} \textsc{McCann, Robert J.; Topping, Peter M.}  Ricci flow, entropy and optimal transportation.  \emph{Amer.~J.~Math.}~\textbf{132} (2010), no.~3, 711--730.

\bibitem[Olv91]{Olv91} \textsc{Olver, F.  ~W.  ~J.}  Uniform, exponentially improved, asymptotic expansions for the confluent hypergeometric function and other integral transforms.  \emph{SIAM J.~Math.~Anal.}~\textbf{22} (1991), no.~5, 1475--1489.

\bibitem[Pal68]{Pal68} \textsc{Palis, Jacob.}  On Morse--Smale dynamical systems.  \emph{Topology} \textbf{8} (1968), 385--404.


\bibitem[Per02]{Per02} \textsc{Perelman, Grisha.}  The entropy formula for the Ricci flow and its geometric applications.  \texttt{arXiv:math/0211159}.

\bibitem[Per03]{Per03} \textsc{Perelman, Grisha.}  Ricci flow with surgery on three-manifolds.  \texttt{arXiv:math/0303109}.

\bibitem[Pet16]{Pet16} \textsc{Petersen, Peter.}  \emph{Riemannian geometry.} Third edition. Graduate Texts in Mathematics, \textbf{171}. Springer, Cham, 2016.

\bibitem[PW09]{PW09} \textsc{Petersen, Peter; Wylie, William.}  Rigidity of gradient Ricci solitons.  \emph{Pacific J.~Math.}~\textbf{241} (2009), no.~2, 329--345.


\bibitem[Sim02]{Sim02} \textsc{Simon, Miles.}  Deformation of $C^0$ Riemannian metrics in the direction of their Ricci curvature.  \emph{Comm.~Anal.~Geom.}~\textbf{10} (2002), no.~5, 1033--1047.

\bibitem[Sim09]{Sim09} \textsc{Simon, Miles.}  Ricci flow of almost non-negatively curved three-manifolds.  \emph{J.~Reine Angew.~Math.}~\textbf{630} (2009), 177--217.

\bibitem[Sim12]{Sim12} \textsc{Simon, Miles.}  Ricci flow of non-collapsed three-manifolds whose Ricci curvature is bounded from below.  \emph{J.~Reine Angew.~Math.}~\textbf{662} (2012), 59--94.

\bibitem[ST17]{ST17} \textsc{Song, Jian; Tian, Gang.}  The K\"ahler--Ricci flow through singularities.  \emph{Invent.~Math.}~\textbf{207} (2017), no.~2, 519--595.

\bibitem[Stu17]{Stu17} \textsc{Sturm, Karl-Theodor.}  Super-Ricci Flows for Metric Measure Spaces.  \texttt{arXiv:\allowbreak1603.02193}.

\bibitem[Top10]{Top10} \textsc{Topping, Peter Miles.}  Ricci flow compactness via pseudolocality, and flows with incomplete initial metrics.  \emph{J.~Eur.~Math.~Soc.~(JEMS)} \textbf{12}, (2010), no.~6, 1429--1451.

\bibitem[Top12]{Top12} \textsc{Topping, Peter Miles.}  Uniqueness and nonuniqueness for Ricci flow on surfaces: reverse cusp singularities.  \emph{Int.~Math.~Res.~Not.~(IMRN)} \textbf{10}, (2012), 2356--2376.

\bibitem[Top15]{Top15} \textsc{Topping, Peter Miles.}  Uniqueness of instantaneously complete Ricci flows.  \emph{Geom.~Topol.}~\textbf{19} (2015), no.~3, 1477--1492.

\end{thebibliography}
\end{document}